\let\cref\Cref
\mathchardef\texthyphen="2D %
\tikzset{>=stealth',auto,node distance=10mm,text height=1.5ex}
\protected\def\tikz@nonactivecolon{\ifmmode\mathrel{\mathop\ordinarycolon}\else:\fi} 
\definecolor{keywordcolor}{rgb}{0.7, 0.1, 0.1}   
\definecolor{tacticcolor}{rgb}{0.1, 0.2, 0.6}    
\definecolor{commentcolor}{rgb}{0.4, 0.4, 0.4}   
\definecolor{symbolcolor}{rgb}{0.0, 0.1, 0.6}    
\definecolor{sortcolor}{rgb}{0.1, 0.5, 0.1}      
\definecolor{attributecolor}{rgb}{0.7, 0.1, 0.1} 
\newcommand{\arrow}{\ensuremath{\operatorname{\mathsf{arrow}}}}
\renewcommand{\o}{\circ}
\newcommand{\eps}{\ensuremath{\varepsilon}}
\renewcommand{\epsilon}{\varepsilon}
\renewcommand{\phi}{\varphi}
\renewcommand{\C}{\mathbb{C}}  
\newcommand{\br}[1]{\langle#1\rangle}
\newcommand{\sectionNotes}{\phantomsection\section*{Notes}\addcontentsline{toc}{section}{Notes}\markright{\textsc{\@chapapp{} \thechapter{} Notes}}}
\newcommand{\sectionExercises}[1]{\phantomsection\section*{Exercises}\addcontentsline{toc}{section}{Exercises}\markright{\textsc{\@chapapp{} \thechapter{} Exercises}}}
\newcommand{\jdeq}{\equiv}      
\let\judgeq\jdeq
\newcommand{\defeq}{\vcentcolon\equiv}  
\def\prd#1{\@ifnextchar\bgroup{\prd@parens{#1}}{\@ifnextchar\sm{\prd@parens{#1}\@eatsm}{\prd@noparens{#1}}}}
\def\prd@parens#1{\@ifnextchar\bgroup%
  {\mathchoice{\@dprd{#1}}{\@tprd{#1}}{\@tprd{#1}}{\@tprd{#1}}\prd@parens}%
  {\@ifnextchar\sm%
    {\mathchoice{\@dprd{#1}}{\@tprd{#1}}{\@tprd{#1}}{\@tprd{#1}}\@eatsm}%
    {\mathchoice{\@dprd{#1}}{\@tprd{#1}}{\@tprd{#1}}{\@tprd{#1}}}}}
\def\@eatsm\sm{\sm@parens}
\def\prd@noparens#1{\mathchoice{\@dprd@noparens{#1}}{\@tprd{#1}}{\@tprd{#1}}{\@tprd{#1}}}
\def\lprd#1{\@ifnextchar\bgroup{\@lprd{#1}\lprd}{\@@lprd{#1}}}
\def\@lprd#1{\mathchoice{{\textstyle\prod}}{\prod}{\prod}{\prod}({\textstyle #1})\;}
\def\@@lprd#1{\mathchoice{{\textstyle\prod}}{\prod}{\prod}{\prod}({\textstyle #1}),\ }
\def\tprd#1{\@tprd{#1}\@ifnextchar\bgroup{\tprd}{}}
\def\@tprd#1{\mathchoice{{\textstyle\prod_{(#1)}}}{\prod_{(#1)}}{\prod_{(#1)}}{\prod_{(#1)}}}
\def\dprd#1{\@dprd{#1}\@ifnextchar\bgroup{\dprd}{}}
\def\@dprd#1{\prod_{(#1)}\,}
\def\@dprd@noparens#1{\prod_{#1}\,}
\def\lam#1{{\lambda}\@lamarg#1:\@endlamarg\@ifnextchar\bgroup{.\,\lam}{.\,}}
\def\@lamarg#1:#2\@endlamarg{\if\relax\detokenize{#2}\relax #1\else\@lamvar{\@lameatcolon#2},#1\@endlamvar\fi}
\def\@lamvar#1,#2\@endlamvar{(#2\,{:}\,#1)}
\def\@lameatcolon#1:{#1}
\def\lamu#1{{\lambda}\@lamuarg#1:\@endlamuarg\@ifnextchar\bgroup{.\,\lamu}{.\,}}
\def\@lamuarg#1:#2\@endlamuarg{#1}
\def\fall#1{\forall (#1)\@ifnextchar\bgroup{.\,\fall}{.\,}}
\def\exis#1{\exists (#1)\@ifnextchar\bgroup{.\,\exis}{.\,}}
\def\sm#1{\@ifnextchar\bgroup{\sm@parens{#1}}{\@ifnextchar\prd{\sm@parens{#1}\@eatprd}{\sm@noparens{#1}}}}
\def\sm@parens#1{\@ifnextchar\bgroup%
  {\mathchoice{\@dsm{#1}}{\@tsm{#1}}{\@tsm{#1}}{\@tsm{#1}}\sm@parens}%
  {\@ifnextchar\prd%
    {\mathchoice{\@dsm{#1}}{\@tsm{#1}}{\@tsm{#1}}{\@tsm{#1}}\@eatprd}%
    {\mathchoice{\@dsm{#1}}{\@tsm{#1}}{\@tsm{#1}}{\@tsm{#1}}}}}
\def\@eatprd\prd{\prd@parens}
\def\sm@noparens#1{\mathchoice{\@dsm@noparens{#1}}{\@tsm{#1}}{\@tsm{#1}}{\@tsm{#1}}}
\def\lsm#1{\@ifnextchar\bgroup{\@lsm{#1}\lsm}{\@@lsm{#1}}}
\def\@lsm#1{\mathchoice{{\textstyle\sum}}{\sum}{\sum}{\sum}({\textstyle #1})\;}
\def\@@lsm#1{\mathchoice{{\textstyle\sum}}{\sum}{\sum}{\sum}({\textstyle #1}),\ }
\def\tsm#1{\@tsm{#1}\@ifnextchar\bgroup{\tsm}{}}
\def\@tsm#1{\mathchoice{{\textstyle\sum_{(#1)}}}{\sum_{(#1)}}{\sum_{(#1)}}{\sum_{(#1)}}}
\def\dsm#1{\@dsm{#1}\@ifnextchar\bgroup{\dsm}{}}
\def\@dsm#1{\sum_{(#1)}\,}
\def\@dsm@noparens#1{\sum_{#1}\,}
\def\wtype#1{\@ifnextchar\bgroup%
  {\mathchoice{\@twtype{#1}}{\@twtype{#1}}{\@twtype{#1}}{\@twtype{#1}}\wtype}%
  {\mathchoice{\@twtype{#1}}{\@twtype{#1}}{\@twtype{#1}}{\@twtype{#1}}}}
\def\lwtype#1{\@ifnextchar\bgroup{\@lwtype{#1}\lwtype}{\@@lwtype{#1}}}
\def\@lwtype#1{\mathchoice{{\textstyle\mathsf{W}}}{\mathsf{W}}{\mathsf{W}}{\mathsf{W}}({\textstyle #1})\;}
\def\@@lwtype#1{\mathchoice{{\textstyle\mathsf{W}}}{\mathsf{W}}{\mathsf{W}}{\mathsf{W}}({\textstyle #1}),\ }
\def\twtype#1{\@twtype{#1}\@ifnextchar\bgroup{\twtype}{}}
\def\@twtype#1{\mathchoice{{\textstyle\mathsf{W}_{(#1)}}}{\mathsf{W}_{(#1)}}{\mathsf{W}_{(#1)}}{\mathsf{W}_{(#1)}}}
\def\dwtype#1{\@dwtype{#1}\@ifnextchar\bgroup{\dwtype}{}}
\def\@dwtype#1{\mathsf{W}_{(#1)}\,}
\def\wtypeh#1{\@ifnextchar\bgroup%
  {\mathchoice{\@lwtypeh{#1}}{\@twtypeh{#1}}{\@twtypeh{#1}}{\@twtypeh{#1}}\wtypeh}%
  {\mathchoice{\@@lwtypeh{#1}}{\@twtypeh{#1}}{\@twtypeh{#1}}{\@twtypeh{#1}}}}
\def\lwtypeh#1{\@ifnextchar\bgroup{\@lwtypeh{#1}\lwtypeh}{\@@lwtypeh{#1}}}
\def\@lwtypeh#1{\mathchoice{{\textstyle\mathsf{W}^h}}{\mathsf{W}^h}{\mathsf{W}^h}{\mathsf{W}^h}({\textstyle #1})\;}
\def\@@lwtypeh#1{\mathchoice{{\textstyle\mathsf{W}^h}}{\mathsf{W}^h}{\mathsf{W}^h}{\mathsf{W}^h}({\textstyle #1}),\ }
\def\twtypeh#1{\@twtypeh{#1}\@ifnextchar\bgroup{\twtypeh}{}}
\def\@twtypeh#1{\mathchoice{{\textstyle\mathsf{W}^h_{(#1)}}}{\mathsf{W}^h_{(#1)}}{\mathsf{W}^h_{(#1)}}{\mathsf{W}^h_{(#1)}}}
\def\dwtypeh#1{\@dwtypeh{#1}\@ifnextchar\bgroup{\dwtypeh}{}}
\def\@dwtypeh#1{\mathsf{W}^h_{(#1)}\,}
\newcommand{\proj}[1]{\ensuremath{\mathsf{pr}_{#1}}\xspace}
\newcommand{\rec}[1]{\mathsf{rec}_{#1}}
\newcommand{\ind}[1]{\mathsf{ind}_{#1}}
\newcommand{\pairr}[1]{{\mathopen{}(#1)\mathclose{}}}
\newcommand{\im}{\ensuremath{\mathsf{im}}} 
\newcommand{\dpath}[4]{#3 =^{#1}_{#2} #4}
\newcommand{\transfib}[3]{\ensuremath{\mathsf{transport}^{#1}(#2,#3)\xspace}}
\newcommand{\mapfunc}[1]{\ensuremath{\mathsf{ap}_{#1}}\xspace} 
\newcommand{\map}[2]{\ensuremath{{#1}\mathopen{}\left({#2}\right)\mathclose{}}\xspace}
\newcommand{\mapdepfunc}[1]{\ensuremath{\mathsf{apd}_{#1}}\xspace} 
\newcommand{\mapdep}[2]{\ensuremath{\mapdepfunc{#1}\mathopen{}\left(#2\right)\mathclose{}}\xspace}
\let\apfunc\mapfunc
\let\ap\map
\let\apd\mapdep
\newcommand{\idfunc}[1][]{\ensuremath{\mathsf{id}_{#1}}\xspace}
\newcommand{\htpy}{\sim}
\newcommand{\eqv}[2]{\ensuremath{#1 \simeq #2}\xspace}
\newcommand{\eqvsym}{\simeq}    
\newcommand{\isequiv}{\ensuremath{\mathsf{isequiv}}}
\newcommand{\hfib}[2]{{\mathsf{fib}}_{#1}(#2)}
\newcommand{\UU}{\ensuremath{\mathcal{U}}\xspace}
\let\type\UU
\newcommand{\set}{\ensuremath{\mathsf{Set}}\xspace}
\newcommand{\prop}{\ensuremath{\mathsf{Prop}}\xspace}
\newcommand{\ua}{\ensuremath{\mathsf{ua}}\xspace} 
\newcommand{\trunc}[2]{\mathopen{}\left\Vert #2\right\Vert_{#1}\mathclose{}}
\newcommand{\tproj}[3][]{\mathopen{}\left|#3\right|_{#2}^{#1}\mathclose{}}
\newcommand{\tprojf}[2][]{|\blank|_{#2}^{#1}}
\newcommand{\emptyt}{\ensuremath{\mathbf{0}}\xspace}
\newcommand{\unit}{\ensuremath{\mathbf{1}}\xspace}
\newcommand{\bool}{\ensuremath{\mathbf{2}}\xspace}
\newcommand{\btrue}{{1_{\bool}}}
\newcommand{\bfalse}{{0_{\bool}}}
\newcommand{\inl}{\ensuremath\inlsym\xspace}
\newcommand{\inr}{\ensuremath\inrsym\xspace}
\newcommand{\seg}{\ensuremath{\mathsf{seg}}\xspace}
\newcommand{\glue}{\mathsf{glue}}
\newcommand{\base}{\ensuremath{\mathsf{base}}\xspace}
\newcommand{\surf}{\ensuremath{\mathsf{surf}}\xspace}
\newcommand{\susp}{\Sigma}
\newcommand{\north}{\mathsf{N}}
\newcommand{\south}{\mathsf{S}}
\newcommand{\merid}{\mathsf{merid}}
\newcommand{\blank}{\mathord{\hspace{1pt}\text{--}\hspace{1pt}}}
\newcommand{\op}{^{\mathrm{op}}}
\newcommand{\N}{\ensuremath{\mathbb{N}}\xspace}
\let\nat\N
\newcommand{\nil}{\mathsf{nil}}
\newcommand{\cons}{\mathsf{cons}}
\newcommand{\Z}{\ensuremath{\mathbb{Z}}\xspace}
\newcommand{\happly}{\mathsf{happly}}
\newcommand{\R}{\ensuremath{\mathbb{R}}\xspace}           
\def\defthm#1#2#3{%
  \newaliascnt{#1}{thm}
  \newtheorem{#1}[#1]{#2}
  \aliascntresetthe{#1}
  \crefname{#1}{#2}{#3}}
\newtheorem{thm}{Theorem}[section]
\crefname{thm}{Theorem}{Theorems}
\theoremstyle{definition}
\theoremstyle{remark}
\crefname{part}{Part}{Parts}
\crefname{figure}{Figure}{Figures}
\let\autoref\cref
\let\c@equation\c@thm
\numberwithin{equation}{section}
\def\noteson{%
\gdef\note##1{\mbox{}\marginpar{\color{blue}\textasteriskcentered\ ##1}}}
\newcounter{symindex}
\newenvironment{inductive}
  {\list{}{%
    \leftmargin=1em
    \topsep=1ex
    \parsep=\parskip
    \listparindent=0mm
    \itemindent=0mm
  }\item\relax}
  {\endlist}
\newcommand{\fib}{\ensuremath{\operatorname{\mathsf{fib}}}}
\newcommand{\image}{\ensuremath{\operatorname{\mathsf{im}}}}
\renewcommand{\im}{\image}
\renewcommand{\ker}{\ensuremath{\operatorname{\mathsf{ker}}}}
\renewcommand{\deg}{\ensuremath{\operatorname{\mathsf{deg}}}}
\renewcommand{\max}{\ensuremath{\operatorname{\mathsf{max}}}}
\newcommand{\colim}{\ensuremath{\operatorname{\mathsf{colim}}}}
\newcommand{\quotient}{\ensuremath{\operatorname{\mathsf{quotient}}}}
\newcommand{\groupoidquotient}{\ensuremath{\operatorname{\mathsf{groupoid-quotient}}}}
\newcommand{\twoquotient}{\ensuremath{\operatorname{\mathsf{two-quotient}}}}
\newcommand{\simpletwoquotient}{\ensuremath{\operatorname{\mathsf{simple-two-quotient}}}}
\newcommand{\pushout}{\ensuremath{\operatorname{\mathsf{pushout}}}}
\newcommand{\pt}{\ensuremath{\operatorname{\mathsf{pt}}}}
\newcommand{\mk}{\ensuremath{\operatorname{\mathsf{mk}}}}
\newcommand{\homm}{\ensuremath{\operatorname{\mathsf{hom}}}}
\newcommand{\fin}{\ensuremath{\operatorname{\mathsf{fin}}}}
\newcommand{\words}{\ensuremath{\operatorname{\mathsf{words}}}}
\newcommand{\circlerec}{\ensuremath{\operatorname{\S^1\hspace{-1mm}{.}\mathsf{rec}}}}
\renewcommand{\inl}{\ensuremath{\operatorname{\mathsf{inl}}}}
\renewcommand{\inr}{\ensuremath{\operatorname{\mathsf{inr}}}}
\newcommand{\inm}{\ensuremath{\operatorname{\mathsf{in}}}}
\newcommand{\Vector}{\ensuremath{\operatorname{\mathsf{vector}}}}
\newcommand{\ulift}{\ensuremath{\operatorname{\mathsf{lift}}}}
\newcommand{\leaf}{\ensuremath{\operatorname{\mathsf{leaf}}}}
\newcommand{\node}{\ensuremath{\operatorname{\mathsf{node}}}}
\newcommand{\wtree}{\ensuremath{\operatorname{\mathsf{\omega-tree}}}}
\renewcommand{\nil}{\ensuremath{\operatorname{\mathsf{nil}}}}
\renewcommand{\cons}{\ensuremath{\operatorname{\mathsf{cons}}}}
\newcommand{\spectrum}{\ensuremath{\mathsf{Spectrum}}}
\newcommand{\prespectrum}{\ensuremath{\mathsf{Prespectrum}}}
\newcommand{\group}{\ensuremath{\mathsf{Group}}}
\newcommand{\abgroup}{\ensuremath{\mathsf{AbGroup}}}
\newcommand{\squaret}{\ensuremath{\mathsf{square}}}
\newcommand{\apo}{\ensuremath{\mathsf{apo}}}
\renewcommand{\apd}{\ensuremath{\mathsf{apd}}}
\newcommand{\apdtilde}{\widetilde{\mathsf{apd}}}
\newcommand{\Kloop}{\operatorname{K-\mathsf{loop}}}
\newcommand{\Kelim}{\operatorname{K-\mathsf{elim}}}
\newcommand{\transp}{\ensuremath{\operatorname{\mathsf{transport}}}}
\newcommand{\id}{\ensuremath{\operatorname{\mathsf{id}}}}
\newcommand{\Id}{\ensuremath{\operatorname{\mathsf{Id}}}}
\newcommand{\istrunc}[1]{\operatorname{\mathsf{is-#1-type}}}
\newcommand{\isconn}[1]{\operatorname{\mathsf{is-#1-connected}}}
\newcommand{\refl}{\ensuremath{\operatorname{\mathsf{refl}}}}
\newcommand{\lp}{\ensuremath{\operatorname{\mathsf{loop}}}}
\renewcommand{\surf}{\ensuremath{\operatorname{\mathsf{surf}}}}
\renewcommand{\U}{\UU}
\newcommand{\pbool}{\ensuremath{\S^0}}
\newcommand{\defeqp}{\ensuremath{\vcentcolon=}}
\newcommand{\pU}{\U^*}
\renewcommand{\S}{\mathbb{S}}
\renewcommand{\l}{\lambda}
\newcommand{\sy}{^{-1}}
\newcommand{\pmap}{\to}
\newcommand{\lpmap}{\xrightarrow}
\newcommand{\tr}{\cdot}
\newcommand{\auxl}{\ensuremath{\operatorname{\mathsf{auxl}}}}
\newcommand{\auxr}{\ensuremath{\operatorname{\mathsf{auxr}}}}
\newcommand{\gluel}{\ensuremath{\operatorname{\mathsf{gluel}}}}
\newcommand{\gluer}{\ensuremath{\operatorname{\mathsf{gluer}}}}
\newcommand{\const}{\ensuremath{\mathbf{0}}}
\renewcommand{\op}{^{\mathsf{op}}}
\DeclarePairedDelimiter\angled{\langle}{\rangle} 
\newcommand*{\pair}[2]{\angled{#1,#2}} 
\newcommand*{\inv}{^{-1}}
\newcommand{\alphabar}{\overline{\alpha}}
\newcommand{\rhobar}{\overline{\rho}}
\newcommand{\lambdabar}{\overline{\lambda}}
\newcommand{\gammabar}{\overline{\gamma}}
\newcommand{\zeroh}{\mathsf{z}}
\newcommand{\oneh}{\mathsf{u}}
\newcommand{\two}{\mathsf{b}}
\newcommand{\twist}{\mathsf{tw}}
\newcommand{\smsh}{\wedge}
\newcommand{\mc}{\mathcal}
\newcommand{\sequence}[3][]{(#2,#3)}
\newcommand{\msm}[2]{\Sigma(#1,#2)}
\newenvironment{constr}{%
  \begin{proof}[Construction]%
}{\end{proof}}
\newcommand{\tfcolim}{\mathsf{colim}}
\newcommand{\kshiftequiv}{\mathsf{kshift\underline{~}equiv}}
\newenvironment{xcenter}
 {\par\vspace*{3mm}\setbox0=\hbox\bgroup\ignorespaces}
 {\unskip\egroup\noindent\makebox[\textwidth]{\box0}\par\vspace*{3mm}}
\begin{document}
\pdfbookmark[chapter]{Front Matter}{front}
\pdfbookmark[section]{Title Page}{front}
\title{On the Formalization of Higher Inductive Types and Synthetic Homotopy Theory}
\author{Floris van Doorn}
\pagenumbering{roman}
\begin{titlepage}
  \begin{center}
    \vspace*{20mm}
    \LARGE
    \textbf{On the Formalization of Higher Inductive Types and Synthetic Homotopy Theory}
    \par\vspace*{15mm}\par
    \Large
    Floris van Doorn
    \par\vspace*{15mm}\par
    May 2018\\[15mm] 
    \normalsize
      Dissertation Committee:\\
      Jeremy Avigad\\
      Steve Awodey\\
      Ulrik Buchholtz\\
      Mike Shulman\\[30mm]
Submitted in partial fulfillment of the requirements for the degree of\\
Doctor of Philosophy in Pure and Applied Logic\\
Department of Philosophy\\
Carnegie Mellon University
\end{center}
\end{titlepage}
\setcounter{page}{2}
\pdfbookmark[section]{\contentsname}{toc}
\tableofcontents

\chapter{Introduction}\label{cha:introduction}
\pagenumbering{arabic}

The goal of this dissertation is to present synthetic homotopy theory in the setting
of \emph{homotopy type theory}. We will present various results in this
framework, most notably the construction of the Atiyah-Hirzebruch and Serre
spectral sequences for cohomology, which have been fully formalized in the Lean
proof assistant.

Homotopy type theory, often abbreviated HoTT, is a version of type theory. Type
theory is a language for formal mathematics, in which every object has a
computational interpretation, so that it can also function as a programming
language. It can be used as a foundation of mathematics as an alternative to set
theory.

A key feature of HoTT is that the equality in a space corresponds to the path
spaces; a path between two points $a$ and $b$ is a proof that $a=b$. Two paths
that are not homotopic give different (unequal) proofs of this equality. The
fact that we identify proofs of an equality with a path means that every
construction in HoTT respects paths. 

Many different researchers contributed to the homotopical interpretation of type theory. 
Steve Awodey and Michael Warren gave a model of type theory in abstract homotopy theory~\cite{awodey2009homotopy}.
Benno van den Berg and Richard Garner published a paper addressing the coherence issue~\cite{berg2010models}. 
Independently, Vladimir Voevodsky gave a model of type theory without identity types in simplicial sets 
and formulated the \emph{univalence axiom}, which he proved consistent~\cite{voevodsky2006,voevodsky2009typesystems}.
The univalence axiom states that
homotopy equivalences between two types (spaces) corresponds to equality between
them~\cite{voevodsky2014univalence}. This means that every construction done in
HoTT automatically respects homotopy equivalence, which is a very convenient
property. Also, Voevodsky proved that a consequence of the univalence axiom is
\emph{function extensionality}. This states that two functions are equal when
they are homotopic. 

The fact that all constructions are homotopy invariant also leads to some
challenges. It is not always clear whether we can define a concept of homotopy theory in
homotopy type theory. For example, \emph{singular homology} is a homotopy
invariant notion, but in the construction we use the set of all simplices in a
space, which is not a homotopy invariant notion. In this case, we can define
homology in a different way (see \autoref{sec:spectral-sequence-homology}).
However, for other definitions, such as the Grassmannian manifolds, it is an
open problem whether they can be constructed in homotopy type theory.

A new concept in homotopy type theory is the concept of \emph{higher inductive
types}. These are types that generalize both cell complexes in homotopy theory,
and inductively generated types (like $\N$) in type theory. Higher inductive
types can be used to construct many spaces and operations on spaces often
encountered in homotopy theory.

Type theory is a convenient language for computer proof assistants. These are
programs that allow you to write formal proofs in a specified language, and
then the computer checks whether the proof is correct and complete. There are
many major results formalized in proof assistants, such as the four colour
theorem~\cite{gonthier2005fourcolour}, Feit-Thompson theorem~\cite{gonthier2013oddorder} 
and the Kepler conjecture (Hales' Theorem)~\cite{hales2017kepler}. 
HoTT is a type theory, and it has been implemented in various proof assistants, 
such as Coq~\cite{bauer2016coqhott}, Agda~\cite{hottagda}, cubicaltt~\cite{cubicaltt}, Lean~\cite{vandoorn2017leanhott}
and various experimental proof assistants. One disadvantage of formally verifying
proofs in a proof assistant is that it takes a lot of work spelling out all
details. For example, doing very basic homotopy theory (not using homotopy type
theory) already takes a lot of effort~\cite{zhan2017auto2}. In HoTT this effect
is mitigated, because many homotopical concepts are close to the foundations of
the type theory, making formal proofs only a little more work than a paper proof. 

Various results have been proven and formalized in HoTT, such as the the
Seifert--van Kampen theorem~\cite{favonia2016seifert}, the Blakers--Massey
theorem~\cite{favonia2016blakersmassey} and a development of cellular
cohomology~\cite{buchholtz2018cellular}. Another main result (which has not been
formalized) is the computation of $\pi_4(\S^3)$~\cite{brunerie2016spheres}, which
relies on conjectured properties of the smash product, which we will discuss in
\autoref{sec:smash-product}.

HoTT gives novel proof methods and new insights to homotopy theory. A basic
property of HoTT is \emph{path induction}, which states that when proving
something for a path with one free endpoint, one may assume that the path is the
constant path. This corresponds to the fact that the path space with one fixed
endpoint is contractible. Another technique is the encode-decode method, for
calculating the path space of certain spaces~\cite{licatashulman2013}. Moreover,
the proof of the Blakers--Massey theorem has been translated back to homotopy
theory, resulting in a new proof with novel ideas~\cite{rezk2014blakersmassey}.

Homotopy type theory has models in most model categories~\cite{awodey2009homotopy,berg2010models}, 
which are categorical models for homotopy theory. These models were 
inspired by the groupoid model~\cite{hofmann1998groupoid}. Other models for HoTT
include the simplicial set model~\cite{voevodsky2009typesystems,kapulkin2012simplicialnew,streicher2014simplicial} and the cubical
set model~\cite{bezem2014cubicalsets,cohen2016cubical}. More generally, all Grothendieck
$(\infty,1)$-toposes model HoTT~\cite{cisinski2014models}.\footnote{General Grothendieck $(\infty,1)$-toposes model HoTT with universes \'a la Tarski. 
This notion is weaker than universes \'a la Russell, which are usually considered in HoTT. We explain Russell universes in \autoref{sec:universes}.} Moreover, it is
conjectured that all elementary $(\infty,1)$-toposes form models of HoTT~\cite{shulman2017topos}.

\subsubsection*{Type Theory}
Homotopy type theory is based on Martin-L\"of type theory (also called intuitionistic type theory 
or constructive type theory)~\cite{martinlof1975typetheory,martinlof1984typetheory}. In this type theory there are types, like the
integers $\Z$, vectors $\R^n$; and complex functions $\C\to\C$. There are also
terms, which have a unique type.\footnote{To be more precise: in many type theories there are terms with multiple types, for example due to universe cumulativity, but we will ignore these issues. Moreover, the type theory of Lean has unique typing~\cite{carneiro2018leantheory}.} 
For example the number $-2$ has type $\Z$
(written as $-2:\Z$), the vector $(1,2,3,\ldots,n)$ has type $\R^n$ and we have the
exponential function $\exp:\C\to\C$. One can think of types as sets of objects
(and indeed, there is a model of type theory where the types are exactly sets),
but there are different interpretations, such as the types-as-spaces
interpretation that homotopy type theory provides. The fact that terms have a unique type means that the
$2:\Z$ and the $2:\R$ are different objects. It might be helpful to think of data types in a programming language, in which the \texttt{int} $2$ is stored differently in memory than the \texttt{float} $2$. Of course, the canonical inclusion
$i:\Z\hookrightarrow\R$ does satisfy $i(2)=2$. Type theory has a primitive
notion of computation, so that for example $2+3$ computes to $5$. Every function
that is explicitly defined in type theory therefore describes an algorithm
that can be executed. This means that type theory can be used as a programming
language, and many programming languages make use of a type system. 
The congruence closure of this notion of computation is called 
\emph{definitional equality} or \emph{judgmental equality}, 
and if two terms are judgmentally equal, one can replace one for the other in any term.

There are several methods to construct new types out of existing ones. For
example we can form the function type $A\to B$ for types $A$ and $B$, the
cartesian product type $A\times B$ and the coproduct or sum $A+B$. Propositions
can also be interpreted as types by the \emph{Curry-Howard isomorphism}~\cite{curry1958combinatorylogic,howard1980formulae}, and
under this interpretation $A\times B$ is the conjunction of $A$ and $B$, the sum
$A+B$ is the disjunction and $A\to B$ is the implication. Furthermore, there are
dependent function types $\prd{x:A}P(x)$ and dependent sum types $\sm{x:A}P(x)$,
which correspond to the universal quantification $\forall(x:A), P(x)$ and
existential quantification $\exists(x:A), P(x)$, respectively. So for example
the transitivity of $\le$ on $\N$ can be expressed as $\prd{k,m,n:\N}k \le m \to
m \le n \to k \le n$, and a term of this type is a proof that $\le$ is
transitive. The $P$ in $\prd{x:A}P(x)$ and $\sm{x:A}P(x)$ is called a
\emph{dependent type}, since it is a type depending on a term $x:A$. It has type
$P:A\to\type$, where $\type$ is the universe of (small) types. The dependent
function type $\prd{x:A}P(x)$ consists of functions $f$ that send terms $a:A$
to a term $f(a):P(a)$. Note that the type of $f(a)$ depends on the input $a$.
The dependent sum type $\sm{x:A}P(x)$ consists of dependent pairs $(a,x)$ with
$a:A$ and $x:P(a)$, where the type of $x$ depends on $a$.

Given two terms $a, b : A$, we can form the \emph{identity type} which we write as $a =_A b$ or $a=b$. As a proposition we view $a=_A b$ as the statement that $a$ and $b$ are
equal. In homotopy type theory these identity types correspond to the path space
of the type $A$.

\subsubsection*{Homotopy Type Theory}

There are various versions of dependent type theory with different rules for the
identity type. Some type theories have a \emph{reflection rule}, which states that 
if we have a proof $p:a=b$, then $a$ and $b$ are judgmentally equal. Type theories with this rule are often called \emph{extensional}.
This is a convenient rule, but these type theories have meta-theoretic properties that are often seen as undesirable. 
For example, checking whether a term $t$ has type $A$ is not decidable anymore. 
Since this operation can be viewed as ``checking the correctness of a proof,'' one often wants to work in a type theory with decidable type-checking.

In \emph{intensional} type theory, without the reflection rule, multiple approaches can be taken for the identity type. 
In some versions, there is a rule that any two proofs of the same
equality are themselves equal. This rule, often called \emph{uniqueness of
identity proofs} or \emph{axiom K} states that if $p,q : a = b$, then there is a
proof of $p = q$. In homotopy type theory, this rule is rejected. In the
types-as-spaces interpretation of homotopy type theory, terms of the identity
type $a =_A b$ are interpreted as paths in $A$ from $a$ to $b$. 
We have familiar operations on paths: given two paths $p:a=_Ab$ and $q:b=_Ac$, 
we write $p\cdot q:a=c$ for the concatenation of $p$ and $q$. 
Furthermore, we have the inverse path $p\sy:b=a$ and the constant path $\refl_a:a=a$.
We also have higher paths, the identity type $p =_{a=_Ab} q$ consists of homotopies from path $p$ to $q$. We can form
higher path types between two homotopies, and there are also operations on these higher paths. 
In this way every type comes equipped with the structure of a higher groupoid.

In 2011, higher inductive types were introduced in homotopy type 
theory~\cite{shulman2011intervalimpliesfunext,lumsdaine2011hits,shulman2011HoTThits,shulman2011pi1S1}. With
ordinary inductive types we specify constructors that generate the type, for
example the natural numbers are generated by zero $0:\N$ and the successor
function $\mathsf{succ}:\N\to\N$. Higher inductive types are generated not only
by these ``point constructors'' but also by ``path constructors,'' which specify
the inhabitants of paths or higher paths in the type. For example, the circle
$\S^1$ is generated by a point $\star:\S^1$ and a loop $\ell:\star=\star$. The
rest of the structure of $\S^1$ is built from these constructors. Using higher
inductive types we can construct many other spaces in homotopy theory, 
such as Eilenberg-MacLane spaces and homotopy pushouts. 

As mentioned before in this introduction, we can use HoTT to do homotopy theory.
We think of types as spaces and we think of maps between types as continuous
maps between those spaces. Then we can define usual notions in homotopy theory,
as long as they are homotopy invariant: homotopy equivalences, suspensions,
spheres, etcetera. This is a \emph{synthetic} way to do homotopy theory: many
concepts, such as spaces and paths are uninterpreted constants of the type
theory. This is opposed to \emph{analytic} homotopy theory, where one studies
topological spaces up to homotopy equivalence. This distinction is similar to
the distinction for elementary geometry, which we can do synthetically (points
and lines are undefined concepts) or analytically (we are working in $\R^2$).
Synthetic geometry limits the things one can state or prove, but these proofs
are applicable in every model of the axioms. The same is true for synthetic
homotopy theory: the proofs performed synthetically are true in all models of
HoTT.

In this dissertation I will not be very precise about the exact rules of the type theory we are using. We will
present the constructions and proofs in such a way that they can be performed in the ``HoTT
book''~\cite{hottbook}. Most of the results in this dissertation have been
formalized in the Lean proof assistant~\cite{moura2015lean}. The HoTT mode we
used in Lean has very similar rules to the HoTT book, and the differences are
not relevant for the constructions in this dissertation. A concept closely related to
homotopy type theory is \emph{univalent mathematics}, a term coined by Vladimir
Voevodsky for the development of mathematics where one takes homotopy types as 
primitive objects, and reasons about them using type-theoretic reasoning and the 
univalence axiom. This is pursued in the proof assistant UniMath~\cite{unimath}. There are also
radically different type theories which are studied in homotopy type
theory. These are called ``cubical type theories'' because they all have a
primitive notion of cubes. Examples include the cubical type theory described in~\cite{cohen2016cubical}, 
which was implemented in the proof assistant cubicaltt~\cite{cubicaltt}, 
and computational higher-dimensional type theory~\cite{angiuli2017computational}, 
on which the proof assistant RedPRL is based~\cite{redprl}. 
These type theories are extensions of the type theory presented in
the HoTT book, which we will call book-HoTT. In book-HoTT the univalence axiom
is an axiom: an uninterpreted constant of a certain type. This breaks the
computational behavior of the type theory. For example not every closed term of
type $\N$ computes to either $0$ or the successor of another number. These
cubical type theories add primitive concepts to the theory to make the
univalence axiom provable, and therefore all terms in these system do compute.

We will often want to compare homotopy type theory with ordinary homotopy
theory. We will use the adverb ``classically'' to refer to the concepts and
theorems in homotopy theory that do not involve HoTT.\footnote{This use of classically has nothing to do with the word classical in ``classical logic,'' involving the law of excluded middle or the axiom of choice. In homotopy type theory one can consistently assume the law of excluded middle or the axiom of choice, formulated in a precise way so that it corresponds to what it usually means. However, doing so removes the computational content of all notions defined using it.} 
Conversely, we will say that something is provable in HoTT if we can prove it in book-HoTT.

\subsubsection*{Contents}

In \autoref{cha:preliminaries} we review the basic concepts in homotopy type
theory. For a more detailed and thorough exposition, we refer to~\cite{hottbook}. 
Alternative introductions can be found in~\cite{favonia2017thesis} and~\cite{brunerie2016spheres}. In
\autoref{sec:martin-lof-type} we introduce the basic concepts of type theory:
functions, pairs, universes, and inductive types such as the identity type. In
\autoref{sec:homotopy-type-theory} we will introduce the basics of homotopy type
theory. In particular we will formally state the univalence axiom and present
higher inductive types. In \autoref{sec:lean} we will discuss the Lean proof
assistant in more detail.

In \autoref{cha:high-induct-types} we will study higher inductive types
internally in HoTT. The main problem we will focus on is the interdefinability
of higher inductive types. In particular, we try to construct various higher
inductive types from the homotopy pushout. We will define the propositional
truncation in \autoref{sec:prop-trunc}, nonrecursive higher inductive types with
2-path constructors in \autoref{sec:non-recursive-2} and work towards defining
certain \emph{localizations} in \autoref{sec:colimits}.

In \autoref{cha:homotopy-theory} we present some synthetic homotopy theory in
HoTT. In \autoref{sec:computing-pi3s2} we will describe the formalization of the
long exact sequence of homotopy groups and its application to compute
$\pi_3(\S^2)$. Although this construction has been described before in HoTT
in~\cite[Section 8.4]{hottbook} and~\cite[Section 2.5.1]{brunerie2016spheres},
no formally verified proof has been given before. In
\autoref{sec:eilenb-macl-spac} we will study Eilenberg-MacLane spaces, which are
spaces with only one nontrivial homotopy group. Eilenberg-MacLane spaces have
been defined in HoTT before~\cite{licata2014em}. Here we prove the (classically
known) results that Eilenberg-MacLane spaces are unique, and give an equivalence
of categories between the category of (abelian) groups and an appropriate class of pointed
types. In \autoref{sec:smash-product} we will discuss the smash product. The
ultimate goal is to prove that the smash product forms a 1-coherent symmetric monoidal
product on pointed types, and we will give one approach towards proving this
using a Yoneda-style argument.

In \autoref{cha:serre-spectr-sequ} we develop the theory of spectral sequences
in HoTT. We give the construction of a spectral sequence from an exact couple
(in \autoref{sec:exact-couples}) and show how to construct an exact couple from
a tower of spectra (in \autoref{sec:spectra}). We construct the
classically-known Atiyah-Hirzebruch and Serre spectral sequences for cohomology
(in \autoref{sec:spectral-sequence-cohomology}), and give some ideas towards
doing the same for their counterparts in homology (in
\autoref{sec:spectral-sequence-homology}). 

\chapter{Preliminaries}\label{cha:preliminaries}

In this chapter we will give a brief overview of type theory and homotopy type
theory. We cannot cover all the subtleties, so readers new to (homotopy) type
theory should consult the homotopy type theory book~\cite{hottbook}.

In \autoref{sec:lean} we will discuss the proof assistant \emph{Lean}. All main
results in this dissertation have been formalized in Lean. 

\section{Martin-L\"of Type Theory}\label{sec:martin-lof-type}

As mentioned in the introduction, homotopy type theory is based on a system called \emph{Martin-L\"of type theory} or \emph{intuitionistic type theory}. 
There are types and there are terms, which have a unique type. There is a notion of computation. Two terms $t$ and $s$ are considered \emph{judgmentally equal} or \emph{definitionally equal}, 
denoted $t \equiv s$ if $t$ and $s$ compute to the same term.

We are working in dependent type theory, which means that types can depend on terms. 
For example, there is a type of vectors of length $n:\N$ in type $A$, denoted $\Vector_A(n)$. 
In this case $\Vector_A$ is a dependent type over $\N$. 
An example term in this type family is $(5,6,7,8) : \Vector_\N(4)$. 
When we say that a term has a unique type, we mean that it has a unique type up to definitional equality. 
In our example, we also have that $(5,6,7,8) : \Vector_\N(2+2)$, because $2+2\equiv 4$. 
More generally, if we have two definitionally equal types $A\equiv B$ and if $t : A$, then $t : B$. 
Logically (under the types-as-propositions interpretation) dependent types are predicates. 
We will explain the topological interpretation of dependent types at the end of \autoref{sec:pair-types}.

In the remainder of this section we will discuss the type formers of Martin-L\"of type theory more closely.

\subsection{Function Types}\label{sec:function-types}
Given a type $A$ and a family of types $B$ depending on $A$, we can form the \emph{dependent function type} (also called \emph{product type} or \emph{pi type}) $$(x : A) \to B(x)\qquad\text{or}\qquad\prd{x:A}B(x).$$
We will use the former notation in this document. A term $f:(x : A)\to B(x)$ is a function that sends each element $a : A$ to an element\footnote{Formally, $B(a)$ is the term $B(x)$ where we substitute $a$ for $x$. In \autoref{sec:universes} we will see that we can treat $B$ as a function into a universe, and that alternatively we can view $B(x)$ and $B(a)$ as function applications.} $f(a):B(a)$. We also use the notation $fa$ or $f\ a$ for $f(a)$. Note that the type of $f(x)$ depends on $x$. We can form functions using \emph{lambda-abstraction}. Given a term $t(x) : B(x)$, we can form the term $\lam{x}t(x): (x : A) \to B(x)$, which is the function $x\mapsto t(x)$, i.e. the function that sends $x$ to $t(x)$. We get the computation rule 
$$(\lam{x}t(x))a\equiv t(a)$$
for $a : A$, which is called the \emph{beta-rule} or \emph{beta-reduction}. We also have an \emph{eta-rule}, which states that every function is a lambda abstraction. This means that for $f : (x : A) \to B(x)$ we have 
$$f\equiv \lam{x}f(x).$$
We will often define functions by writing $f(x)\defeq t$ (where $x$ may occur in $t$), which formally means that we define $f$ as $\lam{x}t$.

An important special case occurs when $B$ does not depend on $A$. In this case the dependent function type $(x : A) \to B$ is written as $A \to B$, which is the type of functions from type $A$ to type $B$.

Logically, the type $A \to B$ is interpreted as the implication $A \Rightarrow B$ and the type $(x : A) \to B(x)$ is interpreted as the universal quantification $\forall(x : A), B(x)$. Topologically, a function $f : A \to B$ corresponds to a continuous map from $A$ to $B$. The type $A \to B$ is the mapping space from $A$ to $B$. We will explain the topological interpretation of $(x : A) \to B(x)$ at the end of \autoref{sec:pair-types}.

We can define the identity function 
$$\idfunc\equiv\idfunc[A]\defeq \lam{x:A}x:A\to A$$ 
and the composition of functions: if $f : A \to B$ and $g : B \to C$, then $g \o f\defeq \lam{x}g(f(x)):A\to C$. Given $b:B$, we also have a constant function $\const_b\defeq \lam{x}b:A \to B$.

We will often write some arguments of a function implicitly. Such arguments are written with curly braces in the type. For example, given a dependent type $C$ over $\N$, we write 
$$g : \{n : \N\} \to C(n) \to C(n+1)$$
to emphasize that the first argument of $g$ is implicit. In this case, for $c : C(n)$ we will write $g(c)$ for $g$ applied (implicitly) to $n$ and applied to $c$. The curly braces are only to indicate how we write function application for functions with this type, for all other purposes the types $\{x : A\} \to B(x)$ and $(x : A) \to B(x)$ are the same.

\subsection{Pair Types}\label{sec:pair-types}
Given a type family $B$ depending on a type $A$, we can form the \emph{dependent pair type} (also called \emph{dependent sum type} or \emph{sigma type}) 
$$(x : A) \times B(x)\qquad\text{or}\qquad\sm{x:A}B(x).$$
We will use the former notation in this document. A term of type $(x : A)\times B(x)$ is a pair consisting of an element $a : A$ and an element $b : B(a)$. Given $a : A$ and $b : B(a)$, we can form the term $(a,b):(x : A)\times B(x)$, and we have projections 
$$p_1:(x : A)\times B(x)\to A\quad \text{ and }\quad p_2:(z : (x : A)\times B(x))\to B(p_1(z)).$$
We will sometimes write $x.i$ for $p_i(x)$. There are beta rules $p_1(a,b)\equiv a$ and $p_2(a,b)\equiv b$ and an eta rule stating that for any $z:(x : A)\times B(x)$ we have $z\equiv (p_1z, p_2z)$. In Lean, there is no eta rule for dependent pair types, but instead there is an induction principle, similar to those of inductive types (see \autoref{sec:inductive-types}). 

If $B(x)$ does not depend on $x$, we write $(x : A) \times B$ simply as $A\times B$. In this case we retrieve the usual cartesian product of $A$ and $B$.

Logically we can think of $A\times B$ as the conjunction of $A$ and $B$, as
described above. Furthermore, we can think of $(x : A)\times B(x)$ as a
proof-relevant version of the existential quantifier $\exists(x : A). B(x)$. It
is proof-relevant in the sense that from a proof of $(x : A)\times B(x)$ we can
extract a witness $a : A$ such that $B(a)$ holds. In \autoref{sec:truncatedness}
we will define an existential quantifier from which the witness cannot be
extracted.

Topologically, we think of $A\times B$ as the product space of $A$ and $B$. 
The map $p_1 : (x : A) \times B(x) \to A$ corresponds to a \emph{fibration}. A fibration is a map that has the homotopy lifting property with respect to any space, which is given by transport, to be defined in \autoref{sec:paths}. Under this interpretation, $(x : A) \times B(x)$ is the total space of the fibration $p_1$, and $B(a)$ is the \emph{fiber} of $p_1$ at point $a$. The type $(x : A)\to B(x)$ is the type of sections of $p_1$. These observations are usually summarized as ``dependent types correspond to fibrations.'' We will often call dependent functions \emph{sections}.

\subsection{Universes}\label{sec:universes}
In our discussions below we need one or more \emph{universes} in our type theory. There are different styles of universes in type theory~\cite{martinlof1984typetheory}, we will describe the universes \'a la Russell. A universe $\type$ is a type that has types as its terms. That is to say, if $A:\type$, then $A$ is a type. It is closed under all type-forming operations. For example, for pi-types this means that if $A : \type$ and for $a:A$ we have $B(a):\type$, then
$$(a : A) \to B(a):\type.$$
We can now interpret dependent types in $\type$, such as $B$ above, as functions $B:A\to\type$.

In the proofs in this document we can often get away with assuming only a single universe. However, it is useful to have the property that all types have a type themselves, and we cannot do that with a single universe $\type$, because positing $\type:\type$ is inconsistent~\cite{girard1972paradox}. Instead, we will assume that we have a tower of universes $$\type_0:\type_1:\type_2:\cdots$$
such that for every type $A$ there is an $i$ such that $A:\type_i$. In this case every dependent type can be interpreted as a function $A\to\type_i$ for some $i$. As is customary, we usually omit writing universe levels explicitly, and we will perform constructions polymorphic over all universes. For example, if we write $$\idfunc : \{A : \type\} \to A \to A,$$ we really mean that for any universe level $i$ we have $$\idfunc[i] : \{A : \type_i\} \to A \to A.$$

One rule that is sometimes assumed is \emph{universe cumulativity}, which states that if $A : \type_i$, then $A : \type_j$ for $j\geq i$. 
This can be problematic, and lead to violation of nice properties of the type theory, such as subject reduction or canonicity~\cite{luo2012universes}. 
In this document (and in Lean), we do not assume universe cumulativity. 
Instead, using inductive types (see \autoref{sec:inductive-types}) we can construct for $A : \type_i$ a new type $\ulift A:\type_j$ for $j\geq i$ such that $A\simeq \ulift A$.

\subsection{Inductive Types}\label{sec:inductive-types}

\emph{Inductive types} are types that are inductively generated by some
\emph{constructors}. A simple example is $\N$, which is inductively generated by
$0$ and the successor function $S\defeq\lam{x}x+1$. In this section we will
discuss some inductive types that we will need in this dissertation. We will
talk about the empty type, the unit type, the booleans, the natural numbers and
the sum type. The dependent pair type (\autoref{sec:pair-types}) is also an inductive type. 

\subsubsection*{The empty type}
The empty type $\emptyt:\type_0$ is a type without inhabitants. There are no constructors, and we have as induction principle that if $P:\emptyt\to\U_i$, then $$\ind{\emptyt}:(x : \emptyt) \to P(x).$$ This conveys that $\emptyt$ indeed has no inhabitants, because if we view $P$ as a predicate, we can prove anything about all inhabitants of $\emptyt$. We can define negation $\neg A\defeq A \to \emptyt$.

\subsubsection*{The unit type}
The unit type $\unit:\type_0$ is a type with exactly one inhabitant $\star:\unit$. The induction principle states that if $P:\unit\to\U_i$, then $$\ind{\unit}:P(\star) \to (x : \unit) \to P(x).$$ This states that $\star$ is the only inhabitant of $\unit$, because if we can prove something for $\star$, then it holds for all inhabitants of $\unit$. There is a computation rule $$\ind{\unit}(p,\star)\equiv p.$$

\subsubsection*{The booleans}
The type of booleans $\bool:\type_0$ has exactly two inhabitants $\btrue,\bfalse:\bool$. Its induction principle states that if $P:\bool\to\U_i$, then $$\ind{\bool}:P(\btrue) \to P(\bfalse) \to (x : \bool) \to P(x).$$
The computation rules are 
$$\ind{\bool}(p_\btrue,p_\bfalse,\btrue)\equiv p_\btrue\qquad\text{and}\qquad 
\ind{\bool}(p_\btrue,p_\bfalse,\bfalse)\equiv p_\bfalse.$$

\subsubsection*{The natural numbers}
A more interesting type is the type of natural numbers $\N:\type_0$. It has a constructor $0:\N$ and a unary constructor $S:\N \to \N$, and it is freely generated by these constructors. This means that if $P:\N\to\U_i$ and if we have $p_0:P(0)$ and $p_S : (k : \N) \to P(k) \to P(S\ k)$, then $$\ind{\N}(p_0,p_S):(n : \N) \to P(n).$$
If we view $P$ as a predicate, this is the usual induction principle for $\N$: to prove something for all numbers we need to prove it for $0$ and we need to prove it for $k+1$ assuming it holds for $k$, for an arbitrary $k$. However, this induction principle also allows us to define (dependent) functions from $\N$. These functions satisfy the computation rules
$$\ind{\nat}(p_0,p_S,0)\equiv p_0\qquad\text{and}\qquad 
\ind{\nat}(p_0,p_S,S\ n)\equiv p_S(n,\ind{\nat}(p_0,p_S,n)).$$
Often, we will want to give a name $f$ to $\ind{\nat}(p_0,p_S)$, and we will instead denote the recursive definition of $f$ using pattern matching notation:
$$f(0)\defeq p_0\qquad\text{and}\qquad f(S\ n)\defeq p_S(n,f(n)).$$
For example, we can define addition and multiplication $+,\cdot:\N\to\N\to\N$ recursively (in the second argument) as
\begin{align*}
  n+0&\defeq n & n\cdot 0&\defeq 0\\
  n+(S\ m)&\defeq S(n + m) & n \cdot (S\ m)&\defeq n \cdot m + n.
\end{align*}
Note that $n+1\equiv S\ n$, and we will often write $n+1$ instead of $S\ n$ from now on.

\subsubsection*{The sum type}
Given two types $A$ and $B$, we can form the \emph{sum type} or \emph{coproduct} $A+B$ with constructors $\inl:A\to A+B$ and $\inr : B \to A + B$. 
The induction principle states that for $P:A+B\to\type$ with maps $p_{\inl} : (a : A) \to P(\inl a)$ and $p_{\inr} : (b : B) \to P(\inr b)$ we get a section 
$$\ind{{+}}(p_{\inl},p_{\inr}):(x : A + B) \to P(x)$$
with computation rules
$$\ind{{+}}(p_{\inl},p_{\inr},\inl(a))\equiv p_{\inl}(a)\qquad\text{and}\qquad
\ind{{+}}(p_{\inl},p_{\inr},\inr(b))\equiv p_{\inr}(b)$$

Logically, the type $A+B$ is the proof-relevant disjunction of $A$ and $B$. It is
proof-relevant in the sense that a proof of $A + B$ is of the form
$\inl a$ or $\inr b$. Therefore, a proof comes with a proof of either $A$ or $B$. In
\autoref{sec:truncatedness} we will see a disjunction that does not have this
property.

\subsubsection*{General Inductive Types}

In \autoref{sec:inductive-types} we saw various instances of inductive types.
Also the sigma-types from \autoref{sec:pair-types} (without eta rule) are an instance of an
inductive type. We will now explain inductive types and families of inductive
types in general. For a more detailed description, see~\cite[Section ``Inductive Types'']{carneiro2018leantheory}.

When defining an inductive type, we have to list its constructors. For example, we could define the sum type as follows. Given $A\ B : \type$, we define 

\begin{inductive}
  \texttt{inductive} $A + B : \type \defeqp$ \\
  $\bullet\ \inl : A \to A + B;$\\
  $\bullet\ \inr : B \to A + B.$
\end{inductive}
This defines the type $A+B$ with constructors $\inl$ and $\inr$ of the specified
type. Each constructor must have as target the inductive type currently being
defined (in this case $A+B$).\footnote{For \emph{higher inductive types}
(\autoref{sec:high-induct-types}) the conclusion can also be a (higher) path in
the type currently being defined.} Constructors can be \emph{recursive},
meaning that the type being defined can occur in the domain of a constructor.
For example, here is the type of $\omega$-branching trees with leaves labeled by
a type $C$.
\begin{inductive}
  \texttt{inductive} $\wtree_C : \type \defeqp$ \\
  $\bullet\ \leaf : C\to\wtree_C;$\\
  $\bullet\ \node : (\N \to \wtree_C)\to\wtree_C.$
\end{inductive}
A restriction on recursive constructors is that the inductive type being defined can only occur in strictly positive positions, that is as the target of one of the arguments of the constructor.

Every inductive type has an induction principle. We can algorithmically find the type of the induction principle from the constructors. The first argument of the induction principle (often left implicit) is the \emph{motive}, which is an arbitrary type family over the inductive type being defined, for $\wtree_C$ this has type $P:\wtree_C\to\type$. Then for every constructor $c$ there is an argument that mimics the type of $c$ and has as target $P(c(\cdots))$. 
For $\wtree_C$ these arguments have type $p_{\leaf}:(c:C)\to P(\leaf c)$ and 
$$p_{\node}:(f:\N\to\wtree_C)\to ((n : \N) \to P(f\ n))\to P(\node f).$$
Note that for each recursive argument $f$ of the constructor we assume an \emph{induction hypothesis} of type $P(f(\cdots))$. The induction principle then gives a section of $P$. So for example we get 
$$\ind{\wtree_C}(p_{\leaf},p_{\node}):(x : \wtree_C) \to P(x).$$
Finally, the computation rules states that if the induction principle acts on a constructor, then it will reduce to the argument corresponding to that constructor. For $\wtree_C$ this means (abbreviating $s\defeq \ind{\wtree_C}(p_{\leaf},p_{\node})$)
$$s(\leaf(c))\equiv p_{\leaf}(c)\qquad\text{and}\qquad
s(\node(f))\equiv p_{\node}(f,\lam{n}s(f\ n)),$$
where applying $s$ to the recursive constructor leads to a recursive call of $s$.

One important generalization of inductive types are families of inductive types. In this case, a family of types $P$ is being defined simultaneously indexed over some type $I$. In this case, constructors must have as target $P(t)$ where $t$ is a term of type $I$ formed by the (nonrecursive) arguments of the constructor. An example of an inductive family of types is the type of vectors in $A$ of some length $n:\N$.
\begin{inductive}
  \texttt{inductive} $\Vector_A : \N \to \type \defeqp$ \\
  $\bullet\ \nil : \Vector_A(0);$\\
  $\bullet\ \cons : \{n : \N\} \to A \to \Vector_A(n) \to \Vector_A(n+1).$
\end{inductive}
Note that the parameter $A$ remains fixed in the definition of $\Vector_A(n)$, while the index $n:\N$ is not: the constructor $\cons$ constructs a vector of length $n+1$ from a vector of length $n$.
The induction principle can again be extracted algorithmically. It is important that the motive also quantifies over all indices of the inductive family. For vectors it states that given a motive
$$P:\{n : \N\} \to \Vector_A(n)\to \type$$ 
and induction steps
\begin{align*}
  p_{\nil}&:P(\nil)\\
  p_{\cons}&:(n : \N) \to (a : A) \to (x : \Vector_A(n))\to P(x) \to P(\cons(a,x)),
\end{align*}
we get a section 
$$\ind{\Vector}(p_{\nil},p_{\cons}):\{n : \N\} \to (x : \Vector_A(n)) \to P(x)$$
with the expected computation rules.

A very important inductive family of types is the \emph{identity type}.\footnote{also called \emph{path type}, \emph{identification type} or \emph{equality type}.} This is a family of types with parameters $A:\type$ and $a:A$ and is defined as
\begin{inductive}
  \texttt{inductive} $\Id_A(a,{-}) : A \to \type \defeqp$ \\
  $\bullet\ \refl_a : \Id_A(a,a).$
\end{inductive}
We also denote the type $\Id_A(a_1,a_2)$ by $a_1=_A a_2$ or $a_1=a_2$ and $\refl_a$ by $\refl$, $1_a$ or $1$. Its induction principle states that for a family $P:(a' : A) \to a = a' \to \type$ and a term $p_{\refl}: P(a,1_a)$ we find a section
$$\ind{=}(p_{\refl}):(a' : A) \to (p : a = a') \to P(a',p).$$
In words: we may assume that a path with free right endpoint (that is, the right hand side of the equality is a variable) is reflexivity. 

Logically, the identity type corresponds to equality. Under this interpretation, a term of type $a_1=a_2$ is a proof that $a_1$ and $a_2$ are equal. Homotopically, the identity type corresponds to the path space of $A$, and we will explore this interpretation more in \autoref{sec:paths}.


\section{Homotopy Type Theory}\label{sec:homotopy-type-theory}

We will now discuss in more detail the homotopical interpretation of types, and the basic concepts of homotopy type theory.

\subsection{Paths}\label{sec:paths}

Elements of an identity type form paths in the space. We can define the usual operations on paths.

Given a path $p:a=_Ab$, we can define the \emph{inverse} $p\sy:b=_Aa$. We can do this by path induction. Define the family $$P\defeq \lam{x:A}{q:a=_Ax}x=_Aa:(x:A)\to a=_Ax \to \type.$$
We now have $\refl_a:P(a,p)\equiv a =_A a$, and therefore we get 
$$p\sy\defeq\ind{=}(\refl_a,b,p):b=_Aa.$$
The computation rule gives that $\refl_a\sy\equiv \refl_a$.

We can explain the proof in words more intuitively. Path induction states that we may assume that a path with a free endpoint is reflexivity. Since $p$ has a free endpoint ($b$ is a variable), we may assume that $b\equiv a$ and $p\equiv\refl_a$. In this case, we can define $$p\sy\equiv \refl_a\sy\defeq \refl_a:a=a.$$
The map path inversion we have defined this way has type 
$$\{a\ b : A\} \to a = b \to b = a.$$
We can also define path \emph{concatenation}. Given $p:a=_Ab$ and $q:b=_Ac$, we define $p\tr q:a=_Ac$ again by path induction. We will only give the intuitive argument and leave the formal proof to the reader. Since $q$ has free endpoint $c$, we may assume that $c\equiv b$ and $q\equiv\refl_b$. In this case, we define $p\tr \refl_b\defeq p:a=b$.

We can also define higher paths. For example, given $p:a=b$ and $q:b=c$ and $r:c=d$, we have a path
$$p\tr(q \tr r)=(p\tr q)\tr r,$$
which is the \emph{associativity} of path concatenation. We can prove this by path induction on $r$: if $r$ is reflexivity, then both sides reduce to $p\tr q$.

By using path induction, we can also prove the following equalities:
\begin{align*}
 p\cdot 1 &= p & p\cdot p\sy &= 1\\
 1\cdot p &= p & p\sy\cdot p &= 1.
\end{align*}
It is trickier to prove the Eckmann-Hilton property of equality, which states that given $a:A$ and $p,q:\refl_a=\refl_a$, we have $p\cdot q=q\cdot p$. 
The problem is that cannot apply path induction to $p$ or $q$ directly. We omit the proof here and refer to~\cite[Theorem 2.1.6]{hottbook}.

Given a map $f:A\to B$, we can prove that $f$ respects paths. Given a path
$p:a=_Aa'$, we define $\apfunc{f}(p):f(a)=_Bf(b)$ by path induction: for
reflexivity we define $\apfunc{f}(\refl_a)\defeq \refl_{f(a)}$. We will sometimes
abuse notation and write $f(p)$ for $\apfunc{f}(p)$. From a logical perspective this just states that functions respect equality, 
but from a homotopical perspective, this states that functions respect paths, which is in line with our intuition that all functions are continuous in HoTT.

We can compute what $\apfunc{}$ does when our map is the identity map, a constant map or a composition of maps:
\begin{align*}
  \apfunc{\id_A}(p)&=p&\apfunc{\const_b}(p)&=\refl_b&\apfunc{g\o f}(p)=\apfunc{g}(\apfunc{f}(p)).
 \end{align*}
 All three of these properties are easily proven by path induction. Also, we can compute $\apfunc{}$ when we apply it to inverses or concatenations of paths:
 \begin{align*}
  \apfunc{f}(p\tr q)&=\apfunc{f}(p)\tr\apfunc{f}(q)&\apfunc{f}(p\sy)&=(\apfunc{f}(p))\sy.
 \end{align*}

 Given a dependent type $P:A\to\type$ and a path $p:a=_Aa'$, we can define the
 \emph{transport} function $\transp^P(p): P(a)\to P(a')$. We define it by path induction; for reflexivity we define $\transp^P(\refl_a)\defeq\id_{P(a)}$. When $P$ is known from context we will write $p_*(b)$ for $\transp^P(p,b)$.

By path induction we can prove basic equalities about transports. We have

\begin{align*}
  \transp^P(p \tr q,x) &= \transp^P(q,\transp^P(p,x))\\
  \transp^{\lam{a}B}(p,x) &= x\\
  \transp^{P\o f}(p,x) &= \transp^P(\apfunc{f}(p),x)\\
  f_{a'}(\transp^P(p,x)) &= \transp^Q(p,f_a(x))&&\text{for $f:(a:A)\to P(a)\to Q(a)$.}
\end{align*}

\subsection{Equivalences}\label{sec:equivalences} 

In this section we talk about maps between types that have an inverse in a suitable way. Before we can give the definition, we need to define homotopy.

Given two dependent maps $f,g:(a:A)\to B(a)$, a \emph{homotopy} $h:f\sim g$ is a proof that $f$ and $g$ are pointwise equal: 
$$(f\sim g)\defeq (a:A)\to f(a)=_{B(a)}g(a).$$
Recall that all maps are considered continuous, so this actually gives a continuous deformation of $f$ to $g$, which is exactly what a homotopy is in topology. 

\begin{defn}Suppose given a function $f:A\to B$.
  \begin{itemize}
    \item A \emph{left-inverse} of $f$ is an inhabitant of $(g:B\to A)\times g\o f\sim\id_A$.
    \item Similarly, a \emph{right-inverse} of $f$ is an inhabitant of $(h:B\to A)\times f\o h\sim\id_B$.
    \item We say that $f$ is an \emph{equivalence} or $\isequiv(f)$ if $f$ has both a left and a right inverse. We will denote its left-inverse by $f\sy$. We can then show that $f\sy$ is also a right inverse of $f$.
    \item The type of equivalences between $A$ and $B$ is $(A\simeq B)\defeq(f:A\to B)\to \isequiv(f)$. Given an element $f:A\simeq B$, we will also use $f$ to denote the underlying map $A\to B$.
  \end{itemize}
\end{defn}
It is easy to show that the identity map $\id_A:A\simeq A$ is an equivalence. Moreover, if $g:B\to C$ and $f:A\to B$ are both equivalences, then $g\o f$ and $f\sy$ are also equivalences. This shows that equivalences are reflexive, symmetric and transitive. 

A very important property is that any two inhabitants of $\isequiv(f)$ are equal: if $p,q:\isequiv(f)$, then $p=q$. We will not prove this here, but it is shown in~\cite[Theorem 4.3.2]{hottbook}. This property is the reason that we define the notion of equivalences this way. 
If we would define $\isequiv(f)$ by requiring a map that is both a left \emph{and} a right inverse of $f$, then this property would not hold.

Given two equivalences $f,f':A\simeq B$, it does not matter whether we compare them as functions or equivalences:
$$(f=_{A\simeq B}f')\simeq (f=_{A\to B}f') \simeq (f\sim f').$$

By path induction we also get a map $(A=_{\type}B)\to(A\simeq B)$, because if the path $p:A=B$ is reflexivity, we can just take $\id_A:A\simeq A$ as our equivalence. In plain Martin L\"of type theory one cannot characterize what the type $A=B$ is. This is where the univalence axiom comes in. The \emph{univalence axiom} states that the map
$$(A=B)\to (A\simeq B)$$
is an equivalence. In particular this means that we get a map in the other direction: given an equivalence $e:A\simeq B$, we get an equality $\ua(e):A=B$.

\subsection{More on paths}\label{sec:more-paths} 
In this section we will discuss dependent paths, or pathovers; higher paths, such as squares and cubes; and paths in type formers.

\subsubsection*{Pathovers}
We will often need to relate elements in two different fibers of a dependent type. Suppose we have a family $P:A\to\type$ with $x:P(a)$ and $x':P(a')$. If we have a path $p:a=a'$, we can form the type $x =_p^P x'$ of \emph{dependent paths} or \emph{pathovers} over $p$. There are four equivalent ways to define this:
\begin{enumerate}
  \item \label{item:pathover-tr} We can define $(x =_p^P x')\defeq(\transp^P(p,x)=x')$
  \item We can define $(x =_p^P x')\defeq(x=\transp^P(p\sy,x'))$
  \item We can define $(x =_p^P x')$ by path induction on $p$. If $p\defeq\refl_a$, we define 
  $(x =_p^P x')\equiv(x =_{\refl_a}^P x')\defeq(x=_{P(a)}x')$.
  \item \label{item:pathover-ind} We can define $(x =_p^P x')$ by a family of inductive types. For fixed $A:\type$ and $P:A\to\type$ and $a:A$ and $x:P(a)$ we have the following family:
  \begin{inductive}
    \texttt{inductive} $x =_{(-)}^P {(-)} : \{a' : A\} \to a = a' \to P(a') \to \type \defeqp$ \\
    $\bullet\ \refl : x =_{\refl_a}^P x.$
  \end{inductive}  
\end{enumerate}
It does not matter which of these definitions we pick, because we can prove that all of them are equivalent.\footnote{In Lean, we chose option \ref{item:pathover-ind}. Option \ref{item:pathover-tr} would probably be slightly more convenient to work with, because then this characterization becomes a definitional equality. In practice it will not matter much, though.}

We have the following equivalences between pathovers:
\begin{align*}
  (x=^{\lam{a}B}_px') &\simeq (x =_B x') & (x =^{P \o f}_p x') \simeq (x =^P_{\apfunc{f}(p)}x').
\end{align*}

We can do operations on pathovers, similar to the operations on paths. We have concatenation and inversion, and we will abuse notation and denote them with the same notation.
\begin{align*}
({-})\cdot ({-})&: x_1 =^P_{p} x_2 \to x_2 =^P_{q} x_3 \to x_1 =^P_{p\tr q} x_3\\
({-})\sy&: x_1 =^P_{p} x_2 \to x_2 =^P_{p\sy} x_1.
\end{align*}
We have a dependent version of $\apfunc{}$. Given a dependent map $f:(a:A)\to P(a)$, we get
$$\apd_f: (p : a = a') \to f(a) =^P_p f(a').$$
A variant to $\apd$ is the following. Given $f:A\to B$, a family $P:B\to\type$ and a section $g:(a:A)\to P(f(a))$, we define
\begin{align}\apdtilde_g: (p : a = a') \to g(a) =^P_{\apfunc{f}(p)} g(a').\label{eq:apdtilde}\end{align}
The difference between $\apd$ and $\apdtilde$ is over which path they lie.

Furthermore, if we have a map $f:A\to B$ and two families $P:A\to\type$ and $Q:B\to\type$ and a fiberwise map $g:(a:A)\to P(a)\to Q(f(a))$, then we get a fiberwise version of $\apfunc{}$:
\begin{equation}\apo_g: x =^P_p x'\to g_a(x) =^Q_{\apfunc{f}(p)} g_{a'}(x').\label{eq:apo}\end{equation}

\subsubsection*{Squares}
For higher paths, it is convenient to define a separate notion of a square in a type:
\begin{center}
  \begin{tikzpicture}[node distance=10mm]
  \node (tl) at (0,0) {$a_{00}$};
  \node[right = of tl] (tr) {$a_{20}$};
  \node[below = of tl] (bl) {$a_{02}$};
  \node (br) at (tr |- bl)  {$a_{22}$};
  \path[every node/.style={font=\sffamily\small}]
  (tl) edge[double equal sign distance] node [above] {$p_{10}$} (tr)
       edge[double equal sign distance] node [right] {$p_{01}$} (bl)
  (bl) edge[double equal sign distance] node [above] {$p_{12}$} (br)
  (tr) edge[double equal sign distance] node [right] {$p_{21}$} (br);
  \end{tikzpicture}
\end{center}
Suppose given four paths as in the diagram above, that is
\begin{align*}
  p_{10}&:a_{00}=a_{20}&p_{01}&:a_{00}=a_{02}\\
  p_{12}&:a_{02}=a_{22}&p_{21}&:a_{20}=a_{22}.
\end{align*}
We have a type of squares $\squaret(p_{10},p_{12},p_{01},p_{21})$, which we can define in either of the two following equivalent ways
\begin{enumerate}
\item We can define $\squaret(p_{10},p_{12},p_{01},p_{21})\defeq (p_{10}\tr p_{21} = p_{01} \tr p_{12})$.
\item $\squaret(p_{10},p_{12},p_{01},p_{21})$ is defined as an inductive family of types. For a fixed $a_{00}:A$ we define the family
\begin{inductive}
  \texttt{inductive} $\squaret({-},{-},{-},{-}) : \{a_{20}\ a_{02}\ a_{22} : A\} \to a_{00} = a_{20} \to a_{02} = a_{22} \to a_{00} = a_{02} \to a_{20} = a_{22} \to \type \defeqp$ \\
  $\bullet\ \refl : \squaret(\refl_{a_{00}},\refl_{a_{00}},\refl_{a_{00}},\refl_{a_{00}}).$
\end{inductive}  
\end{enumerate}
We will usually write squares using diagrams as above. There are various operations on squares. For example, we can horizontally concatenate them. If we can fill each of the individual squares below, we can fill the outer rectangle (which has as top $p_{10}\cdot p_{30}$ and as bottom $p_{12}\cdot p_{32}$).
\begin{center}
  \begin{tikzpicture}[node distance=10mm]
  \node (tl) at (0,0) {$a_{00}$};
  \node[below = of tl] (bl) {$a_{02}$};
  \node[right = of tl] (tr) {$a_{20}$};
  \node (br) at (tr |- bl)  {$a_{22}$};
  \node[right = of tr] (tr2) {$a_{40}$};
  \node (br2) at (tr2 |- bl)  {$a_{42}$};
  \path[every node/.style={font=\sffamily\small}]
  (tl) edge[double equal sign distance] node [above] {$p_{10}$} (tr)
       edge[double equal sign distance] node [right] {$p_{01}$} (bl)
  (bl) edge[double equal sign distance] node [above] {$p_{12}$} (br)
  (tr) edge[double equal sign distance] node [above] {$p_{30}$} (tr2)
       edge[double equal sign distance] node [right] {$p_{21}$} (br)
  (br) edge[double equal sign distance] node [above] {$p_{32}$} (br2)
  (tr2) edge[double equal sign distance] node [right] {$p_{41}$} (br2);
  \end{tikzpicture}
\end{center}
We can also vertically concatenate squares, and horizontally or vertically invert squares.

Given a homotopy $h:f\sim g$ between nondependent functions $f,g:A\to B$ and a path $p:a=_Aa'$, we get the following \emph{naturality square}.
\begin{center}
  \begin{tikzpicture}[node distance=10mm]
  \node (tl) at (0,0) {$f(a)$};
  \node[right = of tl] (tr) {$g(a)$};
  \node[below = of tl] (bl) {$f(a')$};
  \node (br) at (tr |- bl)  {$g(a')$};
  \path[every node/.style={font=\sffamily\small}]
  (tl) edge[double equal sign distance] node [above] {$h(a)$} (tr)
       edge[double equal sign distance] node [left] {$\apfunc{f}(p)$} (bl)
  (bl) edge[double equal sign distance] node [above] {$h(a')$} (br)
  (tr) edge[double equal sign distance] node [right] {$\apfunc{g}(p)$} (br);
  \end{tikzpicture}
\end{center}

\subsubsection*{Squareovers and cubes}
Going up further, we have the type of \emph{squareovers}. A squareover is a square in a dependent type over a square. Suppose that we have a dependent type $P:A\to\type$, a square $s$ in $A$ and a dependent path over each of the sides of the square, as in the following diagram.
\begin{center}
  \begin{tikzpicture}[node distance=10mm]
  \node (tl) at (0,0) {$x_{00}$};
  \node[right = of tl] (tr) {$x_{20}$};
  \node[below = of tl] (bl) {$x_{02}$};
  \node (br) at (tr |- bl)  {$x_{22}$};
  \path[every node/.style={font=\sffamily\small}]
  (tl) edge[double equal sign distance] node [above] {$q_{10}$} (tr)
       edge[double equal sign distance] node [right] {$q_{01}$} (bl)
  (bl) edge[double equal sign distance] node (t) [above] {$q_{12}$} (br)
  (tr) edge[double equal sign distance] node [right] {$q_{21}$} (br);
  \node[below = of bl] (tl) {$a_{00}$};
  \node (tr) at (br |- tl) {$a_{20}$};
  \node[below = of tl] (bl) {$a_{02}$};
  \node (br) at (tr |- bl)  {$a_{22}$};
  \node (m) at ($(tl)!0.5!(br)$) {$s$};
  \path[every node/.style={font=\sffamily\small}]
  (tl) edge[double equal sign distance] node (b) [above] {$p_{10}$} (tr)
       edge[double equal sign distance] node [left] {$p_{01}$} (bl)
  (bl) edge[double equal sign distance] node [above] {$p_{12}$} (br)
  (tr) edge[double equal sign distance] node [right] {$p_{21}$} (br)
  (t)  edge[->, shorten <= 3mm] (b);
  \end{tikzpicture}
\end{center}
We have the type of \emph{squareovers} or \emph{dependent squares}, which fill the top square and lie over the bottom square. We can again define this using multiple methods, but the most convenient method here is to define it as an inductive family. We take as parameters the type $A$, the family $P$ and the points $a_{00}$ and $x_{00}$ and let all the other arguments be indices. We have a ``reflexivity squareover'' when the square $s$ is the reflexivity square and each of the four pathovers are reflexivity pathovers. 

We can also define a type of cubes. Given six squares in a type with twelve paths as sides, fitting together in a cube, we can define the type of fillers of the cube. This is again done using a family of inductive types, where we give a cube filler when all the six sides are reflexivity squares. Of course, we could continue by defining cubeovers and 4-cubes, but we will not need them in this dissertation.

\subsection*{Paths in type formers}

In each of the type formers of \autoref{sec:martin-lof-type} we can compute what the paths in that type are, and what the operations of paths are in that type.

As a simple example, consider the cartesian product type $A\times B$. A path in the cartesian product is just a pair of paths.
$$(x=_{A\times B}y)\simeq(p_1x=_Ap_1y)\times (p_2x=_Bp_2y)$$
In particular, given paths $r:p_1x=p_1y$ and $s:p_2x=p_2y$, we get a path 
$x=y$, which we will denote $(r,s)$. Given maps $f:A\to A'$ and $g:B\to B'$, we get the map $f\times g:A\times B\to A'\times B'$ and we can compute
$$\apfunc{f\times g}(r,s)=(\apfunc{f}(r),\apfunc{g}(s))$$. 
Given families $P,Q:A\to\type$, we can compute transport:
$$\transp^{\lam{a}P(a)\times Q(a)}(p,(x,y))=(\transp^P(p,x),\transp^Q(p,y))$$
Pathovers in a family of cartesian products are also pairs of pathovers:
$$(x,y)=^{\lam{a}P(a)\times Q(a)}_p(x',y')\simeq(x=^P_px')\times (y=^Q_p y').$$

In sigma-types the relations are a bit more difficult, since the second component depends on the first. In the type $(a:A)\times B(a)$ paths are pairs of a path and a path over that path:
$$(x=_{(a:A) \times B(a)}y)\simeq(r:p_1x=_Ap_1y)\times (p_2x=^B_rp_2y)$$
We will also denote in this case the map from right to left by $({-},{-})$. Given a map $f:A\to A'$ and a fiberwise map $g:(a:A)\to B(a)\to B'(f(a))$, we get a functorial action of the sigma type: $f\times g: ((a:A)\times B(a))\to ((a':A')\times B(a'))$. In this case, we can compute
$$\apfunc{f\times g}(r,s)=(\apfunc{f}(r),\apo_g(s)),$$
where $\apo$ is defined in \eqref{eq:apo}.
We leave the rule for transports as an exercise to the reader, but the rule for pathovers in a family of sigma-types is the following. For 
$B:A\to\type$ and $C:(a:A)\to B(a)\to\type$ we get:\footnote{We could define a new notion ``path over a pathover,'' but the rule given here suffices for all the cases we considered.}
$$((a,b)=^{\lam{a}(b:B(a))\times C(a,b)}_p(a',b'))\simeq(q:a=^P_pa')\times (y=^{\lam{x:(a{:}A)\times B(a)}Q(p_1x,p_2x)}_{(p,q)} y').$$

For dependent function types the situation is a bit more complicated. Given $f,g:(a:A)\to
B(a)$, by path induction we get a map
$$\happly:(f=g)\to f\sim g.$$ 
However, we cannot show in plain Martin-L\"of type theory that this map gives
rise to an equivalence. In homotopy type theory we can use the univalence axiom
(see \autoref{sec:equivalences}) to show that $\happly$ is an equivalence. We
skip the proof here, but refer the reader to~\cite[Section 4.9]{hottbook}. Using univalence we can also prove the other properties. The general rule for pathovers in a dependent function type is complicated, but two important special cases are the following. In the first case, the domain does not depend on the path. We have types $A$ and $B$ and a family $C:A\to B\to \type$ and then we can prove:
$$(f=^{\lam{a}(b:B)\to C(a,b)}_pg)\simeq(b:B)\to f(b)=^{C({-},b)}_{p} g(b).$$
The second case is for nondependent functions. Given a type $A$ and two families $B,C:A\to\type$, we have
$$(f=^{\lam{a}B(a)\to C(a)}_pg)\simeq(b:B(a))\to f(b)=^{C}_{p} g(p_*(b)).$$

We characterized paths in the universe in \autoref{sec:equivalences} using the univalence axiom. We will not need to do much path algebra in inductive types, except for the identity type, pathover type and square type. 
A pathover in a family of identity types is a square. Suppose given types $A$ and $B$ and functions $f, g : A \to B$, a path $p:a=_Aa'$ and paths $q:f(a)=g(a)$ and $r:f(a')=g(a')$. Then the pathover type becomes equivalent to the square type shown below.
$$(q=^{\lam{a}f(a)=g(a)}_{p}r)\simeq 
\begin{tikzpicture}[node distance=10mm,baseline=(l.base)]
  \node (tl) at (0,0) {$f(a)$};
  \node[right = of tl] (tr) {$g(a)$};
  \node[below = of tl] (bl) {$f(a')$};
  \node (br) at (tr |- bl)  {$g(a')$};
  \path[every node/.style={font=\sffamily\small}]
  (tl) edge[double equal sign distance] node [above] {$q$} (tr) edge[double
       equal sign distance] node [right] (l) {$\apfunc{f}(p)$} (bl) (bl)
       edge[double equal sign distance] node [above] {$r$} (br) (tr) edge[double
       equal sign distance] node [right] {$\apfunc{g}(p)$} (br);
\end{tikzpicture}$$ 
       
We also sometimes encounter a pathover in a dependent family of
pathovers. In that case we get a squareover. Suppose we are given
functions $f,g:A\to B$, and a homotopy $h:f\sim g$, a dependent family
$C:B\to\type$ and sections $c:(a:A)\to C(f(a))$ and $c':(a:A)\to
C(g(a))$. We want to characterize a pathover in the family
$P\defeq\lam{a}c(a)=^C_{h(a)}c'(a):A\to\type$. If we are also given a
path $p:a=_Aa'$ and two pathovers $q:c(a)=^C_{h(a)}c'(a)$ and
$q':c(a')=^C_{h(a')}c'(a')$, then the pathover $q=^P_pq'$ is equivalent to the
following squareover, where $\apdtilde$ is defined in \eqref{eq:apdtilde}, and the bottom square is a naturality square.

\begin{center}
  \begin{tikzpicture}[node distance=10mm]
  \node (tl) at (0,0) {$c(a)$};
  \node[right = of tl] (tr) {$c'(a)$};
  \node[below = of tl] (bl) {$c(a')$};
  \node (br) at (tr |- bl)  {$c'(a')$};
  \path[every node/.style={font=\sffamily\small}]
  (tl) edge[double equal sign distance] node [above] {$q$} (tr)
       edge[double equal sign distance] node [left] {$\apdtilde_b(p)$} (bl)
  (bl) edge[double equal sign distance] node (t) [above] {$q'$} (br)
  (tr) edge[double equal sign distance] node [right] {$\apdtilde_{b'}(p)$} (br);
  \node[below = of bl] (tl) {$f(a)$};
  \node (tr) at (br |- tl) {$g(a)$};
  \node[below = of tl] (bl) {$f(a')$};
  \node (br) at (tr |- bl)  {$g(a')$};
  \node (m) at ($(tl)!0.5!(br)$) {nat.};
  \path[every node/.style={font=\sffamily\small}]
  (tl) edge[double equal sign distance] node (b) [above] {$h(a)$} (tr)
       edge[double equal sign distance] node [left] {$\apfunc{f}(p)$} (bl)
  (bl) edge[double equal sign distance] node [below] {$h(a')$} (br)
  (tr) edge[double equal sign distance] node [right] {$\apfunc{g}(p)$} (br)
  (t)  edge[->, shorten <= 3mm] (b);
  \end{tikzpicture}
\end{center}
Lastly, we will mention that a pathover in a family of squares is a cube, but we will not explain the details here.

\subsection{Truncated Types}\label{sec:truncatedness}

In HoTT we can define iterated path spaces in any type. In certain types, if we iterate path spaces enough times, these path spaces do not contain any information. These types are called \emph{truncated}. The notion of an $n$-truncated type, was introduced in 2009 by Vladimir Voevodsky under the name ``a type of h-level $n+2$.''

We define the notion that $A$ is $n$-truncated, or that $A$ is an \emph{$n$-type} or $\istrunc{n}(A)$ recursively for $n\geq-2$. We say that a type $A$ is $(-2)$-truncated or \emph{contractible} if it has exactly one inhabitant, i.e. if we can prove
$$(a_0:A)\times (a:A)\to a=a_0.$$
A type $A$ is $(n+1)$-truncated if for all $a\ a':A$ the type $a=_Aa'$ is $n$-truncated. 

We can show that $\unit$ is contractible and that every contractible type is equivalent to $\unit$.

The $(-1)$-truncated types are called \emph{mere propositions} or \emph{propositions} for short. A type $A$ is a proposition precisely when any two of its inhabitants are equal, i.e. if we can prove 
$$(a\ a':A)\to a=a'.$$
We call these types propositions because these types correspond to truth values, and do not contain any further information. In particular, if a proposition is inhabited, then it is contractible.
It is easy to see that $\emptyt$ and $\unit$ are mere propositions, and in \autoref{sec:equivalences} we saw that the statement $\isequiv(f)$ is a mere proposition. 

One level up, the $0$-types are called \emph{sets}. These are the types for which uniqueness of identity proofs holds. Examples of sets are $\N$ and $\bool$.

On the next level we have the $1$-types or \emph{groupoids}. Below we list some properties of truncated types, see~\cite[Section 7.1]{hottbook} for their proofs.
\begin{lem}\mbox{}
\begin{itemize}
\item If $A$ is $n$-truncated, then $A$ is $m$-truncated for all $m\geq n$.
\item If $A$ is $n$-truncated and $A\simeq B$, then $B$ is $n$-truncated.
\item If $A$ and $B$ are $n$-truncated types, then $A\times B$ and $A\simeq B$ are $n$-truncated. 
      If $n\geq0$, then $A+B$ is also $n$-truncated.
\item If $B:A\to\type$ is a family of $n$-truncated types (i.e. $(a:A)\to\istrunc{n}(B(a))$), then $(a:A)\to B(a)$ is $n$-truncated. If moreover $A$ is also $n$-truncated, then $(a:A)\times B(a)$ is also $n$-truncated.
\item Given $a_0:A$, the type $(a:A)\times (a_0=a)$ is contractible.
\item The type $\istrunc{n}A$ is a mere proposition.
\end{itemize}
\end{lem}

We define the \emph{subuniverse of $n$-types} as $\type_{\leq n}\defeq (X:\type)\times\istrunc{n}(X)$. For $X:\type_{\leq n}$ we will also write $X$ for the underlying type of $X$. We write $\prop\defeq \type_{\leq -1}$ and $\set\defeq\type_{\leq0}$.

We can do set-level mathematics in the subuniverse of sets. For example, we can define a \emph{group} to be a set with operations satisfying the following axiomatization:\footnote{From these equalities the fact that $e$ is a left-identity and $i$ is a left-inverse can be derived.}
\begin{align*}
  \group&\defeq(G:\set)\times (m:G\to G \to G)\times (i:G\to G)\times (e:G)\times ((x\ y\ z: G) \to {}\\
  &\mathrel{\hphantom{\defeq}} m(x,m(y,z))=m(m(x,y),z)\times m(x,e)=x\times m(x,i(x))=e).
\end{align*}
A group $G$ is \emph{abelian} if it moreover satisfies $m(x,y)=m(y,x)$ for all $x,y:G$. This gives the usual notion of groups, and we can perform all basic group theory in this setting.

\subsubsection*{Truncations}

We can turn every type $A$ into an $n$-type $\|A\|_n$ in a universal way, which is called the \emph{$n$-truncation} of $A$. It comes with a map $|{-}|_n:A\to\|A\|_n$ and has the following induction principle. Suppose given $P:\|A\|_n\to\type$ such that $P(x)$ is $n$-truncated for all $x:\|A\|_n$. If we are given a dependent map $f:(a:A)\to P(|a|_n)$, we get a section
$$\ind{\|{-}\|}(f):(x:\|A\|_n)\to P(x)$$
such that $\ind{\|{-}\|}(f,|a|_n)\equiv f(a)$.

We will now state some properties of the $n$-truncation, for the proofs we refer to~\cite[Section 7.3]{hottbook}.
\begin{lem}\mbox{}
  \begin{itemize}
  \item The truncation is functorial. Given $f:A\to B$, we get a map $\|f\|_n : \|A\|_n \to \|B\|_n$. This map respects composition and identities: $\|g\o f\|_n\sim \|g\|_n \o \|f\|_n$ and $\|\id_A\|_n\sim \id_{\|A\|_n}$.
  \item $A$ is an $n$-type iff $|{-}|_n : A \to \|A\|_n$ is an equivalence.
  \item The equality type in the truncation is truncated equality, but shifted: $$(|a|_{n+1} =_{\|A\|_{n+1}} = |a'|_{n+1})\simeq \|a =_A a'\|_n.$$
  \item Truncating twice is the same as truncating once: $$\|\|A\|_n\|_k\simeq \|A\|_{\min(n,k)}.$$
  \end{itemize}
\end{lem}

In particular the \emph{propositional truncation} $\|A\|\defeq\|A\|_{-1}$ of $A$ is a proposition stating that $A$ is \emph{merely inhabited}~\cite{awodey2004Propositions}. 
We can use it to define \emph{proof irrelevant} versions of the disjunction or existential quantifier. We have the \emph{mere disjunction} 
\begin{align*}(P\vee Q)&\defeq \|P+Q\|
  \intertext{and the \emph{mere existential}}
(\exists(x:A).P(x))&\defeq \|(x:A)\times P(x)\|.
\end{align*}
We say that there \emph{merely exists} $x:A$ such that $P(x)$ holds if $\exists(x:A).P(x)$ is inhabited, to contrast with constructing an element in the untrucated dependent pair type. If we construct an element of $(x:A)\times P(x)$, we will sometimes say that there \emph{purely exists} an $x$ such that $P(x)$ holds, but often we will drop the adverb \emph{purely}.

\subsubsection*{Connected types}
A type is truncated if the type contains no interesting information in a high enough dimension. Dually, a type is \emph{connected} if it contains no interesting information in a low enough dimension.

We say that a type $A$ is \emph{$n$-connected} for $n\geq-2$ if $\|A\|_n$ is contractible. From the definition we see that every type is $(-2)$-connected. A type is $(-1)$-connected precisely when it is merely inhabited. A type is called 0-connected 
when $A$ has exactly one \emph{connected component}. A 1-connected type is called \emph{simply connected}.

\subsubsection*{Fibers}

We can extend the notion of truncated types and connected types to functions. Given a function $f:A\to B$ and a point $b:B$, we define the \emph{fiber} of $f$ at $b$ to be 
$$\fib_f(b)\defeq(a:A)\times f(a)=b.$$ 
The fiber of the projection $p_1:((a:A)\times B(a))\to A$ at $a:A$ is equivalent to $B(a)$, which explains the terminology that $B(a)$ is the fiber of $B$ over $a$. 


We say that a function $f:A\to B$ is $n$-truncated ($n$-connected) when for all $b:B$ the type $\fib_f(b)$ is $n$-truncated ($n$-connected). The function $f$ is $(-2)$-truncated precisely when it is an equivalence. The function $f$ is $(-1)$-truncated, or an \emph{embedding}, if for all $a\ a':A$ the map $\apfunc{f}:a=_Aa'\to f(a)=_Bf(a')$ is an equivalence. A map $f:A\to B$ between sets is an embedding iff it is \emph{injective}, i.e. if we have a map $f(a)=f(a')\to a=a'$ for all $a\ a':A$. On the other hand, a $(-1)$-connected map is called a \emph{surjection}, which means that for every $b:B$ there merely exists an $a:A$ such that $f(a)=b$.

Every map can be factorized as an $n$-connected map followed by an $n$-truncated map in a unique way, which means that these classes form an \emph{orthogonal factorization system}~\cite{rijke2017modalities}.

Similar to the universe of $n$-truncated types, we have a universe of $n$-connected types: $$\type_{>n}\defeq (A:\type)\times\isconn{n}(A).$$

\subsection{Pointed Types}\label{sec:pointed}
A lot of homotopy theory is done in the $(\infty,1)$-category of pointed types
where the morphisms are maps that preserve the basepoints of the types. Below are the
basic definitions for pointed types.
\begin{defn}\label{def:pointed-types-basic}\mbox{}
  \begin{enumerate}
    \item A type $A$ is \emph{pointed} if $A$ has a distinguished basepoint
    $a_0:A$. For example, $\unit$ is pointed by $\star$ and $\bool$ is pointed
    with $\bfalse$. We will also write $\pbool$ for the pointed type $\bool$. $A\times B$ is pointed if both $A$ and $B$ are
    pointed,\footnote{More formally, we have to specify the basepoint of $A\times
    B$, because being pointed is structure on a type, not a property of the
    type, but there is only one choice of basepoint in this example and other 
    examples where we leave the basepoint implicit.} $(a:A)\to B(a)$ is
    pointed if $B$ is a family of pointed types, and $(a:A)\times B(a)$ is
    pointed if $A$ is pointed and $B(a_0)$ is pointed.
    \item The type of \emph{pointed types} is $\type^*\defeq(A:\type)\times A$.
    Given a pointed type $A:\type^*$, we will also write $A$ for its underlying
    type.
    \item Given two pointed types $A,B:\type^*$, a \emph{pointed map} $f:A\to^*B$
    is a pair consisting of a map $f:A\to B$ and a path $f_0$ stating that $f$
    preserves the basepoint, that is $f_0:f(a_0)=b_0$. The type $A\to^*B$ is
    pointed with basepoint
    $\const\equiv\const_{A,B}\defeq(\lam{a}b_0,\refl_{b_0})$.
    \item We have an identity pointed map $\id\equiv\id_A:A\to^*A$ defined as
    $(\lam{x}x,\refl_{a_0})$ and if $g:B\to^*C$ and $f:A\to^*B$ we have a
    composite $g\o f:A\to^* C$ defined as $(\lam{x}g(f(x)),\mapfunc{g}(f_0)\cdot
    g_0)$.
    \item More generally, Given a pointed type $A:\type^*$ and a family of types
    $B:A\to\type$ with a basepoint $b_0:B(a_0)$, a \emph{pointed dependent map}
    $f:(a:A)\to^*B(a)$ is a pair consisting of a dependent map $f:(a:A)\to B(a)$
    and a path $f_0:f(a_0)=b_0$. If we require that $B$ is a family of pointed
    types, i.e. $B:A\to\type^*$, then $(a:A)\to^* B(a)$ is pointed with
    basepoint $(\lam{a}b_0(a),\refl_{b_0(a_0)}).$
    \item Given two pointed dependent maps $f,g:(a:A)\to^* B(a)$, a \emph{pointed
    homotopy} $h:f\sim^* g$ is a pointed dependent map $(a:A)\to^* f(a)=g(a)$.
    This is well-defined, since the type $f(a_0)=g(a_0)$ is pointed by $f_0\cdot g_0\sy$.
    Expanding the definition, this means that $h$ is a pair of a homotopy
    $h:f\sim g$ and a 2-path stating that $h$ relates the basepoint-preserving
    paths of $f$ and $g$. This means that we have $h_0:h(a_0)=f_0\cdot g_0\sy$, or equivalently,
    $h_0:h(a_0)\cdot g_0=f_0$. We say that a diagram of pointed types commutes
    if there are pointed homotopies between the corresponding composites of
    pointed maps.
    \item A pointed map $e:A\to^* B$ is a \emph{pointed equivalence} if it has a
    left-inverse and a right-inverse. That is, there is $\ell:B\to^* A$ such that
    $\ell\o e\sim^* \id_A$ and $r:B\to^* A$ such that $e\o r\sim^* \id_B$. The
    type of pointed equivalences between $A$ and $B$ is denoted $A\simeq^* B$.
    The identity map is a pointed equivalence and pointed equivalences are
    closed under composition.
    \item Given $A:\type^*$, we define its \emph{loop space} $\Omega A:\type^*\defeq
    (a_0=a_0,\refl_{a_0})$. We define the \emph{iterated loop space} $\Omega^nA$ by iteration as $\Omega^0A\defeq A$ and $\Omega^{n+1}A\defeq \Omega(\Omega^nA)$.
    \item We define the $n$-th homotopy group of $A$ as the set-truncation of the iterated loop space, i.e. $\pi_n(A)\defeq\|\Omega^nA\|_0$. This is a group for $n\geq1$ that is abelian for $n\geq 2$.
    \item Given a pointed map $f:A\to^* B$, we define the \emph{pointed fiber of
    $f$} $\fib_f:\type^*$ as $\fib_f(b_0)\equiv(x:A)\times f(a)=b_0$ with basepoint $(a_0,f_0)$.
    There is a pointed map $p_1:\fib_f\to^*A$ defined as
    $(\lam{x}p_1(x),\refl_{a_0})$.
  \end{enumerate}
\end{defn}
Here are some basic properties of pointed types. We omit the proofs.
\begin{lem}\label{lem:pointed-types-basic}\mbox{}
  \begin{enumerate}
    \item Suppose given a pointed map $f:A\to^* B$. The type of proofs that $f$ is an
    equivalence is equivalent to the type that $f$ is a pointed equivalence. In
    particular, being a pointed equivalence is a property. Also, we can define a
    pointed equivalence $X\simeq^* Y$ by giving a map $e:X\to Y$ that is both
    an equivalence and pointed.
    \item Suppose given $A,B:\type^*$. Univalence implies \emph{univalence for pointed
    types}: the canonical map $(A = B)\to (A\simeq^* B)$ is an equivalence.
    \item Suppose given pointed maps $f,g:(a:A)\to^* B(a)$. Function extensionality
    implies \emph{function extensionality for pointed maps:} the canonical
    map $(f = g) \to (f \sim^* g)$ is an equivalence. 
    \item We have the usual categorical laws: 
    \begin{align*}
      f\o\id &\sim^* f&\id\o f&\sim^* f&(h\o g)\o f&\sim^* h\o (g\o f)\\
      f\o\const &\sim^* \const&\const\o f&\sim^*\const 
    \end{align*}
    The two homotopies showing $\const\o\id\sim^*\const$ are equal. This is also
    true for the two homotopies of $\id\circ\const\sim^*\const$ and of $\id\o\id\sim^*\id$
    and of $\const\o \const\sim^*\const$.
    \item 
    We can form iterated pointed maps $(A\to^* B \to^* C)\defeq (A\to^*
    (B\to^* C))$. To show that such a map preserves the basepoint, we need to
    give an equality between pointed maps, or equivalently, we can give a
    pointed homotopy between pointed maps. For example, the above homotopies
    involving $\const$ imply that we have precomposition and postcomposition maps.
    For $f:A\to^* B$ we have a pointed map $({-})\o f:(B\to^* C)\to^* A \to^* C$
    and for $g:B\to^* C$ we have a pointed map $g\o({-}):(A\to^* B)\to^* A \to^*
    C$. We will also write $f\to C$ resp. $A \to g$ for these maps. 
    Precomposition and postcomposition commute, which means that the
    following square commutes.
    \begin{center}
      \begin{tikzpicture}[node distance=10mm]
      \node (tl) at (0,0) {$(A\to^* B)$};
      \node[right = of tl] (tr) {$(A\to^* B)$};
      \node[below = of tl] (bl) {$(A'\to^* B)$};
      \node (br) at (tr |- bl)  {$(A'\to^* B')$};
      \path[every node/.style={font=\sffamily\small}]
      (tl) edge[->] node [above] {$g\o({-})$} (tr)
           edge[->] node [right] {$({-})\o f$} (bl)
      (bl) edge[->] node [above] {$g\o({-})$} (br)
      (tr) edge[->] node [right] {$({-})\o f$} (br);
      \end{tikzpicture}
    \end{center}
    Moreover, if $f$ or $g$ are constant, then these maps are pointed homotopic
    to constant maps, which gives a pointed map 
    $$({-})\o({-}):(B\to^* C)\to^*(A\to^* B)\to^* A\to^* C.$$ 
    \item\label{item:fiber-composition} 
    There are also dependent versions of these composition maps. In particular,
    if $g:(a:A)\to B(a)\to^* C(a)$, then we have a map 
    $$g\o({-}):((a:A)\to^* B(a))\to^* (a:A)\to^* C(a).$$
    We have an equivalence $$\fib_{g\o({-})}\simeq^* ((a:A)\to^* \fib_{ga}).$$
    \item $\Omega$ and $\Omega^n$ are pointed functors. For $\Omega$ this means
    that given a pointed map $f:A\to^* B$, we can define $\Omega f:\Omega A \to^*
    \Omega B$, with pointed homotopies $\Omega(g\o f)\sim^* \Omega g \o \Omega
    f$ and $\Omega\id\sim^* \id$ and $\Omega\unit\simeq^*\unit$. This also
    implies that $\Omega\const\sim^*\const$ and that if $e:A\simeq^* B$ then
    $\Omega e:\Omega A\simeq^* \Omega B$.
    \item \label{item:pointed-function-extensionality} 
    There is a pointed version of function extensionality for pointed types. 
    If $B$ is a family of pointed types, we have a pointed equivalence
    $$e_B:\Omega((a:A)\to^* B(a))\simeq^* ((a:A)\to^* \Omega B(a)).$$
    This equivalence is natural in $B$. This means that given a fiberwise
    pointed map $f:(a:A)\to B(a)\to^* C(a)$, the following square commutes.
    \begin{center}
      \begin{tikzpicture}
      \node (tl) at (0,0) {$\Omega((a:A)\to^* B(a))$};
      \node[right = of tl] (tr) {$((a:A)\to^* \Omega B(a))$};
      \node[below = of tl] (bl) {$\Omega((a:A)\to^* C(a))$};
      \node (br) at (tr |- bl)  {$((a:A)\to^* \Omega C(a))$};
      \path[every node/.style={font=\sffamily\small}]
      (tl) edge[->] node [above] {$e_B$} (tr)
           edge[->] node [right] {$\Omega(f\o({-}))$} (bl)
      (bl) edge[->] node [above] {$e_C$} (br)
      (tr) edge[->] node [right] {$\Omega f \o ({-})$} (br);
      \end{tikzpicture}
    \end{center}
    \item The fiber of a pointed map is functorial. This means that given a commuting square, we get a pointed map from the fiber of the top map to the fiber of the bottom map.
    \begin{center}
      \begin{tikzpicture}
      \node (tl) at (0,0) {$\fib_f$};
      \node[right = of tl] (t) {$A$};
      \node[right = of t] (tr) {$B$};
      \node[below = of tl] (bl) {$\fib_{f'}$};
      \node (b)  at (t  |- bl)  {$A'$};
      \node (br) at (tr |- bl)  {$B'$};
      \path[every node/.style={font=\sffamily\small}]
      (tl) edge[->] node [above] {$p_1$} (t)
           edge[->, dashed] (bl)
      (t)  edge[->] node [above] {$f$} (tr)
           edge[->] node [right] {$g$} (b)
      (bl) edge[->] node [above] {$p_1$} (b)
      (b)  edge[->] node [above] {$f'$} (br)
      (tr) edge[->] node [right] {$h$} (br);
      \end{tikzpicture}
    \end{center}
    Moreover, if the left and the right sides of the squares are equivalences, then the functorial action is an equivalence. Lastly, $p_1$ is natural, which means that the left square commutes.
    \item Given a pointed map $f:A\to^*B$, we have a equivalence $\Omega\fib_f\simeq^*\fib_{\Omega f}$ that is natural in $f$. This means that if we have a commuting square with top $f$ and bottom $f'$, then the following square commutes (the left and the right side come from the functorial action of $\fib$).
    \begin{center}
      \begin{tikzpicture}
      \node (tl) at (0,0) {$\Omega\fib_f$};
      \node[right = of tl] (tr) {$\fib_{\Omega f}$};
      \node[below = of tl] (bl) {$\Omega\fib_{f'}$};
      \node (br) at (tr |- bl)  {$\fib_{\Omega f'}$};
      \path[every node/.style={font=\sffamily\small}]
      (tl) edge[->] node [above] {$\sim$} (tr)
           edge[->] (bl)
      (bl) edge[->] node [above] {$\sim$} (br)
      (tr) edge[->] (br);
      \end{tikzpicture}
    \end{center}
    \item We have a pointed equivalence $(\pbool\to^* X)\simeq^* X$ natural in $X$.
    \item A pointed type $A$ is $n$-connected iff $\pi_k(A)$ is trivial (contractible) for all $k\leq n$. If a type $A$ is $n$-truncated, then $\pi_k(A)$ is trivial for $k>n$ (however, the converse is not true in general).
  \end{enumerate}
\end{lem}

\subsection{Higher Inductive Types} \label{sec:high-induct-types}

Higher inductive types are a generalization of inductive types where we specify not only the generating points in the type by constructors, but also the generating paths and higher paths. 
The idea is that the type together with its (higher) path spaces are freely generated by these constructors.

A simple example is the \emph{interval}. The interval $I$ is generated by two points $0,1:I$ and a path $\seg:0=1$. Using a syntax similar to that of inductive types, we could write
\begin{inductive}
  \texttt{HIT} $I:\type \defeqp$ \\
  $\bullet\ 0, 1 : I$; \\ 
  $\bullet\ \seg : 0=_I1$.
\end{inductive}
Note that this is not an inductive type, since the last constructor does not specify an element in $I$, but an element in the path space of $I$. We get an induction principle for higher inductive types, similar to the induction principle for inductive types. We first give a special case, the nondependent induction principle, also called the \emph{recursion principle}. For the interval this states the following. Given a type $X$, if we have points $x_0\ x_1:X$ and a path $p:x_0=_Xx_1$, then we get a map $\rec{I}(x_0,x_1,p):I\to X$. On the points this has the expected computation rules: 
$$\rec{I}(x_0,x_1,p,0)\equiv x_0\qquad\text{and}\qquad\rec{I}(x_0,x_1,p,1)\equiv x_1.$$
We want a similar computation rule on paths. We can apply the induction principle to $\seg$ using $\apfunc{}$. The resulting computation rule is
$$\apfunc{\rec{I}(x_0,x_1,p)}(\seg)=p.$$
Note that for this case we postulate a member of the identity type instead of making this a definitional equality. There are various reasons for this. Firstly, in this type theory, there is no justification for this equality to be definitional. There are various ways to define $\apfunc{}$, and there is no good reason for the computation rules to favor this definition. Secondly, in the early proof assistants for HoTT there was no support for definitional computation rules on path constructors, but there was a trick to get it for the point constructors~\cite{licata2011trick}. In fact, calling this rule a ``computation rule'' is not quite accurate, since there is no computation going on. We will still keep using this terminology, so that we have the same terminology as for inductive types. In the cubical type theories mentioned in the introduction we can make these terms reduce judgmentally, making them convenient for working with higher inductive types. 

The induction principle for the interval is the following. Suppose given a family $P:I\to\type$ with elements $x_0:P(0)$ and $x_1:P(1)$. We need to relate $x_0$ and $x_1$ in some way, but we cannot ask that they are equal, since they live in different types. Instead, we require a pathover $p:x_0=^P_{\seg}x_1$. In this case we get a dependent map
$\ind{I}(x_0,x_1,p):(i:I)\to P(i)$ with computation rules on points
$$\ind{I}(x_0,x_1,p,0)\equiv x_0\qquad\text{and}\qquad\ind{I}(x_0,x_1,p,1)\equiv x_1.$$
For the computation rule on paths, we need to use $\apd$ to apply the induction principle to $\seg$, and we get
$$\apd_{\ind{I}(x_0,x_1,p)}(\seg)=p.$$

A more interesting example of a higher inductive type is the \emph{(graph) quotient} which we will call a \emph{quotient} in this dissertation. Given $A : \U$ and $R : A \to A \to \U$, the quotient is the following higher inductive type.
\begin{inductive}
\texttt{HIT} $\quotient_A(R) \defeqp$ \\
$\bullet\ i : A \to \quotient_A(R)$; \\
$\bullet\ \glue : (a\ a' : A) \to R(a,a')\to i(a) = i(a')$.
\end{inductive}
We will sometimes use the notation $[{-}]_0$ for $i$ and $[{-}]_1$ for $\glue$.

A very similar higher inductive type is the \emph{homotopy pushout}, or \emph{pushout} for short. Given two maps $f:A\to B$ and $g:A\to C$, their pushout is the following HIT.
\begin{inductive}
\texttt{HIT} $\pushout(f,g) \defeqp$ \\
$\bullet\ \inl : B \to \pushout(f,g)$ \\
$\bullet\ \inr : C \to \pushout(f,g)$ \\
$\bullet\ \glue : (a : A) \to \inl(f(a)) = \inr(g(a))$
\end{inductive}
We denote $\pushout(f,g)$ by $B+_AC$ if $f$ and $g$ are clear from the context. In this section, we will define other higher inductive types in terms of the pushout. However, we could also start with the quotient, by the following lemma.

\begin{lem}
  The pushout and quotient are interdefinable in MLTT.
\end{lem}
\begin{proof}
  We will only give the definitions of the pushout and the quotient in terms of the other. 
  Showing that these definitions are correct is easy, and we omit it here.

  If we have quotients, we can define the pushout of $f: A \to B$ and $g:B\to C$ as the quotient of
  $B+C$ under the relation $R:B+C\to B+C\to\U$, which is inductively generated by
  $\mk : (a : A) \to R (f(a),g(a))$.

  On the other hand, if we have pushouts, we can define the quotient of $A$ under $R$ as
  follows. Let $T\defeq(a\ a' : A)\times R(a,a')$ be the total space of $R$. Then the quotient of
  $A$ under $R$ is the pushout of $f\defeq\pair{\pi_1}{\pi_2} : T+T \to A$ and
  $g\defeq\pair\id\id:T+T\to T$.
\end{proof}

Many higher inductive types can be defined in terms of the homotopy pushout (or equivalently, the quotient):
\begin{itemize}
\item The \emph{cofiber} of a map $f : A \to B$ is defined as $C_f\defeq B+_A\unit$.
The maps are $f$ and $!$.
\item The \emph{suspension} $\susp A$ of type $A$ is defined as $\susp A \defeq
  \unit+_A\unit$, i.e. as the cofiber of the map $A \to \unit$. The points are
  called $\north$ and $\south$ and $\glue$ is called $\merid$.
\item The \emph{wedge sum} of a family of pointed types $A : I \to \pU$ is defined as the
  cofiber of the map $I \to (i : I) \times A(i)$, which sends $i$ to the pair
  $(i,\pt_{A(i)})$. The binary wedge $A\vee B$ of two pointed types $A\ B:\pU$ can
  equivalently be described as the pushout of $A+_\unit B$ where the maps come
  from the basepoints of $A$ and $B$.
\item The \emph{smash product} $A\wedge B$ of $A$ and $B$ can be defined as the cofiber
  of the map $A\vee B\to A\times B$, which sends $\inl(a)$ to $(a,b_0)$ and $\inr(b)$
  to $(a_0,b)$ and $\glue(\star)$ to $\refl_{(a_0,b_0)}$. We will discuss the smash product in~\cref{sec:smash-product}.
\item The \emph{$n$-sphere} $\S^n$ is defined inductively for $n\geq0$: $\S^0\defeq\bool$ and
  $\S^{n+1}\defeq\susp\S^n$. The $n$-sphere is pointed with point $\north$ for $n\geq1$ and with $\bfalse$ for $n=0$. 
  We could also start counting at $n=-1$, defining $\S^{-1}=\emptyt$, but we often only want to consider the pointed spheres.
\end{itemize}

Another higher inductive type that we will study is the \emph{sequential colimit} or \emph{colimit} for short. This is the following HIT for $A:\N\to\type$ and $f:(n:\N)\to A(n)\to A(n+1)$:
\begin{inductive}
  \texttt{HIT} $\colim(A,f) \defeqp$ \\
  $\bullet\ \iota : (n : \N) \to A(n) \to \colim(A,f)$; \\
  $\bullet\ \kappa : (n : \N) \to (a : A(n)) \to i_{n+1}(f_n(a))=i_n(a)$.
\end{inductive}

We can define $\colim(A,f)$ using quotients, namely as $\quotient(B,R)$ where
  $B=(n : \N)\times A(n)$ is the total space of $A$ and $R:B\to B\to\U$ is inductively generated by
  $\mk:(n : \N)\to(a : A(n))\to R(f_n(a),a)$. We will discuss the colimit more in \autoref{sec:colimits}

We will use the following properties of these higher inductive types. For the proof we refer to~\cite[Chapter 8]{hottbook}
\begin{lem}\mbox{}
\begin{itemize}
\item If $A$ is $n$-connected, then $\Sigma A$ is $(n+1)$-connected.
\item The suspension is left-adjoint to the loop space: $\Sigma\dashv\Omega$. That means that for any two pointed types $A$ and $B$ there is a pointed equivalence
$$(\Sigma A\to^* B)\simeq^*(A\to^*\Omega B)$$
that is natural in $A$ and $B$.
\item We have the following equivalence: $\Omega \S^1\simeq \Z$. Therefore $\S^1$ is1-truncated and $\pi_1(\S^1)\simeq \Z$.
\end{itemize}
\end{lem}
In particular, by the above lemma we know that $\S^n$ is $(n-1)$-connected, and hence that $\pi_k(\S^n)$ is trivial for $k<n$.

Another higher inductive type is the \emph{torus}, which is the following higher inductive type
\begin{inductive}
  \texttt{HIT} $T^2 \defeqp$ \\
  $\bullet\ \star : T^2$; \\
  $\bullet\ \ell_1\ \ell_2 : \star = \star$;\\
  $\bullet\ \ell_1 \cdot \ell_2 = \ell_2 \cdot \ell_1$.
\end{inductive}
The last constructor of the torus is a 2-path constructor. In general, HITs can have as constructor any higher path. We say that a HIT is an \emph{$n$-HIT} if its highest path constructor has dimension $n$. So the torus is a 2-HIT and all the other HITs we have seen are 1-HITs.

Higher inductive types can also have \emph{recursive} constructors. If a higher inductive type has at least one recursive constructor, we will call it a recursive HIT. For example, we can encode the propositional truncation as a HIT with a recursive path constructor:
\begin{inductive}
  \texttt{HIT} $\|A\| \defeqp$ \\
  $\bullet\ |{-}|:A\to\|A\|$; \\
  $\bullet\ (x_1\ x_2 : \|A\|) \to x_1 = x_2$.
\end{inductive}
Higher truncations can also be encoded using HITs~\cite[Section 7.3]{hottbook}.

\section{Lean}\label{sec:lean}

Lean~\cite{moura2015lean} is an interactive theorem prover that is mainly developed at Microsoft
Research and Carnegie Mellon University.\footnote{The contents of this section are based on~\cite{vandoorn2017leanhott}, which was written with Jakob von Raumer and Ulrik Buchholtz.} 
The project was started in 2013 by Leonardo de Moura to
bridge the gap between interactive theorem proving and automated theorem proving. Lean is an
open-source program released under the Apache License 2.0.

In its short history, Lean has undergone several major changes. The second version (Lean~2) supports
two kernel modes. The standard mode is for proof irrelevant reasoning, in which "Prop", the bottom
universe, contains types whose objects are considered to be judgmentally equal. This is incompatible
with homotopy type theory, so there is a second HoTT mode without "Prop". In 2016, the third major
version of Lean (Lean~3) was released~\cite{ebner2017metaprogramming}.
In this version, many components of Lean have been rewritten. Of note, the unification procedure
has been restricted, since the full higher-order unification that is available in Lean~2 can lead
to timeouts and error messages that are unrelated to the actual mistakes. Due to certain design
decisions, such as proof erasure in the virtual machine and a function definition package that
requires axiom K~\cite{goguen2006eliminating}, the homotopy type theory mode is currently not natively
supported in Lean~3. However, a trick found by Gabriel Ebner allows us to build a homotopy type theory library in Lean~3.
In this library, we do not use singleton elimination, which is the feature of "Prop" that is inconsistent with univalence. 
Singleton elimination is the property that some "Prop"-valued inductive types can eliminate to all universe levels. 
Gabriel Ebner also wrote a piece of code that no definition in this library uses singleton elimination in its definition.
Porting the HoTT library from Lean~2 to Lean~3 is a lot of work, because of the changes in the elaborator and in the syntax. 
All major results in this dissertation are only formalized in Lean~2 and not yet in Lean~3.
The HoTT~3 library can be found at \url{https://github.com/gebner/hott3}.

The HoTT kernel of Lean~2 provides the following primitive notions:
\begin{itemize}
\item \emph{Type universes} "Type.{u} : Type.{u + 1}" for each universe level $u \in \mathbb{N}$.
In Lean, this chain of universes is non-cumulative, and all universes are predicative.
\item \emph{Function types} "A → B : Type.{max u v}" for types "A : Type.{u}" and "B : Type.{v}" as well as
\emph{dependent function types} "Πa, B a : Type.{max u v}" for each type "A : Type.{u}" and type family
"B : A → Type.{v}". These come with the usual "β" and "η" rules.
\item \emph{inductive types} and \emph{inductive type families}, as proposed by
Peter Dybjer~\cite{dybjer1994inductive}.
Every inductive definition adds its constructors and dependent recursors to the environment.
Pattern matching is \emph{not} part of the kernel
\item two kinds of \emph{higher inductive types}:
"n"-truncation and (typal) quotients.
\end{itemize}

Outside the kernel, Lean's elaborator uses backtracking search to infer implicit information. It
does the following simultaneously.
\begin{itemize}
\item The elaborator fills in \emph{implicit arguments} that can be inferred from the context,
  such as the type of the term to be constructed and the given explicit arguments. Users mark
  implicit arguments with curly braces. For example, the type of equality is
  "eq : Π{A : Type}, A → A → Type", which allows the user to write "eq a₁ a₂" or "a₁ = a₂" instead
  of "@eq A a₁ a₂". The symbol "@" allows the user to fill in implicit arguments explicitly. The
  elaborator supports both first-order unification and higher-order unification.
\item We can mark functions as \emph{coercions}, which are then ``silently'' applied when needed.
  For example, we have the type of equivalences "A ≃ B", which is a structure consisting of a function
  "A → B" with a proof that the function is an equivalence. The map "(A ≃ B) → (A → B)" is marked as
  a coercion. This means that we can write "f a" for "f : A ≃ B" and "a : A", and the coercion is
  inserted automatically.
\item Lean was designed with \emph{type classes} in mind, which can provide canonical inhabitants of
  certain types. This is especially useful for algebraic structures and for type properties like
  truncatedness and connectedness.  Type class instances can refer to other type classes, so that we
  can chain them together. This makes it possible for Lean to automatically infer why types are
  "n"-truncated if our reasoning requires this, for example when we are eliminating out of a
  truncated type. For example we show that the type of functors between categories "C" and "D" is
  equivalent to an iterated sigma type.
  \begin{lstlisting}[gobble=2]
  (Σ (F₀ : C → D) (F₁ : Π {a b}, hom a b → hom (F₀ a) (F₀ b)),
  (Π (a), F₁ (ID a) = ID (F₀ a)) ×
  (Π {a b c} (g : hom b c) (f : hom a b),
    F₁ (g ∘ f) = F₁ g ∘ F₁ f)) ≃ functor C D
  \end{lstlisting}
  Note the use of coercions here: "F₀ : C → D" really means a function from the objects of "C" to
  the objects of "D". From this equivalence, Lean's type class inference can automatically infer
  that "functor C D" is a set if the objects of "D" form a set. Type class inference will repeatedly
  apply the rules when sigma-types and pi-types are sets, and use the facts that hom-sets are sets
  and that equalities in sets are sets (in total 20 rules are applied for this example).
\item Instead of giving constructions by explicit terms, we can also make use of
Lean's \emph{tactics}, which give us an alternative way to construct terms step by
step. This is especially useful if the proof term is large, or if the elaboration relies heavily on
higher-order unification.
\item We can define custom syntax, including syntax with binding.
In the following example we declare two custom notations.
\begin{lstlisting}
infix ⬝ := concat
notation `Σ` binders `, ` r:(scoped P, sigma P) := r
\end{lstlisting}
The first line allows us to write "p ⬝ q" for path concatenation "concat p q". The second line
allows us to write "Σ x, P x" instead of "sigma P". This notation can also be chained:
"Σ (A : Type) (a : A), a = a" means "sigma (λ(A : Type), sigma (λ(a : A), a = a))".
\end{itemize}

All main results in this dissertation have been formalized in Lean. Some corollaries or examples have not been formalized, in which case we will explicitly mention this. 
The formalizations are separated in two Github repositories: the Lean-HoTT library\footnote{\url{https://github.com/leanprover/lean2/blob/master/hott/hott.md}} 
and the ``spectral'' repository, which was originally a repository to formalize spectral sequences, but now also contain many other results in synthetic homotopy theory.\footnote{\url{https://github.com/cmu-phil/Spectral/}}

Below is a table with the locations of the formal results in the libraries.
\begin{xcenter}
  {\footnotesize 
\begin{tabular}{|c|c|c|} \hline
  \textbf{Theorem} & \textbf{File} & \textbf{Name} \\ \hline
\autoref{thm:main} & \texttt{hott/hit/prop\_trunc.hlean} & \texttt{ptrunc\_equiv\_trunc} \\ 
\autoref{thm:simple-two-quotient-def} & \texttt{hott/hit/two\_quotient.hlean} & \texttt{simple\_two\_quotient.rec} \\ 
\autoref{thm:colim_sm} & \texttt{Spectral/colimit/seq\_colim.hlean} & \texttt{sigma\_seq\_colim\_over\_equiv} \\
\autoref{cor:eq_colim} & \texttt{Spectral/colimit/seq\_colim.hlean} & \texttt{seq\_colim\_eq\_equiv} \\
\autoref{thm:les-homotopy} & \texttt{hott/homotopy/LES\_of\_homotopy\_groups.hlean} & \texttt{is\_exact\_LES\_of\_homotopy\_groups} \\
\autoref{cor:homotopy-group-spheres-1} & \texttt{hott/homotopy/sphere2.hlean} & \texttt{$\pi$2S2} and \texttt{$\pi$nS3\_eq\_$\pi$nS2} \\
\autoref{cor:homotopy-group-spheres-2} & \texttt{hott/homotopy/sphere2.hlean} & \texttt{$\pi$nSn} and \texttt{$\pi$3S2} \\
\autoref{thm:EM-equiv-categories} & \texttt{Spectral/homotopy/EM.hlean} & \texttt{AbGrp\_equivalence\_cptruncconntype${}^\prime$} \\
\autoref{thm:smash-adjoint} & \texttt{Spectral/homotopy/smash\_adjoint.hlean} & \texttt{smash\_adjoint\_pmap} \\
\autoref{thm:exact-couple-convergence} & \texttt{Spectral/algebra/spectral\_sequence.hlean} & \texttt{is\_built\_from\_infpage} \\
\autoref{thm:spectral-sequence-spectrum} & \texttt{Spectral/algebra/spectral\_sequence.hlean} & \texttt{converges\_to\_sequence} \\
\autoref{thm:atiyah-hirzebruch-reduced} & \texttt{Spectral/cohomology/serre.hlean} & \texttt{atiyah\_hirzebruch\_convergence} \\
\autoref{thm:serre-spectral-sequence} & \texttt{Spectral/cohomology/serre.hlean} & \texttt{serre\_convergence} \\
\hline
\end{tabular}
  }
\end{xcenter}

\chapter{Higher Inductive Types}\label{cha:high-induct-types}

In this chapter we will study properties of Higher Inductive Types (HITs), which
we introduced in \Cref{sec:high-induct-types}. There is no uniformly accepted
scheme of which HITs are allowed, and the semantics of HITs is a topic of
current research. There are semantic interpretations of a large class of Higher
Inductive Types~\cite{lumsdaine2017HITsemantics}, but there are still open
questions. Firstly, a general scheme for higher inductive types is unknown,
although~\cite{altenkirch2016qiits} is a step in the right direction. Secondly,
it is unknown whether universes can be closed under higher inductive types. 
This is unknown even in the case for homotopy pushouts. In this chapter, we do
not study the semantics of higher inductive types. Instead, we will work
internally in a type theory that has some specific HITs, and construct other
HITs from the ones we started with.

In particular, we are interested in the case where we start with the quotient, or equivalently, the homotopy pushout.

One HIT from \Cref{sec:high-induct-types} that we have not yet defined using
quotients is the $n$-truncation. In \Cref{sec:prop-trunc} we will define the
propositional truncation using quotients. A construction of the $n$-truncations is given by the join
construction~\cite{rijke2017join}. This shows that we can define certain
recursive HITs using quotients. We will make a start on defining a bigger class of recursive HITs using quotients in \cref{sec:colimits}. 

Another class of HITs we want to construct is HITs with higher path constructors. We construct
nonrecursive 2-HITs in \Cref{sec:non-recursive-2}, using a method very similar to the hubs and
spokes method~\cite[Section 6.7]{hottbook}.

One might wonder after these examples whether all HITs can be reduced to
quotients. This turns out to be false. In~\cite[Section
9]{lumsdaine2017HITsemantics} the authors describe a specific recursive 1-HITs
that cannot be reduced to quotients. Still, it is worthwhile to see which
higher inductive types can be constructed from quotients, for example if one is
interested in a model of HoTT with homotopy pushouts, but without the extra
structure to model all HITs.

\section{Propositional Truncation}\label{sec:prop-trunc}

In this section we will construct the propositional truncation from quotients.\footnote{The contents of this section have been published in~\cite{vandoorn2016proptrunc}. However, \autoref{cor:prop-trunc-univ-set} is new.}

Given a type $A$, define $\{A\}$ as the quotient of $A$ by the indiscrete relation
$R\defeq\l(a\ a':A),\unit$. We will call the type $\{A\}$ the \emph{one-step truncation}, since
repeating it will give the propositional truncation. We will denote its point constructor by $f : A \to \{A\}$ and its path constructor by $e : (x\ y : A) \to f(x) = f(y)$. We call a function $g : A \to B$ \emph{weakly constant} if $(x\ y : A) \to g(x) = g(y)$ is inhabited. Note that maps $\{A\}\to B$ correspond exactly to weakly constant maps $A\to B$.

Given a type $A$, we define a sequence
$\{A\}_{-}:\N\to\U$ by
\begin{align}
\begin{aligned}
\{A\}_0&:\equiv A\\ \{A\}_{n+1}&:\equiv \{\{A\}_n\}
\end{aligned}
\label{eq:An}
\end{align}
We have map $f_n:\equiv f : \{A\}_n\to \{A\}_{n+1}$, which is the constructor of the one-step
truncation. This gives the sequence
\begin{equation}
A\xrightarrow{f}\{A\}\xrightarrow{f}\{\{A\}\}\xrightarrow{f}\cdots \label{eq:prop-sequence}
\end{equation}
We define $\{A\}_\infty=\colim(\{A\}_{-},f_{-})$. We will prove that $\{A\}_\infty$ is the
propositional truncation of $A$, in the sense that the construction $A\mapsto \{A\}_\infty$ has the
same formation, introduction, elimination and computation rules for the propositional truncation.

We have already shown the formation rule of the propositional truncation (note that $\{A\}_\infty$
lives in the same universe as $A$).

We also easily get the point constructor of the propositional truncation, because that is just the
map $i_0:A\to \{A\}_\infty$. The path constructor $(x, y : \{A\}_\infty) \to x = y$, i.e. the
statement that $\{A\}_\infty$ is a mere proposition, is harder to define. We will postpone this
until after we have defined the elimination and computation rules.

The elimination principle --- or induction principle --- for the propositional truncation is the
following statement. Suppose we are given a family of propositions $P : \{A\}_\infty \to \prop$ with
a section $h : (a : A) \to P(i_0(a))$. We then have to construct a map
$k : (x : \{A\}_\infty) \to P(x)$. To construct $k$, take an $x : \{A\}_\infty$. Since $x$ is in a
colimit, we can apply induction on $x$. Notice that we construct an element in $P(x)$, which is a
mere proposition, so we only have to define $k$ on the point constructors. This means that we can
assume that $x\equiv i_n(a)$ for some $n : \N$ and $a:\{A\}_n$. Now we apply induction on $n$.

If $n\equiv0$, then we can choose $k(i_0(a)):\equiv h(a):P(i_0(a))$.

If $n\equiv \ell+1$ for some $\ell:\N$, we know that $a:\{\{A\}_\ell\}$, so we can induct on
$a$. The path constructor of this induction is again automatic. For the point constructor, we can
assume that $a\equiv f(b)$. In this case we need to define $k(i_{\ell+1}(f(b))) :
P(i_{\ell+1}(f(b)))$. By induction hypothesis, we have an element $y : P(i_\ell(b))$. Now we can
transport $x$ along the equality $(g_\ell(b))\sy : i_\ell(b)=i_{\ell+1}(f(b))$. This gives the
desired element in $P(i_{\ell+1}(f(b)))$.

We can write the proof in pattern matching notation:
\begin{itemize}
\item $k(i_0(a)):\equiv h(a)$
\item $k(i_{n+1}(f_n(a))):\equiv(g_n(b))_*\sy(k(i_n(b)))$
\end{itemize}
The definition $k\ (i_{0}\ a) :\equiv h\ a$ is also the judgmental computation rule for the point constructors of the propositional truncation.

For the remainder of this section we will prove that $\{A\}_\infty$ is a mere proposition. We will
need the following two lemmas.

\begin{lem}\label{lem:pieq}
  Let $X$ be a type with $x : X$. Then the type $(y:X) \to x=y$ is a mere proposition.
\end{lem}
\begin{proof}
  To prove that $(y:X) \to x=y$ is a mere proposition, we assume that it is inhabited and show that
  it is contractible. Let $f : (y:X) \to x=y$. From this, we conclude that $X$ is contractible
  with center $x$. Now given any $g : (y:X) \to x=y$, we know that $f$ and $g$ are pointwise
  equal, because their codomain is contractible. By function extensionality we conclude that $f=g$,
  finishing the proof.
\end{proof}
\begin{lem}\label{lem:apconstant}
  If $g:X\to Y$ is weakly constant, then for every $x, x' : X$, the function $\text{ap}_g:x=x'\to
  g(x)=g(x')$ is weakly constant. That is, $\ap gp=\ap gq$ for all $p,q:x=x'$.
\end{lem}
\begin{proof}
  Let $q : (x, y : X) \to g(x)=g(y)$ be the proof that $g$ is weakly constant, and fix $x : X$. We
  first prove that for all $y : X$ and $p : x = y$ we have
  \begin{equation}\label{eq:apconstanteq}
    \ap{g}{p} = q(x,x)^{-1}\ \cdot\ q(x,y).
  \end{equation}
  This follows from path induction, because if $p$ is reflexivity, then $\ap{g}{\refl_x} \equiv
  \refl_{g(x)} = q(x,x)^{-1}\ \cdot\ q(x,x)$. The right hand side of \eqref{eq:apconstanteq} does not
  depend on $p$, hence $\text{ap}_g$ is weakly constant.
\end{proof}

To prove that $\{A\}_\infty$ is a mere proposition, we need to show
$(x, y : \{A\}_\infty) \to x=y$. Since $(y : \{A\}_\infty) \to x=y$ is a mere proposition, we can use the
induction principle for the propositional truncation on $x$, which we have just proven for
$\{A\}_\infty$. This means we only have to show that for all $a : A$ we have
$(y : \{A\}_\infty) \to i_0(a)=y$. We do not know that $i_0(a)=y$ is a mere proposition,\footnote{Of
  course, we do know that it is a mere proposition after we have finished the proof that
  $\{A\}_\infty$ is a mere proposition.} so we will just use the regular induction principle for
colimits on $y$. We then have to construct two inhabitants of the following two types:
\begin{enumerate}
\item For the point constructor we need $p(a,b) : i_0(a) = i_n(b)$ for all $a : A$ and $b :
  \{A\}_n$.
\item We have to show that $p$ respects path constructors:
  \begin{equation}
    p(a,f(b))\ \cdot\ g(b) = p(a,b).\label{eq:coh}
  \end{equation}
\end{enumerate}

We have a map $f^n:A \to \{A\}_n$ defined by induction on $n$, which repeatedly applies $f$. We also
have a path $g^n(a): i_n(f^n(a)) = i_0(a)$, which is a concatenation of instances of $g$.

\begin{figure}\begin{center}
\begin{tikzpicture}[node distance=3cm,font=\footnotesize,
  thick,main node/.style={font=\large}]

  \node[main node,label=left:$a$] (a) at (0,0) {$\bullet$}; \node[main node,label=left:$f^n(a)$]
  (fa) [above of=a] {$\bullet$}; \node[main node,label=left:$f^{n+1}(a)$] (ffa) [above of=fa]
       {$\bullet$}; \tikzset{node distance=2.5cm}; \node[main node,label=right:$b$] (b) [right
         of=fa] {$\bullet$}; \node[main node,label=right:$f(b)$] (fb) [right of=ffa] {$\bullet$};
       \tikzset{node distance=2cm}; \node (Am1) [right of=fb] {$\{A\}_{n + 1}$}; \node (Am) [right
         of=b] {$\{A\}_n$}; \node (An) [right of=a] {$A$}; \draw ($(ffa)!0.4!(fb)$) ellipse (2.7cm
       and 1cm); \draw ($(fa)!0.4!(b)$) ellipse (2.4cm and 1cm); \draw (a) circle (1cm); \path (fa) edge node [left] {$g^n$} (a) edge node [right]
       {$g$} (ffa) (a) edge [bend left=60] node [above left] {$g^{n+1}$} (ffa) (fb) edge [bend
         right=20] node [above] {$e$} (ffa) edge node [left] {$g$} (b);
\end{tikzpicture}
\caption{The definition of $p$. The applications of $i$ and the arguments of the paths are
  implicit.}
\label{fig:defp}
\end{center}\end{figure}
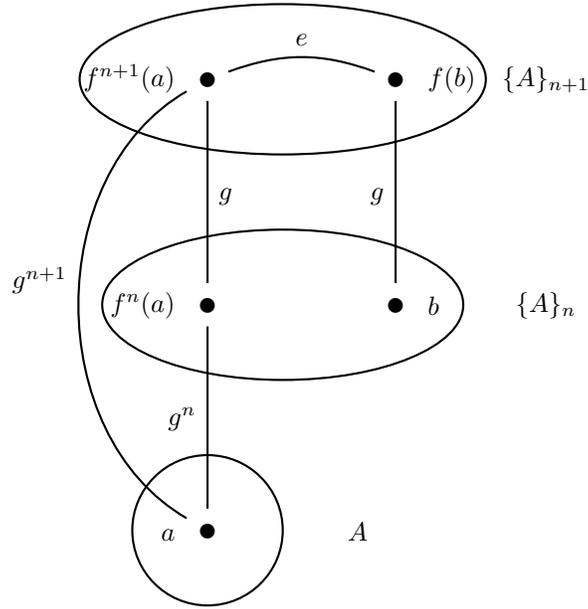

We can now define $p(a,b)$ as displayed in \Cref{fig:defp}, which is the concatenation
\begin{align*}
i_0(a) &= i_{n+1}(f^{n+1}(a)) &&\text{(using $g^{n+1}$)}\\ &\equiv i_{n+1}(f(f^n(a)))\\ &=
i_{n+1}(f(b)) &&\text{(using $e$)}\\ &= i_n(b) &&\text{(using $g$)}
\end{align*}

Note that by definition $g^{n+1}(a) \equiv g(f^n(a))\ \cdot\ g^n(a)$, so the triangle on the left of
\Cref{fig:defp} is a definitional equality.

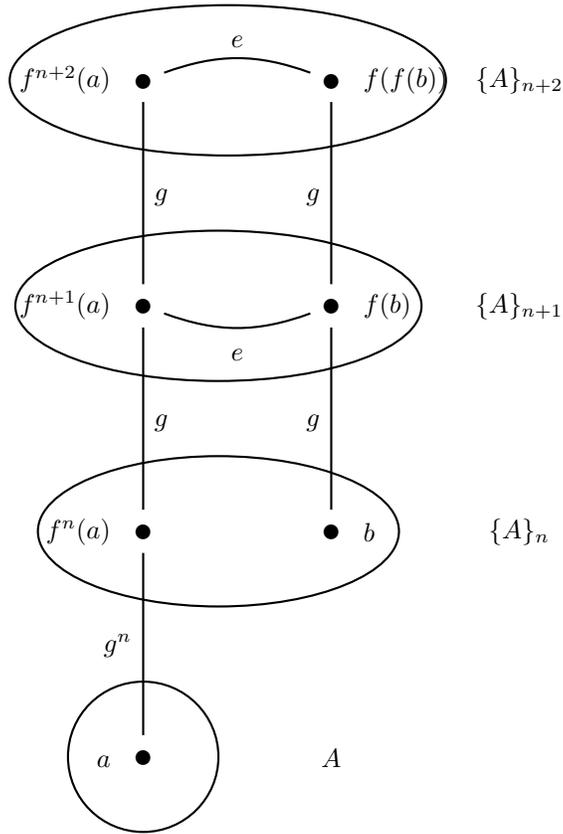
\begin{figure}\begin{center}
\begin{tikzpicture}[node distance=3cm,font=\footnotesize,
  thick,main node/.style={font=\large}]

  \node[main node,label=left:$a$] (a) at (0,0) {$\bullet$}; \node[main node,label=left:$f^n(a)$]
  (fa) [above of=a] {$\bullet$}; \node[main node,label=left:$f^{n+1}(a)$] (ffa) [above of=fa]
       {$\bullet$}; \node[main node,label=left:$f^{n+2}(a)$] (fffa) [above of=ffa]{$\bullet$};
       \tikzset{node distance=2.5cm}; \node[main node,label=right:$b$] (b) [right of=fa]
               {$\bullet$}; \node[main node,label=right:$f(b)$] (fb) [right of=ffa] {$\bullet$};
               \node[main node,label=right:$f(f(b))$] (ffb) [right of=fffa]{$\bullet$}; \node (Am2)
                    [right of=ffb]{$\{A\}_{n + 2}$}; \node (Am1) [right of=fb] {$\{A\}_{n + 1}$};
                    \node (Am) [right of=b] {$\{A\}_n$}; \node (An) [right of=a] {$A$}; \draw
                    ($(fffa)!0.45!(ffb)$) ellipse (2.9cm and 1cm); \draw ($(ffa)!0.4!(fb)$) ellipse
                    (2.7cm and 1cm); \draw ($(fa)!0.4!(b)$) ellipse (2.4cm and 1cm); \draw (a)
                    circle (1cm); \path (fa) edge node
                    [left] {$g^n$} (a) edge node [right] {$g$} (ffa) (fb) edge [bend left=20] node
                    [below] {$e$} (ffa) edge node [left] {$g$} (b) edge node [left] {$g$} (ffb)
                    (fffa) edge node [right] {$g$} (ffa) edge [bend left=20] node [above] {$e$}
                    (ffb);
\end{tikzpicture}
\caption{The coherence condition for $p$. The applications of $i$ and the arguments of the paths are
  implicit.}
\label{fig:cohp}
\end{center}\end{figure}

Now we have to show that this definition of $p$ respects the path constructor of the colimit, which
means that we need to show~\eqref{eq:coh}. This is displayed in \Cref{fig:cohp}.  We only need to
fill the square in \Cref{fig:cohp}. To do this, we first need to generalize the statement,
because we want to apply path induction. Note that if we give the applications of $i$ explicitly,
the bottom and the top of this square are $$\ap i{e(f^{n+1}(a),f(b))}$$ and $$\ap
i{e(f^{n+2}(a),f(f(b)))},$$ respectively. This means we can apply the following lemma to prove this
equality.

\begin{figure}\begin{center}
\begin{tikzpicture}[node distance=3cm,font=\footnotesize,
  thick,main node/.style={font=\large, 
  }]

  \node[main node,label=left:$i(x)$] (x) at (0,0) {$\bullet$}; \node[main node,label=left:$i(f(x))$]
  (fx) [above of=x] {$\bullet$}; \tikzset{node distance=2.5cm}; \node[main node,label=right:$i(y)$]
  (y) [right of=x] {$\bullet$}; \node[main node,label=right:$i(f(y))$] (fy) [right of=fx]
       {$\bullet$}; \node (Ak1) [right of=fy] {$\{A\}_{n + 1}$}; \node (Ak) [right of=y]
       {$\{A\}_n$}; \draw ($(fx)!0.5!(fy)$) ellipse (2.8cm and 1cm); \draw ($(x)!0.5!(y)$) ellipse
       (2.6cm and 1cm); \path (x) edge [bend right=20]
       node [below] {$\ap ip$} (y) edge node [right] {$g$} (fx) (fy) edge [bend right=20] node
       [above] {$\ap i{p'}$} (fx) edge [bend left=20] node [below] {$\ap i{\ap fp}$} (fx) edge node
       [left] {$g$} (y);
\end{tikzpicture}
\caption{The situation in \Cref{lem:cohplemma}.}
\label{fig:cohplemma}
\end{center}\end{figure}

\begin{lem}\label{lem:cohplemma}
  Suppose we are given $x\ y : \{A\}_n$, $p:x=y$ and $p' : f(x) = f(y)$. Then we can fill the outer
  square in \Cref{fig:cohplemma}, i.e.
    $$g(x)\ \cdot\ \ap ip = \ap i{p'}\ \cdot\ g(y).$$
\end{lem}
\begin{proof}
We can fill the inner square of the diagram by induction on $p$, because if $p$ is reflexivity, then
the inner square reduces to
  $$g(x)\ \cdot\ \refl_{i(x)}=\refl_{i(f(x))}\ \cdot\ g(x).$$ To show that the two paths in the top
are equal, first note that $i_{k} : \{A\}_{k} \to \{A\}_\infty$ is weakly constant. To see this,
look at \Cref{fig:defp}. The path from $f^n(a)$ to $b$ in that figure gives a proof of
$i_n(f^n(a))=i_n(b)$ that does not use the form of $f^n(a)$, so we also have $i_k(u)=i_k(v)$ for
$u,v :\{A\}_k$. Since $i_{n+1}$ is weakly constant, by \Cref{lem:apconstant} the
function $$\text{ap}_{i_{n+1}} : f(x)=f(y)\to i_{n+1}(f(x))=i_{n+1}(f(y))$$ is also weakly
constant. This means that the two paths in the top are equal, proving the Lemma.
\end{proof}
We have now given the proof of the following theorem:

\begin{thm}\label{thm:main}
The map $A\mapsto \{A\}_\infty$ satisfies all the properties of the propositional truncation $\|{-}\|$, including the universe level and judgmental computation rule on point constructors.
\end{thm}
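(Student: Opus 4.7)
The plan is to assemble the pieces already in hand into a verification of each rule of the propositional truncation for $\{A\}_\infty$ in turn. The formation rule is immediate, since a sequential colimit of types lives in the same universe as the types in the sequence. For the introduction rule I take the coprojection $i_0 : A \to \{A\}_\infty$; the judgmental computation rule $k(i_0(a)) \equiv h(a)$ will then fall out of the $\beta$-rule of $\colim$ on its point constructor $i_0$ (since the eliminator we build never unfolds on $i_0(a)$ beyond this base case).

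For the elimination principle I would argue by iterated induction. Given a family $P : \{A\}_\infty \to \prop$ and a section $h : (a:A) \to P(i_0(a))$, I would first apply colimit induction on $x : \{A\}_\infty$; because $P$ is proposition-valued, the obligation at the path constructor vanishes, reducing the task to constructing a section $(n : \N)(a : \{A\}_n) \to P(i_n(a))$. I would then induct on $n$: the base case is just $h$, and for $n = \ell + 1$ I would perform a further one-step-truncation induction on $a : \{\{A\}_\ell\}$, again trivializing the path-constructor obligation by propositionality of $P$, and in the point case $a \equiv f(b)$ transport the inductive hypothesis across $g_\ell(b)^{-1} : i_\ell(b) = i_{\ell+1}(f(b))$.

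The main obstacle is proving that $\{A\}_\infty$ is a mere proposition. Using the lemma that $(y : X) \to x = y$ is always propositional, the freshly-constructed eliminator lets me reduce the outer variable, leaving the task of constructing, for every $a : A$, a section $(y : \{A\}_\infty) \to i_0(a) = y$. The codomain is no longer visibly propositional, so I must revert to plain colimit induction on $y$, producing paths $p(a,b) : i_0(a) = i_n(b)$ together with a coherence $p(a,f(b)) \cdot g_n(b) = p(a,b)$. I would build $p(a,b)$ as the zigzag $i_0(a) = i_{n+1}(f^{n+1}(a)) = i_{n+1}(f(b)) = i_n(b)$, using $g^{n+1}(a)$, the image under $\apfunc{i_{n+1}}$ of $e(f^{n+1}(a), f(b))$, and $g_n(b)^{-1}$ respectively.

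The delicate step is the coherence. Unfolding, it becomes a square whose horizontal edges are $\apfunc{i_{n+1}}(e(f^{n+1}(a), f(b)))$ and $\apfunc{i_{n+2}}(e(f^{n+2}(a), f(f(b))))$, with vertical edges given by instances of $g$. I would abstract this as an auxiliary square-filling lemma: for any $x, y : \{A\}_n$, any $p : x = y$, and any $p' : f(x) = f(y)$, the equation $g(x) \cdot \apfunc{i_n}(p) = \apfunc{i_{n+1}}(p') \cdot g(y)$ holds. Path induction on $p$ reduces this to showing that $\apfunc{i_{n+1}}$ is weakly constant on loops at $f(x)$, which by the lemma that $\apfunc{g}$ inherits weak constancy from $g$ follows from the weak constancy of $i_{n+1}$ itself --- a fact that the very zigzag used to define $p$ already witnesses, since that zigzag runs from any chosen $u$ to any chosen $v$ without depending on their form. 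The crux is therefore the apparent circularity resolved by observing that weak constancy of $i_{n+1}$ is a property supplied for free by the zigzag, while the coherence we need is a specific equation between two paths that weak constancy of $\apfunc{i_{n+1}}$ is precisely strong enough to establish.
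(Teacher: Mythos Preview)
Your proposal is correct and follows the paper's argument essentially step by step: the same bootstrap (prove the eliminator first, then use it together with \autoref{lem:pieq} to reduce the prop-ness claim), the same zigzag for $p(a,b)$, and the same square-filling lemma proved by path induction on $p$ plus weak constancy of $i_{n+1}$ (itself witnessed by the zigzag). One cosmetic slip: in your zigzag you have the inverses swapped---the first leg should be $(g^{n+1}(a))^{-1}$ and the last leg should be $g_n(b)$, not the other way around---but the coherence equation you wrote down is consistent with the correct orientation, so this is just a typo.
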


We will mention two corollaries of this result. An alternate proof of the first one is given in~\cite{kraus2014anonymousexistence}.
\begin{cor}\label{c:hstable}
  Given a weakly constant function $h : A \to A$, there is a function $\|A\|\to A$.
\end{cor}
\begin{proof}
  The weakly constant function $h$ gives a function $\tilde h : \{A\}\to A$. The HIT $\{{-}\}$ is
  functorial (just like all other HITs), so by its functorial action we get a map $\{\tilde
  h\}:\{\{A\}\}\to\{A\}$, which we can compose with $\tilde h$ to get a map $\{\{A\}\}\to A$. By
  induction on $n$ we get a map $k_n : \{A\}_n \to A$. Formally, we define
  \begin{align*}
  k_0(a)&:\equiv a\\ k_{n+1}(x)&:\equiv \tilde h(\{k_n\}(x))
  \end{align*}
  However, this sequence of maps does not form a cocone, because the triangles do not commute. (For
  example for the first triangle we have to show $h(a)=a$ for all $a$.) But we can easily modify the
  definition by postcomposing with $h$. Define $h_n:\equiv h\circ k_n : \{A\}_n\to A$. Now we get a
  cocone; all triangles commute because $h$ is weakly constant. By the universal property of the sequential colimit we get
  a map $\|A\|\to A$.
\end{proof}

We can also construct maps out of the propositional truncation into a set by giving a weakly constant function. 
An alternate proof was given in~\cite{kraus2014universalproperty}.
\begin{cor}\label{cor:prop-trunc-univ-set}
  Suppose given a weakly constant function $g:A\to B$ where $B$ is a set. Then there is a map $\tilde g:\|A\|\to B$ such that $\tilde g(|a|)=g(a)$.
\end{cor}
\begin{proof}
  First note that given any map $h:\{X\}\to B$, we get a map $h':\{\{X\}\}\to B$ such that $h'\o f\sim h$. 
  Namely, on point constructors we define $h'(f(x))\defeq h(x)$ for $x:\{X\}$. 
  Now given $x',y':\{X\}$, we want to define $h'$ on $e(x',y')$. We perform induction on both $x$ and $y$. 
  In the case that both $x$ and $y$ are point constructors, $x'\equiv f(x)$ and $y'\equiv f(y)$ we can define 
  $$\apfunc{h'}(e(x',y'))\defeqp\apfunc{h}(e(x,y)):h'(f(x'))\equiv h(f(x))=h(f(y))\equiv h'(f(y')).$$
  In the other three cases, we are constructing a 2-path (or 3-path) in $B$, which is automatically filled because $B$ is a set. 
  This finishes the construction of $h'$, which satisfies $h'\o f\sim h$ by definition.

  Now we can define a cocone $g_n:\{A\}_n\to B$ as follows. $g_0$ and $g_1$ are given by $g$. 
  We now define $g_{n+2}\defeq g_{n+1}'$. These $g$'s form a cocone because $g_{n+2}'\o f\sim g_{n+1}$. 
  This gives a map $\tilde g:\|A\|\to B$ such that $\tilde g(|a|)=g(a)$.
\end{proof}

An alternative construction of the propositional truncation using non-recursive HITs has been given in~\cite{kraus2016hits}. All results in this section have been fully formalized.

\section{Non-recursive 2-HITs}\label{sec:non-recursive-2}

We can also define nonrecursive 2-HITs using quotients.\footnote{A summary of this section also appeared in~\cite{vandoorn2017leanhott}.} There are various 2-HITs we would like to construct, such as the torus (as formulated in \autoref{sec:high-induct-types}), groupoid quotients, and Eilenberg-MacLane spaces $K(G,1)$. The construction of 2-HITS uses a method similar to the hubs-and-spokes method described in~\cite[Sect.~6.7]{hottbook}.

The idea behind the hubs-and-spokes method is that for any path $p : x =_A x$ we can define a map $f : \S^1 \to A$
with $\apfunc{f}(\lp) = p$ by circle induction. Then we can prove the equivalence
$$(p = 1) \simeq (x_0 : A) \times (z : \S^1) \to f(z) = x_0.$$ 
This equivalence informally
states that filling in a loop is the same as adding a new point $x_0$, the \emph{hub}, and
\emph{spokes} $f(z) = x_0$ for every $z : \S^1$, similar to the spokes in a wheel.
This means that in a higher inductive type, we can replace a 2-path constructor $p = 1_x$ by
a new point constructor $x_0 : A$ and a family of 1-path constructors $(z : \S^1) \to f(z) = x_0$. 2-path constructors of the form $p = q$ can be replaced by the equivalent path constructor $p \cdot q\sy = 1$.

This construction reduces certain 2-HITs to 1-HITs. However, this reduction is not a quotient, since this family of path constructors refers to other path constructors (in the definition of $f$), which is not allowed in quotients. If we use quotients, we need to take the quotient twice. We first define a quotient with only the 1-paths (and the hubs), and then use another quotient to add the spokes. In this section we will describe this construction of 2-HITs from quotients. 

To be more formal, let us first prove a slightly more general version of the above equivalence.
\begin{lem}\label{lem:twopathlemma} 
  Given a path $p : a =_A a$ and $f : A \to B$, we have an equivalence
  $$e:((b_0 : B) \times (z : \S^1) \to f(\circlerec(a,p,z)) = b_0) \simeq (\apfunc{f}(p) = 1),$$
  where $\circlerec : (y : P)\to y = y \to \S^1 \to P$ is the nondependent eliminator of the circle $\S^1$. 
\end{lem}
\begin{proof}
This follows from the following chain of equivalences.
\begin{align*}
  &\mathrel{\hphantom{\simeq}}(b_0 : B) \times (z : \S^1) \to f(\circlerec(a,p,z)) = b_0 \\
  &\simeq (b_0 : B) \times (q : f(a) = b_0) \times q =_{\lp}^{f(\circlerec(a,p,{-}))=b_0} q\\
  &\simeq 1_{f(a)} =_{\lp}^{f(\circlerec(a,p,{-}))=f(a)} 1_{f(a)}\\
  &\simeq \apfunc{f\circ\circlerec(a,p)}(\lp) = \apfunc{\const_{f(a)}}(\lp)\\
  &\simeq \apfunc{f}(p) = 1.\qedhere
\end{align*}
\end{proof}

More formally, for $A:\type$ and $R:A\to A\to\type$ we will define words in $R$ to be the following inductive family of types:
\begin{inductive}
\texttt{inductive} $\words_R : A \to A \to \type \defeqp$ \\
$\bullet\ [{-}] : \{a\ a' : A\} \to R(a,a') \to \words_R(a,a');$\\
$\bullet\ \langle{-}\rangle : \{a\ a' : A\} \to a = a' \to \words_R(a,a');$ \\
$\bullet\ {-}\sy:\{a\ a' : A\} \to \words_R(a,a') \to \words_R(a',a);$; \\
$\bullet\ {-} \cdot {-}: \{a_1\ a_2\ a_3 : A\} \to \words_R(a_1,a_2) \to \words_R(a_2,a_3) \to \words_R(a_1,a_3).$
\end{inductive}
A \emph{specification for a \emph(nonrecursive\emph) 2-HIT} consists of a type $A$ and two families
$R : A \to A \to \type$ and $S : \{a\ a' : A\} \to \words_R(a,a') \to \words_R(a,a') \to \type$.
Using this, we define the 2-HIT $\twoquotient(A,R,S)$ with constructors
\begin{inductive}
\texttt{HIT} $\twoquotient(A,R,S) \defeqp$ \\
$\bullet\ [{-}]_0 : A \to \twoquotient(A,R,S);$\\
$\bullet\ [{-}]_1 : \{a\ a' : A\} \to R(a,a') \to [a]_0=[a']_0;$ \\
$\bullet\ [{-}]_2:\{a\ a' : A\} \to \{t\ t' :\words_R(a,a')\} \to S(t,t') \to
\overline{[t]_1}=\overline{[t']_1}$.
\end{inductive}
where $\overline{[t]_1}$ is the action of $[{-}]_1$ on words in $R$. So if $t : \words_R(a,a')$, then $\overline{[t]_1}:[a]_0=[a']_0$ is defined by recursion over $t$. For example, the recursive steps for concatenation of words is 
$$\overline{[t_1 \cdot t_2]_1} \defeq \overline{[t_1]_1} \cdot \overline{[t_2]_1}.$$

Before we define $\twoquotient(A,R,S)$, we first define a special case with only reflexivities on the right hand side of 2-path constructors. This is the following HIT, where $Q$ has type
$\{a : A\} \to \words_R(a,a) \to \type$.
\begin{inductive}
\texttt{HIT} $\simpletwoquotient(A,R,Q) \defeqp$ \\
$\bullet\ [{-}]_0 : A \to \simpletwoquotient(A,R,Q);$\\
$\bullet\ [{-}]_1 : \{a\ a' : A\} \to R(a,a') \to [a]_0=[a']_0;$ \\
$\bullet\ [{-}]_2:\{a : A\} \to \{t :\words_R(a,a)\} \to Q(t) \to
\overline{[t]_1}=1$.
\end{inductive}
To define this, we first define a new type where we add a \emph{hub} to $A$ for every path specified by $Q$.
$$B \defeq A + (a : A) \times (t : \words_R(a,a)) \times Q t.$$
Then we quotient this as specified by $R$, to obtain the 1-paths.
$$C \defeq \quotient_B(R_B),$$
where the inductive family of types $R_B$ is defined as follows.
\begin{inductive}
\texttt{inductive} $R_B : B \to B \to \type \defeqp$ \\
$\bullet\ \{a\ a' : A\} \to R(a, a') \to R_B(\inl a,\inl a').$
\end{inductive}
We now define $D\defeq\simpletwoquotient(A,R,Q') \defeq \quotient_C(R_C)$ where $R_C$ is defined as the following inductive family of types (we write $u_t\defeq\circlerec([\inl a]_0,\overline{[t]_1}):\S^1\to C$)
\begin{inductive}
\texttt{inductive} $R_C : C \to C \to \type \defeqp$ \\
$\bullet\ \{a : A\} \to \{t : \words_R(a,a)\} \to (q : Q(t)) \to (x : \S^1) \to R_C(u_t(x),[\inr (a,t,q)]_0).$
\end{inductive}
We will now define the expected constructors, eliminators, and computation rules for this two-quotient.
\begin{thm}\label{thm:simple-two-quotient-def}
  The type $D\defeq\simpletwoquotient(A,R,Q)$ is the HIT as specified above. This means that
  \begin{itemize}
    \item There is a 0-path constructor $\br{{-}}_0 : A \to D$;
    \item There is a 1-path constructor $\br{{-}}_1 : \{a\ a' : A\} \to R(a,a') \to \br{a}_0=\br{a'}_0$;
    \item There is a 2-path constructor $\br{{-}}_2 : \{a : A\} \to \{t : \words_R(a,a)\} \to Q(t) \to \overline{\br{t}_1}=1$;
    \item There is an induction principle that states the following: given a family $P: D \to \type$ with $s_0 : (a : A) \to P\br{a}_0$ and 
    $$s_1 : \{a\ a' : A\} \to (r : R(a,a')) \to s_0(a) =_{\br{r}_1}^P s_0(a')$$
    $$s_2 : \{a : A\} \to \{t : \words_R(a,a)\} \to (q : Q(t)) \to 
    \overline{s_1}(t)=_{\br{q}_2}1,$$
    then $P$ has a section $f : (d : D) \to P(d)$ that computes on the point and 1-path constructors: $f\br{a}_0\equiv s_0(a)$ and $\apd_f\br{r}_1=s_1(r)$.
    \item There is a recursion principle that states the following: given $P : \type$ with $p_0 : A \to P$ and 
    $$p_1 : \{a\ a' : A\} \to R(a,a') \to p_0(a) = p_0(a')$$
    $$p_2 : \{a : A\} \to \{t : \words_R(a,a)\} \to Q(t) \to \overline{p_1}(t)=1,$$
    then there is a map $g : D \to P$ that computes on the point 1-path and 2-path constructors. This means that $g\br{a}_0\equiv p_0(a)$ for $a : A$, and that there is a path
    $\iota_1:\apfunc{g}\br{r}_1=p_1(r)$ for $r : R(a,a')$ and a filler of the following square for $q : Q(t)$.
    \begin{center}
    \begin{tikzpicture}[thick, node distance=3cm]
      \node (tl)       at (0,0) {$\apfunc{g}\overline{\br{t}_1}$};
      \node[right of = tl] (tr) {$1$};
      \node (bl)      at (0,-2) {$\overline{p_1}(t)$};
      \node (br) at (tr |- bl)  {$1$};
      \path[every node/.style={font=\sffamily\small}]
      (tl) edge[double equal sign distance] node {$\apfunc{\apfunc{g}}\br{q}_2$} (tr)
           edge[double equal sign distance] node[left] {$\overline{\iota_1}$} (bl)
      (tr) edge[double equal sign distance] node {$1$} (br)
      (bl) edge[double equal sign distance] node {$p_2(q)$} (br);
    \end{tikzpicture}
  \end{center}
  \end{itemize}
\end{thm}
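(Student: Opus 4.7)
The plan is to define the three constructors first, then derive the induction principle by two successive uses of $\quotient$-induction, with recursion following as the case where the motive is constant. The key observation driving everything is that the double quotient $D = \quotient_C(\quotient_B(R_B))$ realises a hubs-and-spokes presentation, so the 2-paths demanded by $Q$ can be produced by applying Lemma \ref{lem:twopathlemma}.

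For the constructors, set $\br{a}_0 \defeq [[\inl a]_0]_0$ and $\br{r}_1 \defeq \apfunc{[{-}]_0}([r]_1)$, where the outer $[{-}]_i$ are the constructors of the $D$-quotient and the inner ones are those of the $C$-quotient (precomposed with the $R_B$-generator that lifts $R$). A routine induction on the structure of $t : \words_R(a,a')$ gives a path $\xi_t : \overline{\br{t}_1} = \apfunc{[{-}]_0}(\overline{[t]_1})$ using that $\apfunc{[{-}]_0}$ respects concatenation and inversion. For the 2-path, fix $q : Q(t)$; the generator of $R_C$ yields, for every $x : \S^1$, a 1-path $\eta_q(x) : [u_t(x)]_0 = [[\inr(a,t,q)]_0]_0$ in $D$. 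Setting $b_0 \defeq [[\inr(a,t,q)]_0]_0$, the pair $(b_0, \eta_q)$ is precisely the left-hand datum of Lemma \ref{lem:twopathlemma} applied to $f \defeq [{-}]_0 : C \to D$, $a \defeq [\inl a]_0$, and $p \defeq \overline{[t]_1}$. The equivalence produces a path $\apfunc{[{-}]_0}(\overline{[t]_1}) = 1$, which composed with $\xi_t$ yields $\br{q}_2 : \overline{\br{t}_1} = 1$.

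For the induction principle, given $(s_0, s_1, s_2)$, I would first define $g : (c : C) \to P([c]_0)$ by $\quotient_B(R_B)$-induction: set $g([\inl a]_0) \defeq s_0(a)$; at a hub, use that $u_t(\base) \equiv [\inl a]_0$ definitionally and put $g([\inr(a,t,q)]_0) \defeq \transp^P(\eta_q(\base), s_0(a))$; and use $s_1(r)$ to discharge the $R_B$-clause. To descend $g$ to a section $f : (d : D) \to P(d)$ I must verify that $g$ respects $R_C$, i.e.\ produce, for each $q : Q(t)$ and $x : \S^1$, a pathover of $g(u_t(x))$ to $g([\inr(a,t,q)]_0)$ over $\eta_q(x)$. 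By the choice of $g$ at the hub, the value at $x = \base$ is the canonical identity pathover, so the remaining obligation (the loop part of the circle induction on $x$) reduces to a 2-coherence in $P$, and this is exactly where $s_2(q)$ enters, via a dependent, pathover-valued analogue of Lemma \ref{lem:twopathlemma}. The recursion principle is obtained either as the specialisation of induction to a constant motive or by the same two-stage quotient argument without pathovers, and the filler of the 2-path square stated in the theorem is read off from the chain of equivalences invoked.

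The main obstacle will be this last piece of coherence: verifying that the spoke-indexed family used to make $g$ descend to $D$ glues up to give exactly the 2-path filler stated in the theorem, and matching the propositional computation rule $\apd_f\br{r}_1 = s_1(r)$ through the extra $D$-inclusion (which is why that rule is only propositional, not definitional). All the raw ingredients — $\xi_t$, $s_2(q)$, and Lemma \ref{lem:twopathlemma} in both its nondependent and dependent forms — are in place, but chaining them to land on the precise target square requires the usual careful bookkeeping with pathovers, squares, and naturality of $\apfunc{[{-}]_0}$.
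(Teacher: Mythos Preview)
Your proposal is correct and follows essentially the same route as the paper: identical constructor definitions, the same use of Lemma~\ref{lem:twopathlemma} to produce $\br{q}_2$, and the same two-stage quotient induction (first over $C$, then circle-induction on the spoke parameter $x$, with the hub value given by a transport and the loop obligation discharged via $s_2$).

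One point worth flagging: the paper deliberately does \emph{not} obtain the recursion principle as the constant-motive instance of induction. The reason is that the induction principle, as stated, carries no 2-path computation rule, so specialising it would not yield the square filler for $\apfunc{\apfunc{g}}\br{q}_2$; that filler has to be established by a separate, more direct argument in which the hub value is simply $p_0(a)$ (rather than a transport), making the path algebra tractable. Your alternative ``same two-stage quotient argument without pathovers'' is exactly this, so you have the right idea, but be aware that the first option you list will not give you the 2-path square for free.
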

\begin{rmk}\mbox{}
  \begin{itemize}
    \item We do not prove a computation rule for the induction principle on 2-paths. Although we strongly expect this to be true, it will involve an elaborate computation. This computation rule is not necessary to define $\simpletwoquotient(A,R,Q)$ up to equivalence. If we had another type with these exact constructors, eliminators and computation rules, we can prove that it is equivalent to this one. Furthermore, in many examples of two-quotients we will 1-truncate the result, such as for Eilenberg-MacLane spaces and groupoid quotients (see \autoref{sec:eilenb-macl-spac}). After the 1-truncation, the computation rules on the 2-paths are automatic, since these 3-paths can be constructed just from the assumption that the type family is truncated.
    \item We do not define the recursion principle as a special case of the induction principle. We can define it is a much simpler way, so that we can compute its action on 2-paths more easily.
    \item We use overlines to denote elimination out of the inductive type $\words_R$. The exact type and definition of the overline depends on the type of the object we overline. 
    For example 
    $$\overline{\br{{-}}_1}: \words_R(a,a')\to\br{a}_0=\br{a'}_0$$ 
    is defined recursively by path concatenation and path inversion. In contrast
    $$\overline{s_1}: (t : \words_R(a,a'))\to s_0(a)=_{\overline{\br{t}_1}}^P s_0(a')$$ 
    is defined recursively by pathover concatenation and pathover inversion and 
    $$\overline{\iota_1}: (t : \words_R(a,a'))\to \apfunc{g}\overline{\br{t}_1}=\overline{p_1}(t)$$ is defined recursively by horizontal concatenation and horizontal inversion and by using the rules 
    $\apfunc{g}(p \cdot q)=\apfunc{g}(p)\cdot\apfunc{g}(q)$ and $\apfunc{g}(p\sy)=(\apfunc{g}(p))\sy$.
  \end{itemize}
\end{rmk}
\begin{proof}
  \textbf{Constructors.}\\
  We define for $a : A$ the point constructor 
  $$\br{a}_0\defeq [[\inl a]_0]_0:D$$ 
  and for $r : R(a,a')$ the 1-path constructor
  $$\br{r}_1\defeq\apfunc{[{-}]_0}[r]_1:\br{a}_0=\br{a'}_0$$ 
  from the path constructors of $C$.


  The 2-path constructor $\br{q}_2:\overline{\br{t}_1}=1$ for $q : Q(t)$ is
  defined as the concatenation
  $\overline{\br{t}_1}=\apfunc{[{-}]_0}\overline{[t]_1}=1.$ Here the first
  equality is by a general lemma about $\words_R$ that states that
  $\overline{\apfunc{f}(h(r))}=\apfunc{f}(\overline{h(r)})$. The second path
  uses \autoref{lem:twopathlemma} and is defined as
  $e([[\inr(a,t,q)]_0]_0,[q,{-}])$, where 
  $$[q,{-}]:[\circlerec([\inl a]_0,\overline{[t]_1},x)]_0\equiv [u_t(x)]_0=[[\inr(a,t,q)]_0]_0$$ 
  is the path constructor $[q,x]_1$ of $D$.\\ \mbox{} \\
  \textbf{Induction Principle.}\\
  For the induction principle, suppose given $P$, $s_0$, $s_1$ and $s_2$ as in the theorem statement. We first define $f_0 : (c : C) \to P[c]_0$ by induction on $c : C$. 
  We define $$f_0[\inl a]\defeq s_0(a)$$ and (denoting $b\defeq\base:\S^1$)
  $$f_0[\inr(a,t,q)]\defeq \transp^P([q,b]_1,s_0(a)).$$
  For the path constructor, we need to construct for $r : R(a,a')$ the pathover 
  $$\apd_{f_0}[r]_1:s_0(a)=_{[r]_1}^{P[{-}]_0}s_0(a').$$
  Here we can use $s_1$, and then apply the equivalence 
  $$y=_{\apfunc{f}(p)}^P y' \simeq y =_p^{P \circ f} y'.$$
  Note that this equivalence holds by reflexivity in a cubical type theory. In the remainder of this proof we will denote any occurrence of this and similar by a tilde for readability. So we define
  $$\apd_{f_0}[r]_1\defeqp\widetilde{s_1}(r).$$
  This defines $f_0$, which is $f$ applied to the point constructor of $D$, that is, $f[c]_0\defeq f_0(c)$. Now we need to define for $x:\S^1$ the pathover
  $$\apfunc{f}[q,x]_1:f_0(u_t(x))=_{[q,x]_1}^P 
  ([q,b]_1)_*(s_0a).$$
  We will fill this pathover by induction to $x$. For $x\equiv b$ we can constructor the resulting pathover by 
  $$1_{[q,b]_1}:\apfunc{f}[q,b]_1:s_0a=_{[q,b]_1}^P ([q,b]_1)_*(s_0a),$$
  where in general $1_p:y=_p^P p_*(y)$ can be easily defined by induction on $p$. When $x$ varies along loop, we need to construct a pathover between two pathovers and this corresponds to the following squareover. The bottom square is a square in $D$, namely the naturality square of $$[(q,{-})]_1:(x : \S^1) \to [u_t(x)]_0=[[\inr(a,t,q)]_0]_0$$ applied to the path $\lp$, and the top square is the squareover we need to fill.
  \begin{center}
    \begin{tikzpicture}[thick, node distance=3cm]
      \node (tl)       at (0,0) {$s_0(a)$};
      \node[right of = tl] (tr) {$[q,b]_{1*}(s_0(a))$};
      \node (bl)      at (0,-2) {$s_0(a)$};
      \node (br) at (tr |- bl)  {$[q,b]_{1*}(s_0(a))$};
      \path[every node/.style={font=\sffamily\small}]
      (tl) edge[double equal sign distance] node {$1_{[q,b]_1}$} (tr)
           edge[double equal sign distance] node[left]{$\apdtilde_{f_0\circ u_t}(\lp)$} (bl)
      (tr) edge[double equal sign distance] node {$\apdtilde_{\const_{[q,b]_{1*}(s_0(a))}}(\lp)$} (br)
      (bl) edge[double equal sign distance] node (b) {$1_{[q,b]_1}$} (br);
      \node (tl)       at (0,-4.5) {$\br{a}_0$};
      \node[right of = tl] (tr) {$[[\inr(a,t,q)]_0]_0$};
      \node (bl)      at (0,-6.5) {$\br{a}_0$};
      \node (br) at (tr |- bl)  {$[[\inr(a,t,q)]_0]_0$};
      \path[every node/.style={font=\sffamily\small}]
      (tl) edge[double equal sign distance] node (t) {$[q,b]_1$} (tr)
           edge[double equal sign distance] node[left]{$\apfunc{[u_t({-})]_0}(\lp)$} (bl)
      (tr) edge[double equal sign distance] node {$\apfunc{\const_{[[\inr(q)]_0]_0}}(\lp)$} (br)
      (bl) edge[double equal sign distance] node {$[q,b]_1$} (br)
      (1.3,-2.5)  edge[->] (1.3,-3.5);
    \end{tikzpicture}
  \end{center}
  We will first focus on the left side of the squareover. We compute
  \begin{align*}
    \apdtilde_{f_0\circ u_t}(\lp)
    &= \apdtilde_{f_0}(\apfunc{u_t}(\lp))\\
    &= \apdtilde_{f_0}(\overline{[t]_1})\\
    &= \widetilde{\overline{\widetilde{s_1}}(t)}\\
    &= \widetilde{\overline{s_1}(t)}\\
    &= \widetilde{(\br{q}_2\sy)_*1} && \text{(using $s_2$)} \\
    &\equiv\vcentcolon \widetilde{1}.
  \end{align*}
  Here with $\widetilde{1}$ we mean the pathover $1:s_0(a) =_{1_{\br{a}_0}}^{P}s_0(a)$ but transported along the path
  $$1_{\br{a}_0} \stackrel{\br{q}_2}= \overline{\br{t}_1} = \apfunc{[{-}]_0}\overline{[t]_1} = 
  \apfunc{[{-}]_0}(\apfunc{u_t}(\lp))=\apfunc{[u_t({-})]_0}(\lp).$$
  By unfolding the definition of $\br{q}_2$ this can be simplified to the following concatenation:
  $$1_{\br{a}_0} \stackrel{e}= \apfunc{[{-}]_0}\overline{[t]_1} = 
  \apfunc{[{-}]_0}(\apfunc{u_t}(\lp))=\apfunc{[u_t({-})]_0}(\lp).$$
  The right side of the squareover is easier to manipulate:
  $$\apdtilde_{\const_{[q,b]_{1*}(s_0(a))}}(\lp)=\widetilde{1},$$
  where in this case we mean the pathover $1:s_0(a) =_{1_{\br{a}_0}}^{P}s_0(a)$ transported along the path
  $$1_{\br{a}_0} = \apfunc{\const_{[[\inr(q)]_0]_0}}(\lp).$$
  Now in both the left and the right side these transports only act on the path they lie over. This means that we can ``push them down'' to the base square. 

  After we do that, we have a vertically degenerate squareover, and we only have to show that the square over which it lies is also vertically degenerate, which is a straightforward calculation. 

  This finishes the definition of $f$. The computation rule $f\br{a}_0\equiv s_0(a)$ follows directly from the computation rule for the quotient. Furthermore, we have 
  $$\apd_f\br{r}_1\equiv\apd_f\mapfunc{[{-}]_0}[r]_1=\apdtilde_{f\circ[{-}]_0}[r]_1\equiv\apdtilde_{f_0}[r]_1=s_1(r).$$
  \textbf{Recursion Principle.}\\
  For the recursion principle, suppose given $P, p_0, p_1, p_2$ as in the theorem statement. We first define $g_0 : C \to P$ by 
  \begin{align*}
    g_0[\inl a]_0&\defeq p_0(a)\\
    g_0[\inr (a,t,q)]_0&\defeq p_0(a)\\
    \apfunc{g_0}[r]_1&\defeqp p_1(r).
  \end{align*}
  We define $g:D\to P$ by $g[c]_0\defeq g_0(c)$ and then we need to define 
  $\apfunc{g}[q,x]_1 : g_0(u_t(x)) = p_0(a)$, which we do by induction to $x$. For $x\equiv b$, this can be done by reflexivity, so $\apfunc{g}[q,b]_1\defeqp 1_{p_0(a)}$. When $x$ varies over $\lp$, we need to fill the following square.
  \begin{center}
    \begin{tikzpicture}[thick, node distance=3cm]
      \node (tl)       at (0,0) {$p_0(a)$};
      \node[right of = tl] (tr) {$p_0(a)$};
      \node (bl)      at (0,-2) {$p_0(a)$};
      \node (br) at (tr |- bl)  {$p_0(a)$};
      \path[every node/.style={font=\sffamily\small}]
      (tl) edge[double equal sign distance] node {$1$} (tr)
           edge[double equal sign distance] node[left] {$\apfunc{g_0\circ u_t}(\lp)$} (bl)
      (tr) edge[double equal sign distance] node {$\apfunc{\const_{p_0(a)}}(\lp)$} (br)
      (bl) edge[double equal sign distance] node {$1$} (br);
    \end{tikzpicture}
  \end{center}
  This can be done by the following calculation.
  $$\apfunc{g_0\circ u_t}(\lp)=\apfunc{g_0}\apfunc{u_t}(\lp)=\apfunc{g_0}\overline{[t]_1}=
  \overline{p_1}(t)\stackrel{p_2}=1=\apfunc{\const_{p_0(a)}}(\lp).$$
  This completes the definition of $g$. The computation rule $g\br{a}_0\equiv p_0(a)$ follows from the computation rule for quotients on points. We can define the computation rule on paths as the composite
  $$\iota_1 : \apfunc{g}\br{r}_1 \equiv \apfunc{g}\apfunc{[{-}]_0}[r]_1 = \apfunc{g_0}[r]_1 = p_1(r).$$
  The fact that $g$ has the correct computation rule for 2-paths requires some complicated path algebra, which we will omit here.
\end{proof}

We can now define the general version of the 2-quotient, $\twoquotient(A,R,S)$, to be equal to
$\simpletwoquotient(A,R,Q)$ where $Q$ is the inductive family
\begin{inductive}
\texttt{inductive} $Q : \{a : A\} \to \words_R(a, a) \to \type \defeqp$ \\
$\bullet\ (a\ a' : A) \to (t\ t' : \words_R(a,a')) \to (s : S(t,t')) \to Q(t \cdot t\sy).$
\end{inductive}
We then show that $\twoquotient(A,R,S)$ and $\|\twoquotient(A,R,S)\|_n$ have the right elimination
principles and computation rules (it requires some work to show that the eliminator of the truncated 2-quotient has the right computation rules on 2-paths). 

This allows us to define all nonrecursive HITs with point, 1-path and 2-path constructors.
For example, we define the torus $T^2 := \twoquotient(\unit,R,S)$ where
$R(⋆,⋆) = \bool$ (giving two path constructors $p$ and $q$ from the basepoint to itself) and
$Q$ is generated by the constructor
$s_0 : S (\bfalse \cdot \btrue) (\btrue \cdot \bfalse)$, which determines a path $p \cdot q = q \cdot p$.
We also define the \emph{groupoid quotient}: For a groupoid $G$ we define its quotient as
$\|\twoquotient(G, \homm_G, S)\|_1$ where:
\begin{inductive}
\texttt{inductive} $S \defeqp$ \\
$\bullet\ (a\ b\ c : G) \to (g : \homm(b,c)) \to (f : \homm(a,b)) \to S (g \circ f) (f \cdot g)$
\end{inductive}
If $G$ is just a group (considered as a groupoid with a single object), then the groupoid quotient
of $G$ is exactly the Eilenberg-MacLane space $K(G,1)$. For more information, see \autoref{sec:eilenb-macl-spac}.

\section{Colimits}\label{sec:colimits}
\lstDeleteShortInline"

We can ask whether we can use the construction of \Cref{sec:prop-trunc} can be generalized to
construct other higher inductive types.\footnote{The work in this section is joint work with Egbert Rijke and Kristina Sojakova.} 
The general idea is that we can construct a recursive higher
inductive type as a sequential colimit of repeatedly applying a nonrecursive version of the
HIT. This does not work in general: if a constructor is infinitary, there is no reason why the type after $\omega$ many steps is the desired type. However, this does work for a general class of higher inductive types, the \emph{$\omega$-compact localizations}. In this section we will show various properties of colimits that are used in the proof of this fact. The full proof will appear in an upcoming preprint.

\begin{defn}
  Suppose given a type $A$, families $P, Q : A \to \type$ and $F : \{a : A\} \to P(a) \to Q(a)$.

  A type $X$ is \emph{$F$-local} if for all $a : A$ the map
  $$\psi_X(a)\defeq \lam{f}f \o F(a) : (Q(a) \to X) \to (P(a) \to X)$$
  is an equivalence.


  The \emph{$F$-localization} $L_FX$ or $LX$ of $X$ turns $X$ into a $F$-local type in a universal
  way. This means there is a map $\ell_X : X \to LX$ such that for any $F$-local type $Y$ there is an
  equivalence of maps $(LX \to Y) \to (X \to Y)$ given by precomposition with $\ell_X$. $L_FX$ can
  be given as a higher inductive type with the following constructors:
  \begin{lstlisting}[gobble=4]
    HIT L F X : Type :=
    | incl : X → L X
    | rinv : Π{a} (f : P a → L X), Q a → L X
    | isri : Π{a} (f : P a → L X) (x : P a), rinv f (F x) = f x
    | linv : Π{a} (f : P a → L X), Q a → L X
    | isli : Π{a} (f : Q a → L X) (x : Q a), linv (f ∘ F) x = f x.
  \end{lstlisting}
\end{defn}

For a sequence $(A_n,f_n)_n$ we denote the colimit by $\colim(A)$ or $A_\infty$. Also, for any type $X$, we can define a new sequence $(X\to A_n, f_n \o ({-}))_n$. Note that there is a canonical map
$$\xi_X : \colim(X\to A_n)\to (X\to A_\infty).$$
It is defined by $\xi_X(i_n(f))\defeq i_n\o f$ and $\xi_X(\kappa(f))\defeq \kappa_n\o f$, where $\kappa$ is the path constructor of the colimit.

\begin{defn}
  A type $X$ is said to be \emph{$\omega$-compact} if the map $\xi_X$ is an equivalence for all sequences $(A_n,f_n)_n$.
\end{defn}

Examples of $\omega$-compact types are the finite types. Moreover, the $\omega$-compact types are closed under dependent pair types and pushouts. A non-example of an $\omega$-compact type is $\N$. We will omit the details here.

\begin{thm}\label{thm:localization}
  Assume that for all $a : A$ the types $P(a)$ and $Q(a)$ are $\omega$-compact. Then we can
  construct the $F$-localization in MLTT$+$quotients.
\end{thm}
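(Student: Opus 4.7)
The plan is to mimic the construction of the propositional truncation in \autoref{sec:prop-trunc}: build the localization as a sequential colimit of a nonrecursive ``one-step'' approximation. Define a nonrecursive HIT $L^{(0)}(X)$ with the same list of constructors as $L_F X$, except that the arguments $f$ of $\mathsf{rinv}$ and $\mathsf{linv}$ are required to have codomain $X$ rather than $L^{(0)}(X)$. Since none of its constructors refers recursively to the type being defined, $L^{(0)}(X)$ is a nonrecursive 1-HIT, and can therefore be built from quotients using the two-quotient machinery of \autoref{sec:non-recursive-2}. Set $L_0 X \defeq X$ and $L_{n+1} X \defeq L^{(0)}(L_n X)$, take as connecting map $\sigma_n \defeq \mathsf{incl} : L_n X \to L_{n+1} X$, and define $L X \defeq \colim_n L_n X$ with colimit inclusions $\iota_n : L_n X \to L X$, so $\ell_X \defeq \iota_0 : X \to L X$.

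I would first verify that $L X$ is $F$-local. Fix $a : A$; it suffices to show that $\psi_{L X}(a) : (Q(a) \to L X) \to (P(a) \to L X)$ has both a left and a right inverse. By $\omega$-compactness of $P(a)$ and $Q(a)$, the maps $\xi$ are equivalences, so
$$(P(a) \to L X) \simeq \colim_n (P(a) \to L_n X), \qquad (Q(a) \to L X) \simeq \colim_n (Q(a) \to L_n X).$$
Given $g : P(a) \to L X$, $\omega$-compactness lets us factor $g$ as $\iota_n \circ g_n$ for some $n$ and some $g_n : P(a) \to L_n X$. Working inside $L_{n+1} X = L^{(0)}(L_n X)$, the constructor $\mathsf{rinv}(g_n) : Q(a) \to L_{n+1} X$ satisfies $\mathsf{rinv}(g_n) \circ F = \sigma_n \circ g_n$ by $\mathsf{isri}$. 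Therefore $\rho(g) \defeq \iota_{n+1} \circ \mathsf{rinv}(g_n)$ satisfies $\rho(g) \circ F = \iota_{n+1} \circ \sigma_n \circ g_n = \iota_n \circ g_n = g$ by the path constructor $\kappa_n$ of the colimit, so $\rho$ is a right inverse of $\psi_{L X}(a)$. A left inverse is produced from $\mathsf{linv}$ and $\mathsf{isli}$ by the symmetric argument.

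For the universal property, given an $F$-local type $Y$ and $g : X \to Y$, I would construct the extension $\tilde g : L X \to Y$ as a cocone $g_n : L_n X \to Y$. Set $g_0 \defeq g$ and define $g_{n+1} : L^{(0)}(L_n X) \to Y$ by recursion on the nonrecursive HIT: on $\mathsf{incl}$ it agrees with $g_n$; for $f : P(a) \to L_n X$, use that $\psi_Y(a)$ is an equivalence to obtain the unique $h_f : Q(a) \to Y$ with $h_f \circ F = g_n \circ f$, and set both $g_{n+1}(\mathsf{rinv}(f))(y)$ and $g_{n+1}(\mathsf{linv}(f))(y)$ to $h_f(y)$; the defining equation of $h_f$ makes the $\mathsf{isri}$ and $\mathsf{isli}$ paths hold on the nose. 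The universal property of the sequential colimit then yields $\tilde g$, and uniqueness follows by the same induction using $F$-locality of $Y$. This identifies $L X$ with $L_F X$.

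The main obstacle is the ``off-by-one'' structure of the nonrecursive approximation: the section/retraction data given by $\mathsf{rinv}$ and $\mathsf{linv}$ in $L^{(0)}(L_n X)$ lives one level up the tower from the function it is meant to invert, and only assembles into genuine inverses after passing to the colimit. The $\omega$-compactness hypothesis is exactly what allows this shifted data to be reabsorbed into a bona fide equivalence on $L X$; carefully tracking the naturality coherences between $\xi$, the colimit path constructors $\kappa_n$, and the recursors for the one-step HIT will be the most delicate part of the argument.
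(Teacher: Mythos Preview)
The paper does not actually prove this theorem: immediately after the statement it writes ``We will not prove this theorem here, but defer it to an upcoming preprint,'' and then develops only supporting machinery about sequential colimits (culminating in \autoref{thm:colim_sm} and its corollaries on identity types and fibers in colimits). So there is no proof in the paper to compare against, only an indication of which tools the eventual argument uses.

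Your overall strategy---iterate a nonrecursive one-step approximation and take the sequential colimit---is the intended one, and in your final paragraph you correctly name the real difficulty. But your sketch of $F$-locality presents as settled something that is not. When you write ``$\omega$-compactness lets us factor $g$ as $\iota_n \circ g_n$ \ldots\ therefore $\rho(g) \defeq \iota_{n+1} \circ \mathsf{rinv}(g_n)$,'' you have not defined a function $\rho$: the pair $(n,g_n)$ is only determined up to the colimit relation, and the one-step HIT contains no constructor identifying $\mathsf{rinv}(\sigma_n \circ g_n)$ with $\mathsf{incl}\circ\mathsf{rinv}(g_n)$ in $L_{n+2}X$. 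The same obstruction blocks $\mathsf{linv}$, so the ``symmetric argument'' inherits the same gap. What the constructors actually give you is a \emph{shifted} right inverse $\mathsf{rinv}_n$ with $\psi_{n+1}\circ\mathsf{rinv}_n \sim \sigma_n\circ({-})$ (from $\mathsf{isri}$) and a shifted left inverse $\mathsf{linv}_n$ with $\mathsf{linv}_n\circ\psi_n \sim \sigma_n\circ({-})$ (from $\mathsf{isli}$); neither is a natural transformation of sequences, so neither induces a map on colimits by functoriality alone. Turning this pair of shifted one-sided inverses into an honest equivalence on $LX$ is precisely the ``most delicate part'' you flag, and it is where the machinery the paper builds (especially \autoref{cor:eq_colim} characterising identity types in a sequential colimit) enters.

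A minor point: $L^{(0)}(X)$ is a nonrecursive $1$-HIT (only point and $1$-path constructors), so it is already a graph quotient; the two-quotient machinery of \autoref{sec:non-recursive-2} is not needed here.
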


We will not prove this theorem here, but defer it to an upcoming preprint. However, we will develop machinery here that is crucial to prove this theorem. In particular we prove that sigma-types commute with sequential colimits.

\subsubsection*{Type Sequences}

\begin{defn}
A \emph{type sequence} $\sequence{A}{f}$ consists of a diagram of the form
\begin{equation*}
\begin{tikzcd}
A_0 \arrow[r,"f_0"] & A_1 \arrow[r,"f_1"] & A_2 \arrow[r,"f_2"] & \cdots
\end{tikzcd}
\end{equation*}
Thus, the type of all sequences of types is
\begin{equation*}
\mathrm{Seq} \defeq (A:\nat\to\type)\times(n:\nat) \to A_n\to A_{n+1}
\end{equation*}
\end{defn}

Recall that the relation $\leq$ on the natural numbers is defined as an inductive family of types $\leq\mathop{:}\N\to\N\to\UU$ with
\begin{align*}
r & : (n:\N) \to n\leq n \\
s & : (n,m:\N)\to n\leq m \to n\leq m+1.
\end{align*}
It follows that $n\leq m$ is a a mere proposition for each $n,m:\N$.

\begin{defn}
Let $\sequence{A}{f}$ be a type sequence. For any $n,m:\nat$, we define
\begin{equation*}
f^{n\leq m} : A_n\to A_m.
\end{equation*}
where we leave the proof that $n\leq m$ implicit.
\end{defn}

\begin{proof}[Construction]
We define $f^{n\leq m}$ by induction on the proof that $n\leq m$ by taking
\begin{align*}
f^{n\leq n} & \defeq \idfunc[A_n] \\
f^{n\leq m+1} & \defeq f_m\circ f^{n\leq m} \qedhere
\end{align*}
\end{proof}

\begin{defn}
Let $\sequence{A}{f}$ be a type sequence. For any $n,k:\nat$, we define
$f_n^k:A_n\to A_{n+k}$ to be $f^{n\leq n+k}(p)$, where $p$ is the canonical proof that $n\leq n+k$.
\end{defn}

\begin{defn}
A \emph{sequence $\sequence{B}{g}$ of types over $\sequence{A}{f}$} consists of a diagram of the form
\begin{equation*}
\begin{tikzcd}
B_{0} \arrow[r,"g_0"] \arrow[d,->>] & B_{1} \arrow[r,"g_1"] \arrow[d,->>] & B_{2} \arrow[r,"g_2"] \arrow[d,->>] & \cdots \\
A_0 \arrow[r,"f_0"] & A_1 \arrow[r,"f_1"] & A_2 \arrow[r,"f_2"] & \cdots
\end{tikzcd}
\end{equation*}
where each $g_n$ has type $(a:A_n)\to B_n(x)\to B_{n+1}(f_n(a))$, implicitly rendering the
squares commutative.

We say that a sequence $\sequence{B}{g}$ over $\sequence{A}{f}$ is \emph{equifibered} if each $g_n$ is a family of equivalences.
\end{defn}

\begin{defn}
Let $\sequence{A}{f}$ and $\sequence{A'}{f'}$ be type sequences.
A \emph{natural transformation} $\sequence{A}{f}\to\sequence{A'}{f'}$
is a pair $\sequence{\tau}{H}$ consisting of a family of maps
\begin{equation*}
\tau : (n:\N) \to A_n \to A'_n
\end{equation*}
and a family $H_n$ of homotopies witnessing that the diagram
\begin{equation*}
\begin{tikzcd}
A_{0} \arrow[r,"f_0"] \arrow[d,"\tau_0"] & A_{1} \arrow[r,"f_1"] \arrow[d,"\tau_1"] & A_{2} \arrow[r,"f_2"] \arrow[d,"\tau_2"] & \cdots \\
A'_0 \arrow[r,"{f'_0}"] & A'_1 \arrow[r,"{f'_1}"] & A'_2 \arrow[r,"{f'_2}"] & \cdots
\end{tikzcd}
\end{equation*}
commutes.
\end{defn}

\begin{defn}
A \emph{natural equivalence} is a natural
transformation $(\tau,H)$ such that each $\tau_n$ is an equivalence. 
The type of natural equivalences from $\sequence{A}{f}$ to $\sequence{A'}{f'}$
is called $\mathsf{NatEq}(\sequence{A}{f},\sequence{A'}{f'})$.
\end{defn}

\begin{lem}
The canonical dependent function $\mathsf{idtonateq}$ 
\begin{equation*}
(\sequence{A}{f}=\sequence{A'}{f'})\to \mathsf{NatEq}(\sequence{A}{f},\sequence{A'}{f'})
\end{equation*}
that sends $\refl_{\sequence{A}{f}}$ to the identity natural transformation, is
an equivalence.
\end{lem}

\begin{proof}
Straightforward application of univalence.
\end{proof}

Every type sequence $\sequence{B}{g}$ over $\sequence{A}{f}$ gives rise to a natural transformation, by the following definition. 

\begin{defn}
Let $\sequence{B}{g}$ be a sequence over $\sequence{A}{f}$. Then we define the
sequence $\msm{\sequence{A}{f}}{\sequence{B}{g}}$ to consist of the diagram
\begin{equation*}
\begin{tikzcd}[column sep=large]
(a:A_0)\times B_0(a) \arrow[r,"\pairr{f_0,g_0}"] & (a:A_1)\times B_1(a) \arrow[r,"\pairr{f_1,g_1}"]
& (a:A_2)\times B_2(a) \arrow[r,"\pairr{f_2,g_2}"] & \cdots
\end{tikzcd}
\end{equation*}
where we take the usual definition
\begin{equation*}
\pairr{f_n,g_n} \defeq \lam{\pairr{a,b}}\pairr{f_n(a),g_n(a,b)}.
\end{equation*}
Furthermore, we define a natural transformation 
\begin{equation*}
\sequence{\pi}{\theta}:\msm{\sequence{A}{f}}{\sequence{B}{g}}\to \sequence{A}{f}
\end{equation*}
by taking 
\begin{align*}
\pi_n & \defeq \proj1 & & : ((a:A_n)\times B_n(a))\to A_n \\
\theta_n(a,b) & \defeq\refl_{f_n(a)} & & : f_n(\proj 1(a,b))= \proj 1(f_n(a),g_n(b)).
\end{align*}
\end{defn}




We will now look at the shift operation on type sequences, in particular to bring up subtleties that come up in the formalization of mathematics in homotopy type theory. The issue we face is that equality in the natural numbers is not always strict. For instance, when addition is defined by induction on the second argument, then $n+0$ is judgmentally equal to $n$, while $0+n$ is not. This implies that sometimes we might have to \emph{transport} along the equalities in the natural numbers (such as $n=0+n$), and this complicates the formalization process.

We define the shift operation.
\begin{defn}
For any type sequence $\sequence{A}{f}$ we define a new type sequence $(S(A),S(f))$ by taking
\begin{align*}
S(A)_n & \defeq A_{n+1} \\
S(f)_n & \defeq f_{n+1}.
\end{align*}
\end{defn}

Of course we can iterated the shift operation, defining a type sequence $(S^k(A),S^k(f))$ for every $k:\N$. However, while the type $S^k(A)_n$ is $A_{n+k}$, the function $S^k(f)_n$ is some function $A_{n+k}\to A_{(n+1)+k}$ that is not judgmentally equal to a function of the form $f_m$ for some $m:\N$. Therefore, we make an alternative definition of the $k$-shift that is different from $S^k$, the type sequence obtained from iterating the shift $S$.

\begin{defn}
Given a type sequence $(A,f)$, we define $S_k(A,f)\jdeq(S_k(A),S_k(f))$ to be the type sequence given by
\begin{align*}
S_k(A)_n & \defeq A_{k+n} \\
S_k(f)_n & \defeq f_{k+n}.
\end{align*}
Given a dependent sequence $(B,g)$ over $(A,f)$, we also define $S_k(B,g)\jdeq (S_k(B),S_k(g))$ by 
\begin{align*}
S_k(B)_n & \defeq B_{k+n} \\
S_k(g)_n & \defeq g_{k+n}.
\end{align*}
\end{defn}

Note that the sequence $(S_{k+1}(A),S_{k+1}(f))$ is not judgmentally equal to the sequence $S(S_k(A),S_k(f))$, since in general we do not have $(k+1)+n\jdeq (k+n)+1$. Therefore we have the following lemma.

\begin{lem}\label{lem:iterate_succ}
For any $k,n:\nat$ and $a : A_k$, one has $q_{k,n}(a):\dpath{A}{p(k,n)}{f_k^{n+1}(a)}{f_{k+1}^n(f_k(a))}$ where $p(k,n):(k+n)+1=(k+1)+n$ is the canonical path in $\nat$.
\end{lem}

\begin{proof}
By induction on $n:\N$.
\end{proof}

\begin{cor}
For any type sequence $\sequence{A}{f}$, the type sequence $(S_{k+1}(A),S_{k+1}(f))$ is naturally equivalent to the type sequence $(S(S_k(A)),S(S_k(f)))$. 
\end{cor}

\subsubsection*{Sequential Colimits}
\begin{rmk}
The induction principle for sequential colimits tells us how to construct a dependent function $f:(a:A_\infty)\to P(a)$ for a type family $P:A_\infty\to\type$. 

Given $s:(a:A_\infty)\to P(a)$, we get
\begin{align*}
\lam{n}{a} s(\iota_n(a)) & : (n:\N)(a:A_n)\to P(\iota_n(a)) \\
\lam{n}{a} \apd{s}{\kappa_n(a)} & : (n:\N)(a:A_n)\to s(\iota_n(a)) =_{\kappa_n(a)}^P s(\iota_{n+1}(f_n(a)))
\end{align*}
In other words, we have a canonical map
\begin{align*}
&\Big((a:A_\infty)\to P(a)\Big)\to\\
&\Big((h:(n:\N)(a:A_n)\to P(\iota_n(a)))\times(n:\N)(a:A_n)\to h_n(a) =_{\kappa_n(a)}^P h_{n+1}(f_n(a))\Big)
\end{align*}
Now we can state the induction principle and computation rule concisely: the canonical map described above comes equipped with a section. We assume that that the computation rule is strict on the point constructors.
\end{rmk}

The universal property of sequential colimits is a straightforward consequence of the induction principle.

\begin{thm}
Let $\sequence{A}{f}$ be a type sequence, and let $X$ be a type. Then the canonical map
\begin{equation*}
(A_\infty\to X)\to (h:(n:\N) \to A_n\to X)\times(n:\N)\to h_n\htpy h_{n+1}\circ f_n
\end{equation*}
is an equivalence.
\end{thm}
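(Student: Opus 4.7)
The plan is to derive this statement from the induction principle stated in the preceding remark, applied to the constant type family $P \defeq \lambda(a:A_\infty).\,X$. For this constant family, a pathover $h_n(a) =^P_{\kappa_n(a)} h_{n+1}(f_n(a))$ is equivalent to an ordinary path $h_n(a) = h_{n+1}(f_n(a))$, via the equivalence $(x =^{\lambda a.B}_p x') \simeq (x = x')$ recalled in \autoref{sec:more-paths}. After uncurrying $(n:\N)(a:A_n)\to X$ into $(n:\N)\to A_n\to X$ and unfolding the definition of homotopy, the domain of the induction principle matches precisely the right-hand side of the statement. This exhibits the canonical map as having a section, call it $\mathsf{rec}$, which sends $(h,H)$ to a map $\mathsf{rec}(h,H):A_\infty\to X$ satisfying $\mathsf{rec}(h,H)(\iota_n(a)) \equiv h_n(a)$ and $\apfunc{\mathsf{rec}(h,H)}(\kappa_n(a)) = H_n(a)$.

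Next I would show that the canonical map is also a retraction of $\mathsf{rec}$. Given $f : A_\infty \to X$, let $(h,H)$ be the image of $f$, so $h_n \defeq f\circ\iota_n$ and $H_n(a) \defeq \apfunc{f}(\kappa_n(a))$, and let $\tilde f \defeq \mathsf{rec}(h,H)$. It suffices to produce a pointwise equality $\tilde f \sim f$, since function extensionality then yields $\tilde f = f$ and hence that the image of $\tilde f$ agrees with $(h,H)$. To prove $\tilde f \sim f$ I would apply the induction principle a second time, to the type family $Q \defeq \lambda a.\,\tilde f(a) = f(a)$. On the point constructor $\iota_n(a)$ we have $\tilde f(\iota_n(a)) \equiv f(\iota_n(a))$ by the judgmental computation rule of $\mathsf{rec}$, so we may take $\refl$ as the point case.

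The main obstacle is the path component of this second induction: for each $n$ and $a:A_n$, we must supply a pathover in $Q$ over $\kappa_n(a)$ between the reflexivity proofs at the two endpoints. By the characterization of pathovers in a family of identity types (\autoref{sec:more-paths}), such a pathover corresponds to a square in $X$ whose horizontal sides are $\refl$ and whose vertical sides are $\apfunc{\tilde f}(\kappa_n(a))$ and $\apfunc{f}(\kappa_n(a))$. Since the path computation rule for $\mathsf{rec}$ supplies a propositional equality $\apfunc{\tilde f}(\kappa_n(a)) = H_n(a) \equiv \apfunc{f}(\kappa_n(a))$, this square fills. Assembling the point and path cases via the induction principle yields the required homotopy, and hence the equivalence.
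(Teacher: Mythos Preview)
Your argument is correct and is exactly the expansion the paper has in mind: the paper itself gives no proof beyond calling the theorem ``a straightforward consequence of the induction principle,'' and your two uses of induction (first with constant family to obtain $\mathsf{rec}$, then with $Q(a)\defeq \tilde f(a)=f(a)$ to establish the other round-trip) are the standard way to spell this out. One terminological slip: where you write ``the canonical map is also a retraction of $\mathsf{rec}$'' you mean that $\mathsf{rec}$ is a retraction of the canonical map (i.e.\ $\mathsf{rec}\circ\text{canonical}\sim\idfunc$), which is indeed what you then prove.
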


The following theorem is a descent theorem for sequential colimits.

\begin{thm}\label{thm:descent}
Consider a sequence $\sequence{A}{f}$. The type $A_\infty\to\UU$ is equivalent to the type of equifibered type sequences over $\sequence{A}{f}$.
\end{thm}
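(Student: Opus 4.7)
The plan is to build mutually inverse maps between $A_\infty \to \UU$ and the type
$$\eqf(A,f)\defeq \Bigl(B : (n{:}\N) \to A_n \to \UU\Bigr)\times \prd{n:\N}\prd{a:A_n} B_n(a)\simeq B_{n+1}(f_n(a))$$
of equifibered type sequences over $\sequence{A}{f}$, and check that the round-trips are homotopic to the identity. Univalence will be used in an essential way (to turn equivalences into paths in $\UU$) together with the universal property of $A_\infty$ proved just above.

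First I would construct the forward map $\Phi:(A_\infty\to\UU)\to\eqf(A,f)$. Given $P:A_\infty\to\UU$, set $B_n\defeq P\o\iota_n$, so $B_n:A_n\to\UU$, and define
$$g_n(a)\defeq \idtoeqv(\apfunc{P}(\kappa_n(a))):B_n(a)\simeq B_{n+1}(f_n(a)).$$
This is clearly a natural transformation of sequences and each $g_n(a)$ is an equivalence because it arises from a path via $\idtoeqv$.

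Next I would construct the backward map $\Psi:\eqf(A,f)\to(A_\infty\to\UU)$. Given $(B,g)$, univalence converts each equivalence $g_n(a)$ into a path $\ua(g_n(a)):B_n(a)=B_{n+1}(f_n(a))$. Packaging these pointwise paths via function extensionality yields a homotopy $h_n:B_n\htpy B_{n+1}\o f_n$. The pair $\sequence{B}{h}$ is exactly the kind of cocone data for which the universal property of sequential colimits (the theorem immediately above) gives a unique map $\Psi(B,g):A_\infty\to\UU$, characterized (up to the equivalence produced by the universal property) by $\Psi(B,g)\o\iota_n\sim B_n$ and $\apfunc{\Psi(B,g)}(\kappa_n(a))$ corresponding to $\ua(g_n(a))$.

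Now I would verify the two round-trips. For $\Phi\o\Psi\htpy\idfunc$: starting with $(B,g)$ and writing $P\defeq\Psi(B,g)$, the component $P\o\iota_n$ equals $B_n$ by the computation rule of the universal property, and
$$\idtoeqv\bigl(\apfunc{P}(\kappa_n(a))\bigr)=\idtoeqv(\ua(g_n(a)))=g_n(a),$$
using the computation rule on paths together with the fact that $\idtoeqv\o\ua\sim\idfunc$. Assembling these with function extensionality and the characterization of equality in $\eqf(A,f)$ (a sigma-type of families) gives the required equality. For $\Psi\o\Phi\htpy\idfunc$: given $P$, both $P$ and $\Psi(\Phi P)$ satisfy the cocone data $(P\o\iota_n,\,\apfunc{P}\o\kappa_n)$ (for the second, using $\ua\o\idtoeqv\sim\idfunc$), so by the uniqueness half of the universal property they are equal.

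The main obstacle is bookkeeping rather than conceptual: making sure that the two univalence round-trips ($\idtoeqv\o\ua$ and $\ua\o\idtoeqv$) combine correctly with function extensionality so that the forward path data constructed by $\Phi$ matches the input equivalences of $\Psi$ on the nose at the level of the sigma-type identity. Everything else reduces to the universal property together with the fact that the canonical cocone of $P$ itself is the cocone built by $\Psi$ from $\Phi(P)$.
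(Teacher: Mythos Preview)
Your approach is correct and is essentially the same as the paper's, just unpacked. The paper's proof is a two-line chain of equivalences: apply the universal property of $A_\infty$ with $X\defeq\UU$ to get
\[
(A_\infty\to\UU)\;\simeq\;\bigl(B:(n{:}\N)\to A_n\to\UU\bigr)\times\prd{n}B_n\htpy B_{n+1}\o f_n,
\]
and then apply univalence (plus function extensionality) fiberwise to turn each pointwise path $B_n(a)=B_{n+1}(f_n(a))$ into an equivalence. Since both steps are already known to be equivalences, their composite is an equivalence on the nose---no need to build $\Phi$, $\Psi$ and check round-trips by hand. Your explicit verification is exactly what underlies this composite, so nothing is wrong, merely longer than necessary.

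One small direction issue: since $\kappa_n(a):\iota_{n+1}(f_n(a))=\iota_n(a)$, the path $\apfunc{P}(\kappa_n(a))$ has type $B_{n+1}(f_n(a))=B_n(a)$, so $\idtoeqv$ of it does not have the type you wrote; you want $\kappa_n(a)^{-1}$ (or invert the resulting equivalence).
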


\begin{proof}
By the universal property of $A_\infty$ and by univalence we have
\begin{align*}
(A_\infty\to \UU) & \eqvsym (B:(n:\N)\to A_n\to \UU)\times (n:\N) \to B_n\htpy B_{n+1}\circ f_n \\
& \eqvsym (B:(n:\N) \to A_n\to \UU)\times(n:\N)(x:A_n)\to B_n(x)\eqvsym B_{n+1}(f_n(x))\qedhere
\end{align*}
\end{proof}

\begin{lem}\label{lem:seq_colim_functor}
  Suppose given a natural transformation $(\tau,H):\sequence{A}{f}\to\sequence{A'}{f'}$.
  \begin{enumerate}
    \item\label{part:seq_colim_functor} We get a function $\tfcolim(\tau,H)$ or $\tau_\infty:A_\infty\to A'_\infty$.
    \item\label{part:1_functoriality} The sequential colimit is 1-functorial. This means the following three things. If $(\sigma,K):\sequence{A'}{f'}\to\sequence{A''}{f''}$, then $(\tau\circ\sigma)_\infty\sim\tau_\infty\circ\sigma_\infty$. Moreover, $1_\infty\sim \idfunc$, where $1$ is the identity natural transformation. Lastly, if $(\tau',H'):\sequence{A}{f}\to\sequence{A'}{f'}$ and $q:(n:\N)\to \tau_n\sim\tau'_n$ and we can fill the following square for all $a:A_n$
    \begin{center}\begin{tikzcd}[column sep=25mm]
      \tau_{n+1}(f_na)
      \ar[r,equal,"{q_{n+1}(f_na)}"] 
      \ar[d,equal,"{H_n(a)}"] &
      \tau'_{n+1}(f_na)
      \ar[d,equal,"{H'_n(a)}"]\\
      f'_n(\tau_n(a))
      \ar[r,equal,"{\apfunc{f'_n}(q_n(a))}"] & 
      f'_n(\tau'_n(a))
    \end{tikzcd}\end{center}
    then $\tau_\infty\sim\tau'_\infty$.
    \item\label{part:functor_equivalence} If $\tau$ is a natural equivalence, then $\tau_\infty$ is an equivalence.
  \end{enumerate}
\end{lem}
\begin{proof}\mbox{}
  \begin{enumerate}
    \item 
    We define $\tau_\infty(\iota_n(a))\defeq\iota_n(\tau_n(a))$ and 
    $$\apfunc{\tau_\infty}(\kappa_n(a))\vcentcolon=\apfunc{\iota_{n+1}}(H(a))\cdot\kappa_n(\tau_n(a)):
    \iota_{n+1}(\tau_{n+1}(f_na))=\iota_n(\tau_n(a)).$$
    \item All three parts are by induction on the element of $A_\infty$, and all parts are straightforward.
    \item We define $(\tau_\infty)^{-1}\defeq(\tau^{-1})_\infty$ where $\tau^{-1}$ is the natural transformation by inverting $\tau_n$ for each $n$. Now we can check that this is really the inverse by using all three parts of the 1-functoriality.
    $$\tau_\infty^{-1}\circ\tau_\infty\sim (\tau^{-1}\circ\tau)_\infty\sim 1_\infty\sim\idfunc[A_\infty].$$
    For the second homotopy we need to show that we can fill a certain square, which is straightforward.
    The other composite is homotopic to the identity by a similar argument.\qedhere
  \end{enumerate}
\end{proof}

The following lemma states that $\iota_0$ is an equivalence if all maps in the sequence are an equivalence. We will have a more general result in \autoref{cor:trunc_colim}\ref{part:iota_is_trunc_conn}, but in that proof we will use some special cases of this lemma.
\begin{lem}\label{lem:equiv_equiseq}
  Suppose given a sequence $\sequence{A}{f}$ where $f_n$ is an equivalence for all $n$. 
  Then $\iota_0:A_0\to A_\infty$ is an equivalence.
\end{lem}
\begin{proof}
  First note that the map $f^{0\le n}:A_0\to A_n$ is an equivalence, which is an easy induction on the proof that $0\le n$, because $f^{0\le0}\jdeq\idfunc$ is an equivalence and $f^{0\le n+1}\jdeq f_n\circ f^{0\le n}$ is a composition of two equivalences.

  Also note that we have paths $\kappa^{n\le m}(a):\iota_m(f^{n\le m}(a))=\iota_n(a)$ for $a:A_n$.

  Now we define $\iota_0^{-1}:A_\infty\to A_0$ as 
  $$\iota_0^{-1}(\iota_n(a))\defeq (f^{0\le n})^{-1}(a)$$
  and we define
  $$\apfunc{\iota_0^{-1}}(\kappa_n(a)):(f^{0\le n})^{-1}(f_n^{-1}(f_n(a)))=(f^{0\le n})^{-1}(a)$$
  as $\apfunc{(f^{0\le n})^{-1}}(\ell_n(a))$, where $\ell_n(a):f_n^{-1}(f_n(a))$ is the canonical path.

  Now $\iota_0^{-1}\circ\iota_0\sim\idfunc$ is true by definition. To show that for $x:A_\infty$ we have 
  $p(x):\iota_0(\iota_0^{-1}(x))=x$, we use induction on $x$. If $x\jdeq\iota_n(a)$, we have
  \begin{align*}
    \iota_0(\iota_0^{-1}(\iota_n(a)))
    &\jdeq \iota_0((f^{0\le n})^{-1}(a))\\
    &=\iota_n(f^{0\le n}((f^{0\le n})^{-1}(a)))\\
    &=\iota_n(a).
  \end{align*}
  If we write $r^{0\le n}: f^{0\le n} \circ (f^{0\le n})^{-1}\sim\idfunc$ for the canonical homotopy, then we explicitly define $p(\iota_n(a))$ as
  $$p(\iota_n(a))\defeq(\kappa^{0\le n}((f^{0\le n})^{-1}(a)))^{-1}\cdot \apfunc{\iota_n}(r^{0\le n}(a)).$$

  If $x$ varies over $\kappa_n(a)$, then we need to fill the following square.
  \begin{center}\begin{tikzcd}[column sep=25mm]
    \iota_0(\iota_0^{-1}(\iota_{n+1}(f_na)))
    \ar[r,equal,"{p(\iota_{n+1}(f_na))}"] 
    \ar[d,equal,"{\mapfunc{\iota_0\circ\iota_0^{-1}}(\kappa_n(a))}"] &
    \iota_{n+1}(f_na)
    \ar[d,equal,"{\kappa_n(a)}"]\\
    \iota_0(\iota_0^{-1}(\iota_n(a)))
    \ar[r,equal,"{p(\iota_n(a))}"] & 
    \iota_n(a)
  \end{tikzcd}\end{center}
  If we unfold the definitions of $\iota_0^{-1}$ and $p$, we can fill this as the horizontal concatenation of the following two squares (where we have left out some arguments to the paths)
  \begin{center}\begin{tikzcd}[column sep=15mm]
    \iota_0((f^{0\le n})^{-1}(f_n^{-1}(f_na)))
    \ar[r,equal,"{(\kappa^{0\le n})^{-1}}"] 
    \ar[d,equal,"{\mapfunc{\iota_0\circ(f^{0\le n})^{-1}}(\ell)}"] &
    \iota_n(f^{0\le n}((f^{0\le n})^{-1}(f_n^{-1}(f_na))))
    \ar[d,equal,"{\mapfunc{\iota_n\circ f^{0\le n}\circ (f^{0\le n})^{-1}}(\ell)}"] \\
    \iota_0((f^{0\le n})^{-1}(a))
    \ar[r,equal,"{(\kappa^{0\le n})^{-1}}"] & 
    \iota_n(f^{0\le n}((f^{0\le n})^{-1}(a)))
  \end{tikzcd}\end{center}
  \begin{center}\begin{tikzcd}[column sep=15mm]
    \iota_n(f^{0\le n}((f^{0\le n})^{-1}(f_n^{-1}(f_na))))
    \ar[rr,equal,"{\kappa^{-1}\cdot\apfunc{\iota_{n+1}}(\apfunc{f}(r^{0\le n})\cdot r)}"] 
    \ar[dd,equal,"{\mapfunc{\iota_n\circ f^{0\le n}\circ (f^{0\le n})^{-1}}(\ell)}"] 
    \ar[rd,equal,"{\mapfunc{\iota_n}(r^{0\le n})}"] & &
    \iota_{n+1}(f_na)
    \ar[dd,equal,"{\kappa}"]\\
    & \iota_0(f_n^{-1}(f_na))
    \ar[rd,equal,"{\mapfunc{\iota_n}(\ell)}"] & \\
    \iota_n(f^{0\le n}((f^{0\le n})^{-1}(a)))
    \ar[rr,equal,"{\apfunc{\iota_n}(r^{0\le n})}"] & &
    \iota_n(a)
  \end{tikzcd}\end{center}
  The first square is a naturality square, as is the bottom-left part of the second square. 
  We can use the triangle equalities of $f$ to rewrite the $r$ in the top part to $\apfunc{f}(\ell)$. After doing that, the top-right square becomes the following naturality square.

  \begin{center}\begin{tikzcd}[column sep=20mm]
    \iota_n(f_n(f^{0\le n}((f^{0\le n})^{-1}(f_n^{-1}(f_na)))))
    \ar[r,equal,"{\apfunc{\iota_{n+1}\circ f}(r^{0\le n}\cdot \ell)}"] 
    \ar[d,equal,"{\kappa}"] &
    \iota_{n+1}(f_na)
    \ar[d,equal,"{\kappa}"]\\
    \iota_0(f_n^{-1}(f_na))
    \ar[r,equal,"{\mapfunc{\iota_n}(r^{0\le n}\cdot\ell)}"] &
    \iota_n(a)
  \end{tikzcd}\end{center}
\end{proof}

\begin{lem}\label{lem:colim_shift_one}
For any type sequence $(A,f)$, the colimits of $(A,f)$ and $S(A,f)$ are equivalent.
\end{lem}

\begin{proof}
We construct a map $\varphi:A_\infty \to S(A)_\infty$ by induction on $A_\infty$, by taking
\begin{align*}
(x:A_n) & \mapsto \iota^{S(A),S(f)}_n(f_n(x)) \\
(x:A_n) & \mapsto \kappa^{S(A),S(f)}_n(f_n(x)).
\end{align*}

Next, we construct a map $\psi:S(A)_\infty\to A_\infty$ by induction on $S(A)_\infty$, by taking
\begin{align*}
(x:S(A)_n) & \mapsto \iota^{A,f}_{n+1}(x) \\
(x:S(A)_n) & \mapsto \kappa^{A,f}_{n+1}(x).
\end{align*}

Then we prove that $\psi\circ \varphi\htpy \idfunc$ by induction on $A_\infty$, by taking
\begin{align*}
(x:A_n) & \mapsto \kappa^{A,f}_n(x)
\end{align*}
Now we compute
\begin{align*}
\mathsf{ap}_{\psi\circ\varphi}(\kappa^{A,f}_n(x))) & = \mathsf{ap}_\psi(\mathsf{ap}_\varphi(\kappa^{A,f}_n(x))) \\
& = \mathsf{ap}_\psi(\kappa^{S(A),S(f)}_n(f_n(x))) \\
& = \kappa^{A,f}_{n+1}(f_n(x))
\end{align*}
from the computation rules of $A_\infty$ and $S(A)_\infty$.

We construct the homotopy $\varphi\circ\psi\htpy\idfunc$ by induction on $A_\infty$, by taking
\begin{align*}
(x:S(A)_n) & \mapsto \kappa_{n+1}(S(f)_n(x))
\end{align*}
Now we compute
\begin{align*}
\mathsf{ap}_{\varphi\circ\psi}(\kappa^{S(A),S(f)}_n(x)) & = \mathsf{ap}_\varphi(\mathsf{ap}_\psi(\kappa^{S(A),S(f)}_n(x))) \\
& = \mathsf{ap}_\varphi(\kappa^{A,f}_{n+1}(x)) \\
& = \kappa^{S(A),S(f)}_{n+1}(f_{n+1}(x)).\qedhere
\end{align*}
\end{proof}

\begin{lem}\label{lem:colim_shift_k}
For any type sequence $(A,f)$, we have an equivalence
\begin{equation*}
\kshiftequiv_{k} : \tfcolim(A,f)\eqvsym\tfcolim (S_k(A,f)).
\end{equation*}
\end{lem}

The shift operations and the corresponding equivalences on the sequential colimits can be used to turn an arbitrary sequence $\sequence{B}{g}$ over $\sequence{A}{f}$ into an equifibered sequence over $\sequence{A}{f}$. 

\begin{defn}
Given a dependent sequence $(B,g)$ over $(A,f)$ and $x:A_0$, we define a type sequence $(B[x],g[x])$ by
\begin{align*}
B[x]_n & \defeq B_n(f^n(x)) \\
g[x]_n & \defeq g_n(f^n(x),\blank).
\end{align*}
\end{defn}

\begin{defn}
Given any sequence $\sequence{B}{g}$ over $\sequence{A}{f}$, we define an equifibered sequence
$\sequence{\square B}{\square g}$ over the sequence $\sequence{A}{f}$.
\end{defn}

\begin{constr}
For $x:A_n$ we define
\begin{equation*}
(\square B)_n(x) \defeq S_n(B)[x]_{\infty} \jdeq \tfcolim_m(B_{n+m}(f^m(x))).
\end{equation*}
Now note that
\begin{align*}
  (\square B)_{n+1}(f(x)) & \jdeq \tfcolim_m(B_{(n+1)+m}(f^m(f(x))) \\
  & \simeq \tfcolim_m(B_{n+(m+1)}(f^{m+1}(x))\\
  & \simeq \tfcolim_m(B_{n+m}(f^m(x))\\
  & \jdeq (\square B)_n(x)
\end{align*}
The first equivalence $u_{n,m}$ is given by transporting along the dependent path in \autoref{lem:iterate_succ} in the family $B$. This forms a natural equivalence, because $\mathsf{transport}$ is natural. The second equivalence is given by applying \autoref{lem:colim_shift_one}. We call the composite equivalence $F$, which shows that $\square B$ is an equifibered sequence.
\end{constr}

\begin{defn}
Let $\sequence{B}{g}$ be a sequence over $\sequence{A}{f}$. Then we define
\begin{equation*}
B_\infty : A_\infty\to\UU
\end{equation*}
to be the family over $A_\infty$ associated to the equifibered sequence $(\square B,\square g)$ via the equivalence of \autoref{thm:descent}.
\end{defn}

By construction of $B_\infty$ we get the equality 
$$r(y):\transfib{B_\infty}{\kappa_n(x)}{y}=F(y)$$ 
for $y:B_\infty(\iota_{n+1}(f_n(x)))$ witnessing that $B_\infty$ is defined by the equivalence $F$ on the path constructor. 

We now state our main result, which could be seen as a flattening lemma for sequential colimits,
with the added generality that the sequence $\sequence{B}{g}$ over
$\sequence{A}{f}$ is not required to be equifibered. 

\begin{thm}\label{thm:colim_sm}
  Let $P\defeq\sequence{P}{f}$ be a sequence over $A\defeq\sequence{A}{a}$. Then we have a
  commuting triangle
  \begin{equation*}
  \begin{tikzcd}[column sep=huge]
  \tfcolim(\msm{A}{P}) \arrow[rr,"{\alpha}"] \arrow[dr,swap,"{p\defeq\mathsf{rec}(\iota_n\circ \proj 1,\blank)}"]
  & & (x:A_\infty)\times P_\infty(x) \arrow[dl,"{\proj 1}"] \\
  & A_\infty
  \end{tikzcd}
  \end{equation*}
  in which $\alpha$ is an equivalence.
  \end{thm}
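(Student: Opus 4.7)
My plan is to define $\alpha$ directly by the recursion principle of $\tfcolim(\msm{A}{P})$, exhibit an explicit two-sided inverse $\beta$ using the descent presentation of $P_\infty$, and verify both composites. The key observation is that by construction $P_\infty(\iota_n(a)) \jdeq (\square P)_n(a) \jdeq \tfcolim_m P_{n+m}(a_n^m(a))$ is itself a sequential colimit, and the transport $\transp^{P_\infty}$ along $\kappa_n(a)$ is the equivalence $F$ witnessed by $r$. Both the map and its inverse can therefore be phrased uniformly in terms of the canonical inclusions of the inner and outer colimits.

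To define $\alpha$, I send the point constructor $\iota_n(a,p)$ to $(\iota_n(a), \iota_0(p))$, where this inner $\iota_0$ is the inclusion $P_n(a) \to (\square P)_n(a)$. For the path constructor $\kappa_n(a,p)$, the first coordinate is $\kappa_n(a)$; for the dependent second coordinate, the equation $r$ reduces the required pathover to an equation $F(\iota_0(f_n(a,p))) = \iota_0(p)$ in $(\square P)_n(a)$, and unfolding $F$ via \autoref{lem:colim_shift_one} and \autoref{lem:iterate_succ} shows that this is exactly the path constructor of the inner colimit at level $0$ (applied after the shift/reindexing bookkeeping). Commutativity of the triangle $\proj1 \circ \alpha = p$ is essentially by definition on the point constructor and a short calculation on the path constructor.

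For the inverse $\beta \colon ((x{:}A_\infty) \times P_\infty(x)) \to \tfcolim(\msm{A}{P})$, I use the sigma-type pairing followed by induction on $x:A_\infty$. For $x \jdeq \iota_n(a)$, I define a map $(\square P)_n(a) \to \tfcolim(\msm{A}{P})$ by recursion on the inner colimit: the inner point $\iota_m(q)$ with $q : P_{n+m}(a_n^m(a))$ goes to $\iota_{n+m}(a_n^m(a), q)$, while the inner path constructor $\kappa_m(q)$ is handled using the path constructor $\kappa_{n+m}(a_n^m(a), q)$ of $\tfcolim(\msm{A}{P})$ together with the reindexing of \autoref{lem:iterate_succ}. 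When $x$ varies along $\kappa_n(a)$, the required pathover is again mediated by $r$, and checking that $F$ intertwines my $\beta$-rule at level $n+1$ with the one at level $n$ reduces, via the same shift lemmas and the 1-functoriality of \autoref{lem:seq_colim_functor}, to compatibility of $\iota_{\bullet}$ with the outer path constructors.

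The composite $\beta \circ \alpha$ on $\iota_n(a,p)$ definitionally equals $\iota_n(a,p)$ and the path constructor case is a naturality check; for $\alpha \circ \beta$, the inner-point case $(\iota_n(a), \iota_m(q))$ yields $(\iota_{n+m}(a_n^m(a)), \iota_0(q))$, which matches $(\iota_n(a), \iota_m(q))$ by $\kappa^{n \leq n+m}$ in the first coordinate and by coherence with the inner path constructors in the second. The main obstacle will be the two-dimensional bookkeeping on the path constructor cases: every such case produces a squareover over a naturality square in $A_\infty$, and since $P_\infty$ is built from both the shift equivalences (\autoref{lem:colim_shift_one}, \autoref{lem:colim_shift_k}) and the reindexing pathover of \autoref{lem:iterate_succ}, these squareovers decompose into compositions of standard coherences that are long but routine to fill using the machinery developed earlier in the section.
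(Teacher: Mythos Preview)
Your definitions of $\alpha$ and $\beta$, and your argument for $\beta\circ\alpha\sim\idfunc$, coincide with the paper's. The difference is in how you handle $\alpha\circ\beta\sim\idfunc$. You propose a direct double induction: at $(\iota_n(a),\iota_m(q))$ you get a non-reflexivity witness built from $\kappa^{n\le n+m}$, and you then have to check compatibility with the inner path constructor, the outer path constructor, and their interaction (a cube). The paper instead first proves a separate induction principle for $(x:A_\infty)\times P_\infty(x)$ (Theorem~\ref{thm:sigma-colim-induction}) that mimics the 1-HIT elimination rule of $\tfcolim(\msm{A}{P})$: to give a section it suffices to supply data at the ``generators'' $(\iota_n(x),\iota_0(y))$ and over a single path $\kappa'_n(x,y)$. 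With this principle in hand, $\alpha\circ\beta\sim\idfunc$ becomes reflexivity on points and a short computation on $\kappa'$. The real work is shifted into the induction principle itself, but there the paper exploits a freedom you do not have: the point-path datum $\nu$ is \emph{defined} by induction on the inner index precisely so that the path-path squareover fills automatically (an open-box composition), turning a verification into a definition. Your direct route should go through, but the cube you defer as ``long but routine'' is exactly the coherence the paper's detour is designed to sidestep; if you pursue your approach, that cube is where all the content lives, and you should not expect it to factor cleanly into off-the-shelf naturality lemmas.
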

  
  The strategy of the proof is to first show that $(x:A_\infty)\times P_\infty(x)$ has the induction principle of $\colim((x:A_n)\times P_n(x),(a_n,f_n))_n$. This simplifies giving the equivalence, because 
  $(x:A_\infty)\times P_\infty(x)$ is a 2-HIT, being a sigma-type of two 1-HITs, while $\colim((x:A_n)\times P_n(x),(a_n,f_n))_n$ is a 1-HIT. Before we continue, we first define $\alpha$.
  
  The map $\alpha$ is defined by induction on $\tfcolim(\msm{A}{P})$. On the point constructors we define
  \begin{equation*}
    \alpha(\iota_n(x,y))\defeq \pairr{\iota_n(x),\iota_0(y)}.
  \end{equation*}
  For the path constructor we need to define 
  $$\kappa'_n(x,y):(\iota_{n+1}(a_nx),\iota_0(f_n(x,y)))=(\iota_n(x),\iota_0(y))$$
  The first components are equal by $\kappa_n(x)$. By the definition of $P_\infty$, transporting along $\kappa_n(x)$ takes $\iota_0(f_n(x,y))$ to $\iota_1(f_n(x,y))$, which is equal to $\iota_0(y)$ by $\kappa_0(y)$. Explicitly, we define
  $$\apfunc{\alpha}(\kappa_n(x,y))\vcentcolon=\kappa'_n(x,y)\defeq (\kappa_n(x),r(\iota_0(f_n(x,y)))\cdot\kappa_0(y)).$$
  
  \begin{thm}\label{thm:sigma-colim-induction}
    Let $E: (x:A_{\infty})\to P_{\infty}(x) \to \UU$ such that
    \begin{enumerate}
    \item For each $n:\N$, $x:A_n$, $y:P_n(x)$, a term $e_n(x,y):E(\iota_n(x),\iota_0(y))$. 
    \item For each $n:\N$, $x:A_n$, $y:P_n(x)$, a path
    \begin{equation*}
      w_{n}(x,y):e_{n+1}(a_nx,f_n(x,y))=_{\kappa'_n(x,y)}^Ee_n(x,y). 
    \end{equation*}
    \end{enumerate}
    Then there exists a function $s:(x:A_\infty)(y:P_\infty(x))\to E(y)$. 
    \end{thm}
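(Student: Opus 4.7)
The plan is to construct $s$ by two nested sequential colimit inductions: an outer induction on $x:A_\infty$ producing, for each $x$, a section $s_x : (y:P_\infty(x)) \to E(x,y)$; and an inner induction on $y$, using the fact that $P_\infty(\iota_n(x)) \jdeq (\square P)_n(x) \jdeq \tfcolim_m P_{n+m}(a^m x)$ is itself a sequential colimit.

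First I would handle the outer point case $x \equiv \iota_n(x')$. Here I need to build $s_{\iota_n(x')} : (y:(\square P)_n(x')) \to E(\iota_n(x'), y)$ by induction on the inner colimit. On inner points $y \equiv \iota^\square_m(y')$ with $y' : P_{n+m}(a^m x')$, the given datum $e_{n+m}(a^m x', y')$ inhabits $E(\iota_{n+m}(a^m x'), \iota^\square_0(y'))$, so I need to transport it along the canonical equality $\iota_{n+m}(a^m x') = \iota_n(x')$ obtained by composing $m$ copies of $\kappa$ in $A_\infty$. By the construction of $P_\infty$ from the equifibered sequence $\square P$ (in particular the equation $r$), the transport of $\iota^\square_0(y')$ along this composite path is definitionally/propositionally equal to $\iota^\square_m(y')$, so the transported term lives (after a straightforward rewrite) in the required fiber $E(\iota_n(x'), \iota^\square_m(y'))$. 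On inner paths $\kappa^\square_m(y')$, the required pathover is obtained from $w_{n+m}(a^m x', y')$ by the same transport, using the naturality of $r$ with respect to the inner $\kappa^\square$.

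For the outer path case, I need to produce a pathover $s_{\iota_{n+1}(a_n x')} =^{\lam{x}(y:P_\infty(x))\to E(x,y)}_{\kappa_n(x')} s_{\iota_n(x')}$. By function extensionality for pathovers (the special case from the end of \autoref{sec:more-paths}), this reduces to giving, for each $y_0 : P_\infty(\iota_{n+1}(a_n x'))$, a pathover in $E$ over $(\kappa_n(x'), q)$ where $q$ is the canonical path between $y_0$ and its $\kappa_n(x')_*$-image. Under the identification $(\square P)_{n+1}(a_n x') \simeq (\square P)_n(x')$ coming from $F$, the element $\iota^\square_m(y')$ on the left corresponds to $\iota^\square_{m+1}(y')$ on the right, and the required square reduces (after unfolding $\kappa'_n$) to a composite of $\kappa^\square_0$ and the equation $r$; this is exactly the form of $\kappa'_{n+m}(a^m x', y')$, so $w_{n+m}(a^m x', y')$ (again transported appropriately) supplies the required pathover. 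Matching this construction on inner paths is a coherence check between two instances of $w$, which holds because both are defined from the same underlying datum.

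The main obstacle is bookkeeping the transports: every step of the inner induction introduces a transport along a $\kappa$ in $A_\infty$ together with the corresponding equation $r$ coming from the definition of $P_\infty$ via $\square$, and the outer path case requires these transports for consecutive indices $n$ and $n+1$ to agree modulo the shift equivalence $F$. In practice I would isolate a lemma expressing $\kappa'_n(x,y)$ as the canonical composite of an outer $\kappa_n(x)$, an instance of $r$, and an inner $\kappa^\square_0(y)$, and prove the pathover computations relative to this decomposition; once that lemma is in place, both inductive cases reduce to direct applications of the hypothesized $e$ and $w$ under explicit transports.
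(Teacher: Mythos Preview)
Your overall plan---a double colimit induction with four cases---matches the paper's. But two of the four cases are not as you describe, and the one you dismiss is the whole difficulty.

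First, the outer path case (what the paper calls \emph{path-point}) does \emph{not} use $w_{n+m}$. At the point $\iota^\square_m(y')$ you are comparing the value at level $(n{+}1,m)$, which is a transport of $e_{(n+1)+m}(a^m(a_n x'),y')$, with the value at level $(n,m{+}1)$, a transport of $e_{n+(m+1)}(a^{m+1}x',u_{n,m}(y'))$. Since $(n{+}1)+m = n+(m{+}1)$, these are the \emph{same} instance of $e$ up to the $u_{n,m}$ coercion; no $w$ enters. The required pathover is pure transport bookkeeping. The paper avoids even this bookkeeping by defining the point-point value $g_\ast(k{+}1,n,x,-)$ \emph{recursively} as a transport of $g_\ast(k,n{+}1,a_n x,-)$ along a path $\kappa^\ast_{n,k}$; the path-point filler $\mu_{n,k}$ then comes for free as the equation witnessing that definition.

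Second, and more seriously, the \emph{path-path} case is not ``a coherence check between two instances of $w$ which holds because both are defined from the same underlying datum''. This is a genuine 3-dimensional filling problem: you must show that the point-path filler at $(n{+}1,m)$ and the point-path filler at $(n,m{+}1)$ cohere over a naturality square. If you define the point-path fillers directly as transported $w$'s, this coherence does not follow from anything given---there is no hypothesis relating $w$ at adjacent stages. The paper's key maneuver is to \emph{not} prove this: instead it sets up the required path-path squareover, observes that three of its four sides (top $\mu$, bottom $\mu$, left $\nu(k)$) are already defined while the right side is $\nu(k{+}1)$, and \emph{defines} $\nu(k{+}1)$ as the composite of that open box. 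The squareover then fills automatically, and induction on $k$ (anchored by $w$ at $k=0$) builds all of $\nu$. Without this simultaneous definition of the point-path data and the path-path filler, your outline has no mechanism to close the last case.
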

    
  \begin{proof}
    We define the function $s$ by induction on both $x$ and $y$. We need to consider four cases, since both $x$ and $y$ can be a point constructor or vary over a path constructor.
  
  \emph{(point-point)}
  Fix $x:A_n$, we first define $g(n,x):(p:P_{\infty}(\iota_n(x))) \to E(\iota_n(x),p)$. To obtain $g(n,x)$, we do induction on $p:P_\infty(\iota_n(x))$. Fix $y:P_{n+k}(a_n^k(x))$, we need to construct a term of type $g_\ast(k,n,x,y) : E(\iota_n(x),\iota_k(y))$. Proceed by induction on $k$. We can define 
  $$g_\ast(0,n,x,y)\defeq e_n(x,y) : E(\iota_n(x),\iota_0(y)).$$ 
  Assume that $g_\ast(k)$ is defined. 
  We need to define $g_\ast(k+1,n,x,y):E(\iota_n(x),\iota_{k+1}(y))$, where $y:P(n+(k+1),a_n^{k+1}(x))$. However, the type of $y$ is equivalent to the type $P((n+1)+k,a_{n+1}^k(a_n(x)))$ via the equivalence $u_{n,k}$. Therefore, it suffices to define for $z:P_{(n+1)+k}(a_{n+1}^k(a_n(x)))$
  \begin{equation*} 
  g_\ast(k+1,n,x,u_{n,k}(z)) : E(\iota_n(x),\iota_{k+1}(u_{n,k}(z))).
  \end{equation*}
  
  By induction hypothesis we have $g_\ast(k,n+1,a_n(x),z):E(\iota_{n+1}(a_n(x)),\iota_k(z))$, so it suffices to show that
  
  $$\kappa^\ast_{n,k}(x,z):(\iota_{n+1}(a_n(x)),\iota_k(z))=(\iota_n(x),\iota_{k+1}(u_{n,k}(z))).$$ 
  This construction is similar to that of $\kappa'_n(x,y)$.
  The first components are equal by $\kappa_n(x)$, and for the second components we need to show that $\transfib{P_\infty}{\kappa_n(x)}{\iota_k(z)}=\iota_{k+1}(u_{n,k}(z))$. This follows from the computation rule of $P_\infty$ on paths, since the equivalence used to define $P_\infty$ sends $\iota_k(z)$ to $\iota_{k+1}(u_{n,k}(z))$. Specifically,
  $$\kappa^\ast_{n,k}(x,z)\defeq(\kappa_n(x),r(\iota_k(y))).$$
  This finishes the construction of $g_\ast$, hence also of $g$ on points. By construction, we get the following equation: 
  $$\mu_{n,k}(x,z):\dpath{E}{\kappa^\ast_{n,k}(x,z)}{g_\ast(k,n+1,a_n(x),z)}{g_\ast(k+1,n,x,u_{n,k}(z))}$$
  
  \emph{(point-path)} To define $g$ on paths $\kappa_k(y):\iota_{k+1}(f(y))=\iota_k(y)$, we need to give a dependent path
  $$\nu(k,n,x,y):g_\ast(k+1,n,x,f(y))=^{E(\iota_n(x))}_{\kappa_k(y)}g_\ast(k,n,x,y).$$
  We do this by induction on $k$. For $k=0$ note that $u_{n,0}$ is the identity function, and the goal definitionally reduces to
  $$\nu(k,n,x,y):\transfib{E}{\kappa^\ast_{n,0}(x,f_n(x,y)}{e_{n+1}(a_n(x),f_n(x,y)}=^{E(\iota_n(x))}_{\kappa_0(y)}e_n(x,y).$$
  Note that $\kappa'_n(x,y)=\kappa^\ast_{n,0}(f_n(x,y)\cdot(1,\kappa_0(y))$, which means we get this from $w_n(x,y)$. Now suppose that $\nu(k)$ is defined. We need to define for $y:P(n+(k+1),a_n^{k+1}(x))$
  $$\nu(k+1,n,x,y):g_\ast(k+2,n,x,f(y))=^{E(\iota_n(x))}_{\kappa_k(y)}g_\ast(k+1,n,x,y).$$
  Now we again write $y=u_{n,k}(z)$ for $z:P((n+1)+k,a_{n+1}^k(a_n(x)))$ and we equivalently need to give
  $$\nu(k+1,n,x,u_{n,k}(z)):g_\ast(k+2,n,x,f(u_{n,k}(z)))=^{E}_{(1,\kappa_k(y))}g_\ast(k+1,n,x,u_{n,k}z).$$
  We will define this as the composition of a square that we will give later in the proof.
  
  \emph{(path-point)} We have defined $s$ on points constructors of $A_\infty$. To define it on the path $\kappa_n(x):\iota_{n+1}(a_n(x))=\iota_n(x)$ we need a path $g(n+1,a_n(x))=g(n,x)$ over $\kappa_n(x)$. By function extensionality, we can characterize dependent paths in a function type, which means we need to show:
  \begin{equation*}
  (p:P_\infty(\iota_{n+1}(a_n(x))))\to g(n+1,a_n(x),p) =^E_{(\kappa_n(x),1)} g(n,x,\transfib{P_\infty}{\kappa_n(x)}{p}).
  \end{equation*}
  Now for $p:P_\infty(\iota_{n+1}(a_n(x)))$, we can apply the path $r(p)$, which means we need to construct the following path (note that $r(p)$ is added to the path, since $g$ is a dependent function):
  \begin{equation*}
  g(n+1,a_n(x),p) =^E_{(\kappa_n(x),r(p))} g(n,x,F(p)).
  \end{equation*}
  We proceed by induction on $p$. If $p\jdeq\iota_k(y)$ for $k:\N$, $y:P_{(n+1)+k}(a_{n+1}^k(a_n(x)))$, then $F(p)\jdeq \iota_{k+1}(u_{n,k}(y))$ and we need a path
  \begin{equation*}
  g_\ast(k,n+1,a_n(x),y) =^E_{(\kappa_n(x),r(\iota_k(y))))} g_\ast(k+1,n,x,u_{n,k}(y)).
  \end{equation*}
  Now the path $(\kappa_n(x),r(\iota_k(y)))\jdeq\kappa^\ast_{n,k}(x,y)$, hence this dependent path is given by $\mu_{n,k}(x,y)$.
  
  \emph{(path-path)} If $p$ varies over $\kappa_k(y)$, we need to give a dependent path in a family of dependent paths. 
  This is equivalent to filling the following dependent square in the family $E$, 
  which lies over the naturality square form by applying $\lam{p}(\kappa_n(x),r(p))$ to the path $\kappa_k(y)$.\footnote{The 
  left and right sides of the square are not quite correct, 
  the dependent function applied to $\kappa_k(y)$ are pathovers lying over $\kappa_k(y)$, and not $(1,\kappa_k(y))$. 
  However, pathovers lying over $\kappa_k(y)$ in the family $E(\iota_n(x))$ are equivalent to pathovers lying over $(1,\kappa_k(y))$ in the family $E$, 
  and this equivalence commutes with all operations we perform, therefore we omit them in this proof. 
  The following calculations are only type correct when these equivalences are inserted back. 
  Furthermore, we omit some other details. For example, if $p = q$, 
  then $\mapdep{g}{p}$ and $\mapdep{g}{q}$ have different types: 
  the former is a dependent path over $p$ and the latter one over $q$. 
  However, if you modify the path over which they lie, they become equal. 
  These ``modifications'' can be pushed down to the square in $(x:A_\infty)\times P_\infty(x)$, 
  and the proof still goes through. For the full details, consult the formal proof.}
  \begin{xcenter}\begin{tikzcd}[column sep=25mm]
    g_\ast(k+1,n+1,a_n(x),f_{(n+1)+k)}(y)) 
    \ar[r,equal,"{\mu_{n,k+1}(x,f_{(n+1)+k}(y))}","{\kappa^\ast_{n,k+1}(x,f_{n+k}(y))}" swap] 
    \ar[d,equal,"{\mapdep{g(n+1,a_n(x))}{\kappa_k(y))})}","{(1,\kappa_k(y)}" swap] &
    g_\ast(k+2,n,x,u_{n,(k+1)}(f_{(n+1)+k}(y))) 
    \ar[d,equal,"{\mapdep{g(n,x)\circ F}{\kappa_k(y))})}","{(1,\kappa_k(y)}" swap]\\
    g_\ast(k,n+1,a_n(x),y) 
    \ar[r,equal,"{\mu_{n,k}(x,y)}","{\kappa^\ast_{n,k}(x,y)}" swap] & 
    g_\ast(k+1,n,x,u_{n,k}(y))
  \end{tikzcd}\end{xcenter}
  Below and to the left of each equal sign we give the path in $(x:A_\infty)\times P_\infty(x)$ over which the pathover lie. Above and to the right of each equal sign we give the value of the dependent path. 
  
  Now $\mapdep{g(n+1,a_n(x))}{\kappa_k(y)}$ (occurring in the left pathover) is equal to $\nu(k,n+1,a_n(x),y)$ by definition of $g$. On the right, we have ($\delta$ is the naturality of $u_{n,k}$)
  \begin{align*}
    \mapdep{g(n,x)\circ F}{\kappa_k(y)} &= 
    \mapdep{g(n,x)}{\map{F}{\kappa_k(y)}}\\
    &= \mapdep{g(n,x)}{\map{\iota_{k+2}}{\delta(y)}\cdot\kappa_{k+1}(u_{n,k}(y))}\\
    &= \mapdep{g_\ast(k+2,n,x)}{\delta(y)} \cdot \mapdep{g(n,x)}{\kappa_{k+1}(u_{n,k}(y))}\\
    &= \mapdep{g_\ast(k+2,n,x)}{\delta(y)} \cdot \nu(k+1,n,x,u_{n,k}(y))\\
  \end{align*}
  Now we can move the first part of the expression to the top of the square, which means we need to fill the following squareover (where we made some arguments implicit).
  \begin{center}\begin{tikzcd}[column sep=25mm]
    g_\ast(f(y)) 
    \ar[r,equal,"{\mu(f(y))}","{\kappa^\ast(f(y))}" swap] 
    \ar[d,equal,"{\nu(k,n+1,a_n(x),y)}","{(1,\kappa(y))}" swap] &
    g_\ast(u(f(y))) 
    \ar[r,equal,"{\mapdep{g_\ast(k+2,n,x)}{\delta(y)}}","{(1,\delta(y))}" swap] &
    g_\ast(f(u(y))) 
    \ar[d,equal,"{\nu(k+1,n,x,u_{n,k}(y))}","{(1,\kappa(u_{n,k}(y)))}" swap]\\
    g_\ast(y) 
    \ar[rr,equal,"{\mu(y)}","{\kappa^\ast(y)}" swap] & &
    g_\ast(u_{n,k}(y))
  \end{tikzcd}\end{center}
  Note that in this squareover $g$ does not occur, and $\nu$ only occurs on the left side (applied to $k$) and on the right side (applied to $k+1$). Therefore, the top, bottom and left side form a valid open box, and we define $\nu(k+1,n,x,u_{n,k}(y))$ to be the composition of \emph{this} open box. This inductively defines $\nu$, and makes the filler for this square automatic. This finishes the proof.
  \end{proof}
  \begin{proof}[Proof (of Theorem \ref{thm:colim_sm})]
    We first define a map $$\beta: \big((x:A_\infty)\times P_\infty(x)\big) \to \tfcolim(\msm{A}{P}).$$ 
    We do this by induction on $x:A_\infty$ and $p:P_\infty(x)$ individually, so we get four cases again 
    (we do not use our newly defined induction principle, because we have not proven a computation rule for it).
  
    \emph{(point-point)} Suppose $x:A_n$ and $y:P_{n+k}(a_n^k(x))$. We define 
    $$\beta(\iota_n(x),\iota_k(y))\defeq\iota_{n+k}(a_n^k(x),y).$$
    \emph{(point-path)} To show that the second argument respects $\kappa_k(y)$, we define
    $$\mapfunc{\beta(\iota_n(x))}(\kappa_k(y))\vcentcolon=\kappa_{n+k}(a_n^k(x),y):\iota_{n+(k+1)}(a_n^{k+1}(x),f(y))=\iota_{n+k}(a_n^k(x),y).$$
    \emph{(path-point)} To show that the first argument respects $\kappa_n(x)$, we need to give a dependent path
    $$\beta(\iota_{n+1}(a_n(x)))=^{P_\infty({-})\to\tfcolim(\msm{A}{P})}_{\kappa_n(x)}\beta(\iota_n(x)).$$
    By function extensionality, this is equivalent to showing for $p:P_\infty(\iota_{n+1}(a_n(x)))$ that
    $$\beta(\iota_{n+1}(a_n(x)),p)=\beta(\iota_n(x),\transfib{P_\infty}{\kappa_n(x)}{p}).$$
    We apply $\apfunc{\beta(\iota_n(x))}\inv{(r(p))}$ on the right, so that we have to show
    $$\beta(\iota_{n+1}(a_n(x)),p)=\beta(\iota_n(x),F(p)).$$
    Now we apply induction on $p$. If $p\equiv\iota_k(y)$, then $F(p)\jdeq \iota_{k+1}(u_{n,k}(y))$ and we need to show
    $$\mu_{n,k}(x,y):\iota_{(n+1)+k}(a_{n+1}^k(a_n(x)),y)=\iota_{n+(k+1)}(a_n^{k+1},u_{n,k}(y)).$$
    But the triples $((n+1)+k,a_{n+1}^k(a_n(x)),y)$ and $(n+(k+1),a_n^{k+1},u_{n,k}(y))$ are equal: the first two components by \autoref{lem:iterate_succ} and the last component because $u_{n,k}$ was defined by transporting along the equality of the first components. Let us call this equality $s$. So we define $\mu_{n,k}(x,y)$ by applying $\iota$ to $s$.
    
    \emph{(path-path)} Suppose $p$ varies along $\kappa_k(y)$, we need to construct a proof of a pathover in an equality type. This is equivalent to filling the following square.
    \begin{center}\begin{tikzcd}[column sep=25mm]
      \beta(\iota_{n+1}(a_n(x)),\iota_{k+1}(f(y)))
      \ar[r,equal,"{\mu_{n,k+1}(x,f(y))}"] 
      \ar[d,equal,"{\mapfunc{\beta(\iota_{n+1}(a_nx))}(\kappa_k(y)})"] &
      \beta(\iota_n(x),\iota_{k+2}(u_{n,k+1}(f(y))))
      \ar[d,equal,"{\mapfunc{\beta(\iota_n(x))\circ F}(\kappa_k(y))}"]\\
      \beta(\iota_{n+1}(a_n(x)),\iota_k(y))
      \ar[r,equal,"{\mu_{n,k}(x,y)}"] & 
      \beta(\iota_n(x),\iota_{k+1}(u_{n,k}(y)))
    \end{tikzcd}\end{center}
    By simplifying the left and right path, this reduces to
    \begin{center}\begin{tikzcd}[column sep=10mm]
      \beta(\iota(a(x)),\iota(f(y)))
      \ar[r,equal,"{\mu(f(y))}"] 
      \ar[d,equal,"{\kappa(a^k(a(x)),y)}"] &
      \beta(\iota(x),\iota(u(f(y))))
      \ar[r,equal,"{\mapfunc{\beta(\iota(a(x)),\iota({-}))}(\kappa(y))}"] &[15mm]
      \beta(\iota(x),\iota(f(u(y))))
      \ar[d,equal,"{\kappa(a^k(x),u(y))}"]\\
      \beta(\iota(a(x)),\iota_k(y))
      \ar[rr,equal,"{\mu(y)}"] & &
      \beta(\iota(x),\iota(u(y)))
    \end{tikzcd}\end{center}
    Now the concatenation of the two paths on the top reduces to the function $i\defeq\lam{n}{x}{y}\iota_{n+1}(a(x),f(y))$ applied to $s$. Then the square is exactly the naturality square of the homotopy $\kappa:i\sim\iota$ applied to the path $s$. This finishes the definition of $\beta$.
  
    Now we need to show that $\beta\circ\alpha\sim\idfunc$. Take $p:\tfcolim(\msm{A}{P})$, we apply induction to $p$. If $p\jdeq\iota_n(x,y)$, then the equality holds by reflexivity:
    $$\beta(\alpha(p))\jdeq\beta(\iota_n(x),\iota_0(y))\jdeq\iota_n(x,y)\jdeq p.$$
    If $p$ varies over $\kappa_n(x,y)$, we need to fill a square with two degenerate sides, so we need to prove that 
    $\apfunc{\beta\circ\alpha}(\kappa_n(x,y))=\kappa_n(x,y).$
    We can show this as follows.
    \begin{align*}
      &\mathrel{\hphantom{=}}\apfunc{\beta\circ\alpha}(\kappa_n(x,y))\\
      &=\apfunc{\beta}(\kappa_n(x),r(\iota_0(f(y)))\cdot\kappa_0(y))\\
      &=\apfunc{\beta}(\kappa'_n(x,y))\\
      &=\mu_{n,1}(x,f(y))\cdot\apfunc{\beta(\iota_n(x))}\inv{(r(\iota_0(f(y))))}\cdot\apfunc{\beta(\iota_n(x))}(r(\iota_0(f(y)))\cdot\kappa_0(y))\\
      &=\mu_{n,1}(x,f(y))\cdot\apfunc{\beta(\iota_n(x))}(\kappa_0(y))\\
      &=\apfunc{\beta(\iota_n(x))}(\kappa_0(y))\\
      &=\kappa_n(x,y)
    \end{align*}
    In the third step we use that $\apfunc{\beta}(p,q)=\mapdep{\beta}{p}(f(y))\cdot\mapfunc{\beta{\iota_n(x)}(q)}$ and in the fifth step that $\mu_{n,k}(x,y)=1$ for any \emph{numeral} $k$.
  
    Finally we need to show that $\alpha\circ\beta\sim\idfunc$. Take $p:(x:A_\infty)\times P_\infty(x)$. We apply the induction principle proven in \autoref{thm:sigma-colim-induction} to $p$. Suppose that $p\jdeq(\iota_n(x),\iota_0(y))$. Then the equality holds by reflexivity:
    $$\alpha(\beta(p))\jdeq\alpha(\iota_n(x,y))\jdeq(\iota_n(x),\iota_0(y))\jdeq p.$$
    If $p$ varies over $\kappa'_n(x,y)$, then we have to show (similar to the proof $\beta\circ\alpha\sim\idfunc$) that 
    $$\apfunc{\alpha\circ\beta}(\kappa'_n(x,y))\kappa'_n(x,y).$$
    But by the previous computation, $\apfunc{\beta}(\kappa'_n(x,y))=\kappa_n(x,y)$, so we have
    $$\apfunc{\alpha\circ\beta}(\kappa'_n(x,y))=\apfunc{\alpha}(\kappa_n(x,y))=\kappa'_n(x,y).$$
    This finishes the proof. 
  \end{proof}
  
  \begin{cor}\label{cor:eq_colim}
    Consider a sequence $(A_n,f_n)_n$. Then for any $a,a':A_n$ there is an equivalence
    \begin{equation*}
    \eqv{(\iota_n(a)=_{A_\infty}\iota_n(a'))}{\tfcolim(f^k(a)=_{A_{n+k}}f^k(a'))}.
    \end{equation*}
    \end{cor}
    \begin{proof}
    We first prove this for $n \jdeq 0$.   
    Note that for any $a:A_0$, we have the diagram
    \begin{equation*}
    \begin{tikzcd}
    \lam{a':A_0}a=a' \arrow[r] \arrow[d,->>] & \lam{a':A_1} f(a)=a' \arrow[r] \arrow[d,->>] & \lam{a':A_2} f^2(a)=a' \arrow[r] \arrow[d,->>] & \cdots \\
    A_0 \arrow[r,"f_0"] & A_1 \arrow[r,"f_1"] & A_2 \arrow[r,"f_2"] & \cdots
    \end{tikzcd}
    \end{equation*}
    This defines a type family $P:A_\infty\to\type$ with 
    $$P(\iota_n(a'))\defeq\tfcolim_k(f^{0\le n+k}(a)=_{A_{n+k}}f^k(a')).$$
    Now we use \autoref{thm:colim_sm} to see that the total space of $P$ is contractible.
    \begin{align*}
    (a':A_\infty)\times P(a')
    & \eqvsym \tfcolim_n((a':A_n)\times f^{n}(a)=a') \\
    & \eqvsym \tfcolim_n(\unit) \\
    & \eqvsym \unit.
    \end{align*}
    Since $\iota_0(\refl{a}):P(\iota_0(a))$ and noting that $f^{0\le 0+k}(a)\jdeq f^k(a)$ we can now conclude by the total space method to characterize the identity type that 
    $$(\iota_0(a)=_{A_\infty}\iota_0(a'))\simeq P(\iota_0(a'))\jdeq\tfcolim(f^k(a)=_{A_{0+k}}f^k(a')).$$
    
    For general $n$, we use \autoref{lem:colim_shift_k}, which gives us an equivalence 
    $\kshiftequiv_n: A_\infty\simeq \tfcolim(S_n(A,f))$. For $a,a':A_n$ we can now compute:
    \begin{align*}
      (\iota_n(a)=_{A_\infty}\iota_n(a'))
      & \eqvsym (\kshiftequiv_n(\iota_n(a))=_{\tfcolim(S_n(A,f))}\kshiftequiv_n(\iota_n(a'))) \\
      & \eqvsym (\iota_0(a)=_{\tfcolim(S_n(A,f))}\iota_0(a')) \\
      & \eqvsym \tfcolim(S_n(f)^k(a)=_{S_n(A)_{0+k}}S_n(f)^k(a')) \\
      & \eqvsym \tfcolim(f^k(a)=_{A_{n+k}}f^k(a'))..
    \end{align*}
    The last equivalence comes from a natural equivalences of the sequences, because there is a dependent path between
    $S_n(f)^k(a)$ and $f^k(a)$ over the canonical path that $n+(0+k)=n+k$.
    \end{proof}
        
    \begin{cor}\label{cor:fiber_functor}
      Suppose given a natural transformation $\tau : \sequence{A'}{f'}\to\sequence{A}{f}$ and a point $a:A_n$. Then
      $$\hfib{\tau_\infty}{\iota_n(a)}\simeq\colim(\square\mathsf{fib}_\tau[a])\jdeq\colim_k(\hfib{\tau_{n+k}}{f^k(a)}).$$
    \end{cor}
    \begin{proof}
      Consider the following diagram, where the equivalences on the top are given by \autoref{thm:colim_sm} and the fact that the total space of the fiber of a function is the domain of that function.
      \begin{center}\begin{tikzcd}[column sep=10mm]
        (x:A_\infty)\times (\mathsf{fib}_\tau)_\infty(x) \ar[r,"{\sim}"] \ar[dr,"{\pi_1}"] &
        \colim_k((x:A_n))\times \hfib{\tau_n}{x}) \ar[r,"{\sim}"]\ar[d,"p"] &
        A'_\infty \ar[dl,"{\tau_\infty}"] \\
        & A_\infty &
      \end{tikzcd}\end{center}
      This diagram commutes: the left triangle commutes by \autoref{thm:colim_sm} and the right triangle commutes by the 1-functoriality of the colimit, \autoref{lem:seq_colim_functor}. Therefore,
      $$\hfib{\tau_\infty}{\iota_n(a)}\simeq\hfib{\pi_1}{\iota_n(a)}\simeq (\mathsf{fib}_\tau)_\infty(\iota_n(a))\jdeq \colim(\square\mathsf{fib}_\tau[a]).$$
    \end{proof}
    
    \begin{cor}\label{cor:trunc_colim}
      Consider a sequence $\sequence{A}{f}$ and some $k\geq-2$.
      \begin{enumerate}
        \item\label{part:colim_is_trunc} If $A_n$ is $k$-truncated for all $n:\nat$, then $A_\infty$ is $k$-truncated.
        \item\label{part:colim_trunc} We have an equivalence $$\trunc{k}{A_\infty}\simeq\colim(\trunc{k}{A_n},\trunc{k}{f_n})_n.$$
        \item\label{part:colim_is_connected} If $A_n$ is $k$-connected for all $n:\nat$, then $A_\infty$ is $k$-connected.
        \item\label{part:functor_is_trunc_conn} Given a natural transformation $(\tau,H):\sequence Af \to \sequence{A'}{f'}$ such that $\tau_n$ is $k$-truncated ($k$-connected) for all $n$, then $\tau_\infty$ is $k$-truncated ($k$-connected).
        \item\label{part:iota_is_trunc_conn} If $f_n$ is $k$-truncated ($k$-connected) for all $n$, then $\iota_0$ is $k$-truncated ($k$-connected).
      \end{enumerate}
    \end{cor}
      
    \begin{rmk}
      By \autoref{lem:colim_shift_k} we can generalize the quantification ``for all $n:\nat$'' in this Corollary to the weaker ``there exists an $m:\nat$ such that for all $n\ge m$''. In part \ref{part:iota_is_trunc_conn} the conclusion then becomes that $\iota_m$ is $k$-truncated ($k$-connected).
    \end{rmk}
    
    \begin{proof}\mbox{}
      \begin{enumerate}
        \item We prove this by induction on $k$. Suppose $k=-2$, then $f_n$ is an equivalence for all $n$. Therefore $A_\infty\simeq A_0$ by \autoref{lem:equiv_equiseq}, hence $A_\infty$ is contractible.
    
        Now suppose $k\jdeq k'+1$. Take $x,x' : A_\infty$, we need to show that $x=x'$ is $k'$-truncated. Since being truncated is a mere proposition, by induction on $x$ and $x'$ we may assume that $x\jdeq\iota_n(a)$ and $x'\jdeq\iota_m(a')$. Now
        $\iota_n(a)=\iota_{\max(n,m)}(f^{n\le \max(n,m)}(a))$ and $\iota_m(a')=\iota_{\max(n,m)}(f^{m\le \max(n,m)}(a'))$, therefore the type $\iota_n(a)=\iota_m(a')$ is equivalent to
        $$\iota_{\max(n,m)}(f^{n\le \max(n,m)}(a))=\iota_{\max(n,m)}(f^{m\le \max(n,m)}(a')).$$ Therefore it suffices to show that the latter equality type is $k'$-truncated. By \autoref{cor:eq_colim} we need to show that
        $$\tfcolim(f^\ell(f^{n\le \max(n,m)}(a))=f^\ell(f^{m\le \max(n,m)}(a')))_\ell$$
        is $k'$-truncated, which follows from the induction principle and the fact that $A_{\max(n,m)+\ell}$ is $(k'+1)$-truncated.
        \item From the functoriality of the sequential colimit, we get a function
        $$A_\infty\to\colim_n(\trunc{k}{A_n},\trunc{k}{f_n}).$$ 
        Because the right hand side is $k$-truncated, this induces a map
        $$g:\trunc{k}{A_\infty}\to\colim(\trunc{k}{A_n},\trunc{k}{f_n})_n.$$ 
        For the other direction, we define the function
        $$h:\colim(\trunc{k}{A_n},\trunc{k}{f_n})_n\to\trunc{k}{A_\infty}$$ 
        by 
        $$h(\iota_n(\tproj{k}{a}))\defeq\tproj{k}{\iota_n(a)}$$ 
        and 
        $$\mapfunc{h}(\kappa_n(\tproj{k}{a}))\vcentcolon=\mapfunc{\tprojf{k}}(\kappa_n(a)).$$ It is straightforward to show that both $h\circ g$ and $g\circ h$ are homotopic to the identity.
        \item Since $A_n$ is $k$-connected, $\trunc{k}{A_n}$ is contractible, and therefore $\colim_n(\trunc{k}{A_n})\simeq \trunc{k}{A_\infty}$ is contractible.
        \item A function is $k$-truncated ($k$-connected) whenever its fibers are $k$-truncated ($k$-connected). 
        Let $x:A_\infty$. We need to show a proposition, so we may assume that $x\jdeq \iota_n(a)$ for some $a:A_n$. Now 
        $\hfib{\tau_\infty}{\iota_n(a)}\simeq\tfcolim(\square\mathsf{fib}_\tau[a])$ by \autoref{cor:fiber_functor}. Since $\hfib{\tau_n}{x}$ is $k$-truncated ($k$-connected) for all $n$, we know that $\tfcolim(\square\mathsf{fib}_\tau[a])$ is $k$-truncated ($k$-connected) for all $n$, by part \ref{part:colim_is_trunc} or \ref{part:colim_is_connected}.
        \item Consider the natural transformation
        \begin{center}\begin{tikzcd}[column sep=10mm]
          A_0 \ar[r,equal] \ar[d,equal] &
          A_0 \ar[r,equal] \ar[d,equal,"f"] &
          A_0 \ar[r,equal] \ar[d,equal,"{f^{0\le2}}"] & 
          A_0 \ar[r,equal] \ar[d,equal,"{f^{0\le3}}"] & 
          \cdots \ar[r,equal] &
          \colim(A_0)_n \ar[d] \\
          A_0 \ar[r] & 
          A_1 \ar[r] & 
          A_2 \ar[r] & 
          A_3 \ar[r] & 
          \cdots \ar[r] & 
          A_\infty
        \end{tikzcd}\end{center}
        The maps $f^{0\le n}:A_0\to A_n$ are $k$-truncated ($k$-connected) and form a natural transformation. Therefore, by part \ref{part:functor_is_trunc_conn} the map $f^{0\le\infty}:\colim_n(A_0)\to A_\infty$ is $k$-truncated ($k$-connected). The fiber of $\iota_0$ over $x:A_\infty$ is the same as the fiber of $f^{0\le\infty}$ over $x$, and therefore $\iota_0$ is $k$-truncated ($k$-connected).
      \end{enumerate}
    \end{proof}
    
We can use this machinery, in particular \autoref{thm:colim_sm}, to define the localization for maps between $\omega$-compact types. We will omit the construction here, but this will be published in an upcoming preprint.

\chapter{Homotopy Theory}\label{cha:homotopy-theory}

As discussed in the introduction, one very useful application of HoTT is synthetic homotopy
theory. Many results in homotopy theory have been stated and proven in HoTT in a synthetic way. Most
of these results have also been formalized in a proof assistant. This is important,
because one of the advantages of HoTT is to make verification of proofs by a proof assistant
practically possible. Formalizing results that have been proved internally in HoTT provides more evidence for this. 

In this chapter we will look at various topics in homotopy theory and give proofs for them in HoTT
that are fully checked by the Lean proof assistant. In \cref{sec:computing-pi3s2} we will describe
a formalization of the proof that $\pi_3(\S^2)=\Z$. This was already known to be provable in HoTT, but no fully
formalized proof has been given before. We will discuss some new properties proven about
Eilenberg-MacLane spaces in HoTT in \cref{sec:eilenb-macl-spac}, namely that the Eilenberg-MacLane
space functor induces an equivalence of categories. In \cref{sec:smash-product} we prove the
adjunction of the smash product and pointed maps, from which we can conclude that the smash product
is associative. 

None of these results have been formalized before, even including formalization in foundations other than HoTT. 
In fact, not much homotopy theory has been formalized in other foundations. 
The most notable examples of formalizations are the formalization of basic properties of the fundamental group~\cite{zhan2017auto2} 
and the formalization of singular homology theory~\cite[\texttt{Multivariate/homology.ml}]{hollight}.





\section{Computing \texorpdfstring{$\pi_3(\S^2)$}{pi\textunderscore3(S\textasciicircum2)}}%
\label{sec:computing-pi3s2}
Computing that $\pi_3(\S^2)=\Z$ has been done before in Homotopy Type Theory, but it has not been
formalized in a proof assistant before. In this section we will discuss some considerations of
formalizing the proof that $\pi_3(\S^2)=\Z$. The Hopf fibration was formalized in Lean by Ulrik
Buchholtz and was formalized before in Agda by Guillaume Brunerie. The remaining results are formalized by the author.



\subsection{The long exact sequence of homotopy groups}\label{sec:les-homotopy}
We start with an important result in homotopy theory, the long exact sequence of homotopy
groups. 

This has been proven before in HoTT. Two different proofs are given in~\cite[Section 8.4]{hottbook} 
and~\cite[Section 2.5.1]{brunerie2016spheres},
although these proofs have not been formalized. There have been previous
formalizations of parts of this
result~\cite{avigad2015limits,voevodsky2015lecture,unimath}; however none of
these formalizations are complete in the sense that they can be used to deduce
the results in this section. 

The statement is as follows.
\begin{thm}[Long exact sequence of homotopy groups]\label{thm:les-homotopy}
  Suppose $f : X \to Y$ is a pointed map. Then the following is an exact sequence

  \begin{center}\begin{tikzpicture}[node distance=3cm, thick, node
    distance=12mm]
  \node (Y)     at (0,0) {$\pi_0(Y)$};
  \node[left = of Y] (X) {$\pi_0(X)$};
  \node[left = of X] (F) {$\pi_0(F)$};
  \node[above = of Y] (OY) {$\pi_1(Y)$};
  \node[above = of X] (OX) {$\pi_1(X)$};
  \node[above = of F] (OF) {$\pi_1(F)$};
  \node[above = of OY] (O2Y) {$\pi_2(Y)$};
  \node[above = of OX] (O2X) {$\pi_2(X)$};
  \node[above = of OF] (O2F) {$\pi_2(F)$};
  \node[above = 5mm of O2X] (dots) {$\vdots$};
  \path[every node/.style={font=\sffamily\small}]
  (X) edge[->] node [above] (f){$\pi_0(f)$} (Y)
  (F) edge[->] node [below] (f){$\pi_0(p_1)$} (X)
  (OY) edge[->] node [left] (f){$\pi_0(\delta)\quad\mbox{}$} (F)
  (OX) edge[->] node [above] (f){$\pi_1(f)$} (OY)
  (OF) edge[->] node [below] (f){$\pi_1(p_1)$} (OX)
  (O2Y) edge[->] node [left] (f){$\pi_1(\delta)\quad\mbox{}$} (OF)
  (O2X) edge[->] node [above] (f){$\pi_2(f)$} (O2Y)
  (O2F) edge[->] node [below] (f){$\pi_2(p_1)$} (O2X);
\end{tikzpicture}
\end{center}
Here $F\defeq\fib_f$ is the fiber of $f$, $p_1:F\to X$ is the first projection, and
$\delta:\Omega Y \to F$ is defined in the proof.
\end{thm}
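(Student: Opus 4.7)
The plan is to build the long exact sequence by iterating the fiber construction and then applying $\pi_0 = \|{-}\|_0$. The central observation is that for a pointed map $f : X \to^* Y$, the pointed fiber of the projection $p_1 : \fib_f \to^* X$ is itself equivalent to $\Omega Y$. This lets us extend the basic fiber sequence $\fib_f \to X \to Y$ one step to the left as $\Omega Y \xrightarrow{\delta} \fib_f \to X$, and iterating yields an infinite tower
$$\cdots \to \Omega^2 \fib_f \to \Omega^2 X \to \Omega^2 Y \to \Omega \fib_f \to \Omega X \to \Omega Y \to \fib_f \to X \to Y$$
each of whose three-term subsequences is again a fiber sequence.

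First I would define $\delta : \Omega Y \to^* \fib_f$ by sending a loop $p : y_0 = y_0$ to $(x_0, f_0 \cdot p)$, where $f_0 : f(x_0) = y_0$ is the witness that $f$ preserves the basepoint; pointedness holds because at $p = \refl$ we recover the basepoint $(x_0, f_0)$ of $\fib_f$. Next I would prove the key lemma $\fib_{p_1} \simeq^* \Omega Y$: unfolding, $\fib_{p_1}$ is $(x : X) \times (f(x) = y_0) \times (x = x_0)$, and contracting the based path space in the third factor reduces this to $(q : f(x_0) = y_0)$, which via $q \mapsto f_0^{-1} \cdot q$ is pointedly equivalent to $\Omega Y$. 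Checking that under this equivalence the canonical map $\fib_{p_1} \to \fib_f$ agrees up to pointed homotopy with $\delta$ establishes the fiber-sequence extension.

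To build the rest of the tower I would use the pointed equivalence $\Omega \fib_g \simeq^* \fib_{\Omega g}$ natural in $g$ from \autoref{lem:pointed-types-basic}, which lets us transfer the extended fiber sequence $\Omega Y \to \fib_f \to X \to Y$ through $\Omega^n$ to obtain, at each $n$, a fiber sequence $\Omega^{n+1} Y \to \Omega^n \fib_f \to \Omega^n X \to \Omega^n Y$. Splicing these together (which is possible precisely because the relevant squares commute as pointed maps) produces the whole tower.

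Finally I would apply $\pi_0$ and verify exactness. The essential lemma is that for any pointed $g : A \to^* B$, the sequence $\pi_0(\fib_g) \to \pi_0(A) \to \pi_0(B)$ is exact at $\pi_0(A)$: the composition is constant since $g(p_1(a,q)) = b_0$ by $q$, and conversely, if $|g(a)|_0 = |b_0|_0$ in $\pi_0(B)$ then $\|g(a) = b_0\|$, so merely there exists a lift $(a, q) : \fib_g$ with $|p_1(a,q)|_0 = |a|_0$. Combined with the identification $\pi_{n+1}(A) \equiv \pi_0(\Omega^{n+1} A) \equiv \pi_n(\Omega A)$ and functoriality, this yields exactness at every position. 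The main obstacle I expect is the bookkeeping of pointedness and naturality: producing the plain equivalences is easy, but the point of the argument is that every square commutes as \emph{pointed} maps so that after $\pi_0$ we recover precisely $\pi_n(f)$, $\pi_n(p_1)$, and $\pi_n(\delta)$. In a formalization this coherence dominates the work; conceptually everything reduces to (a) $\fib_{p_1} \simeq^* \Omega Y$ and (b) $\pi_0$-exactness of a single three-term fiber sequence.
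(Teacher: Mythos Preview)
Your approach differs from the paper's. The paper first iterates the raw fiber construction $(X,Y,f)\mapsto(\fib_f,X,p_1)$ to obtain a sequence that is \emph{trivially} a type-valued exact sequence (each $A_{n+2}$ is definitionally $\fib_{f_n}$), and only afterwards identifies the terms with iterated loop spaces; at that point the maps come out as $-\Omega g_n$ rather than $\Omega g_n$, and an entire step (the paper's Step~3, \autoref{lem:LESstep3}) is spent removing these signs. You go the other way: you fix the maps $\Omega^n f,\ \Omega^n p_1,\ \Omega^n\delta$ from the outset and argue exactness directly. The trade-off is that the paper gets exactness for free and works to name the maps, whereas you name the maps for free and must work to verify exactness.

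That is where your write-up has a real gap. Looping the four-term sequence $\Omega Y\to F\to X\to Y$ yields, at each level $n$, only \emph{two} fiber sequences, witnessing exactness at $\Omega^n F$ and at $\Omega^n X$. Exactness at $\Omega^n Y$ requires that $\Omega^n X\xrightarrow{\Omega^n f}\Omega^n Y\xrightarrow{\Omega^{n-1}\delta}\Omega^{n-1}F$ also be a fiber sequence, and this three-term piece straddles the splice between levels $n$ and $n{-}1$; nothing in your argument establishes it, and it does \emph{not} follow from the naturality of $\Omega\fib_g\simeq^*\fib_{\Omega g}$ alone. To close the gap you need a separate computation that $\fib_\delta\simeq^*\Omega X$ together with an identification of the induced map $\Omega X\to\Omega Y$. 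Depending on the equivalence you pick this map is $\Omega f$ or $-\Omega f$; in the latter case exactness still survives since $\im(\phi)=\im(-\phi)$ for any homomorphism $\phi$, but that sign is precisely the phenomenon the paper's Step~3 is designed to absorb. Your phrase ``the relevant squares commute as pointed maps'' hides exactly this issue.
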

First of all, we have to carefully formulate the statement of this theorem in type theory. The naive
thing to do is to say that there is a sequence $A : \N \to \set^*$ and maps $f : (n : \N) \to
A_{n+1} \to A_n$ such that
$$A_0\defeq\pi_0(Y),\quad A_1\defeq\pi_0(X),\quad A_2\defeq\pi_0(F),$$
and so forth. Continuing, this means that
$$A_{3n}=\pi_n(Y),\quad A_{3n+1}=\pi_n(X),\quad A_{3n+2}=\pi_n(F).$$
However, there is no way to make these equalities definitional, the elimination principle for the
natural numbers does not allow for computation rules like that. This means that the map
$f_{3n}:A_{3n+1}\to A_{3n}$ cannot be compared directly to $\pi_n(f)$ since the domain and codomain
are note definitionally equal. Setting things up this way is possible, but makes reasoning about it
unnecessarily complicated. Instead, we change the indexing set, using $\N\times\fin_3$ instead of $\N$. We will work with a general
notion of sequences with a flexible choice of indexing set.
\begin{defn}\label{def:chain-complex}
  A \emph{successor structure} is a type $I$ with endomap $S : I \to I$ called the
  \emph{successor}. We will write $i+n$ for $i:I$ and $n:\N$ to mean iterated application of the
  successor function, $i+n\defeq S^n(i)$.

  A \emph{chain complex} indexed by a successor structure $I$ is a family of pointed sets
  $A : I \to \set^*$ and maps $f : (i : I) \to A_{i+1} \to A_i$ with the property that
  $(i : I) \to (a : A_{i+2}) \to f_i(f_{i+1}(a))=a_0^i$ where $a_0^i$ is the basepoint of $A_i$. We call a chain complex \emph{exact} or a \emph{long exact sequence} if
  $$(i : I) \to (a : A_{i+1}) \to f_i(a) = a_0^i \to \|(a' : A_{i+2}) \times f_{i+1}(a')=a\|.$$
  A \emph{type-valued chain complex} is the same, except that $A_i$ is only required to be a pointed type (not a pointed set). A type-valued chain complex is \emph{exact} or a \emph{type-valued exact sequence} if the above property holds without any propositional truncation, i.e. if
  $$(i : I) \to (a : A_{i+1}) \to f_i(a) = a_0^i \to (a' : A_{i+2}) \times f_{i+1}(a')=a.$$
\end{defn}

\begin{rmk}
Note that a type-valued exact sequence gives part of the structure of a \emph{fiber sequence}. A \emph{fiber sequence} is a sequence where $A_{i+2}$ ``is'' the fiber of $f_i$. This means that $(A_{i+2},f_{i+1})=(\fib_{f_i},p_1)$ for all $i$. Using univalence this can be unpacked in an equivalence and a commuting triangle. 
In a type-valued exact sequence we just require two maps back and forth $A_{i+2}\leftrightarrow \fib_{f_i}$ such that the corresponding triangles commute, but we do not require that these maps are mutual inverses. In the text below we will have sequences that are not fiber sequences, so we require this additional generality.
\end{rmk}

\begin{ex}\label{ex:succ_structure}
  Some useful examples of successor structures are $(\N,\lam{n}n+1)$ and $(\Z,\lam{n}n+1)$. Sequences over these successor structures correspond to one-sided and two-sided infinite sequences. We can also mimic one-sided infinite sequences in the other direction using the successor structure $(\N,\lam{n}n-1)$ (with the convention that $0-1=0$). This has the disadvantage that there is one extra map $A_0\to A_0$. Whenever we use $\N$ as successor structure in this section, we use $\lam{n}n+1$ as its successor.

  Furthermore, if $N$ is a successor structure and $k : \N$, then we define a successor structure on
  $N \times \fin_{k+1}$ by defining
  $$S(n,i)\defeq\begin{cases}(n+1,0) & \text{if $i=k$}\\
    (n,i+1) & \text{otherwise}\end{cases}$$
  Note that $n+1$ is addition in the successor structure $N$.
\end{ex}

We now build the long exact sequence of homotopy groups in five steps. The order of these steps is somewhat arbitrary and can be altered. 
We perform the 0-truncation of the sequence as the last step, so that the intermediate sequences contain as much information as possible.
\begin{enumerate}[(1)]
\item First we define the fiber sequence of $f$.
\item Then we show that this sequence is equivalent to a sequence involving iterated loop spaces.
\item We fix some negation signs in the exact sequence.
\item We index the sequence over $\N\times\fin_3$.
\item We 0-truncate the sequence to obtain the sequence in \cref{thm:les-homotopy}.
\end{enumerate}

We first need some lemmas about fibers.

\begin{lem}\label{lem:fibers}
  Suppose given a pointed map $f : A \to^* B$. Let $p_1 : \fib_f \to^* A$ be the first projection. Then there is a pointed natural equivalence $e_f:\fib_{p_1} \simeq^* \Omega B$. 

  Furthermore, if $q_1:\fib_{p_1}\to\fib_f$ is the first projection, we get a commuting square
  \begin{center}\begin{tikzpicture}[node distance=2cm, thick, node
    distance=12mm]
    \node (tl)       at (0,0) {$\Omega A$};
    \node[right = of tl] (tr) {$\Omega B$};
    \node[below = of tl] (bl) {$\fib_{q_1}$};
    \node (br) at (tr |- bl)   {$\fib_{p_1}$};
    \path[every node/.style={font=\sffamily\small}]
    (tl) edge[->] node {$-\Omega f$} (tr)
         edge[<-] node {$e_{p_1}$} (bl)
    (tr) edge[<-] node {$e_f$} (br)
    (bl) edge[->] node {$r_1$} (br);
  \end{tikzpicture}
  \end{center}
  where $r_1$ is (also) the first projection. We write $-\Omega f$ for the map $\Omega f \o ({-})\sy$.
\end{lem}
\begin{proof}
  The underlying equivalence is the following composite
  \begin{align*}
    \fib_{p_1}&\simeq((a,p):\fib_f)\times a = a_0\\
    &\simeq (a:A)\times a = a_0 \times f(a)=b_0\\
    &\simeq f(a_0)=b_0\\
    &\simeq b_0=b_0 \equiv \Omega B
  \end{align*}
  This equivalence sends $((a, p), q):\fib_{p_1}$ (with $p : fa=b_0$ and $q:a=a_0$) to 
  $f_0\sy \tr \ap fq \tr p$. So there is a path
  $$r(a,p,q): e_f((a, p), q)=f_0\sy \tr f(q\sy) \tr p.$$
  This path satisfies $r(a_0,f_0,1)=1$ (equality is type correct since $e_f((a_0, f_0), q)\equiv f_0\sy \tr q$). We also have $e_f\sy(p)=((a_0,f_0\cdot p),1)$ for $p:\Omega B$.

  Now $e$ respects the basepoint, because $$e(a_0,f_0,1)=f_0\sy\cdot f_0=1.$$
  We will not prove naturality here, since it is not required for the results in this section.
  For the commuting square, we will prove that 
  $$h:e_f \o r_1 \o e_{p_1}\sy\sim^*-\Omega f$$
  For the underlying homotopy, we compute for $p:\Omega A$
  \begin{align*}
    e_f(r_1(e_{p_1}\sy p))&=e_f(r_1(((a_0,f_0),1 \tr p),1))\\
    &=e_f((a_0,f_0),p)\\
    &=f_0\sy \cdot f(p\sy) \cdot f_0 \equiv -\Omega f(p).
  \end{align*}
  To show that $h$ respects the basepoint, suppose that $p\equiv1$. In that case, the first two steps of the above equation becomes definitional equalities. Since we know that $r(a_0,f_0,1)=1$, the last equality is also reflexivity. Since the maps $e_f\o r\o e_{p_1}\sy$ and $-\Omega f$ respect the basepoints using the same path, this shows that $h$ is a pointed homotopy, which finishes the proof.
\end{proof}

\subsubsection{Step 1}
Denote $\arrow^*\defeq(X\ Y : \U^*_i)\times (X \to^* Y)$. We define $F:\arrow^*\to\arrow^*$ by 
$F(X,Y,f)\defeq(\fib_f,X,p_1)$. 
Given a pointed map $f:X\to^* Y$, we define its fiber sequence $A:\N\to\U$ by $A_n\defeq p_2(F^n(X,Y,f))$, and we define $f_n:A_{n+1}\to A_n$ by $p_3(F^n(X,Y,f))$ (which is well-typed, since $A_{n+1}\equiv p_1(F^n(X,Y,f))$ by unfolding the definition of $F$). It is easy to show that $(A_n,f_n)_n$ is a type-valued exact sequence, since $A_{n+2}$ is (definitionally) the fiber of $f_n$. 

Note that by \autoref{lem:fibers} there is a pointed equivalence $e_f:A_3\simeq^* \Omega Y$. We define the diagonal map $\delta\defeq p_1\o e_f\sy : \Omega Y \to \fib_f$.

\subsubsection{Step 2}
Define the sequence $B:\N\to\U$ and $g_n:B_{n+1}\to B_n$ by
\begin{align*}
  B_0&\defeq Y&&\\
  B_1&\defeq X & g_0&\defeq f\\
  B_2&\defeq \fib_f & g_1&\defeq p_1\\
  B_{n+3}&\defeq \Omega B_n & g_2&\defeq \delta\\
  && g_{n+3}&\defeq -\Omega g_n
\end{align*}
Note that $g_2$ has the correct type, since $A_3\equiv B_3$.

Now we can show that $(B,g)$ is a type-valued exact sequence by showing that it is equivalent to $(A,f)$.

\begin{lem}
  There is a natural equivalence $(A_n, f_n)_n\simeq(B_n,g_n)_n$. This means that there are pointed equivalences $\eta_n:A_n\simeq^* B_n$ such that for all $n:\N$ we have $$\eta_n\o f_n\sim^*g_n\o \eta_{n+1}.$$
\end{lem}
\begin{proof}
  We define the equivalence $\eta_n$ by induction on $n$. Note that $A_k\equiv B_k$ for $k=0,1,2$. Now suppose we have an equivalence $\eta_k: A_k\simeq B_k$. Then by \autoref{lem:fibers} we have 
  $$A_{k+3}\equiv\fib_{f_{k+1}}\stackrel{e_{f_k}}\simeq \Omega A_k\stackrel{\Omega\eta_k}\simeq \Omega B_k\equiv B_{k+3}.$$
  We also show the naturality by induction on $n$.\\ 
  For $n\equiv0$ we have $\idfunc[Y]\o f\sim^* f\o\idfunc[X].$\\
  For $n\equiv1$ we have $\idfunc[X]\o p_1\sim^* p_1\o\idfunc[\fib_f].$\\
  For $n\equiv2$ we have 
  $$\idfunc[\fib_f]\o p_1\equiv p_1\sim^* (p_1 \o e_f^{-1}) \o e_f \sim^* \delta\o (\Omega\idfunc[Y] \o e_f).$$
  Now suppose the naturality holds for $k$, then we get the following diagram.
  \begin{center}\begin{tikzpicture}[node distance=2cm, thick, node
    distance=12mm]
    \node (tl)       at (0,0) {$\Omega B_{k+1}$};
    \node[right = of tl] (tr) {$\Omega B_k$};
    \node[below = of tl] (l)  {$\Omega A_{k+1}$};
    \node (r) at (tr |- l)    {$\Omega A_k$};
    \node[below = of l] (bl)  {$A_{k+4}$};
    \node (br) at (r |- bl)   {$A_{k+3}$};
    \path[every node/.style={font=\sffamily\small}]
    (tl) edge[->] node {$-\Omega g_k$} (tr)
         edge[<-] node {$\Omega\eta_{k+1}$} (l)
    (tr) edge[<-] node {$\Omega\eta_k$} (r)
    (l)  edge[->] node {$-\Omega f_k$} (r)
         edge[<-] node {$e_{f_{k+1}}$} (bl)
    (r)  edge[<-] node {$e_{f_k}$} (br)
    (bl) edge[->] node {$f_{k+3}$} (br);
  \end{tikzpicture}\end{center}
  The bottom square can be filled by the second part of \autoref{lem:fibers}. The top square can be filled by applying the functor $\Omega$ to the naturality for $k$ and then noticing that $({-})\sy \o \Omega\eta_k\sim^* \Omega\eta_k \o ({-})\sy,$ which is easily proven for an arbitrary pointed map.
\end{proof}

\subsubsection{Step 3}
We now remove the inverses in our sequence. More precisely, we define a second sequence $h_n:B_{n+1}\to B_n$ by
$$h_0\defeq f\qquad h_1\defeq p_1\qquad h_2\defeq \delta\qquad h_{n+3}\defeq \Omega h_n.$$

To show that $(B,h)$ is a type-valued exact sequence we use the following lemma.
\begin{lem}\label{lem:LESstep3}
  Suppose $N$ is a successor structure and $(B,g)$ is a type-valued exact sequence over $N$. Suppose $h_n:B_{n+1}\to^* B_n$ is another sequence of maps, and suppose that there are pointed maps $e_n, \ell_n, r_n : B_n \to^* B_n$ such that $e_n$ is an equivalence and the following diagrams commute as homotopies (not necessarily pointed):
  \begin{center}\begin{tikzpicture}[node distance=2cm, thick, node distance=12mm]
    \node (tl)       at (0,0) {$B_{n+1}$};
    \node[below = of tl] (bl) {$B_{n+1}$};
    \node[right = of bl] (br) {$B_n$};
    \path[every node/.style={font=\sffamily\small}]
    (tl) edge[->] node {$h_n$} (br)
    (bl) edge[->] node {$e_{n+1}$} (tl)
         edge[->] node {$g_n$} (br);
    \node[right = of tr] (tl) {$B_{n+1}$};
    \node[right = of tl] (tr) {$B_n$};
    \node[below = of tl] (bl) {$B_{n+1}$};
    \node (br) at (tr |- bl)   {$B_n$};
    \path[every node/.style={font=\sffamily\small}]
    (tl) edge[->] node {$h_n$} (tr)
         edge[->] node {$\ell_{n+1}$} (bl)
    (tr) edge[->] node {$e_n$} (br)
    (bl) edge[->] node {$h_n$} (br);
    \node[right = of tr] (tl) {$B_{n+1}$};
    \node[right = of tl] (tr) {$B_n$};
    \node[below = of tl] (bl) {$B_{n+1}$};
    \node (br) at (tr |- bl)   {$B_n$};
    \path[every node/.style={font=\sffamily\small}]
    (tl) edge[->] node {$h_n$} (tr)
         edge[->] node {$e_{n+1}$} (bl)
    (tr) edge[<-] node {$r_n$} (br)
    (bl) edge[->] node {$h_n$} (br);
    \end{tikzpicture}\end{center}
    Then $(B,h)$ is a type-valued exact sequence over $N$.
\end{lem}
\begin{proof}
  First we need to show that for $x:B_{n+2}$ we have $h_n(h_{n+1}(x))=b_0^n$. We compute
  \begin{align*}
    h_n(h_{n+1}(x))&=r_n(h_n(e_{n+1}(h_{n+1}(x))))\\
    &=r_n(g_n(h_{n+1}(x)))\\
    &=r_n(g_n(g_{n+1}(e_{n+2}\sy(x))))\\
    &=r_n(b_0^n)\\
    &=b_0^n.
  \end{align*}
  For exactness, suppose that $y:B_{n+1}$ such that $h_n(y)=b_0^n$. Then 
  $g_n(e_{n+1}\sy(y))=h_n(y)=b_0^n$, therefore, by exactness of $g$ there (purely) exists 
  an $x:B_{n+2}$ such that $g_{n+1}(x)=e_{n+1}\sy(y)$. Now we compute
  \begin{align*}
    h_{n+1}(\ell_{n+2}(e_{n+2}(x)))&=e_{n+1}(h_{n+1}(e_{n+2}(x)))\\
    &=e_{n+1}(g_{n+1}(x))\\
    &=e_{n+1}(e_{n+1}\sy(y))\\
    &=y.
  \end{align*}
  This finishes the proof.
\end{proof}

\begin{lem}
  The sequence $(B,h)$ is a type-valued exact sequence.
\end{lem}
\begin{proof}
  We first define for $k\ge2$ we the pointed equivalence 
  $\epsilon_n^k:B_n\simeq^* B_n$ by induction on $n$. For $n\le k$
  $\epsilon_n^k\defeq\idfunc:B_n\simeq^* B_n$ we define
  $\epsilon_{n+3}^k\defeq -\Omega\epsilon_n^k:B_{n+3}\simeq^* B_{n+3}$ for $n+3>k$.
  Now define $e_n\defeq\epsilon_n^3$ and $\ell_n\defeq\epsilon_n^4$ and $r_n\defeq\epsilon_n^2$.
  We apply \autoref{lem:LESstep3} using these equivalences to obtain the desired result. To do this we need to check three commuting triangles. We will check $h_n\o e_{n+1}\sim g_n$, the other two proofs are similar. Apply induction on $n$. For $n=0,1,2$ it is trivial, reducing to $g_n\o\id \sim g_n$. Suppose the homotopy is true for $n=k$. Then 
  $$h_{k+3}\o e_{k+4}\equiv \Omega h_k \o -\Omega e_{k+1} \sim -\Omega (h_k \o e_{k+1}) \sim -\Omega g_k\equiv g_{k+3}.$$
\end{proof}

\subsubsection{Step 4}
We now define a type-valued chain complex over $\N\times\fin_3$, which has a successor structure by \autoref{ex:succ_structure}. Let $\rho_X$ be the equivalence $\Omega^{n+1}X \simeq^* \Omega^n(\Omega X)$. We now define the sequence $C:\N\times\fin_3$ and $k_n:C_{n+1}\to C_n$ by
\begin{align*}
  C_{(n,0)}&\defeq \Omega^nY& k_{(n,0)}&\defeq \Omega^nf\\
  C_{(n,1)}&\defeq \Omega^nX& k_{(n,1)}&\defeq \Omega^np_1\\
  C_{(n,2)}&\defeq \Omega^n\fib_f & k_{(n,2)}&\defeq \Omega^n\delta \o \rho_X
\end{align*}
In a diagram, $(C,k)$ looks like the following.
\begin{center}\begin{tikzpicture}[node distance=3cm, thick, node
  distance=12mm]
\node (Y)     at (0,0) {$Y$};
\node[left = of Y] (X) {$X$};
\node[left = of X] (F) {$F$};
\node[above = of Y] (OY) {$\Omega Y$};
\node[above = of X] (OX) {$\Omega X$};
\node[above = of F] (OF) {$\Omega F$};
\node[above = of OY] (O2Y) {$\Omega^2Y$};
\node[above = of OX] (O2X) {$\Omega^2X$};
\node[above = of OF] (O2F) {$\Omega^2F$};
\node[above = 5mm of O2X] (dots) {$\vdots$};
\path[every node/.style={font=\sffamily\small}]
(X) edge[->] node [above] (f){$f$} (Y)
(F) edge[->] node [below] (f){$p_1$} (X)
(OY) edge[->] node [left] (f){$\delta\quad\mbox{}$} (F)
(OX) edge[->] node [above] (f){$\Omega f$} (OY)
(OF) edge[->] node [below] (f){$\Omega p_1$} (OX)
(O2Y) edge[->] node [left] (f){$\Omega\delta\quad\mbox{}$} (OF)
(O2X) edge[->] node [above] (f){$\Omega^2f$} (O2Y)
(O2F) edge[->] node [below] (f){$\Omega^2p_1$} (O2X);
\end{tikzpicture}
\end{center}
There is an equivalence $e:\N \simeq \N\times\fin_3$ that sends $n$ to its quotient and remainder when dividing $n$ by 3. The proof of the following lemma is straightforward and omitted.
\begin{lem}
  The sequence $(B,h)$ is naturally equivalent to $(C,k)$ over the equivalence $e$. 
  Therefore, $(C,k)$ is a type-valued exact sequence.
\end{lem}

\subsubsection{Step 5}
If we 0-truncate the sequences at step 4, we get the sequence $(D,\ell)\defeq(\|C\|_0,\|k\|_0)$. This is exactly the sequence in \autoref{thm:les-homotopy}. It is now easy to show that this is a long exact sequence.
\begin{proof}[Proof of \autoref{thm:les-homotopy}]
First note that it is a chain complex by the following computation:
$$\ell_n \o \ell_{n+1}\sim \|k_n \o k_{n+1}\|_0\sim \|0\|_0 \sim 0.$$
To show that it is exact, suppose given $x : D_{n+1}$ and $p:\ell_n(x)=d_0^n$. We need to construct an element in a proposition, so we may assume by induction that $x\equiv |y|_0$. Now the type of $p$ reduces to $|k_n(y)|_0=|c_0^n|_0$, which is equivalent to $\|k_n(y)=c_0^n\|_{-1}$ by the characterization of the identity type in truncations. Therefore, the latter type is inhabited, and by induction, we may assume that we have a path $k_n(y)=c_0^n$. By exactness of $(C,k)$ we get an element $z:C_{n+2}$ such that $q:k_{n+1}(z)=y$. Now we can find $|z|_0:D_{n+2}$ and the path $\mapfunc{|{-}|_0}(q):\ell_{n+1}(|z|_0)=x$, showing exactness.
\end{proof}

\subsection{Computation of homotopy groups}

An important application of the long exact sequence of homotopy groups comes in combination with the Hopf fibration. Combining these tools, we can compute more homotopy groups of spheres. The Hopf fibration was  constructed in~\cite[Theorem 8.5.1]{hottbook} and has been formalized by Ulrik Buchholtz. We will not give the construction here.

\begin{thm}[Hopf Fibration] \label{thm:hopf}
  There is a pointed map $\S^3\to\S^2$ with fiber $\S^1$.
\end{thm}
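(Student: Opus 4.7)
My plan is to follow the standard HoTT construction of the Hopf fibration via an H-space structure on $\S^1$, together with the Hopf construction that turns any H-space into a fibration over its suspension. The output will be a map $p_1 : \S^3 \to \S^2$ whose fiber at the basepoint is (equivalent to) $\S^1$.

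First I would equip $\S^1$ with an H-space structure, namely a pointed binary operation $\mu : \S^1 \to \S^1 \to \S^1$ with $\mu(\base, x) = x$ and $\mu(x, \base) = x$. The map $\mu$ is defined by circle induction in both variables: on the first variable the family is $\S^1 \to \S^1$, pointed at $\idfunc$, and the induction principle asks for a loop $\idfunc = \idfunc$ in this function type, which by function extensionality we may take to be given pointwise by $\lloop$. This requires checking a mild coherence for the second unit law, which is the standard delicate point in the construction.

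Next I would carry out the Hopf construction: for any H-space $A$ with basepoint $a_0$, use the recursion principle of $\susp A$ into the universe to define $H : \susp A \to \type$ by $H(\north) \defeq A$, $H(\south) \defeq A$, and $\apfunc{H}(\merid(a)) \defeq \ua(\mu(a, {-}))$, where $\mu(a, {-}) : A \simeq A$ because $A$ is an H-space (left multiplication by any element is an equivalence, using the two unit laws to construct the inverse). The resulting fibration $H$ satisfies $H(\north) \equiv A$, so its fiber over the basepoint is $A$ on the nose. Applied to $A = \S^1$, this produces a fibration $H : \S^2 \to \type$ with fiber $\S^1$.

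Then I would identify the total space $(x : \S^2) \times H(x)$ with the join $\S^1 * \S^1$. Since $\S^2$ is a pushout $\unit \leftarrow \S^1 \to \unit$, the flattening lemma for pushouts rewrites the total space as the pushout of $\S^1 \leftarrow \S^1 \times \S^1 \to \S^1$, where each leg is a projection (because left multiplication by the basepoint is the identity, which under $\ua$ transports trivially). This pushout is by definition the join $\S^1 * \S^1$. Finally I would show $\S^1 * \S^1 \simeq \S^3$; more generally $\Sn^m * \Sn^n \simeq \Sn^{m+n+1}$, which can be proven by induction using the identity $\susp A \simeq \unit * A$ and associativity of the join, or directly for $m = n = 1$ by recognizing the above pushout as an iterated suspension. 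Composing the projection $(x : \S^2) \times H(x) \to \S^2$ with this equivalence yields the desired pointed map $\S^3 \to \S^2$ with fiber $\S^1$.

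The main obstacle will be the coherence bookkeeping in the H-space construction and the flattening-lemma manipulation: each step is individually conceptually simple, but assembling the pushout description of the total space and then matching it up with $\S^3$ requires several definitional unfoldings and applications of $\ua$-computation, which in a formalization typically expand into substantial path algebra. Fortunately, the equivalence $\S^1 * \S^1 \simeq \S^3$ itself is independent of the H-space structure and can be reused, so the irreducible technical core is the H-space structure on $\S^1$ together with the flattening-lemma computation of the total space.
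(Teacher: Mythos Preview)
The paper does not give its own proof of this statement; it explicitly says ``We will not give the construction here'' and cites the HoTT book (Theorem 8.5.1), noting that the formalization is due to Ulrik Buchholtz. Your proposal is precisely that standard HoTT-book construction, so the approaches agree by default.

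Two small corrections to your sketch, though neither is fatal. First, $\mu(a,{-})$ is not an equivalence merely ``because $A$ is an H-space, using the two unit laws to construct the inverse'': unit laws alone do not produce inverses. The actual argument is that $\S^1$ is $0$-connected and $\mu(\base,{-}) = \idfunc$ is an equivalence, so since being an equivalence is a mere proposition the same holds for every $a$. Second, the flattening lemma does not directly present both legs as projections: one leg is the second projection $(a,x)\mapsto x$, but the other is $(a,x)\mapsto \mu(a,x)$, coming from transport along $\merid(a)$. To identify this span with the join span you precompose with the self-equivalence $(a,x)\mapsto (a,\mu(a,x))$ of $\S^1\times\S^1$, which turns the $\mu$-leg into the first projection. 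With these two points fixed your plan is the standard one and goes through.
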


The quaternionic Hopf fibration has also been constructed in HoTT and formalized in Lean~\cite{buchholtz2016cayleydickson}. This gives a fibration $\S^7\to \S^4$ with fiber $\S^3$.

\begin{cor}\label{cor:homotopy-group-spheres-1}
  $\pi_2(\S^2)=\Z$ and $\pi_n(\S^3)=\pi_n(\S^2)$ for $n\geq 3$.
\end{cor}
\begin{proof}
  We know by the connectedness of spheres that $\pi_1(\S^3)$ and $\pi_2(\S^3)$ are trivial, and by the truncatedness of the circle that $\pi_k(\S^1)$ is trivial for $k>1$ and $\Z$ for $k=1$. We now get the following long exact sequence, from which the result immediately follows.
  \begin{center}\begin{tikzpicture}[node distance=3cm, thick, node
    distance=12mm]
  \node (Y)     at (0,0) {$0$};
  \node[left = of Y] (X) {$0$};
  \node[left = of X] (F) {$\Z$};
  \node[above = 8mm of Y] (OY) {$\pi_2(\S^2)$};
  \node (OX) at (X |- OY) {$0$};
  \node (OF) at (F |- OY) {$0$};
  \node[above = 8mm of OY] (O2Y) {$\pi_3(\S^2)$};
  \node (O2X) at (X |- O2Y) {$\pi_3(\S^3)$};
  \node (O2F) at (F |- O2Y) {$0$};
  \node[above = 8mm of O2Y] (O3Y) {$\pi_4(\S^2)$};
  \node (O3X) at (X |- O3Y) {$\pi_4(\S^3)$};
  \node (O3F) at (F |- O3Y) {$0$};
  \node[above = 5mm of O3X] (dots) {$\vdots$};
  \path[every node/.style={font=\sffamily\small}]
  (X) edge[->] (Y)
  (F) edge[->] (X)
  (OY) edge[->] (F)
  (OX) edge[->] (OY)
  (OF) edge[->] (OX)
  (O2Y) edge[->] (OF)
  (O2X) edge[->] (O2Y)
  (O2F) edge[->] (O2X)
  (O3Y) edge[->] (O2F)
  (O3X) edge[->] (O3Y)
  (O3F) edge[->] (O3X);
\end{tikzpicture}
\end{center}
\end{proof}

The last ingredient we need is the Freudenthal Suspension
Theorem. This has been formalized before by Dan Licata in Agda, and our
formalization is a direct port of that proof to Lean. For the proof we refer to~\cite[Section 8.6]{hottbook}.
\begin{thm}[Freudenthal Suspension Theorem] \label{thm:freudenthal}
  Suppose that $X$ is $n$-connected. Then $\|X\|_{2n}\simeq \|\Omega\Sigma X\|_{2n}$.
\end{thm}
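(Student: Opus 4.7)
The plan is to prove that the unit of the suspension--loop adjunction $\sigma : X \to^* \Omega\Sigma X$ is $2n$-connected, from which the stated equivalence of $2n$-truncations follows immediately via functoriality of $\|-\|_{2n}$ together with the property that $(-2)$-truncating a $k$-connected map to a $k$-type is an equivalence. So the real content is the connectivity statement about $\sigma$.

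The main tool will be the \emph{wedge connectivity lemma}: if $A$ is $p$-connected and $B$ is $q$-connected, both pointed, and $P : A \to B \to \typele{(p+q)}$ is a family of $(p+q)$-types, then any pair of sections $f : (a:A) \to P(a,b_0)$ and $g : (b:B) \to P(a_0,b)$ with $f(a_0) = g(b_0)$ extends uniquely to a section $h : (a:A)(b:B) \to P(a,b)$. This lemma follows from the characterization of $n$-connected maps by a universal property with respect to $n$-truncated families, applied to the basepoint inclusions $\unit \to A$ and $\unit \to B$. I would prove the wedge lemma first, in a separate section before the theorem.

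Next I would use the encode--decode method on the suspension. Define a family $\mathsf{code} : \Sigma X \to \Sigma X \to \type$ by double suspension induction; on the four point-point cases set $\mathsf{code}(\north,\north) \defeq \|X\|_{2n}$, $\mathsf{code}(\south,\south) \defeq \|X\|_{2n}$, and $\mathsf{code}(\north,\south) = \mathsf{code}(\south,\north) \defeq \|X\|_{2n}$; on the meridian cases use the wedge lemma, whose hypotheses are satisfied because $X$ is $n$-connected (so $\Sigma X$ is $(n+1)$-connected, giving $p+q = 2n$ when we combine two meridian variations) and the target is a $2n$-type. Concretely, transport along $\merid(x)$ on either coordinate is given by the map $\|x' \mapsto x\cdot x'\|_{2n}$ (where the ``product'' is the one implicit in the encode--decode setup), specified by its values when either $x$ or $x'$ is the basepoint and glued by the wedge lemma.

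Then I would construct $\mathsf{encode}$ and $\mathsf{decode}$ inverses between $(\north =_{\Sigma X} \north)$ and $\mathsf{code}(\north,\north) \equiv \|X\|_{2n}$ in the usual way: $\mathsf{encode}(p) \defeq \transp^{\mathsf{code}(\north,-)}(p,|x_0|_{2n})$, and $\mathsf{decode}$ is defined by suspension induction, again using the wedge lemma to check the meridian case. The round-trip $\mathsf{encode}\circ\mathsf{decode} \sim \id$ and $\mathsf{decode}\circ\mathsf{encode} \sim \id$ are again proved by suspension induction plus path induction, with all coherences landing in $2n$-types so that the wedge lemma applies. This gives $\|X\|_{2n} \simeq \|\Omega\Sigma X\|_{2n}$ after noting that $\mathsf{encode}$ factors through the $2n$-truncation of $\Omega \Sigma X$ (since its codomain is already $2n$-truncated), and checking it is inverse to the map induced by $\sigma$.

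The main obstacle will be the bookkeeping of pathovers and squareovers needed to define $\mathsf{code}$ on the meridian constructors and to verify the round-trip homotopies; each application of the wedge lemma requires exhibiting two sections and a compatibility at the basepoint, all of which have to be assembled with the right type so that truncation levels match and transports along the meridians compute as expected. Once the wedge lemma is in hand, however, each of these steps is mechanical, and the whole proof reduces to carefully instantiating it at the right families of $2n$-types.
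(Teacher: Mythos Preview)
The paper does not give its own proof of this theorem; it simply references \S8.6 of the HoTT book, noting that the Lean formalization is a direct port of Licata's Agda proof. Your high-level strategy---showing that the unit $\sigma:X\to\Omega\Sigma X$ is $2n$-connected via an encode--decode argument powered by the wedge connectivity lemma---is exactly the HoTT book approach, so the overall shape is correct and matches what the paper points to.

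That said, several details in your outline are garbled and would not work as written. First, $\mathsf{code}$ should be a unary family $\Sigma X\to\type$, not binary: one fixes the basepoint $\north$ and characterises $\north=y$ for varying $y:\Sigma X$. The double-suspension-induction version you describe is superfluous (you only ever use $\mathsf{code}(\north,\north)$ at the end) and would make the meridian coherences needlessly harder. Second, the wedge connectivity lemma is applied to two copies of $X$, each $n$-connected, yielding the bound $n+n=2n$; the $(n{+}1)$-connectedness of $\Sigma X$ plays no role in that step, and your sentence ``so $\Sigma X$ is $(n+1)$-connected, giving $p+q=2n$'' does not typecheck arithmetically. Third, there is no ``product implicit in the encode--decode setup'' on $X$: the whole content of the meridian case is that the wedge lemma \emph{constructs} a map $X\to X\to\|X\|_{2n}$ from its prescribed restrictions to $\{x_0\}\times X$ and $X\times\{x_0\}$, and this map is what specifies transport along $\merid(x)$. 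You are not using a pre-existing multiplication; you are building one at the $2n$-truncated level. Once these points are corrected, the rest of your plan (encode, decode by suspension induction, round-trips with all coherences landing in $2n$-types so the wedge lemma applies again) matches the referenced proof.
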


We can combine these results to compute the following homotopy groups.
\begin{cor}\label{cor:homotopy-group-spheres-2}
  $\pi_n(\S^n)=\Z$ and $\pi_3(\S^2)=\Z$
\end{cor}
\begin{proof}
  Note that $\S^n$ is $(n-1)$-connected. Therefore, by the Freudenthal suspension theorem we have
  $$\|\S^n\|_{2(n-1)}\simeq \|\Omega\S^{n+1}\|_{2(n-1)}.$$
  For $n\geq 2$ we have $2(n-1)\geq n$, and therefore we also have
  $$\|\S^n\|_{n}\simeq \|\Omega\S^{n+1}\|_{n}.$$ 
  Taking the $n$-th homotopy group, we get
  $$\pi_n(\S^n)\simeq \pi_{n+1}(\S^{n+1}).$$ 
  Combining this with \autoref{cor:homotopy-group-spheres-1}, we also get $\pi_3(\S^2)\simeq\Z$, as desired.
\end{proof}

\section{Eilenberg-MacLane Spaces}\label{sec:eilenb-macl-spac}

In this section we give an important equivalence between groups and Eilenberg-MacLane spaces~\cite{eilenberg1945spaces}.\footnote{Some of the contents of this section have been published in~\cite{buchholtz2018groups}. 
The work in this section is joint work with Ulrik Buchholtz and Egbert Rijke.} Eilenberg-MacLane space are play an important role in homotopy theory, since they are spaces with simple homotopy groups. Therefore, they can be used to build up more complicated spaces with complicated homotopy groups. Also, they can be used to define homology and cohomology in HoTT, see Sections \ref{sec:spectral-sequence-cohomology} and \ref{sec:spectral-sequence-homology}.

We prove in this section that the category of $n$-connected $(n+1)$-truncated pointed types is equivalent to the
category of groups for $n = 0$ and the category of abelian groups for $n \geq 1$.

If $G$ is a (pre-)groupoid, the groupoid quotient is a higher inductive type with constructors
\begin{inductive}
\texttt{HIT} $\groupoidquotient(G) :=$ \\
$\bullet\ i : G_0 \to \groupoidquotient(G)$; \\
$\bullet\ p : (x\ y : G_0) \to \homm(x,y)\to x=y$; \\
$\bullet\ q : (x\ y\ z : G_0) \to (g : \homm(y,z)) \to (f : \homm(x,y)) \to
p(g \circ f) = p(f) \cdot p(g)$; \\
$\bullet\ \eps : \istrunc{1}(\groupoidquotient(G))$.
\end{inductive}

The groupoid quotient can be constructed purely from homotopy pushouts. 
The untruncated version was constructed in \autoref{sec:non-recursive-2}. 
Then we can apply the 1-truncated afterwards, and we can also construct truncations from homotopy pushouts~\cite{rijke2017join}.

In~\cite{licata2014em} the authors define Eilenberg-MacLane spaces. We use the same approach as in
that paper. We first quickly review the results in that paper.

\subsection{Construction of Eilenberg-MacLane spaces}

If $G$ is any group, the 1-dimensional Eilenberg-MacLane space $K(G,1)$ can be defined by viewing
$G$ as a groupoid, and taking the groupoid quotient of $G$. It is not hard to see that $K(G,1)$ is
0-connected and 1-truncated. Using an encode-decode proof, we can show that $\Omega K(G,1) \simeq G$
and that this equivalence sends concatenation to multiplication. Hence the composite
$\pi_1K(G,1)\simeq \|G\|_0\simeq G$ is a group isomorphism.

If $G$ is abelian, the higher Eilenberg-MacLane spaces can be defined recursively
as $$K(G,n+1):\equiv \|\Sigma K(G,n)\|_{n+1}$$ for $n\geq 1$. This definition is slightly different
than the one given in~\cite{licata2014em}, where $K(G,n+1)$ was defined using the iterated
suspension as $\|\Sigma^n K(G,1)\|_{n+1}$. We chose to modify the definition, since a lot of
properties of Eilenberg-MacLane spaces are proven by induction on $n$, so it is more convenient to
have $K(G,n+1)$ defined directly in terms of $K(G,n)$.

It is easy to show that $K(G,n)$ is $(n-1)$-connected and $n$-truncated. It is trickier to show that
$\Omega K(G,n+1)\simeq K(G,n)$. This is done separately for $n=1$ and for $n\geq 2$.

For $n=1$ we need the result that for every type $X$ with a coherent h-structure, the type $\|\Sigma
X\|_2$ is a \emph{delooping} of $X$, which means that $\Omega \|\Sigma X\|_2 \simeq X$. If $G$ is abelian, then $K(G,1)$ can be equipped with a coherent h-structure,
showing that $\Omega K(G,2)\simeq K(G,1)$.

For $n\geq 2$, this can be done using the Freudenthal suspension theorem,
\cref{thm:freudenthal}. Then the equivalence follows from the following chain of equivalences:
$$\Omega K(G,n+1)\equiv \Omega\|\Sigma K(G,n)\|_{n+1}\simeq \|\Omega\Sigma K(G,n)\|_n\simeq
\|K(G,n)\|_n\simeq K(G,n).$$ The Freudenthal Suspension Theorem is applied in the third step, which
is allowed since $K(G,n)$ is $(n-1)$-connected and $n \leq 2(n-1)$ for $n\geq2$.

This finishes the proof sketch that $\Kloop(G,n):\Omega K(G,n+1)\simeq K(G,n)$. By induction,
$\Omega^n K(G,n+1)\simeq K(G,1)$, hence we get the following group isomorphism
$\pi_{n+1}(K(G,n+1))\simeq \pi_1(K(G,1))\simeq G$.

\subsection{Uniqueness}

In this section we prove that Eilenberg-MacLane spaces are unique, which means that if $X$ and $Y$
are both $(n-1)$-connected, $n$-truncated pointed types such that $\pi_n(X)\simeq \pi_n(Y)$, then
$X\simeq Y$. Note that from these assumptions one can show that $\pi_k(X)\simeq 1\simeq \pi_k(Y)$
for $k < n$ since $X$ and $Y$ are $(n-1)$-connected, but also for $k > n$ since $X$ and $Y$ are
$n$-truncated. Hence from the assumptions we actually have that $\pi_k(X)\simeq\pi_k(Y)$ for all
natural numbers $k$.

This is similar to Whitehead's Theorem, which states that if $f : X \to Y$ is a pointed map
that induces an equivalence on all homotopy groups, then $f$ is an equivalence. Whitehead's Theorem
is not true in general, but it is true under the assumption that both $X$ and $Y$ are $n$-truncated
for some $n$. For the special case that $X$ and $Y$ are both $(n-1)$-connected and $n$-truncated one
does not need to find a map between $X$ and $Y$ to show that they are equivalent, as long as they
have isomorphic homotopy groups.

We first give an elimination principle for $K(G,n)$.
\begin{defn}\label{def:Kelim}
Suppose that $X$ is an $n$-truncated pointed type, and suppose that for some group
$G$ there is an map $\phi : G \to \Omega^n X$ that sends multiplication to
concatenation. Then there is a pointed map $\Kelim(\phi, n) : K(G,n)\to X$.
\end{defn}
\begin{proof}[Construction]
  We construct this by induction on $n$.
  For $n=1$ this follows directly from the induction principle of $K(G,1)$. For $n=k+1>1$ we can define the group homomorphism $\widetilde\phi$ as the composite $G \xrightarrow{\phi} \Omega^{k+1} X \simeq \Omega^k(\Omega X)$, and apply the induction hypothesis to get a map
  $\Kelim(\widetilde\phi, k):K(G,k)\to^* \Omega X$. By the adjunction $\Sigma\dashv\Omega$ we get a pointed map $\Sigma K(G,k)\to^* X$, and by the elimination principle of the truncation we get a map 
  $K(G,k+1)\equiv\|\Sigma K(G,k)\|_{k+1}\to^* X$. 
\end{proof}

\begin{lem}\label{lem:Kelim1}
There is a pointed homotopy making the following diagram commute.
\begin{center}
  \begin{tikzpicture}[node distance=3cm,
      thick,main node/.style={font=\sffamily\bfseries}]
    \node[main node] (Kn) at (0,0) {$K(G,n)$};
    \node[main node] (OK) at (4,0) {$\Omega K(G,n+1)$};
    \node[main node] (OX) at (2,-2) {$\Omega X$};
         \path[every node/.style={font=\sffamily\small}]
         (Kn) edge [->] node {$\sim$} (OK)
              edge [->] node [below left] {$\Kelim(\widetilde\phi,n)$} (OX)
         (OK) edge [->] node [below right] {$\Omega(\Kelim(\phi,n+1))$} (OX);
  \end{tikzpicture}
\end{center}
\end{lem}
\begin{proof}
  This follows by unwinding the definition of the function $\Kelim(\phi,n+1)$ in terms of $\Kelim(\phi,n)$.
\end{proof}

\begin{lem}\label{lem:Kelim2}
The following diagram commutes. 
\begin{center}
  \begin{tikzpicture}[node distance=3cm,
      thick,main node/.style={font=\sffamily\bfseries}]
    \node[main node] (OK) at (0,0) {$\Omega^nK(G,n)$};
    \node[main node] (G)  at (4,0) {$G$};
    \node[main node] (OX) at (2,-2) {$\Omega^n X$};
         \path[every node/.style={font=\sffamily\small}]
         (OK) edge [->] node {$\sim$} (G)
              edge [->] node [below left] {$\Omega^n(\Kelim(\phi,n))$} (OX)
         (OX) edge [<-] node [below right] {$\phi$} (G);
  \end{tikzpicture}
\end{center}
\end{lem}
\begin{proof}
  This follows by repeatedly applying \autoref{lem:Kelim1}.
\end{proof}

\begin{thm}\label{thm:em-unique}
Suppose that $X$ is an $(n-1)$-connected $n$-truncated pointed type, and suppose that for some group
$G$ there is an equivalence $\phi : G \simeq \Omega^n X$ that sends multiplication to
concatenation. Then the map $\Kelim(\phi, n) : K(G,n)\to X$ is an equivalence.
In particular this means that if $X$ is an $(n-1)$-connected $n$-truncated pointed type, and there is a group isomorphism $e : \pi_n(X) \simeq G$, then $X\simeq^* K(G,n)$.
\end{thm}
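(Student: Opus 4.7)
The plan is to show that $f \defeq \Kelim(\phi, n) : K(G,n) \to X$ is an equivalence by verifying that its pointed fiber $\fib_f$ is contractible. Since both $K(G,n)$ and $X$ are $n$-truncated, $\fib_f$ is $n$-truncated as well, and it is pointed by the canonical basepoint coming from $f$ being pointed. By the last item of \autoref{lem:pointed-types-basic}, an $n$-truncated pointed type is $n$-connected --- and hence contractible --- as soon as all its homotopy groups $\pi_k$ for $k \leq n$ vanish. So it suffices to arrange $\pi_k(\fib_f) = 0$ for every $k \leq n$.

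For that, I will apply the long exact sequence of homotopy groups (\autoref{thm:les-homotopy}) to $f$. By exactness, $\pi_k(\fib_f)$ is trivial as soon as $\pi_k(f)$ is injective and $\pi_{k+1}(f)$ is surjective; it is enough to know that $\pi_k(f)$ is an isomorphism for every $k \geq 0$. I will verify this in three cases. For $k < n$, both $\pi_k(K(G,n))$ and $\pi_k(X)$ are trivial by $(n-1)$-connectedness, so $\pi_k(f)$ is automatically an isomorphism. For $k > n$, both are trivial by $n$-truncatedness. The essential case is $k = n$: \autoref{lem:Kelim2} presents $\Omega^n f$ as the composite $\phi \circ h$, where $h : \Omega^n K(G,n) \simeq G$ is the canonical equivalence arising from the construction of Eilenberg--MacLane spaces and $\phi : G \simeq \Omega^n X$ is the hypothesis equivalence. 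A composite of two equivalences is an equivalence, so $\Omega^n f$ is an equivalence of pointed sets, whence $\pi_n(f) = \|\Omega^n f\|_0$ is a group isomorphism.

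The real work is thus packaged into the Whitehead-style step --- that vanishing of $\pi_k$ for all $k \leq n$ forces contractibility of an $n$-truncated pointed type --- together with the careful bookkeeping of the long exact sequence; everything else follows from \autoref{lem:Kelim2} and elementary truncation/connectedness facts. For the ``In particular'' clause, from a group isomorphism $e : \pi_n(X) \simeq G$ I would manufacture $\phi$ as the composite $G \xrightarrow{e^{-1}} \pi_n(X) \equiv \|\Omega^n X\|_0 \simeq \Omega^n X$, where the last equivalence holds because $\Omega^n X$ is a set ($X$ being $n$-truncated). This composite preserves the group operation by construction, so the main statement applies and yields $X \simeq^* K(G, n)$.
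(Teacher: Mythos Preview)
Your argument is essentially the same as the paper's: both reduce to showing that $\pi_k(f)$ is an isomorphism for every $k$, dispatching $k\neq n$ by connectedness/truncatedness and $k=n$ by \autoref{lem:Kelim2}. The paper then simply invokes Whitehead's theorem for truncated types \cite[Theorem~8.8.3]{hottbook}, whereas you unfold part of that theorem by hand via the long exact sequence and the fiber.

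There is, however, a genuine gap in your reduction. Showing that the \emph{pointed} fiber $\fib_f \equiv \fib_f(x_0)$ is contractible does not by itself make $f$ an equivalence: for that you need \emph{every} fiber $\fib_f(x)$ to be contractible. The long exact sequence of homotopy groups only sees the fiber over the basepoint, so your argument as written stops one step short. The fix is easy in this situation: since $X$ is $(n-1)$-connected and $n\geq 1$, it is in particular $0$-connected, so for any $x:X$ there merely exists a path $x_0 = x$, and transport along such a path gives $\fib_f(x_0)\simeq\fib_f(x)$. Contractibility being a mere proposition, the mere existence suffices to conclude that every fiber is contractible. You should state this explicitly; otherwise the inference ``pointed fiber contractible $\Rightarrow$ $f$ is an equivalence'' is unjustified. (This is exactly the step that Whitehead's theorem packages for you, which is why the paper's proof avoids the issue.)
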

\begin{proof}
  We apply Whitehead's principle for truncated types. This states that a \emph{weak equivalence} (a map inducing an isomorphism on all homotopy groups) between truncated types is an equivalence. The proof can be found in~\cite[Theorem 8.8.3]{hottbook}. Since both $K(G,n)$ and $X$ are $(n-1)$-connected and $n$-truncated, the map $\Kelim(\phi,n)$ trivially induces an isomorphism on all homotopy groups for all levels other than $n$. It also induces an isomorphism on level $n$ by \autoref{lem:Kelim2}. This finishes the proof.
\end{proof}

\begin{cor}
The type of $(n-1)$-connected, $n$-truncated pointed types is equivalent to the type of groups for
$n=1$ and equivalent to the type of abelian groups for $n \geq 2$.
\end{cor}
\begin{proof}
  The maps back and forth are $K({-},n)$ and $\pi_n$. The composites are homotopic to the identity map, since $\pi_n(K(G,n))\simeq G$ and $K(\pi_n(X),n)\simeq^* X$ (the last equivalence comes from \autoref{thm:em-unique}).
\end{proof}

\subsection{Equivalence of categories}
\begin{defn}
  If $\phi : G \to H$ is a homomorphism between groups, then there is a pointed map $K(\phi, n) :
  K(G, n) \to K(H, n)$. This action is functorial, i.e. it respects composition and identity maps.
\end{defn}
\begin{proof}[Construction]
  The functorial action comes from \autoref{def:Kelim}. We omit the proof of the other properties.
\end{proof}

To show that we get the desired equivalence of categories, we need to fill the following naturality squares. We will omit the proofs here.

\begin{center}
  \begin{tikzpicture}[node distance=3cm,
      thick,main node/.style={font=\sffamily\bfseries}]
    \node[main node] (KG) at (0,0) {$\pi_n(K(G,n))$};
    \node[main node] (KH) at (4,0) {$\pi_n(K(H,n))$};
    \node[main node] (G) at (0,-4) {$G$};
    \node[main node] (H) at (4,-4) {$H$};
         \path[every node/.style={font=\sffamily\small}]
         (KG) edge [->] node {$\pi_n(K(\phi,n))$} (KH)
         (G)  edge [->] node [left] {$\sim$} (KG)
              edge [->] node {$\phi$} (H)
         (H)  edge [->] node [right] {$\sim$} (KH);
  \end{tikzpicture}
\qquad
  \begin{tikzpicture}[node distance=3cm,
      thick,main node/.style={font=\sffamily\bfseries}]
    \node[main node] (X)  at (0,0) {$X$};
    \node[main node] (Y)  at (4,0) {$Y$};
    \node[main node] (KX) at (0,-4) {$K(\pi_n(X),n)$};
    \node[main node] (KY) at (4,-4) {$K(\pi_n(Y),n)$};
         \path[every node/.style={font=\sffamily\small}]
         (X)  edge [->] node {$f$} (Y)
         (KX) edge [->] node [left] {$\sim$} (X)
              edge [->] node {$K(\pi_n(f),n)$} (KY)
         (KY) edge [->] node [right] {$\sim$} (Y);
  \end{tikzpicture}
\end{center}
These diagrams show the following result.
\begin{thm}\label{thm:EM-equiv-categories}
  $K({-},n)$ is an equivalence from the category of $(n-1)$-connected $n$-truncated pointed types to
  the category of groups (for $n=1$) or abelian groups (for $n\geq2$).
\end{thm}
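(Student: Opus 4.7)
The plan is to verify that the pair of functors $K({-},n)$ and $\pi_n$ exhibit the desired equivalence of categories, by constructing natural isomorphisms between the composites and the corresponding identity functors and then checking the remaining categorical coherences.

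First I would confirm that both $K({-},n)$ and $\pi_n$ are functors on the relevant categories. For $K({-},n)$ the functorial action is given by \autoref{def:Kelim} together with its functoriality already recorded just before the theorem. For $\pi_n$ the functoriality is immediate from the fact that $\Omega^n$ is a pointed functor (see \autoref{lem:pointed-types-basic}) and the $0$-truncation is functorial. On objects, if $G$ is a group ($n=1$) or an abelian group ($n\geq 2$) then $K(G,n)$ is $(n-1)$-connected and $n$-truncated, and conversely if $X$ is such a type then $\pi_n(X)$ is a group, which is abelian for $n\geq 2$ (this is standard and already implicit in the existence of $K(G,n)$).

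Next I would produce the two natural isomorphisms. On the algebra side, the composite $\pi_n \circ K({-},n)$ is naturally isomorphic to the identity: for each (abelian) group $G$ the isomorphism $\eta_G : G \simeq \pi_n(K(G,n))$ is the one constructed in the Eilenberg--MacLane section, and naturality in $G$ is exactly the content of the first displayed square preceding the theorem statement. On the space side, the composite $K(\pi_n({-}), n)$ is naturally isomorphic to the identity: the equivalence $\epsilon_X : K(\pi_n(X), n) \simeq^* X$ is obtained by applying \autoref{thm:em-unique} to the identity isomorphism $\pi_n(X) \simeq \pi_n(X)$, realized via $\Kelim$, and its naturality in $X$ is the second displayed square. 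Because both $\eta_G$ and $\epsilon_X$ are equivalences componentwise, once naturality is established we obtain natural isomorphisms of functors, which is exactly what is needed for an equivalence of categories.

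The main obstacle is verifying these naturality squares, which is precisely what the text defers. The proof proceeds by induction on $n$, tracking the definition of $K(\phi,n)$ through the adjunction $\Sigma \dashv \Omega$ and the suspension-truncation used to define $K(G,n+1)$. For the base case $n=1$ naturality reduces to the fact that $K({-},1)$ is defined via the groupoid quotient, so a group homomorphism $\phi:G\to H$ induces a map on $1$-cells strictly compatible with the loop-space identification $\Omega K(G,1) \simeq G$; this is essentially a calculation using the encode--decode characterization. For $n\geq 2$ the inductive step unfolds $K(\phi,n+1)$ as the truncation of the suspension of $K(\phi,n)$ and uses \autoref{lem:Kelim1} together with the naturality of $\Sigma\dashv\Omega$ and the naturality portion of the Freudenthal equivalence to propagate the inductive hypothesis. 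The second naturality square is handled symmetrically: after rewriting $K(\pi_n(f),n)$ through $\Kelim$, one reduces its compatibility with $\epsilon$ to an instance of \autoref{lem:Kelim2}, which identifies the induced map on $\Omega^n$ with the original group homomorphism. The bulk of the work consists in chasing these pointed-homotopy coherences; once they are in place, combining them with the equivalences $\eta_G$ and $\epsilon_X$ yields the equivalence of categories.
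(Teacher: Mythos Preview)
Your proposal is correct and follows essentially the same approach as the paper: exhibit $K({-},n)$ and $\pi_n$ as mutually inverse functors by producing the componentwise equivalences $\pi_n(K(G,n))\simeq G$ and $K(\pi_n(X),n)\simeq^* X$ (the latter via \autoref{thm:em-unique}) and then filling the two naturality squares displayed just before the theorem. The paper explicitly omits the proofs of those naturality squares; your inductive sketch---reducing the $n{=}1$ case to the encode--decode description of $\Omega K(G,1)$ and propagating to higher $n$ through \autoref{lem:Kelim1}, the $\Sigma\dashv\Omega$ adjunction, and the naturality of the Freudenthal equivalence---is the natural way to do it and matches what one would expect the formalization to carry out.
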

\begin{rmk}
  In particular this shows that the type of pointed maps between two $(n-1)$-connected $n$-truncated types is a set. 
  This is a special case of the more general fact that the type of pointed maps from an $n$-connected type to a $(n+k+1)$-truncated type is $k$-truncated (for $n\geq-1$).
\end{rmk}
\begin{rmk}
  It would be interesting, but a lot more work, to do this one level up. In that case, it should be possible to show that crossed modules or 2-groups correspond to pointed connected 2-types. Furthermore, pointed $(n-2)$ connected $n$-types should correspond to braided 2-groups for $n=3$ and to symmetric 2-groups for $n\geq 4$. A start of this project was given in~\cite{raumer2016doublegroupoids}.
\end{rmk}
\section{The Smash Product}\label{sec:smash-product}

In this section we will discuss the smash product and its
properties.\footnote{The work in this section is joint work with Stefano
Piceghello. Parts of this section are based on ideas from Robin Adams, Marc
Bezem, Ulrik Buchholtz and Egbert Rijke.} The smash product has many uses in
homotopy theory. It can be used to define generalized homology theory (see
\autoref{sec:spectral-sequence-homology}) and it is used to define the cup
product for cohomology~\cite[Section 5.1]{brunerie2016spheres}.

The goal is to prove that the smash product defines a \emph{1-coherent symmetric
monoidal product on pointed types}~\cite[Definition 4.1.1]{brunerie2016spheres},
which we repeat in \autoref{def:symmonprod}. Our proof strategy is to show that
the smash product is left adjoint to pointed maps and then use a Yoneda-style
argument to show that we get a 1-coherent symmetric monoidal product. 

This proof is known in 1-category theory~\cite[Chapter 2, Theorem
5.3]{eilenberg1966closedcategories}. Suppose given a closed category\footnote{A
\emph{closed category} is a category with internal hom-objects. We can view pointed
types as a higher closed category, where the internal hom-object is the type of pointed
maps, pointed by the constant map.} $\mc C$ with internal hom $[{-},{-}]:\mc C\op\times \mc C\to \mc C$.
Moreover suppose that for every $A,B:\mc C$ the functor $[A,[B,{-}]]:\mc C\to \mc C$ is
representable as a $\mc C$-enriched functor. This means that there is an object
$A\otimes B:\mc C$ and a $\mc C$-enriched natural transformation $[A\otimes
B,C]\cong [A,[B,C]]$. Then $\mc C$ is a monoidal closed category. We will spell
out the precise formulation for pointed types in \autoref{def:naturality}, where we will call
$\type^*$-enriched functors \emph{pointed functors} and $\type^*$-enriched
natural transformations \emph{pointed natural transformations}.

In this section we will prove two main claims.
\begin{itemize}
  \item We prove that $A\smsh B$ represents the functor $A\to^* B \to^* ({-})$ on
pointed types. In other words, that we have a natural equivalence 
  $$(A\smsh B \to^* C)\simeq^* (A\to^* B \to^* C).$$
\item We prove that if we have a \emph{pointed} natural equivalence 
$$(A\smsh B \to^* C)\simeq^* (A\to^* B \to^* C),$$
then the smash product forms a 1-coherent symmetric
monoidal product on pointed types.
\end{itemize}
There is still a gap in this argument: we still need to show that the natural
equivalence above is a pointed natural equivalence. We did not manage to do
this, because of the high level of the path algebra involved, but we do not
expect theoretical difficulties.

In this section, all types, maps, homotopies and equivalences are pointed,
unless mentioned otherwise. We will denote pointed homotopies using equalities
in diagrams. We will start with defining some categorical properties of pointed
types. We will use the notation established in \autoref{sec:pointed}.

\subsection{The Category of Pointed Types}\label{sec:pointed-category}

\begin{defn}\label{def:naturality}
  Suppose we are given $F:\type^*\to\type^*$. We say that $F$ is a \emph{1-coherent functor} if
  \begin{itemize}
    \item $F$ acts on pointed maps: given $f : A \to A'$, there is a pointed map $Ff : F(A) \to F(A');$
    \item it respects identities: $F(\id_A)\sim \id_{FA};$
    \item it respects composition: $F(f' \o f) \sim Ff' \o Ff.$
  \end{itemize}
  We will call a 1-coherent functor a \emph{functor} for short.\footnote{While 
  this is an abuse of terminology, it will not cause confusion in practice. Note that internally in the language of HoTT 
  it is an open problem whether we can even formulate the type of fully coherent functors.}
  We say that a functor $F$ is a \emph{pointed functor} if moreover
  $F\unit=\unit$, where $\unit$ is the unit type (which is the zero object in
  pointed types). In this case we can show that
  $F(\const_{A,B})=\const_{FA,FB}$, where $\const_{A,B}$ is the constant map.

	Let $F$, $G$ be functors of pointed types and suppose that $\theta$ is a
	family of pointed maps $(X : \type^*) \to F(X) \to G(X)$. We say that $\theta$
	is a (1-coherent) \emph{natural transformation} or \emph{natural} if for every $f : A \to
	B$ there is a diagram:
	\begin{center}
	\begin{tikzcd}
		F(A)
			\arrow[r, "F(f)"]
			\arrow[d, swap, "\theta_A"]
		& F(B)
			\arrow[d, "\theta_B"]
		\\
		G(A)
			\arrow[r, swap, "G(f)"]
		& G(B)
	\end{tikzcd}
  \end{center}
  That is, a pointed homotopy 
  \[p_\theta(f) : \theta_B \o F(f) \sim G(f) \o \theta_A.\]

	We say that $\theta$ is \emph{pointed natural} if $\theta$ is natural and $p_\theta(\const) = (p_\theta)_0$, where
		\[(p_\theta)_0 : G(\const) \o \theta_A \sim \const \o \theta_A \sim \const \sim \theta_B \o \const \sim \theta_B \o F(\const)\]
    is the canonical proof of the pointed homotopy $G(\const) \o \theta_A \sim \theta_B \o F(\const)$.
  
  For $n$-ary functions $F:\type^*\to\cdots\to\type^*$ we define functoriality
  similarly. We say that transformations between $n$-ary functors are natural if
  they are natural in all arguments.
\end{defn}

\begin{rmk}
  We could define a notion of \emph{weak naturality}, which is like naturality,
  but where the homotopy is not required to be pointed. However, this is
  generally ill-behaved. For example, if $\theta$ is weakly natural, neither
  $X\to\theta$ nor $\theta\to X$ needs to be weakly natural.
\end{rmk}

\begin{defn}\label{def:symmonprod}
  A 1-coherent symmetric monoidal product for pointed types is a binary operation $\otimes:\type^*\to\type^*\to\type^*$ that is functorial. Explicitly, this means that
  \begin{itemize}
  \item Given $f : A \to A'$ and $g : B \to B'$, there is a map $f\otimes g : A \otimes B \to A' \otimes B.$
  \item It respects identities: $\id_A\otimes\id_B\sim \id_{A\otimes B}.$
  \item It respects composition: $(f' \o f) \otimes (g' \o g) \sim (f' \otimes g') \o (f \otimes g).$
  \end{itemize}
  Furthermore, there is a pointed type $I$ and natural equivalences
  \begin{itemize}
    \item $\alpha : (A \otimes B) \otimes C \simeq A \otimes (B \otimes C)$ (associativity of the smash product);
    \item $\lambda : I \otimes B \simeq B$ (left unitor for the smash product);
    \item $\gamma : A \otimes B \simeq B \otimes A$ (braiding for the smash product).
  \end{itemize}
  With pointed homotopies filling the following three diagrams.
  \begin{center}
  \begin{tikzcd}
    &((A \otimes B) \otimes (C \otimes D))
      \arrow[dr, "\alpha"]
    \\
    (((A \otimes B) \otimes C) \otimes D)
      \arrow[ru, "\alpha"]
      \arrow[d, swap, "\alpha \otimes D"]
    && (A \otimes (B \otimes (C \otimes D)))
    \\
    ((A \otimes (B \otimes C)) \otimes D)
      \arrow[rr, swap, "\alpha"]
    && (A \otimes ((B \otimes C) \otimes D))
      \arrow[u, swap, "A \otimes \alpha"]
  \end{tikzcd}
  \end{center}
  \begin{center}
  \begin{tikzcd}
  ((I \otimes A) \otimes B)
    \arrow[rr, "\alpha"]
    \arrow[dr, swap, "\lambda \otimes B"]
  && (I \otimes (A \otimes B))
    \arrow[dl, "\lambda"]
  \\
  & (A \otimes B)
  \end{tikzcd}
  \end{center}


  \begin{center}
  \begin{tikzcd}
    ((A \otimes B) \otimes C)
      \arrow[r, "\alpha"]
      \arrow[d, swap, "\gamma \otimes C"]
    &(A \otimes (B \otimes C))
      \arrow[r, "\gamma"]
    & ((B \otimes C) \otimes A)
      \arrow[d, "\alpha"]
    \\
    ((B \otimes A) \otimes C))
      \arrow[r, swap, "\alpha"]
    & (B \otimes (A \otimes C))
      \arrow[r, swap, "B \otimes \gamma"]
    & (B \otimes (C \otimes A))
  \end{tikzcd}
  \end{center}  
\end{defn}

We have a version of the Yoneda Lemma for pointed types.
\begin{lem}[Yoneda]\label{lem:yoneda}
	Let $A$, $B$ be pointed types, and assume, for all pointed types $X$, a pointed equivalence $\phi_X : (B \to X) \simeq (A \to X)$, natural in $X$, i.e. for all $f : X \to X'$ there is a homotopy \[ p_\phi(f) : (A \to f) \o \phi_X \sim \phi_X' \o (B \to f) \]
	Then there exists a pointed equivalence $\psi_\phi : A \simeq B$.
\end{lem}
\begin{proof}
	We define $\psi_\phi \defeq \phi_B(\idfunc[B]) : A \to B$ and $\psi_\phi\sy \defeq \phi_A\sy(\idfunc[A])$. The given naturality square for $X \defeq B$ and $g \defeq \psi_\phi\sy$ yields $\psi_\phi\sy \o \phi_B (\idfunc[B]) \judgeq \psi_\phi\sy \o \psi_\phi \sim \phi_A (\psi_\phi\sy \o \idfunc[B]) \judgeq \phi_A (\phi_A\sy (\idfunc[A])) \sim \idfunc[A]$, and similarly for the inverse composition.
\end{proof}

\begin{lem}\label{lem:yoneda-pointed}
	Assume $A$, $B$, $\phi_X$ and $p$ as in \autoref{lem:yoneda}, and assume moreover that $\phi_X$ is pointed natural. Then there is a pointed homotopy $(\psi_\phi \to X) \sim \phi_X$.
\end{lem}

\begin{proof}
	Let $f : B \to X$. The underlying homotopy is obtained by:
	\begin{align*}
		(\psi_\phi \to X)(f) &\judgeq f \o \psi_\phi\\
		&\sim \phi_X (f \o \idfunc) &&\text{(by $p_\phi(f)(\idfunc)$)}\\
		&\sim \phi_X (f) &&\text{(by $\mapfunc{\phi_X}(\oneh_f)$)}
	\end{align*}
	To show that this is a pointed homotopy, we need to prove that the following diagram commutes:
	\begin{center}
	\begin{tikzcd}[column sep=4em]
		(\psi_\phi \to X)(\const)
			\arrow[rr, equals, "p_\phi(\const)(\idfunc)\tr\mapfunc{\phi_X}(\oneh_\const)"]
			\arrow[dr, equals, swap, "\zeroh_{\psi_\phi}"]
		&&\phi_X(\const)
			\arrow[dl, equals, "(\phi_X)_0"]
		\\
		&\const
	\end{tikzcd}
	\end{center}
	where the top-left expression is definitionally equal to $\const \o \phi_X(\idfunc)$, the horizontal path comes from the underlying homotopy and $(\phi_X)_0$ is the canonical path from $\phi_X(\const)$ to $\const$. Since $\phi_X$ is pointed natural, we have that
	$p_{\phi_X}(\const)(\idfunc) = (p_{\phi_X})_0(\idfunc)$, which is the concatenation:
	\begin{align*}
	\const\o \phi_X(\idfunc)
	&= \const &&\text{(by $\zeroh_{q_X(\idfunc)}$)}\\
	&= \phi_X(\const) &&\text{(by $(\phi_X)_0\sy$)}\\
	&= \phi_X(\const\o 1) &&\text{(by $(\mapfunc{\phi_X}(\zeroh_{\idfunc}))\sy$)}
	\end{align*}
	The diagram then commutes by cancellation of inverses and using that $\zeroh_{\idfunc} = \oneh_\const$.
\end{proof}
  
\subsection{Basic Properties of the Smash Product}\label{sec:smash-basic}

\begin{defn}
  The smash of $A$ and $B$ is the HIT generated by the point constructor $(a,b)$ for $a:A$ and $b:B$
  and two auxiliary points $\auxl,\auxr:A\smsh B$ and path constructors $\gluel_a:(a,b_0)=\auxl$
  and $\gluer_b:(a_0,b)=\auxr$ (for $a:A$ and $b:B$). $A\smsh B$ is pointed with point $(a_0,b_0)$.
\end{defn}
\begin{rmk}
  This definition of $A\smsh B$ is basically the pushout of
  $\bool\leftarrow A+B\to A \times B$. A more traditional definition of $A\smsh B$ is the pushout
  $\unit\leftarrow A\vee B\to A \times B$; here $\vee$ denotes the wedge product, which can be
  equivalently described as either the pushout $A\leftarrow \unit\to B$ or
  $\unit\leftarrow \bool\to A + B$. These two definitions of $A\smsh B$ are equivalent, because in
  the following diagram the top-left square and the top rectangle are pushout squares, hence the
  top-right square is a pushout square by applying the pushout lemma. Another application of the
  pushout lemma then states that the two definitions of $A\smsh B$ are equivalent.
\begin{center}
\begin{tikzcd}
\bool \arrow[r]\arrow[d] & A+B     \arrow[r]\arrow[d] & \bool \arrow[d] \\
\unit \arrow[r]          & A\vee B \arrow[r]\arrow[d] & \unit \arrow[d] \\
                     & A\times B        \arrow[r] & A\smsh B
\end{tikzcd}
\end{center}

\end{rmk}
\begin{lem}\label{lem:smash-general}
	The smash product is functorial: if $f:A\pmap A'$ and $g:B\pmap B'$, then
    $f\smsh g:A\smsh B\pmap A'\smsh B'$. We write $A\smsh g$ or $f\smsh B$ if one of the
    functions is the identity function. Moreover, if $p:f\sim f'$ and $q:g\sim g'$, then $p\smsh q:f\smsh g\sim f'\smsh g'$; this operation preserves reflexivities, symmetries and transitivies. We will write $p \smsh g$ or $f \smsh q$ if one of the homotopies is reflexivity.
\end{lem}

\begin{lem}\label{lem:interchange}
	The smash product preserves composition, which gives rise to the interchange law:
    \[i:(f_2 \o f_1)\smsh (g_2 \o g_1) \sim f_2 \smsh g_2 \o f_1 \smsh g_1\]
    for maps $A_1\lpmap{f_1}A_2\lpmap{f_2}A_3$ and $B_1\lpmap{g_1}B_2\lpmap{g_2}B_3$.
\end{lem}
\begin{proof}
	Let us denote the basepoints of $A_i$ and $B_i$ with $a_i$ and $b_i$ respectively. We first apply induction on the paths that all the maps in the statement respect the basepoint. We verify the underlying homotopy of $i$ by induction on terms $x$ of the domain $A_1 \smsh B_1$ of the two maps; this can be defined on point constructors $(a,b)$, $\auxl$ and $\auxr$ to be the identity path. If $x$ varies over $\gluel_a$, we need to fill the following square:
	\begin{equation}\label{eq:i-gluel}
	\begin{tikzcd}
		(f_2(f_1(a)), b_3)
			\arrow[r,equals,"1"]
			\arrow[d,swap,equals,"\mapfunc{(f_2 \o f_1)\smsh (g_2 \o g_1)}(\gluel_a)"]
		& (f_2(f_1(a)), b_3)
			\arrow[d,equals,"\mapfunc{f_2 \smsh g_2 \o f_1 \smsh g_1}(\gluel_a)"]
		\\
		\auxl
			\arrow[r,swap,equals,"1"]
		&\auxl
	\end{tikzcd}
	\end{equation}
	This reduces to proving that
	\[\mapfunc{(f_2(f_1(a)),-)}(g_2\o g_1)_0 \tr \gluel_{f_2(f_1(a))} = \mapfunc{(f_2(f_1(a)),-)}(\mapfunc{g_2}{(g_1)}_0 \tr {(g_2)}_0) \tr \gluel_{f_2(f_1(a))}\]
	Since we assumed that ${(g_1)}_0$ and ${(g_2)}_0$ are the identity path, the claim is easily verified. The case for $x$ varying over $\gluer_b$ is entirely analogous, giving the square:
	\begin{equation}\label{eq:i-gluer}
	\begin{tikzcd}
		(a_3, g_2(g_1(b))
			\arrow[r,equals,"1"]
			\arrow[d,swap,equals,"\mapfunc{(f_2 \o f_1)\smsh (g_2 \o g_1)}(\gluer_b)"]
		& (a_3, g_2(g_1(b))
			\arrow[d,equals,"\mapfunc{f_2 \smsh g_2 \o f_1 \smsh g_1}(\gluer_b)"]
		\\
		\auxr
			\arrow[r,swap,equals,"1"]
		&\auxr
	\end{tikzcd}
	\end{equation}
	The resulting homotopy is pointed, as $i(a_1,b_1) \judgeq 1$ and the proofs that the two maps respect the basepoint are assumed to be the identity path.
\end{proof}

\begin{lem}\label{lem:smash-zero}
	There are homotopies 
	\begin{align*}
	t_g : \const\smsh g\sim \const && t'_f : f\smsh\const\sim\const
	\end{align*}
	such that the following diagrams
    commute for given homotopies $p : g\sim g'$ and $q : f\sim f'$.
    \begin{equation}\label{eq:t-triangles}
	\begin{tikzcd}
	\const\smsh g
		\arrow[rr, equals,"1\smsh p"]
		\arrow[dr,equals,swap,"t_g"]
	&& \const\smsh g'\arrow[dl,equals,"t_{g'}"]
	&f\smsh \const
		\arrow[rr, equals,"q\smsh 1"]
		\arrow[dr,equals,swap, "t'_f"]
	&& f'\smsh \const\arrow[dl,equals,"t'_{f'}"]
	\\
	& \const
	&&& \const
	\end{tikzcd}
	\end{equation}
\end{lem}
\begin{proof}
	We will define the homotopy $t_g : \const \smsh g$, with $\const : A_1 \to A_2$ and $g : B_1 \to B_2$ (with the notational convention for the basepoints as in \autoref{lem:interchange}); the definition for $t'_f$ is analogous. First, we apply induction on the path that $g$ respects the basepoint. The underlying homotopy of $t_g$ is given by induction on terms $x : A_1 \smsh B_1$. On point constructors, we define:
	\begin{align*}
	t_g (a,b) &\defeq \gluer_{g(b)} \tr \gluer_{b_2}\sy && : (a_2, g(b)) = (a_2, b_2)\\
	t_g (\auxl) &\defeq \gluel_{a_2}\sy && : \auxl = (a_2, b_2)\\
	t_g (\auxr) &\defeq \gluer_{b_2}\sy && : \auxr = (a_2, b_2)
	\end{align*}
	If $x$ varies over $\gluel_a$, after some reductions, we need to fill the following square:
	\begin{equation}\label{eq:t-gluel}
	\begin{tikzcd}[column sep=7em]
		(a_2, g(b_1))
			\arrow[r,equals,"\gluer_{b_2} \tr \gluer_{b_2}\sy"]
			\arrow[d,swap,equals, "\gluel_{a_2}"]
		& (a_2, b_2)
			\arrow[d,equals,"1"]
		\\
		\auxl
			\arrow[r,swap,equals, "\gluel_{a_2}\sy"]
		& (a_2, b_2)
	\end{tikzcd}
	\end{equation}
	Similarly, if $x$ varies over $\gluer_b$, we need to fill the following square:
	\begin{equation}\label{eq:t-gluer}
	\begin{tikzcd}[column sep=7em]
		(a_2, g(b))
			\arrow[r,equals,"\gluer_{g(b)} \tr \gluer_{b_2}\sy"]
			\arrow[d,swap,equals, "\gluer_{g(b)}"]
		& (a_2, b_2)
			\arrow[d,equals,"1"]
		\\
		\auxr
			\arrow[r,swap,equals, "\gluer_{b_2}\sy"]
		& (a_2, b_2)
	\end{tikzcd}
	\end{equation}
	The squares in (\ref{eq:t-gluel}) and (\ref{eq:t-gluer}) can both be filled by simple path algebra. The resulting homotopy is pointed, as $t_g(a_1,b_1)$ is equal to the identity path and the proof that $g$ respects the basepoint is also assumed to be the identity path. Finally, for $p : g \sim g'$, the diagram on the left in (\ref{eq:t-triangles}) commutes by induction on $p$.
\end{proof}

\begin{lem}\label{lem:smash-coh}
 	Suppose that we have maps $A_1\lpmap{f_1}A_2\lpmap{f_2}A_3$ and $B_1\lpmap{g_1}B_2\lpmap{g_2}B_3$
 	and suppose that either $f_1$ or $f_2$ is constant. Then there are two homotopies
  $(f_2 \o f_1)\smsh (g_2 \o g_1)\sim \const$, one of which uses the interchange law and one that does not. These two homotopies are equal. Specifically, the following two diagrams commute:
	\begin{center}
	\begin{tikzcd}
		(f_2 \o \const)\smsh (g_2 \o g_1)
			\arrow[r, equals, "i"]
			\arrow[dd, swap, equals, "\zeroh' \smsh (g_2 \o g_1)"]
		&(f_2 \smsh g_2)\o (\const \smsh g_1)
			\arrow[d, equals, "(f_2 \smsh g_2) \o t_{g_1}"]
		\\
		& (f_2 \smsh g_2)\o \const
			\arrow[d,equals, "\zeroh'"]
		\\
		\const\smsh (g_2 \o g_1)
			\arrow[r,equals, swap, "t_{g_2 \o g_1}"]
		& \const
	\end{tikzcd}
	\qquad
	\begin{tikzcd}
		(\const \o f_1)\smsh (g_2 \o g_1)
			\arrow[r, equals, "i"]
			\arrow[dd, swap, equals, "\zeroh \smsh (g_2 \o g_1)"]
		& (\const \smsh g_2)\o (f_1 \smsh g_1)
			\arrow[d,equals, "t_{g_2} \o (f_1 \smsh g_1)"]
		\\
		& \const\o (f_1 \smsh g_1)
			\arrow[d,equals, "\zeroh"]
		\\
		\const\smsh (g_2 \o g_1)
			\arrow[r,swap, equals, "t_{g_2 \o g_1}"]
		& \const
	\end{tikzcd}
	\end{center}

\end{lem}
\begin{proof}
	
	We start by filling the diagram on the left. First apply induction on the paths that $f_2$, $g_1$ and $g_2$
  respect the basepoint. In this case $f_2\o\const$ is definitionally equal to $\const$, and the canonical
  proof that $f_2\o \const\sim\const$ is (definitionally) equal to reflexivity. This means that the homotopy
  $(f_2 \o \const)\smsh (g_2 \o g_1)\sim\const\smsh (g_2 \o g_1)$ is also equal to reflexivity, and also the
  path that $f_2 \smsh g_2$ respects the basepoint is reflexivity, hence the homotopy
  $(f_2 \smsh g_2)\o \const\sim\const$ is also reflexivity. This means we need to fill the following square:
	\begin{center}
	\begin{tikzcd}
		(f_2 \o \const)\smsh (g_2 \o g_1)
			\arrow[r, equals,"i"]
			\arrow[d, swap, equals,"1"]
		& (f_2 \smsh g_2)\o (\const \smsh g_1)
			\arrow[d,equals,"(f_2\smsh g_2)\o t_{g_1}"]
		\\
		\const \smsh (g_2 \o g_1)
			\arrow[r, swap, equals,"t_{g_1 \o g_2}"]
		& \const
	\end{tikzcd}
	\end{center}
  For the underlying homotopy, take $x : A_1\smsh B_1$ and apply induction on $x$. Suppose
  $x\equiv(a,b)$ for $a:A_1$ and $b:B_1$. With the notational convention for basepoints as in \autoref{lem:interchange}, we have to fill the square (we use that the paths that the maps respect the basepoints are reflexivity):
  \begin{equation}\label{eq:pent-left-ab}
    \begin{tikzcd}[column sep=5em]
	(a_3,g_2(g_1(b)))
      	\arrow[r, equals,"1"]
      	\arrow[d,swap,equals,"1"]
	& (a_3,g_2(g_1(b)))
		\arrow[d,equals,"\mapfunc{f_2\smsh g_2}(\gluer_{g_1(b)}\tr\gluer_{b_2}\sy)"]
	\\
	(a_3,g_2(g_1(b)))
		\arrow[r,swap,equals,"\gluer_{g_2(g_1(b))}\tr\gluer_{b_3}\sy"]
	& (a_3,b_3)
    \end{tikzcd}
    \end{equation}  
   Now $\mapfunc{h\smsh k}(\gluer_z)=\gluer_{k(z)}$, so by general groupoid laws we see that the path on the bottom is equal to the path on the right, which means we can fill the square. For the other point constructors, the squares to fill are similar. If $x \judgeq \auxl$, we have:
      \begin{equation}\label{eq:pent-left-auxl}
    \begin{tikzcd}[column sep=5em]
      \auxl \arrow[r, equals,"1"]
      \arrow[d,swap,equals,"1"] &
      \auxl \arrow[d,equals,"\mapfunc{f_2\smsh g_2}(\gluel_{a_2}\sy)"] \\
      \auxl \arrow[r,swap, equals,"\gluel_{a_3}\sy"] &
      (a_3,b_3)
    \end{tikzcd}
 \end{equation}
	We can fill this square, as the path on the bottom is definitionally equal to $\gluel_{a_3}\sy$ (as we applied path induction on the path that $f_2$ respects the basepoint) and the path on the right  also reduces to $\gluel_{a_3}\sy$ using that $\mapfunc{h\smsh k}(\gluel_z)=\gluel_{h(z)}$. Similarly, we can fill the square for $x \judgeq \auxr$, which is:
  \begin{equation}\label{eq:pent-left-auxr}
    \begin{tikzcd}[column sep=5em]
      \auxr \arrow[r, equals,"1"]
      \arrow[d,swap,equals,"1"] &
      \auxr \arrow[d,equals,"\mapfunc{f_2\smsh g_2}(\gluer_{b_2}\sy)"] \\
      \auxr \arrow[r,swap, equals,"\gluer_{b_3}\sy"] &
      (a_3,b_3)
    \end{tikzcd}
  \end{equation}
	If $x$ varies over $\gluel_a$, after some reductions, we need to fill the following cube, where the front and the back are the squares in (\ref{eq:pent-left-ab}) for $(a,b_1)$ and (\ref{eq:pent-left-auxl}) respectively; the left square is degenerate; the other three sides are the squares in the definition of $i$ and $t$ to show that they respect $\gluel_a$ (given in (\ref{eq:i-gluel}) and (\ref{eq:t-gluel}) respectively), where we also apply $f_2 \smsh g_2$ to the square on the right. We suppress in the diagram the arguments of $\gluer$ in $\gluer\tr\gluer\sy$ (which match, so the concatenation results equal to the identity path).
	\begin{equation}\label{eq:pent-left-gluel}
	\begin{tikzcd}[column sep=5em]
	& \auxl
		\arrow[rr, equals, "1"]
		\arrow[dd, swap, equals, near end, "1"]
	&& \auxl
		\arrow[dd, equals, "\mapfunc{f_2\smsh g_2} (\gluel_{a_2}\sy)"]
	\\
	(a_3,b_3)
		\arrow[ur, equals, "\gluel_{a_3}"]
		\arrow[dd, swap, equals, "1"]
		\arrow[rr, equals, crossing over, near end, "1"]
	&& (a_3,b_3)
		\arrow[ur, equals, near start, "\mapfunc{f_2\smsh g_2}(\gluel_{a_2})"]
	\\
	& \auxl
		\arrow[rr, swap, equals, near start, "\gluel_{a_3}\sy"]
	&& (a_3,b_3)
	\\
	(a_3,b_3)
		\arrow[ur, equals, "\gluel_{a_3}"]
		\arrow[rr, swap, equals, "\gluer\tr\gluer\sy"]
	&& (a_3,b_3)
		\arrow[ur, swap, equals, "1"] 
		\arrow[from=uu, equals, crossing over, very near start, "\mapfunc{f_2 \smsh g_2}(\gluer\tr\gluer\sy)"]
	\end{tikzcd}
	\end{equation}
	Similarly, if $x$ varies over $\gluer_b$, we need to fill the cube below: the front and the back are the squares in (\ref{eq:pent-left-ab}) for $(a_1,b)$ and (\ref{eq:pent-left-auxr}) respectively; the left square is again degenerate; the other three sides come from the fact that $i$ and $t$ respect $\gluer_b$ (given in (\ref{eq:i-gluer}) and (\ref{eq:t-gluer}) respectively). Again, we omit the arguments of $\gluer$ in $\gluer\tr\gluer\sy$ (in this case, not a priori judgmentally equal).
	\begin{equation}\label{eq:pent-left-gluer}
	\begin{tikzcd}[column sep=4em]
	& \auxr
		\arrow[rr, equals,"1"]
		\arrow[dd, swap, equals, near end,"1"]
	&& \auxr
		\arrow[dd,equals,"\mapfunc{f_2\smsh g_2}(\gluer_{b_2}\sy)"]
	\\
	(a_3,g_2(g_1(b)))
		\arrow[rr, equals, near end, crossing over, "1"]
		\arrow[dd, swap, equals, "1"]
		\arrow[ur, equals, "\gluer_{g_2(g_1(b))}"]
	&& (a_3,g_2(g_1(b)))
		\arrow[ur, equals, near start, "\mapfunc{f_2\smsh g_2}(\gluer_{g_1(b)})"]
	\\
	& \auxr
		\arrow[rr, swap, equals, near start, "\gluer_{b_3}\sy"]
	&& (a_3,b_3)
	\\
	(a_3,g_2(g_1(b)))
		\arrow[rr, swap, equals,"\gluer\tr\gluer\sy"]
		\arrow[ur, equals, near end, "\gluer_{g_2(g_1(b))}"]
	&& (a_3,b_3)
		\arrow[from=uu, equals, crossing over, very near start, "\mapfunc{f_2\smsh g_2}(\gluer\tr\gluer\sy)"]
		\arrow[ur, swap, equals, "1"]
	\end{tikzcd}
	\end{equation}
  In order to fill the cubes in (\ref{eq:pent-left-gluel}) and (\ref{eq:pent-left-gluer}), we generalize the paths and fill the cubes by path induction. The cube in (\ref{eq:pent-left-gluel}) can be generalized to a cube:
  \begin{center}
  	\begin{tikzcd}[column sep=3em]
	& h(y)
		\arrow[rr, equals,"1"]
		\arrow[dd, swap, equals, near end,"1"]
	&& h(y)
		\arrow[dd,equals,"\mapfunc{h}(p_l\sy)"]
	\\
	h(x)
		\arrow[rr, equals, near end, crossing over, "1"]
		\arrow[dd, swap, equals, "1"]
		\arrow[ur, equals, "q_l"]
	&& h(x)
		\arrow[ur, equals, near start, "\mapfunc{h}(p_l)"]
	\\
	& h(y)
		\arrow[rr, swap, equals, near start, "q_l\sy"]
	&& h(x)
	\\
	h(x)
		\arrow[rr, swap, equals,"q_r\tr q_r\sy"]
		\arrow[ur, equals, "q_l"]
	&& h(x)
		\arrow[from=uu, equals, crossing over, near start, "\mapfunc{h}(p_r\tr p_r\sy)"]
		\arrow[ur, swap, equals, "1"]
	\end{tikzcd}
	\end{center}
	for $X$ and $X'$ pointed types; a map $h : X \to X'$; terms $x$, $y$ $z : X$; paths $p_l : x = y$, $p_r : x = z$, $q_l : h(x) = h(y)$, $q_r : h(x) = h(z)$; and 2-paths $s_l : \mapfunc{h}(p_l) = q_l$ (for the back and the top) and $s_r : \mapfunc{h}(p_r) = q_r$ (for the right side). This cube is filled by path induction on $s_l$, $s_r$, $p_l$ and $p_r$. The cube in (\ref{eq:pent-left-gluer}) can be generalized to a similar cube:
	 \begin{center}
  	\begin{tikzcd}[column sep=3em]
	& h(y)
		\arrow[rr, equals,"1"]
		\arrow[dd, swap, equals, near end,"1"]
	&& h(y)
		\arrow[dd,equals,"\mapfunc{h}(p_b)"]
	\\
	h(x)
		\arrow[rr, equals, near end, crossing over, "1"]
		\arrow[dd, swap, equals, "1"]
		\arrow[ur, equals, "q_l"]
	&& h(x)
		\arrow[ur, equals, near start, "\mapfunc{h}(p_l)"]
	\\
	& h(y)
		\arrow[rr, swap, equals, near start, "q_b"]
	&& h(z)
	\\
	h(x)
		\arrow[rr, swap, equals,"q_l\tr q_b"]
		\arrow[ur, equals, "q_l"]
	&& h(z)
		\arrow[from=uu, equals, crossing over, near start, "\mapfunc{h}(p_l\tr p_b)"]
		\arrow[ur, swap, equals, "1"]
	\end{tikzcd}
	\end{center}
	for paths $p_l : x = y$, $p_b : y = z$, $q_l : h(x) = h(y)$, $q_b : h(y) = h(z)$ and for 2-paths $s_l : \mapfunc{h}(p_l) = q_l$ (for the top) and $s_b : \mapfunc{h}(p_b) = q_b$ (for the back).
	
	The diagram on the right is similar to the previous case. It is not hard to show that these homotopies are pointed.

\end{proof}

\begin{thm}\label{thm:smash-functor-right}
Given pointed types $A$, $B$ and $C$, the functorial action of the smash product induces a map
$$({-})\smsh C:(A\pmap B)\pmap(A\smsh C\pmap B\smsh C)$$
that is natural in $A$ and $B$ and dinatural in $C$.
\end{thm}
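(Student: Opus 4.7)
The underlying map sends $f:A\pmap B$ to $f\smsh C$, which is a pointed map by \autoref{lem:smash-general}. To promote this to a pointed map between pointed mapping spaces, we need a pointed homotopy $\const_{A,B}\smsh C\sim^* \const_{A\smsh C, B\smsh C}$. This is precisely the homotopy $t'_{\id_C}$ from \autoref{lem:smash-zero} (specialized to $f\defeq\const$, $g\defeq\id_C$) after verifying via the second square of \eqref{eq:t-triangles} that its value at a reflexivity homotopy is coherent. Thus $({-})\smsh C$ is a well-defined pointed map, and we denote its basepoint-preservation path by $\zeroh$ for brevity below.

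For naturality in $B$, suppose $g:B\pmap B'$. We must fill the square obtained by postcomposition with $g$ on top and $g\smsh C$ on the bottom. The underlying homotopy for $f:A\pmap B$ is $(g\o f)\smsh C\sim (g\smsh C)\o (f\smsh C)$, which is an instance of the interchange law $i$ from \autoref{lem:interchange} with $g_1\defeq g_2\defeq\id_C$. Naturality in $A$ is completely analogous: for $h:A'\pmap A$, the homotopy $(f\o h)\smsh C\sim (f\smsh C)\o(h\smsh C)$ is again an instance of $i$. In each case we must further exhibit the homotopy as a \emph{pointed} homotopy of pointed maps between pointed mapping spaces, i.e.\ we must check that when $f$ is specialized to the basepoint $\const$, the interchange path reduces appropriately. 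This is where \autoref{lem:smash-coh} is applied: its two commuting diagrams say precisely that the interchange law, composed with $t$ and $t'$, agrees with the canonical pointedness proofs built from $\zeroh$ and $\zeroh'$.

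For dinaturality in $C$, given $k:C\pmap C'$, we must exhibit, for each $f:A\pmap B$, a filler of the square
\[
(B\smsh k)\o (f\smsh C) \sim (f\smsh C')\o (A\smsh k),
\]
together with a coherence certifying it is a pointed natural transformation between the two composite pointed functors $(A\pmap B)\pmap(A\smsh C\pmap B\smsh C')$. Both sides are connected to the common term $f\smsh k$ by two applications of the interchange law $i$ from \autoref{lem:interchange}: one with $f_1\defeq f,\ f_2\defeq\id,\ g_1\defeq\id,\ g_2\defeq k$, giving $f\smsh k\sim (B\smsh k)\o(f\smsh C)$, and the other with $f_1\defeq\id,\ f_2\defeq f,\ g_1\defeq k,\ g_2\defeq\id$, giving $f\smsh k\sim (f\smsh C')\o (A\smsh k)$. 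Concatenating one with the inverse of the other yields the dinaturality homotopy.

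The main technical obstacle will be verifying that each of these naturality squares is \emph{pointed}, i.e.\ that the homotopy we produce, when evaluated at the basepoint $f\defeq\const$, agrees with the canonical pointedness paths of the two composites. For naturality in $A$ and $B$ this amounts to checking the pentagons of \autoref{lem:smash-coh} (one for each of the two positions of $\const$ in the interchange). For dinaturality in $C$ the check is more delicate: we must use \emph{both} pentagons of \autoref{lem:smash-coh} simultaneously, since the basepoint intervenes on both sides of the square, and we must further verify that the two canonical proofs of $\const\smsh k\sim \const$ and $\const\smsh\id_{C'}\sim\const$ arising from $t_k$ and $t_{\id_{C'}}$ respectively cohere with the bottom unitor path $\zeroh$ of the composite. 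All of this is a straightforward (if lengthy) composition of path algebra, using only the lemmas already proved in this section; no genuinely new structural input is required.
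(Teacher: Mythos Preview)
Your proposal is correct and follows essentially the same approach as the paper: interchange (\autoref{lem:interchange}) supplies the underlying naturality homotopies in $A$ and $B$, the pentagons of \autoref{lem:smash-coh} supply the pointedness checks, and dinaturality in $C$ is obtained from two applications of interchange linked through $f\smsh k$ (the paper writes the intermediate terms $(\id\o f)\smsh(k\o\id)$ and $(f\o\id)\smsh(\id\o k)$ explicitly and invokes \autoref{lem:smash-general} for the connecting unit-law step, then fills the resulting four-square diagram using both pentagons of \autoref{lem:smash-coh} on the outside and \autoref{lem:smash-general} in the middle). One minor slip: the basepoint-preservation homotopy for $({-})\smsh C$ is $t_{\id_C}$ (an instance of $t_g:\const\smsh g\sim\const$), not $t'_{\id_C}$.
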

The naturality and dinaturality means that the following squares commute for $f : A' \to A$ $g:B\to B'$ and $h:C\to C'$.
\begin{center}
\begin{tikzcd}[column sep=5em]
(A\pmap B) \arrow[r,"({-})\smsh C"]\arrow[d,"f\pmap B"] &
(A\smsh C\pmap B\smsh C)\arrow[d,"f\smsh C\pmap B\smsh C"] \\
(A'\pmap B) \arrow[r,"({-})\smsh C"] &
(A'\smsh C\pmap B\smsh C)
\end{tikzcd}
\qquad
\begin{tikzcd}[column sep=5em]
(A\pmap B) \arrow[r,"({-})\smsh C"]\arrow[d,"A\pmap g"] &
(A\smsh C\pmap B\smsh C)\arrow[d,"A\smsh C\pmap g\smsh C"] \\
(A\pmap B') \arrow[r,"({-})\smsh C"] &
(A\smsh C\pmap B'\smsh C)
\end{tikzcd}
\begin{tikzcd}[column sep=5em]
(A\pmap B) \arrow[r,"({-})\smsh C"]\arrow[d,"({-})\smsh C'"] &
(A\smsh C\pmap B\smsh C)\arrow[d,"A\smsh C\pmap B\smsh h"] \\
(A\smsh C'\pmap B\smsh C') \arrow[r,"A\smsh h\pmap B\smsh C'"] &
(A\smsh C\pmap B\smsh C')
\end{tikzcd}
\end{center}
\begin{proof}
First note that $\lam{f}f\smsh C$ preserves the basepoint so that the map is indeed pointed.

Let $k:A\pmap B$. Then as homotopy the naturality in $A$ becomes
$(k\o f)\smsh C=k\smsh C\o f\smsh C$. To prove an equality between pointed maps, we need to give
a pointed homotopy, which is given by interchange. To show that this homotopy is pointed, we need to
fill the following square (after reducing out the applications of function extensionality), which follows from \autoref{lem:smash-coh}.
\begin{center}
\begin{tikzcd}
(\const \o f)\smsh C \arrow[r, equals]\arrow[dd,equals] &
(\const \smsh C)\o (f \smsh C)\arrow[d,equals] \\
& \const \o (f \smsh C)\arrow[d,equals] \\
\const\smsh C \arrow[r,equals] &
\const
\end{tikzcd}
\end{center}
The naturality in $B$ is almost the same: for the underlying homotopy we need to show
$i:(g \o k)\smsh C = g\smsh C \o k\smsh C$. For the pointedness we need to fill the following
square, which follows from the left pentagon in \autoref{lem:smash-coh}.
\begin{center}
\begin{tikzcd}
(g \o \const)\smsh C \arrow[r, equals]\arrow[dd,equals] &
(g \smsh C)\o (\const \smsh C)\arrow[d,equals] \\
& (g\smsh C) \o \const\arrow[d,equals] \\
\const\smsh C \arrow[r,equals] &
\const
\end{tikzcd}
\end{center}

The dinaturality in $C$ is a bit harder. For the underlying homotopy we need to show
$B\smsh h\o k\smsh C=k\smsh C'\o A\smsh h$. This follows from applying interchange twice:
$$B\smsh h\o k\smsh C\sim(\idfunc[B]\o k)\smsh(h\o\idfunc[C])\sim(k\o\idfunc[A])\smsh(\idfunc[C']\o h)\sim k\smsh C'\o A\smsh h.$$
To show that this homotopy is pointed, we need to fill the following square:
\begin{xcenter}
  \begin{tikzcd}
    B\smsh h\o \const\smsh C \arrow[r, equals]\arrow[d,equals] &
    (\idfunc[B]\o \const)\smsh(h\o\idfunc[C]) \arrow[r, equals]\arrow[d,equals] &
    (\const\o\idfunc[A])\smsh(\idfunc[C']\o h)\arrow[r, equals]\arrow[d,equals] &
    \const\smsh C'\o A\smsh h\arrow[d,equals] \\
    B\smsh h\o \const \arrow[d,equals] &
    \const\smsh(h\o\idfunc[C]) \arrow[r, equals]\arrow[d,equals] &
    \const\smsh(\idfunc[C']\o h) \arrow[d,equals] &
    \const\o A\smsh h\arrow[d,equals] \\
    B\smsh h\o \const \arrow[r, equals] &
    \const \arrow[r, equals] &
    \const \arrow[r, equals] &
    \const
  \end{tikzcd}
\end{xcenter}
The left and the right squares are filled by \autoref{lem:smash-coh}. The squares in the middle
are filled by (corollaries of) \autoref{lem:smash-general}.
\end{proof}

\subsection{Adjunction}\label{sec:smash-adjunction}

\begin{lem}\label{lem:unit-counit}
  There is a unit $\eta_{A,B}\equiv\eta:A\pmap B\pmap A\smsh B$ natural in $A$ and counit
  $\epsilon_{B,C}\equiv\epsilon : (B\pmap C)\smsh B \pmap C$ dinatural in $B$ and natural in $C$.
  These maps satisfy the unit-counit laws:
  $$(A\to\epsilon_{A,B})\o \eta_{A\to B,A}\sim \idfunc[A\to B]\qquad
  \epsilon_{B,B\smsh C}\o \eta_{A,B}\smsh B\sim\idfunc[A\smsh B].$$
\end{lem}
Note: $\eta$ is also dinatural in $B$, but we do not need this.
\begin{proof}
  We define $\eta ab=(a,b)$. We define the path that $\eta a$ respects the basepoint as
  $$(\eta a)_0\defeq\gluel_a\tr\gluel_{a_0}\sy:(a,b_0)=(a_0,b_0).$$ Also, $\eta$ itself respects the basepoint. To show this, we need to give $\eta_0:\eta (a_0)\sim \const$. The underlying maps are homotopic, by $$\eta_0b\defeq\gluer_b\cdot\gluer_{b_0}\sy:(a_0,b)=(a_0,b_0).$$ To show that
  this homotopy is pointed, we need to show that the two given proofs of $(a_0,b_0)=(a_0,b_0)$ are
  equal, but they are both equal to reflexivity:
  $$\eta_{00}:\gluel_{a_0}\tr\gluel_{a_0}\sy=1=\gluer_{b_0}\tr\gluer_{b_0}\sy.$$
  This defines the unit. To show that it is natural in $A$, we need to give the following pointed homotopy $p_\eta(f)$ for $f:A\to A'$.
  \begin{center}
	\begin{tikzcd}
	A \arrow[r,"\eta"]\arrow[d,"f"] &
	(B\pmap A \smsh B)\arrow[d,"B\to f\smsh B"] \\
	A' \arrow[r,"\eta"] &
	(B\pmap A'\smsh B)
	\end{tikzcd}
  \end{center}
  We may assume that $f_0$ is reflexivity. For the underlying homotopy we need to define for $a:A$ that $p_\eta(f,a):\eta(fa)\sim f\smsh B \circ \eta a$, which is another pointed homotopy. For $b:B$ we have $\eta(fa,b)\equiv(fa,b)\equiv(f\smsh B)(\eta ab).$
  The homotopy $p_\eta(f,a)$ is pointed, since $$(f\smsh B \circ \eta a)_0=\apfunc{f\smsh B}(\gluel_a\cdot\gluel_{a_0}\sy)=\gluel_{fa}\cdot\gluel_{a_0'}\sy=(\eta(fa))_0.$$
  Now we need to show that $p_\eta(f)$ is pointed, for which we need to fill the following diagram.
  \begin{center}
	\begin{tikzcd}
	\eta(fa_0) \arrow[equals,rr,"{p_\eta(f,a_0)}"]\arrow[dr,equals,"{\eta_0}"] & &
	f\smsh B \circ \eta a_0\arrow[dl,equals,"{f\smsh B\circ\eta_0}"] \\
	& \const_{B,A'\smsh B} &
	\end{tikzcd}
  \end{center}
  These pointed homotopies have equal underlying homotopies, since for $b:B$ we have 
  $$p_\eta(f,a_0,b)\cdot\apfunc{f\smsh B}(\eta_0 b)=1\cdot\apfunc{f\smsh B}(\gluer_b\cdot\gluer_{b_0}\sy)=\gluer_{b}\cdot\gluer_{b_0}\sy=\eta_0b.$$
  We will skip the proof that these homotopies respect the point in the same way.

  To define the counit, given $x:(B\pmap C)\smsh B$, we construct
  $\epsilon (x):C$ by induction on $x$. If $x\jdeq(f,b)$, we set $\epsilon(f,b)\defeq f(b)$. If $x$
  is either $\auxl$ or $\auxr$, then we set $\epsilon (x)\defeq c_0:C$. If $x$ varies over $\gluel_f$,
  then we need to show that $f(b_0)=c_0$, which is true by $f_0$. If $x$ varies over $\gluer_b$, we
  need to show that $\const(b)=c_0$ which is true by reflexivity. Now $\epsilon_0\defeq 1:\epsilon(\const_{B,C},b_0)=c_0$ shows that $\epsilon$ is pointed.

  We will skip the proof that the counit is dinatural in $B$ and natural in $C$.

  Finally, we need to show the unit-counit laws. For the underlying homotopy of the first one, let
  $f:A\to B$. We need to show that $p_f:\epsilon\o\eta f\sim f$. We define $p_f(a)=1:\epsilon(f,a)=f(a)$. To show that $p_f$ is a pointed homotopy, we need to show that
  $p_f(a_0)\tr f_0=\mapfunc{\epsilon}(\eta f)_0\tr \epsilon_0$, which reduces to
  $f_0=\mapfunc{\epsilon}(\gluel_f\tr\gluel_0\sy)$, but we can reduce the right hand side: (note:
  $\const_0$ denotes the proof that $\const(a_0)=b_0$, which is reflexivity)
  $$\mapfunc{\epsilon}(\gluel_f\tr\gluel_0\sy)=\mapfunc{\epsilon}(\gluel_f)\tr(\mapfunc{\epsilon}(\gluel_0))\sy=f_0\tr \const_0\sy=f_0.$$
  Now we need to show that $p$ itself respects the basepoint of $A\to B$, i.e. that the composite
  $\epsilon\o\eta(\const)\sim\epsilon\o\const\sim\const$ is equal to $p_{\const_{A,B}}$. The underlying
  homotopies are the same for $a : A$; on the one side we have
  $\mapfunc{\epsilon}(\gluer_{a}\tr\gluer_{a_0}\sy)$ and on the other side we have reflexivity
  (note: this type checks since $\const_{A,B}a\equiv\const_{A,B}a_0$). These paths are equal, since
  $$\mapfunc{\epsilon}(\gluer_{a}\tr\gluer_{a_0}\sy)=\mapfunc{\epsilon}(\gluer_{a})\tr(\mapfunc\epsilon(\gluer_{a_0}))\sy=1\cdot1\sy\equiv1.$$
  Both pointed homotopies are pointed in the same way, which requires some path-algebra, and we skip the proof here.

  For the underlying homotopy of the second unit-counit law, we need to show for $x:A\smsh B$ that
  $q(x):\epsilon((\eta\smsh B)x)=x$, which we prove by induction to $x$. If $x\equiv(a,b)$, then we can define $q(a,b)\defeq1_{(a,b)}$. 
  If $x$ is $\auxl$ or $\auxr$, then the left-hand side reduces to $(a_0,b_0)$, 
  so we can define $q(\auxl)\defeq\gluel_{a_0}$ and $q(\auxr)\defeq\gluer_{b_0}$. The following computation shows that $q$ respects $\gluel_a$:
  \begin{align*}
    \apfunc{\epsilon\circ\eta\smsh B}(\gluel_a)\cdot\gluel_{a_0}&= \apfunc{\epsilon}(\gluel_{\eta a})\cdot\gluel_{a_0}=(\eta a)_0\cdot\gluel_{a_0}=\gluel_a\cdot\gluel_{a_0}\sy\cdot\gluel_{a_0}\\
    &=\gluel_a.
  \end{align*}
  To show that it respects $\gluer_b$ we compute
  \begin{align*}
    \apfunc{\epsilon\circ\eta\smsh B}(\gluer_b)\cdot\gluer_{b_0}&=
  \apfunc{\epsilon({-},b)}(\eta_0)\cdot\apfunc{\epsilon}(\gluer_b)\cdot\gluer_{b_0}=
  \apfunc{\lam{f}fb}(\eta_0)\cdot\gluer_{b_0}\\
  &=\eta_0b\cdot\gluer_{b_0}=
  \gluer_b.
  \end{align*}
  To show that $q$ is a pointed homotopy, we need to show that $(\epsilon\circ\eta\smsh B)_0=1$, For this we compute $$(\epsilon\circ\eta\smsh B)_0=\apfunc{\epsilon({-},b_0)}(\eta_0)=\eta_0b_0=\gluer_{b_0}\cdot\gluer_{b_0}\sy=1.$$
\end{proof}

\begin{defn}
The function $e\jdeq e_{A,B,C}:(A\pmap B\pmap C)\pmap(A\smsh B\pmap C)$ is defined as the composite
$$(A\pmap B\pmap C)\lpmap{({-})\smsh B}(A\smsh B\pmap (B\pmap C)\smsh B)\lpmap{A\smsh B \pmap\epsilon}(A\smsh B\pmap C).$$
\end{defn}

\begin{lem}
  The function $e$ is invertible, hence gives a pointed equivalence $$(A\pmap B\pmap C)\simeq(A\smsh B\pmap C).$$
\end{lem}
\begin{proof}
  Define
  $$e\sy_{A,B,C}:(A\smsh B\pmap C)\lpmap{B\pmap({-})}((B\pmap A\smsh B)\pmap (B\pmap
  C))\lpmap{\eta\pmap(B\pmap C)}(A\pmap B\pmap C).$$ It is easy to show that $e$ and $e\sy$ are
  inverses as unpointed maps from the unit-counit laws (\autoref{lem:unit-counit}) and naturality of $\eta$ and $\epsilon$.
\end{proof}
\begin{lem}\label{lem:e-natural}
	The function $e$ is natural in $A$, $B$ and $C$.
\end{lem}
\begin{proof}
	\textbf{Naturality of $e$ in $A$}. Suppose that $f:A'\pmap A$. Then the following diagram commutes. The left square commutes by naturality of $({-})\smsh B$ in the first argument and the right square commutes because composition on the left commutes with composition on the right.
	\begin{center}
	\begin{tikzcd}
		(A\pmap B\pmap C) \arrow[r,"({-})\smsh B"]\arrow[d,"f\pmap B\pmap C"] &
		(A\smsh B\pmap (B\pmap C)\smsh B) \arrow[r,"A\smsh B\pmap\epsilon"]\arrow[d,"f\smsh B\pmap\cdots"]  &
		(A\smsh B\pmap C)\arrow[d,"f\smsh B\pmap C"] \\
		(A'\pmap B\pmap C) \arrow[r,"({-})\smsh B"] &
		(A'\smsh B\pmap (B\pmap C)\smsh B) \arrow[r,"A\smsh B\pmap\epsilon"] &
		(A'\smsh B\pmap C)
	\end{tikzcd}
	\end{center}

	\textbf{Naturality of $e$ in $C$}. Suppose that $f:C\pmap C'$. Then in the following diagram the left square commutes by naturality of $({-})\smsh B$ in the second argument (applied to $B\pmap f$) and the right square commutes by applying the functor $A\smsh B \pmap({-})$ to the naturality of $\epsilon$ in the second argument.
	\begin{center}
	\begin{tikzcd}
		(A\pmap B\pmap C) \arrow[r]\arrow[d] &
		(A\smsh B\pmap (B\pmap C)\smsh B) \arrow[r]\arrow[d] &
		(A\smsh B\pmap C)\arrow[d] \\
		(A\pmap B\pmap C') \arrow[r] &
		(A\smsh B\pmap (B\pmap C')\smsh B) \arrow[r] &
		(A\smsh B\pmap C')
	\end{tikzcd}
	\end{center}

	\textbf{Naturality of $e$ in $B$}. Suppose that $f:B'\pmap B$. Here the diagram is a bit more
complicated, since $({-})\smsh B$ is dinatural (instead of natural) in $B$. Then we get the
following diagram. The front square commutes by naturality of $({-})\smsh B$ in the second argument
	(applied to $f\pmap C$). The top square commutes by naturality of $({-})\smsh B$ in the third
argument, the back square commutes because composition on the left commutes with composition on the
	right, and finally the right square commutes by applying the functor $A\smsh B' \pmap({-})$ to the
	naturality of $\epsilon$ in the first argument.
	\begin{center}
	\begin{tikzcd}[row sep=scriptsize, column sep=-4em]
		& (A\smsh B\pmap (B\pmap C)\smsh B) \arrow[rr] \arrow[dd] & & (A\smsh B'\pmap (B\pmap C)\smsh B)\arrow[dd] \\
		(A\pmap B\pmap C) \arrow[ur] \arrow[rr, crossing over] \arrow[dd] & & (A\smsh B'\pmap (B\pmap C)\smsh B') \arrow[ur] \\
		& (A\smsh B\pmap C)\arrow[rr] &  & (A\smsh B'\pmap C) \\
		(A\pmap B'\pmap C) \arrow[rr] & & (A\smsh B'\pmap (B'\pmap C)\smsh B') \arrow[ur] \arrow[from=uu, crossing over]
	\end{tikzcd}
	\end{center}

\end{proof}
\begin{rmk}
  Instead of showing that $e$ is natural, we could show that $e^{-1}$ is natural. In
  that case we need to show that the map $A\to({-}):(B\to C)\to(A\to B)\to(A\to C)$ is natural in
  $A$, $B$ and $C$. This might actually be easier, since we do not need to work with any higher
  inductive type to prove that.
\end{rmk}

We have now obtained the following theorem
\begin{thm}\label{thm:smash-adjoint}
  There is an equivalence 
  $$(A\pmap B\pmap C)\simeq(A\smsh B\pmap C)$$
  natural in $A$, $B$ and $C$.
\end{thm}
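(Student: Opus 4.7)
The plan is to simply assemble the pieces constructed in the preceding two lemmas. We already have the pointed map
$$e_{A,B,C} \defeq (A\smsh B \pmap \epsilon_{B,C}) \o (({-})\smsh B) : (A\pmap B\pmap C) \pmap (A\smsh B\pmap C),$$
and the preceding lemma established that $e_{A,B,C}$ is an equivalence (with inverse $e\sy_{A,B,C}$ built from $\eta$), so it defines a pointed equivalence $(A\pmap B\pmap C)\simeq^* (A\smsh B \pmap C)$. The only thing left to package is naturality in all three arguments, which is exactly the content of \autoref{lem:e-natural}.

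Concretely, I would first state that the map underlying the claimed equivalence is $e_{A,B,C}$. Then I would invoke the previous lemma to obtain $\isequiv(e_{A,B,C})$; by \autoref{lem:pointed-types-basic}, being an equivalence of pointed maps is the same as being both pointed and an underlying equivalence, so this upgrades to a pointed equivalence. Naturality in each of $A$, $B$, and $C$ is then an immediate appeal to \autoref{lem:e-natural}: the three commuting squares constructed there (one for precomposition by $f:A'\pmap A$, one for postcomposition by $g:C\pmap C'$, and the more elaborate diagram obtained by decomposing the dinaturality of $({-})\smsh B$ against a map $f:B'\pmap B$) are exactly the naturality squares required by \autoref{def:naturality}.

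I anticipate no genuine obstacle here, since all the work has been done: the construction of $\eta$ and $\epsilon$ with the unit--counit laws in \autoref{lem:unit-counit}, the verification in \autoref{thm:smash-functor-right} that $({-})\smsh B$ is pointed and (di)natural, and the bookkeeping in \autoref{lem:e-natural} combine to give the result with essentially no further argument. The main subtlety worth flagging is that this theorem as stated only claims \emph{naturality} (i.e.\ 1-coherent naturality in the sense of \autoref{def:naturality}), not \emph{pointed} naturality; upgrading to pointed naturality is what is needed to run the Yoneda-style argument of \autoref{lem:yoneda-pointed} and deduce the symmetric monoidal structure, and — as the introduction to this section already notes — that upgrade is the gap in the overall program, not something this theorem is trying to close.
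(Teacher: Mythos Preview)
Your proposal is correct and matches the paper exactly: the paper presents this theorem with the sentence ``We have now obtained the following theorem'' and no further proof, treating it as the immediate packaging of the invertibility lemma for $e$ together with \autoref{lem:e-natural}. Your additional remark distinguishing naturality from pointed naturality is also on point and mirrors the paper's own caveat in the remark that follows the theorem.
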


\begin{rmk}
  We can state \autoref{thm:smash-adjoint} as an adjunction $({-})\smsh B\dashv B\to({-})$ or by saying that $A\smsh B$
  represents the functor $A\pmap B\pmap ({-})$.
  
  In \autoref{sec:smash-monoidal} we show that the smash product forms a
  1-coherent symmetric monoidal product from the assumption that this adjunction
  is pointed in $C$. Explicitly, this means that the naturality of $e$ in $C$
  applied to the map $\const_{C,C'}:C\to C'$ is equal to the composite
  $$(A\smsh B \to \const_{C,C'})\o e_{A,B,C}\sim \const\o e_{A,B,C}\sim \const
  \sim e_{A,B,C'}\o \const \sim e_{A,B,C'}\o (A\to B\to \const_{C,C'}).$$ 
  
  To prove this, we need that the counit $\eps$ is pointed natural in $C$. To
  prove that, we need to show that the map $({-})\smsh C$, defined in
  \autoref{thm:smash-functor-right}, is pointed natural in $B$. In order to
  prove that, we need to show that in the situation of \autoref{lem:smash-coh},
  if both $f_1$ and $f_2$ are (judgmentally) the constant map, then the two
  pentagons stated in that lemma are equal (transported appropriately in order
  to make this equality type check). This can be formulated as a 3-path in a
  type of pointed maps, which is hard to fill.
\end{rmk}

\subsection{Symmetric monoidal product}\label{sec:smash-monoidal}
In this section we will prove that the smash product is a 1-coherent symmetric
monoidal product~\autoref{def:symmonprod}, from the assumption that the
adjunction from \autoref{sec:smash-adjunction} is pointed natural in $C$. We
will need to following pointed equivalences. Without the proof that $e$ is
pointed natural, parts of this section are still true. In particular, the
natural equivalences defined in \autoref{def:smash-alrg} do not require pointed
naturality of $e$.

\begin{defn}\label{def:b-and-tw}
  We define the pointed equivalences:
    \[\two : (\pbool \to X) \simeq X\] where $\pbool$ is the type of booleans (pointed in $\bfalse$) with underlying map defined with $\two(f) \defeq f(\btrue)$, and
    \[\twist : (A \to B \to X) \simeq (B \to A \to X)\]
    with underlying map defined with $\twist(f) \defeq \lam{b}\lam{a}f(a)(b)$.
  \end{defn}

Using \autoref{lem:yoneda} (Yoneda) we can prove associativity, left and right
unitality and braiding equivalences for the smash product, in the following way.

\begin{defn}\label{def:equiv-precursors}
	The following pointed equivalences are defined for $A$, $B$, $C$ and $X$ pointed types:
	\begin{itemize}
		\item $\alphabar_X : (A \smsh (B \smsh C) \to X) \simeq ((A \smsh B) \smsh C \to X)$ as the composition of the equivalences:
			\begin{align*}
			    A \smsh (B \smsh C)\to X&\simeq A \to B\smsh C\to X && (e\sy)\\
			    &\simeq A \to B\to C\to X && (A \to e\sy)\\
			    &\simeq A \smsh B\to C\to X && (e)\\
		    	&\simeq (A \smsh B)\smsh C\to X. && (e)
			\end{align*}
		\item $\lambdabar_X : (B \to X) \simeq (\pbool \smsh B \to X)$ as the composition of the equivalences:
			\begin{align*}
				B \to X &\simeq \pbool \to B \to X && (\two\sy)\\
				&\simeq \pbool \smsh B \to X && (e)
			\end{align*}
		\item $\rhobar_X : (A \to X) \simeq (A \smsh \pbool \to X)$ as the composition of the equivalences:
			\begin{align*}
				A \to X &\simeq A \to \pbool \to X && (A \to \two\sy)\\
				&\simeq A \smsh \pbool \to X && (e)
			\end{align*}
		\item $\gammabar_X : (B \smsh A \to X) \simeq (A \smsh B \to X)$ as the composition of the equivalences:
			\begin{align*}
				B \smsh A \to X &\simeq B \to A \to X && (e\sy)\\
				&\simeq A \to B \to X && (\twist)\\
				&\simeq A \smsh B \to X && (e)
			\end{align*}
	\end{itemize}
\end{defn}

\begin{rmk}\label{rmk:alrg-pointed-natural}
	The equivalences in \autoref{def:equiv-precursors} are natural in all their
	arguments and from the assumption that $e$ is pointed natural in $C$ we can
  show that these maps are all pointed natural in $X$. 
\end{rmk}

\begin{defn}\label{def:smash-alrg}
	We define the following equivalences, natural in all their arguments, with inverses provided as in \autoref{lem:yoneda}:
	\begin{itemize}
		\item $\alpha\defeq\alphabar_{A \smsh (B \smsh C)}(\idfunc) : (A \smsh B) \smsh C \simeq A \smsh (B \smsh C)$ (associativity of the smash product), with inverse $\alpha\sy\defeq\alphabar\sy_{(A \smsh B) \smsh C}(\idfunc)$;
		\item $\lambda \defeq \lambdabar_B(\idfunc) : \pbool \smsh B \simeq B$ and $\rho \defeq \rhobar_A(\idfunc) : A \smsh \pbool \simeq A$ (left- and right unitors for the smash product), with inverses $\lambda\sy\defeq \lambdabar_{\pbool\smsh B}\sy(\idfunc)$ and $\rho\sy\defeq \rhobar_{A\smsh \pbool}\sy(\idfunc)$, respectively;
		\item $\gamma \defeq \gammabar_{B\smsh A} (\idfunc) : A \smsh B \simeq B \smsh A$ (braiding for the smash product), with inverse $\gamma\sy \defeq \gammabar_{A \smsh B}\sy (\idfunc)$.
	\end{itemize}
	$\alpha$, $\lambda$, $\rho$ and $\gamma$ are natural in all their arguments,
	as $\alphabar$, $\lambdabar$, $\rhobar$ and $\gammabar$ are. Note that these
	definitions do \emph{not} require pointed naturality of $e$.
\end{defn}

\begin{lem}\label{lem:bar-homotopy}
	There are pointed homotopies
	\begin{align*}
	\alphabar_X &\sim \alpha \to X
		& \lambdabar_X &\sim \lambda \to X
	\\
	\rhobar_X &\sim \rho \to X
		& \gammabar_X &\sim \gamma \to X
	\end{align*}
\end{lem}

\begin{proof}
	This follows directly from \autoref{lem:yoneda-pointed} and \autoref{rmk:alrg-pointed-natural} (this does require pointed naturality of $e$).
\end{proof}

\begin{thm}[Associativity pentagon]\label{thm:smash-associativity-pentagon}
	For $A$, $B$, $C$ and $D$ pointed types, there is a homotopy
	\[\alpha \o \alpha \sim (A \smsh \alpha) \o \alpha \o (\alpha \smsh D)\]
	corresponding to the commutativity of the following diagram:
	\begin{center}
	\begin{tikzcd}
		&((A \smsh B) \smsh (C \smsh D))
			\arrow[dr, "\alpha"]
		\\
		(((A \smsh B) \smsh C) \smsh D)
			\arrow[ru, "\alpha"]
			\arrow[d, swap, "\alpha \smsh D"]
		&& (A \smsh (B \smsh (C \smsh D)))
		\\
		((A \smsh (B \smsh C)) \smsh D)
			\arrow[rr, swap, "\alpha"]
		&& (A \smsh ((B \smsh C) \smsh D))
			\arrow[u, swap, "A \smsh \alpha"]
	\end{tikzcd}
	\end{center}
\end{thm}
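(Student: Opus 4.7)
The plan is to apply the Yoneda strategy established in \autoref{lem:yoneda} and \autoref{lem:yoneda-pointed}: it suffices to exhibit a pointed natural homotopy after postcomposing with $({-})\to X$ for every pointed type $X$. Under postcomposition the two sides of the pentagon become composites of the equivalences $\alphabar_X$ and its variants via \autoref{lem:bar-homotopy}. More precisely, I would first show by contravariant functoriality of $({-})\to X$ that
\begin{align*}
  \alpha\o\alpha \to X &\sim (\alpha\to X)\o(\alpha\to X),\\
  (A\smsh\alpha)\o\alpha\o(\alpha\smsh D) \to X &\sim (\alpha\smsh D\to X)\o(\alpha\to X)\o(A\smsh\alpha\to X),
\end{align*}
and then rewrite each $\alpha\to X$ as $\alphabar_X$, each $\alpha\smsh D\to X$ as $\alphabar_X$ postcomposed appropriately, and each $A\smsh\alpha\to X$ as $A\to\alphabar_X$ (all three identifications coming from \autoref{lem:bar-homotopy} combined with the naturality of $e$ proved in \autoref{lem:e-natural}).

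After these rewrites, both sides of the desired pentagon have been reduced to composites built only out of $e$, $e\sy$, and applications of the functors $A\to({-})$ and $({-})\smsh D$ to those. I would then expand each $\alphabar$ using its definition in \autoref{def:equiv-precursors} as a four-step zig-zag through the ``fully curried'' type $A\to B\to C\to D\to X$. Both composites should normalise to the same zig-zag going
\[(A\smsh(B\smsh(C\smsh D))\to X)\;\simeq\;(A\to B\to C\to D\to X)\;\simeq\;(((A\smsh B)\smsh C)\smsh D\to X),\]
where the middle type is reached by successive applications of $e\sy$ (with $A\to({-})$ and $A\to B\to({-})$ etc.~applied where needed), and the rightmost is reached by successive applications of $e$. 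The only discrepancy between the two normal forms consists of naturality squares of $e$ with respect to the maps $f\smsh\id$ and $\id\smsh g$, and these are precisely what \autoref{lem:e-natural} provides.

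The main obstacle is that this reduction is not purely formal: the two composites differ by how the inner $e$'s and $e\sy$'s are interleaved with the functors $({-})\smsh D$ and $A\smsh({-})$, so one must verify that the naturality homotopies from \autoref{lem:e-natural} and the dinaturality of $({-})\smsh B$ in $B$ from \autoref{thm:smash-functor-right} compose to the identity 2-cell of the common normal form. Equivalently, one must check that the assembled octagon of naturality squares closes up. Since our Yoneda argument (\autoref{lem:yoneda-pointed}) only guarantees an equality of underlying maps from pointed-natural data, I would also need the pointed naturality of $e$ in $C$ (as flagged at the end of \autoref{sec:smash-adjunction}) to make sure the pentagon homotopy we obtain is a pointed homotopy, not merely an unpointed one.

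In summary: apply $({-})\to X$, reduce via \autoref{lem:bar-homotopy} to a coherence between composites of $\alphabar_X$'s, expand $\alphabar_X$ and cancel inverses using naturality of $e$, then invoke \autoref{lem:yoneda} to descend the resulting pointed natural equivalence back to the pentagon of $\alpha$'s. The technical crux is the bookkeeping of naturality squares in the expanded zig-zag; everything else is the Yoneda machinery we have already set up.
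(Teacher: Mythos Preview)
Your strategy is essentially the paper's, but framed one step more indirectly. The paper does not post-compose with $({-})\to X$ and then descend via Yoneda; instead it evaluates at $\idfunc$ from the start, using the definition $\alpha=\alphabar(\idfunc)$ together with naturality of $\alphabar$ in $X$ to compute $g\o\alphabar(\idfunc)=\alphabar(g)$ and thereby collapse $\alpha\o\alpha$ to $(\alphabar\o\alphabar)(\idfunc)$. It then introduces a target zig-zag $\alphabar^4$ through the fully curried type $A\to B\to C\to D\to X$ and shows both sides of the pentagon, after this collapsing, reduce to $\alphabar^4(\idfunc)$ by expanding and cancelling $e\sy\o e$. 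Your ``normalise both sides to the same zig-zag'' is exactly this, and the ``bookkeeping of naturality squares'' you worry about turns out to be nothing more than a single use of naturality of $e$ in its last argument plus cancellation.

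Where your sketch is loose is in the treatment of $\alpha\smsh D$ and $A\smsh\alpha$. Writing ``$A\smsh\alpha\to X$ as $A\to\alphabar_X$'' is not literally correct: you need conjugation by $e$, i.e.\ $(A\smsh\alpha)\to X\sim e\o(A\to\alphabar)\o e\sy$. The paper makes this explicit by defining auxiliary equivalences $\alphabar^L\defeq e\o(A\to\alphabar)\o e\sy$ and $\alphabar^R\defeq e\o\alphabar\o e\sy$ and proving $\alphabar^L(\idfunc)\sim A\smsh\alpha$ and $\alphabar^R(\idfunc)\sim\alpha\smsh D$ separately; the former computation uses \autoref{lem:bar-homotopy} (hence pointed naturality of $e$) together with dinaturality of $\eta$ and the unit--counit law, which is more than just ``naturality of $e$''. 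Once you have these two identifications the rest is, as you say, pure cancellation---no octagon of naturality squares to close.
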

\begin{proof}
	We articulate the proof in several steps. A map homotopic to both sides of the sought homotopy will be constructed via the equivalence
	\begin{align*}	
		\alphabar^4 : (A \smsh (B \smsh (C \smsh D)) \to X) &\simeq (((A \smsh B) \smsh C) \smsh D \to X)
		\intertext{(natural in all its arguments), defined as the composite:}
		A \smsh (B \smsh (C \smsh D)) \to X
		&\simeq A \to B \smsh (C \smsh D) \to X && \text{($e\sy$)}\\
		&\simeq A \to B \to C \smsh D \to X &&\text{($A \to e\sy$)}\\
		&\simeq A \to B \to C \to D \to X &&\text{($A \to B \to e\sy$)}\\
		&\simeq A \smsh B \to C \to D \to X &&\text{($e$)}\\
		&\simeq (A \smsh B) \smsh C \to D \to X &&\text{($e$)}\\
		&\simeq ((A \smsh B) \smsh C) \smsh D \to X && \text{($e$)}
		\intertext{giving $\alphabar^4(\idfunc) : ((A \smsh B) \smsh C) \smsh D) \simeq A \smsh (B \smsh (C \smsh D))$. Moreover, in order to simplify the expressions of $\alpha \smsh D$ and $A \smsh \alpha$, we also define:}
		\alphabar^R : ((A \smsh (B \smsh C)) \smsh D \to X) &\simeq (((A \smsh B) \smsh C) \smsh D \to X)
		\intertext{as the composite:}
		(A \smsh (B \smsh C)) \smsh D \to X
		&\simeq A \smsh (B \smsh C) \to D \to X &&\text{($e\sy$)}\\
		&\simeq (A \smsh B) \smsh C \to D \to X &&\text{($\alphabar$)}\\
		&\simeq ((A \smsh B) \smsh C) \smsh D \to X &&\text{($e$)}
		\intertext{and}
		\alphabar^L : (A \smsh (B \smsh (C \smsh D)) \to X) &\simeq (A \smsh ((B \smsh C) \smsh D) \to X)
		\intertext{as the composite:}
		A \smsh (B \smsh (C \smsh D)) \to X
		&\simeq A \to B \smsh (C \smsh D) \to X &&\text{($e\sy$)}\\
		&\simeq A \to (B \smsh C) \smsh D \to X &&\text{($A \to \alphabar$)}\\
		&\simeq A \smsh ((B \smsh C) \smsh D) \to X &&\text{($e$)}
	\end{align*}
	also natural in their arguments. Evaluating these equivalences to the identity function, we get new arrows that fit in the original diagram:
	\begin{center}
	\begin{tikzcd}
		&((A \smsh B) \smsh (C \smsh D))
			\arrow[dr, "\alpha"]
		\\
		(((A \smsh B) \smsh C) \smsh D)
			\arrow[ru, "\alpha"]
			\arrow[d, swap, "\alpha \smsh D"]
			\arrow[d, bend left=40, "\alphabar^R(\idfunc)"]
			\arrow[rr, "\alphabar^4(\idfunc)"]
		&& (A \smsh (B \smsh (C \smsh D)))
		\\
		((A \smsh (B \smsh C)) \smsh D)
			\arrow[rr, swap, "\alpha"]
		&& (A \smsh ((B \smsh C) \smsh D))
			\arrow[u, swap, "A \smsh \alpha"]
			\arrow[u, bend left=40, "\alphabar^L(\idfunc)"]
	\end{tikzcd}
	\end{center}
	
	The theorem is then proved once we show the chain of homotopies:
	\begin{equation}\label{eq:alphafour}
	\alpha \o \alpha
	\sim \alphabar^4(\idfunc)
	\sim \alphabar^L(\idfunc) \o \alpha \o \alphabar^R(\idfunc)
	\sim (A \smsh \alpha) \o \alpha \o (\alpha \smsh D)
	\end{equation}
	
	To verify the first homotopy in (\ref{eq:alphafour}), we see that:
	\begin{align*}
		\alpha \o \alpha
		&\judgeq \alphabar(\idfunc) \o \alphabar(\idfunc)\\
		&\sim (\alphabar \o \alphabar) (\idfunc) &&\text{(naturality of $\alphabar$)}\\
		&\judgeq (e \o e \o (A \to e\sy) \o e\sy \o e \o e \o (A \to e\sy) \o e\sy)(\idfunc)\\
		&\sim (e \o e \o (A \to e\sy) \o e \o (A \to e\sy) \o e\sy)(\idfunc) &&\text{(cancelling)}\\
		&\sim (e \o e \o e \o (B \to A \to e\sy) \o (A \to e\sy) \o e\sy)(\idfunc) &&\text{(naturality of $e$)}\\
		&\judgeq \alphabar^4(\idfunc)
	\end{align*}		

	The second homotopy in (\ref{eq:alphafour}) is verified by (right-to-left):
	\begin{align*}
		\alphabar^L(\idfunc) \o \alpha \o \alphabar^R(\idfunc)
		&\judgeq \alphabar^L(\idfunc) \o \alphabar(\idfunc) \o \alphabar^R(\idfunc)\\
    &\sim (\alphabar^R \o \alphabar \o \alphabar^L)(\idfunc) \\ &\mbox{}\qquad\text{(naturality of $\alphabar$ and $\alphabar^R$)}\\
		&\judgeq (e \o \alphabar \o e\sy \o e \o e \o (A \to e\sy) \o e\sy \o e \o (A \to \alphabar) \o e\sy)(\idfunc)\\
		&\sim (e \o \alphabar \o e \o (A \to e\sy) \o (A \to \alphabar) \o e\sy)(\idfunc) \\ &\mbox{}\qquad\text{(cancelling)}\\
		&\sim (e \o \alphabar \o e \o (A \to (e\sy \o \alphabar)) \o e\sy)(\idfunc) \\ &\mbox{}\qquad\text{(functoriality of $A\to -$)}\\
		&\judgeq (e \o e \o e \o (A \to e\sy) \o e\sy \o e \\
		&\hspace{3em}\o (A \to (e\sy \o e \o e \o (B \to e\sy) \o e\sy)) \o e\sy)(\idfunc)\\
		&\sim (e \o e \o e \o (A \to ((B \to e\sy) \o e\sy)) \o e\sy)(\idfunc) \\ &\mbox{}\qquad\text{(cancelling)}\\
		&\sim (e \o e \o e \o (B \to A \to e\sy) \o (A \to e\sy) \o e\sy)(\idfunc) \\ &\mbox{}\qquad\text{(funct. of $A \to -$)}\\
		&\judgeq \alphabar^4(\idfunc)
	\end{align*}
	
	In order to prove the last homotopy in (\ref{eq:alphafour}), it is sufficient to show that $\alphabar^R(\idfunc) \sim \alpha \smsh D$ and that $\alphabar^L(\idfunc) \sim A \smsh \alpha$. We have:
	\begin{align*}
		\alphabar^R(\idfunc)
		&\judgeq e(\alphabar (e\sy(\idfunc)))\\
		&\sim e (\alphabar (\eta))\\
		&\sim e (\eta \o \alphabar(\idfunc)) &&\text{(naturality of $\alphabar$)}\\
		&\judgeq \epsilon \o (\eta \o \alpha) \smsh D\\
		&\sim \epsilon \o (\eta \smsh D) \o (\alpha \smsh D) &&\text{(distrib. of $\smsh$)}\\
		&\sim \alpha \smsh D &&\text{(\autoref{lem:unit-counit})}
	\end{align*}
	and, lastly,
	\begin{align*}
		\alphabar^L(\idfunc)
		&\judgeq e(\alphabar \o e\sy(\idfunc))\\
		&\sim e(\alphabar \o \eta)\\
		&\sim e((\alpha \to A \smsh (B \smsh (C \smsh D))) \o \eta) &&\text{(\autoref{lem:bar-homotopy})}\\
		&\sim e((B \smsh (C \smsh D) \to A \smsh \alpha) \o \eta) &&\text{(dinaturality of $\eta$)}\\
		&\sim (A \smsh \alpha) \o e(\eta) &&\text{(naturality of $e$)}\\
		&\sim A \smsh \alpha &&\text{(\autoref{lem:unit-counit})}
	\end{align*}
	thus proving the desired homotopy.
\end{proof}

\begin{thm}[Unitors triangle]\label{thm:smash-unitors-triangle}
	For $A$ and $B$ pointed types, there is a homotopy
	\[(A \smsh \lambda) \o \alpha \sim (\rho \smsh B)\]
	corresponding to the commutativity of the following diagram:
	\begin{center}
	\begin{tikzcd}
	((A \smsh \pbool) \smsh B)
		\arrow[rr, "\alpha"]
		\arrow[dr, swap, "\rho \smsh B"]
	&& (A \smsh (\pbool \smsh B))
		\arrow[dl, "A \smsh \lambda"]
	\\
	& (A \smsh B)
	\end{tikzcd}
	\end{center}
\end{thm}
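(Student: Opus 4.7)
The plan is to apply the same Yoneda-style strategy as in \autoref{thm:smash-associativity-pentagon}. I define a pointed natural equivalence
\[
\bar{\sigma}_X : (A \smsh B \to X) \simeq ((A \smsh \pbool) \smsh B \to X)
\]
as the composite $e \o \rhobar_{B \to X} \o e\sy$, i.e.
\[
(A \smsh B \to X) \stackrel{e\sy}{\simeq} (A \to B \to X) \stackrel{\rhobar_{B \to X}}{\simeq} (A \smsh \pbool \to B \to X) \stackrel{e}{\simeq} ((A \smsh \pbool) \smsh B \to X),
\]
which is pointed natural in $X$ by \autoref{rmk:alrg-pointed-natural} together with the assumed pointed naturality of $e$ in its third argument. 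The theorem will then follow from the two homotopies $\bar{\sigma}_{A \smsh B}(\idfunc) \sim \rho \smsh B$ and $\bar{\sigma}_{A \smsh B}(\idfunc) \sim (A \smsh \lambda) \o \alpha$.

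For the first homotopy, I would trace the identity through the composite. By the second unit--counit law $e\sy(\idfunc) \sim \eta$. Then \autoref{lem:bar-homotopy} applied to $\rhobar_{B \to A \smsh B}$ gives $\rhobar(\eta) \sim \eta \o \rho$. Applying $e$ and using its naturality in $A$ yields $e(\eta \o \rho) \sim e(\eta) \o (\rho \smsh B)$, and a second use of unit--counit ($e(\eta) \sim \idfunc$) leaves $\rho \smsh B$. This mirrors exactly the calculation $\alphabar^R(\idfunc) \sim \alpha \smsh D$ from the pentagon proof.

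For the second homotopy, I would first rewrite $\bar{\sigma}_X$ as $\alphabar_X \o e \o (A \to \lambdabar_X) \o e\sy$. This holds at the level of maps of hom-types by unfolding $\alphabar_X = e \o e \o (A \to e\sy) \o e\sy$ and $\lambdabar_X = e \o \two\sy$, cancelling the middle $e\sy \o e$, and using functoriality of $A \to (-)$ so that $(A \to e\sy) \o (A \to e) \sim A \to \idfunc \sim \idfunc$, leaving precisely $e \o e \o (A \to \two\sy) \o e\sy$. Now I evaluate the rewritten form at $\idfunc$: we have $e\sy(\idfunc) \sim \eta$; then $(A \to \lambdabar_{A \smsh B})(\eta) \sim \lam{a} \eta a \o \lambda$ by \autoref{lem:bar-homotopy} applied pointwise to $\lambdabar$; next, naturality of $e$ in $B$ along $\lambda : \pbool \smsh B \to B$ gives $e(\lam{a} \eta a \o \lambda) \sim e(\eta) \o (A \smsh \lambda) \sim A \smsh \lambda$; finally, $\alphabar_{A \smsh B}(A \smsh \lambda) \sim (A \smsh \lambda) \o \alpha$ by \autoref{lem:bar-homotopy} applied to $\alphabar$.

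The main obstacle is not the overall structure, which is closely parallel to the pentagon, but rather the bookkeeping required to ensure every intermediate homotopy is pointed. In particular, \autoref{lem:bar-homotopy} is used several times as a pointed homotopy, and the argument therefore depends crucially on the still-open assumption that the adjunction equivalence $e$ is pointed natural in $C$. Once that is established, both sides of the triangle fall out of the single equivalence $\bar{\sigma}$ via the Yoneda lemma for pointed types, and the remaining steps are routine path algebra of the same flavor as in \autoref{thm:smash-associativity-pentagon}.
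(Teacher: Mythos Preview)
Your proposal is correct and essentially the same as the paper's proof: the paper also reduces both sides to $(e\o\rhobar\o e\sy)(\idfunc)$, which is precisely your $\bar\sigma(\idfunc)$, using the analogues of the $\alphabar^L$/$\alphabar^R$ simplifications from the pentagon and then cancelling $e\sy\o e$ inside the unfolded $\alphabar$ and $\lambdabar$. The only difference is organizational---you name $\bar\sigma$ and show both sides equal to $\bar\sigma(\idfunc)$ (mirroring $\alphabar^4$), whereas the paper chains directly from $(A\smsh\lambda)\o\alpha$ to $\rho\smsh B$---but the algebraic content is identical.
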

\begin{proof}
	By an argument similar to the one for $\alphabar^L$ and $\alphabar^R$ in \autoref{thm:smash-associativity-pentagon}, one can verify the homotopies $A \smsh \lambda \sim (e \o (A \to \lambdabar) \o e\sy)(\idfunc)$ and $\rho \smsh B \sim (e \o \rhobar \o e)(\idfunc)$, simplifying the expressions in the sought homotopy. Then:
	\begin{align*}
		(A \smsh \lambda) \o \alpha
		&\sim e(\lambdabar \o e\sy(\idfunc)) \o \alphabar(\idfunc) &&\text{(simplification)}\\
		&\sim \alphabar(e(\lambdabar \o e\sy(\idfunc)) &&\text{(naturality of $\alphabar$)}\\
		&\judgeq e(e(e\sy \o e\sy (e(\lambdabar \o e\sy(\idfunc)))))\\
		&\sim e(e(e\sy \o \lambdabar \o e\sy(\idfunc))) &&\text{(cancelling)}\\
		&\judgeq e(e(e\sy \o e \o \two\sy \o e\sy(\idfunc)))\\
		&\sim e(e(\two\sy \o e\sy(\idfunc))) &&\text{(cancelling)}\\
		&\judgeq (e \o \rhobar \o e\sy)(\idfunc)\\
		&\sim \rho \smsh B &&\text{(simplification)}
	\end{align*}
	gives the desired homotopy.
\end{proof}

\begin{thm}[Braiding-unitors triangle]\label{thm:smash-braiding-unitors}
	For a pointed type $A$, there is a homotopy
	\[\lambda \o \gamma \sim \rho\]
	corresponding to the commutativity of the following diagram:
	\begin{center}
	\begin{tikzcd}
	(A \smsh \pbool)
		\arrow[rr, "\gamma"]
		\arrow[dr, swap, "\rho"]
	&& (\pbool \smsh A)
		\arrow[dl, "\lambda"]
	\\
	& A
	\end{tikzcd}
	\end{center}
\end{thm}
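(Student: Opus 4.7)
The plan is to follow the Yoneda-style pattern established in the proofs of \autoref{thm:smash-associativity-pentagon} and \autoref{thm:smash-unitors-triangle}: rewrite $\lambda \o \gamma$ through the bar equivalences, cancel $e\sy$ against $e$ where possible, and reduce the identity to a small computation about $\pbool$ and $\two$. The structure of the proof should look very similar to how \autoref{thm:smash-unitors-triangle} rewrites its two sides as instances of $\alphabar$-style expressions and then chases them through cancellation identities.

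Concretely, I would first apply \autoref{lem:bar-homotopy} to push composition with $\gamma$ into $\gammabar$, obtaining
$$\lambda \o \gamma \judgeq (\gamma \to A)(\lambda) \sim \gammabar_A(\lambda) \judgeq (e \o \twist \o e\sy)(\lambda).$$
Since $\lambda \judgeq \lambdabar_A(\idfunc) \judgeq (e \o \two\sy)(\idfunc)$, cancellation of $e\sy \o e$ (as used throughout \autoref{thm:smash-associativity-pentagon}) gives $e\sy(\lambda) \sim \two\sy(\idfunc)$, so $\gammabar_A(\lambda) \sim e(\twist(\two\sy(\idfunc)))$. The remaining step is to produce a pointed homotopy
$$\twist(\two\sy(\idfunc)) \sim (A \to \two\sy)(\idfunc)$$
of pointed maps $A \to \pbool \to A$; once this is in hand,
$$e(\twist(\two\sy(\idfunc))) \sim e((A \to \two\sy)(\idfunc)) \judgeq \rhobar_A(\idfunc) \judgeq \rho,$$
which closes the argument. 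Both sides send $(a, \btrue)$ to $a$ and $(a, \bfalse)$ to $a_0$, so at the level of underlying functions the homotopy is immediate from the definitions of $\twist$ and $\two\sy$ together with a case analysis on $\pbool$.

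The main obstacle will be bookkeeping the pointedness data for this last homotopy: although the equality at the level of underlying functions is essentially trivial, the 2-paths witnessing that the pointed homotopy respects the basepoint involve the basepoint-preserving structure of $\two$, of $\pbool$, and of the several instances of $\const$ that are threaded through these identifications. I expect this to go through by routine path algebra in the same spirit as the simplification steps in \autoref{thm:smash-unitors-triangle}, but it requires carefully tracking the canonical pointed homotopies through the equivalences defining $\lambdabar$, $\rhobar$ and $\gammabar$, and in particular keeping straight the distinction between $\two\sy$ as a map $A \to (\pbool \to A)$ and $\two\sy(\idfunc)$ as a map $\pbool \to (A \to A)$.
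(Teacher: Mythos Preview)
Your proposal is correct and follows essentially the same route as the paper. The only minor difference is in the opening move: where you invoke \autoref{lem:bar-homotopy} to rewrite $\lambda \o \gamma \judgeq (\gamma \to A)(\lambda) \sim \gammabar_A(\lambda)$, the paper instead uses ordinary naturality of $\gammabar$ in $X$ to get $\lambdabar(\idfunc) \o \gammabar(\idfunc) \sim \gammabar(\lambdabar(\idfunc))$; these both land on $\gammabar(\lambda)$, after which the cancellation of $e\sy \o e$ and the reduction to the identity $\twist \o \two\sy \sim (A \to \two\sy)$ are exactly as you describe. The paper's first step is slightly lighter since it avoids the detour through pointed naturality that underlies \autoref{lem:bar-homotopy}, but the core computation is identical.
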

\begin{proof}
	We have:
	\begin{align*}
		\lambda \o \gamma
		&\judgeq \lambdabar(\idfunc) \o \gammabar(\idfunc)\\
		&\sim (\gammabar \o \lambdabar)(\idfunc) &&\text{(naturality of $\gammabar$)}\\
		&\judgeq (e \o \twist \o e\sy \o e \o \two\sy)(\idfunc)\\
		&\sim (e \o \twist \o \two\sy)(\idfunc) &&\text{(cancelling)}\\
		&\sim (e \o (A \to \two\sy))(\idfunc)\\
		&\judgeq \rhobar(\idfunc) \judgeq \rho
	\end{align*}		
	where the last homotopy is given by $(A \to c) \o \two \sim \twist : (\pbool \to A \to X) \to (A \to X)$.
\end{proof}

\begin{lem}\label{lem:pentagon-c}
	The following diagram commutes, for $A$, $B$, $C$ and $X$ pointed types:
	\begin{center}
	\begin{tikzcd}[column sep=7em]
		(B \smsh C \to A \to X)
			\arrow[r, "e\sy"]
			\arrow[dd, swap, "\twist"]
		& (B \to C \to A \to X)
			\arrow[d, "B \to \twist"]
		\\
		& (B \to A \to C \to X)
			\arrow[d, "\twist"]
		\\
		(A \to B \smsh C \to X)
			\arrow[r, swap, "A \to e\sy"]
		& (A \to B \to C \to X)
	\end{tikzcd}
	\end{center}
\end{lem}
\begin{proof}
	Unfolding the definition of $e\sy$, we get the diagram:
	\begin{xcenter}
	\begin{tikzcd}[column sep=6em,every node/.style={font=\sffamily\small}]
		(B \smsh C \to A \to X)
			\arrow[rr, bend left=10, "e\sy"]
			\arrow[r, swap, "C\to -"]
			\arrow[dd, swap, "\twist"]
		& ((C \to B \smsh C) \to C \to A \to X)
			\arrow[r, swap, "\eta \to C \to A \to X"]
			\arrow[d, swap, "(C \to B \smsh C) \to \twist"]
		& (B \to C \to A \to X)
			\arrow[d, "B \to \twist"]
		\\
		& ((C \to B \smsh C) \to A \to C \to X)
			\arrow[r, swap, "\eta \to A \to C \to X"]
			\arrow[d, swap, "\twist"]
		& (B \to A \to C \to X)
			\arrow[d, "\twist"]
		\\
		(A \to B \smsh C \to X)
			\arrow[rr, swap, bend right=10, "A \to e\sy"]
			\arrow[r, "A \to (C \to -)"]
		& (A \to (C \to B \smsh C) \to C \to X)
			\arrow[r, "A \to (\eta \to C \to X)"]
		& (A \to B \to C \to X)
	\end{tikzcd}
	\end{xcenter}
	where the squares on the right are instances of naturality of $\twist$, while the commutativity of the pentagon on the left follows easily from the definition of $\twist$.
\end{proof}

\begin{thm}[Associativity-braiding hexagon]\label{thm:smash-associativity-braiding}
	For pointed types $A$, $B$ and $C$, there is a homotopy
	\[\alpha \o \gamma \o \alpha \sim (B \smsh \gamma) \o \alpha \o (\gamma \smsh C)\]
	corresponding to the commutativity of the following diagram:
	\begin{center}
	\begin{tikzcd}
		((A \smsh B) \smsh C)
			\arrow[r, "\alpha"]
			\arrow[d, swap, "\gamma \smsh C"]
		&(A \smsh (B \smsh C))
			\arrow[r, "\gamma"]
		& ((B \smsh C) \smsh A)
			\arrow[d, "\alpha"]
		\\
		((B \smsh A) \smsh C))
			\arrow[r, swap, "\alpha"]
		& (B \smsh (A \smsh C))
			\arrow[r, swap, "B \smsh \gamma"]
		& (B \smsh (C \smsh A))
	\end{tikzcd}
	\end{center}
\end{thm}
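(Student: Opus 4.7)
The plan is to mirror the proof of \autoref{thm:smash-associativity-pentagon}: I will introduce a single natural equivalence $\overline{\sigma} : (B \smsh (C \smsh A) \to X) \simeq ((A \smsh B) \smsh C \to X)$ through which both composites of the hexagon factor, and then reduce the hexagon to path algebra on $e$, $e\sy$, $\twist$, $\alphabar$ and $\gammabar$. Concretely, define $\overline{\sigma}$ as the composite
\begin{align*}
	B \smsh (C \smsh A) \to X
	&\simeq B \to C \smsh A \to X && (e\sy) \\
	&\simeq B \to C \to A \to X && (B \to e\sy) \\
	&\simeq B \to A \to C \to X && (B \to \twist) \\
	&\simeq A \to B \to C \to X && (\twist) \\
	&\simeq A \smsh B \to C \to X && (e) \\
	&\simeq (A \smsh B) \smsh C \to X && (e),
\end{align*}
and, to match the outer smash-factors of the lower route of the hexagon, the auxiliary natural equivalences $\overline{\sigma}^R \defeq e \o \gammabar \o e\sy : ((B \smsh A) \smsh C \to X) \simeq ((A \smsh B) \smsh C \to X)$ and $\overline{\sigma}^L \defeq e \o (B \to \gammabar) \o e\sy : (B \smsh (C \smsh A) \to X) \simeq (B \smsh (A \smsh C) \to X)$.

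I will then establish the chain of pointed homotopies
\[\alpha \o \gamma \o \alpha \sim \overline{\sigma}(\idfunc) \sim \overline{\sigma}^R(\idfunc) \o \alpha \o \overline{\sigma}^L(\idfunc) \sim (B \smsh \gamma) \o \alpha \o (\gamma \smsh C).\]
For the first homotopy, I unfold $\alpha \o \gamma \o \alpha \sim (\alphabar \o \gammabar \o \alphabar)(\idfunc)$ using naturality of $\alphabar$ and $\gammabar$; after cancelling two $e \o e\sy$ pairs, the residual composite contains $(A \to e\sy) \o \twist \o e$, which is replaced by $\twist \o (B \to \twist)$ via \autoref{lem:pentagon-c}, matching $\overline{\sigma}(\idfunc)$ on the nose. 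For the middle homotopy, I rewrite $\overline{\sigma}^R(\idfunc) \o \alpha \o \overline{\sigma}^L(\idfunc) \sim (\overline{\sigma}^R \o \alphabar \o \overline{\sigma}^L)(\idfunc)$ by naturality of $\alphabar$ and $\overline{\sigma}^R$, then expand the two occurrences of $\gammabar$ as $e \o \twist \o e\sy$ and cancel $e \o e\sy$ pairs until the composite agrees definitionally with $\overline{\sigma}(\idfunc)$.

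For the final homotopy, I will identify $\overline{\sigma}^R(\idfunc) \sim \gamma \smsh C$ and $\overline{\sigma}^L(\idfunc) \sim B \smsh \gamma$, following the template of the identifications $\alphabar^R(\idfunc) \sim \alpha \smsh D$ and $\alphabar^L(\idfunc) \sim A \smsh \alpha$ in \autoref{thm:smash-associativity-pentagon}. The right one is established by the chain $\overline{\sigma}^R(\idfunc) \judgeq e(\gammabar(\eta)) \sim e(\eta \o \gamma) \sim \epsilon \o (\eta \smsh C) \o (\gamma \smsh C) \sim \gamma \smsh C$, using $e\sy(\idfunc) \sim \eta$, naturality of $\gammabar$, functoriality of $\smsh$, and \autoref{lem:unit-counit}. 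For $\overline{\sigma}^L(\idfunc) \sim B \smsh \gamma$ I additionally invoke \autoref{lem:bar-homotopy} to rewrite the inner $\gammabar$ as $\gamma \to B \smsh (C \smsh A)$, then use dinaturality of $\eta$ and naturality of $e$ to pull $B \smsh \gamma$ to the outside.

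The main obstacle will be the bookkeeping of the $\twist$ operations. The pentagon proof only juggles $e$ and $e\sy$, whereas here the cyclic rotation $(B, C, A) \mapsto (A, B, C)$ is realized as two distinct but equal compositions of adjacent transpositions, and aligning the decomposition $\twist \o (B \to \twist)$ built into $\overline{\sigma}$ with the single $\twist$ hidden inside each $\gammabar$ relies crucially on \autoref{lem:pentagon-c}. I do not expect theoretical difficulties beyond those already handled in \autoref{thm:smash-associativity-pentagon}, but the overall calculation will be noticeably longer due to the extra bookkeeping.
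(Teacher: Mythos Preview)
Your approach is essentially identical to the paper's: your $\overline\sigma^L$ and $\overline\sigma^R$ are exactly the paper's $\gammabar^L$ and $\gammabar^R$, and both sides of the hexagon are reduced to the common expression $(e \o e \o \twist \o (B \to \twist) \o (B \to e\sy) \o e\sy)(\idfunc)$ (your $\overline\sigma(\idfunc)$) via the same naturality, cancellation, and \autoref{lem:pentagon-c} steps. Two small remarks: in your displayed chain the factors $\overline\sigma^R(\idfunc)$ and $\overline\sigma^L(\idfunc)$ are swapped (the lower path is $\overline\sigma^L(\idfunc)\o\alpha\o\overline\sigma^R(\idfunc)$), and the calculation is in fact no longer than the pentagon --- the extra $\twist$'s cancel cleanly rather than accumulating.
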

\begin{proof}
	The proof is structured similarly to the one for \autoref{thm:smash-associativity-pentagon}: the homotopies
	\begin{align*}
	B \smsh \gamma &\sim \gammabar^L(\idfunc) &\text{with\ \ } \gammabar^L &\defeq e \o (B \to \gammabar) \o e\sy\\
	\gamma \smsh C &\sim \gammabar^R(\idfunc) &\text{with\ \ } \gammabar^R &\defeq e \o \gammabar \o e\sy
	\end{align*}		
	can be proven in exactly the same way and, using these simplifications, we will show that both sides of the sought homotopy are homotopic to the same equivalence. Indeed we have:
	\begin{align*}
		\alpha \o \gamma \o \alpha
		&\judgeq \alphabar(\idfunc) \o \gammabar(\idfunc) \o \alphabar(\idfunc)\\
		&\sim (\alphabar \o \gammabar \o \alphabar)(\idfunc) \\ &\mbox{}\qquad\text{(naturality of $\gammabar$ and $\alphabar$)}\\
		&\judgeq (e \o e \o (A \to e\sy) \o e\sy \o e \o \twist \o e\sy \o e \o e \o (B \to e\sy) \o e\sy)(\idfunc)\\
		&\sim (e \o e \o (A \to e\sy) \o \twist \o e \o (B \to e\sy) \o e\sy)(\idfunc) \\ &\mbox{}\qquad\text{(cancelling)}\\
		&\sim (e \o e \o \twist \o (B \to \twist) \o e\sy \o e \o (B \to e\sy) \o e\sy)(\idfunc) \\ &\mbox{}\qquad\text{(\autoref{lem:pentagon-c})}\\
		&\sim (e \o e \o \twist \o (B \to \twist) \o (B \to e\sy) \o e\sy)(\idfunc) \\ &\mbox{}\qquad\text{(cancelling)}
	\end{align*}
	and
	\begin{align*}
		(B \smsh \gamma) \o \alpha \o (\gamma \smsh C)
		&\sim \gammabar^L(\idfunc) \o \alphabar \o \gammabar^R(\idfunc) \\ &\mbox{}\qquad\text{(simplification)}\\
		&\sim (\gammabar^R \o \alphabar \o \gammabar^L)(\idfunc) \\ &\mbox{}\qquad\text{(naturality of $\alphabar$ and $\gammabar^R$)}\\
		&\judgeq (e \o \gammabar \o e\sy \o e \o e \o (B \to e\sy) \o e\sy \o e \o (B \to \gammabar) \o e\sy)(\idfunc)\\
		&\sim (e \o \gammabar \o e \o (B \to e\sy) \o (B \to \gammabar) \o e\sy)(\idfunc) \\ &\mbox{}\qquad\text{(cancelling)}\\
		&\sim (e \o \gammabar \o e \o (B \to (e\sy \o \gammabar)) \o e\sy)(\idfunc) \\ &\mbox{}\qquad\text{(functoriality of $B \to -$)}\\
		&\judgeq (e \o e \o \twist \o e\sy \o e \o (B \to (e\sy \o e \o \twist \o e\sy)) \o e\sy)(\idfunc)\\
		&\sim (e \o e \o \twist \o (B \to \twist) \o (B \to e\sy) \o e\sy)(\idfunc) \\ &\mbox{}\qquad\text{(cancelling)}
	\end{align*}
	proving the commutativity of the diagram.
\end{proof}

\begin{thm}[Double braiding]\label{thm:smash-double-braiding}
	For $A$ and $B$ pointed types, there is a homotopy
	\[\gamma \o \gamma \sim \idfunc\]
	corresponding to the commutativity of the following diagram:
	\begin{center}
	\begin{tikzcd}
	(A \smsh B)
		\arrow[r, "\gamma"]
		\arrow[dr, equals]
	& (B \smsh A)
		\arrow[d, "\gamma"]
	\\
	& (A \smsh B)
	\end{tikzcd}
	\end{center}
\end{thm}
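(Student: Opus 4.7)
The plan is to mimic the Yoneda-style strategy employed in Theorems~\ref{thm:smash-associativity-pentagon}--\ref{thm:smash-associativity-braiding}. Writing the two instances of $\gamma$ as $\gamma_1 \defeq \gammabar_{B \smsh A}(\idfunc) : A \smsh B \to B \smsh A$ and $\gamma_2 \defeq \gammabar^{BA}_{A \smsh B}(\idfunc) : B \smsh A \to A \smsh B$, where $\gammabar^{BA}$ denotes the analogue of $\gammabar$ obtained by swapping the roles of $A$ and $B$, I would first use naturality of $\gammabar$ in its type parameter to rewrite
\[
\gamma_2 \circ \gamma_1 \;\sim\; \gammabar_{A \smsh B}(\gamma_2) \;=\; (\gammabar_{A \smsh B} \circ \gammabar^{BA}_{A \smsh B})(\idfunc),
\]
exactly as in the first step of the proof of \autoref{thm:smash-associativity-pentagon}.

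Unfolding the definition of $\gammabar$, the composite inside becomes
\[
(e \o \twist \o e\sy \o e \o \twist \o e\sy)(\idfunc).
\]
From here the calculation is routine: cancel the middle $e\sy \o e$ using the unit--counit laws (\autoref{lem:unit-counit}), then use that $\twist \o \twist \sim \idfunc$, which is immediate by eta-reduction on the definition of $\twist$ in \autoref{def:b-and-tw}, and finally cancel the remaining $e \o e\sy$. Each simplification reduces the expression, and at the end we are left with $\idfunc$ applied to $\idfunc$, that is, $\idfunc$.

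The main potential obstacle is, as throughout this section, ensuring that every intermediate homotopy is \emph{pointed}, not merely an underlying homotopy. The naturality of $\gammabar$ is pointed natural under the assumption that $e$ is pointed natural in its third argument (\autoref{rmk:alrg-pointed-natural}), and the elementary cancellations $e\sy \o e \sim \idfunc$, $e \o e\sy \sim \idfunc$, and $\twist \o \twist \sim \idfunc$ are all pointed by construction. Hence the pointedness of the composite should propagate without new difficulty. In particular, no 3-path in a type of pointed maps is required, making this coherence considerably easier than the associativity pentagon.
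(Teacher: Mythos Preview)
Your proposal is correct and follows essentially the same route as the paper: use naturality of $\gammabar$ to combine $\gamma \o \gamma$ into $(\gammabar \o \gammabar)(\idfunc)$, unfold to $(e \o \twist \o e\sy \o e \o \twist \o e\sy)(\idfunc)$, and cancel using $\twist \o \twist \sim \idfunc$. You are slightly more explicit than the paper in distinguishing the two instances of $\gammabar$ by their swapped type parameters and in discussing pointedness, but the argument is the same.
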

\begin{proof}
	Using that $\twist \o \twist \sim \idfunc$, we get:
	\begin{align*}
		\gamma \o \gamma
		&\judgeq \gammabar(\idfunc) \o \gammabar(\idfunc)\\
		&\sim (\gammabar \o \gammabar)(\idfunc) &&\text{(naturality of $\gammabar$)}\\
		&\judgeq (e \o \twist \o e\sy \o e \o \twist \o e\sy)(\idfunc)\\
		&\sim \idfunc &&\text{(cancelling)}
	\end{align*}
	as desired.
\end{proof}

Finally we get the result of this section.
\begin{thm}
	$\smsh$ is a 1-coherent symmetric monoidal product, assuming that $e$ is pointed natural in $C$.
\end{thm}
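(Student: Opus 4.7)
The plan is to assemble the pieces that have already been built in Sections \ref{sec:smash-basic}--\ref{sec:smash-monoidal}. First I would note that functoriality of $\smsh$ in the sense of \autoref{def:symmonprod} is already established: the bifunctorial action on pointed maps is \autoref{lem:smash-general}, preservation of identities is the special case of that lemma applied to $1 \smsh 1$ followed by the straightforward computation on the HIT, and preservation of composition is exactly the interchange law \autoref{lem:interchange}. Then I would take the unit to be $I \defeq \pbool$ and let $\alpha$, $\lambda$, $\gamma$ be the pointed equivalences supplied by \autoref{def:smash-alrg}, noting that a right unitor is not explicitly required by \autoref{def:symmonprod}.

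Next I would establish naturality of $\alpha$, $\lambda$, $\gamma$ in all their arguments. The unpointed naturality is recorded in \autoref{def:smash-alrg} itself, and the upgrade to pointed naturality is \autoref{rmk:alrg-pointed-natural}: this is precisely where the hypothesis that $e$ is pointed natural in $C$ enters the proof, since it is the sole ingredient used to lift $\alphabar$, $\lambdabar$, $\rhobar$, $\gammabar$ from naturality to pointed naturality in the variable $X$, and from there to the equivalences $\alpha$, $\lambda$, $\gamma$.

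With these maps and naturality properties in hand, the three coherence diagrams of \autoref{def:symmonprod} are essentially given by existing theorems: the associativity pentagon is \autoref{thm:smash-associativity-pentagon}, and the braiding hexagon is \autoref{thm:smash-associativity-braiding}. The only subtlety is that the unitor triangle demanded by \autoref{def:symmonprod}, namely $\lambda_{A\smsh B} \o \alpha_{\pbool,A,B} \sim \lambda_A \smsh B$, is \emph{not literally} the triangle proved in \autoref{thm:smash-unitors-triangle}, which instead gives the Kelly-style identity $(A\smsh \lambda)\o\alpha \sim \rho\smsh B$ with the unit in the middle slot. I would therefore derive the required form from the pieces already available: combining \autoref{thm:smash-braiding-unitors} ($\lambda \o \gamma \sim \rho$), the hexagon \autoref{thm:smash-associativity-braiding}, the double-braiding equation \autoref{thm:smash-double-braiding}, and \autoref{thm:smash-unitors-triangle}, one can transport the Kelly triangle with unit in the middle to the variant with unit on the left, exactly as in the standard derivation that pentagon plus one triangle implies all the other triangles in a symmetric monoidal category.

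The hard part will be this last derivation of the correct triangle: the path algebra involved in chaining pointed homotopies through the hexagon and the braiding-unitor relation is delicate, and one must be careful that all the naturality witnesses used along the way are genuinely \emph{pointed} naturalities so that the resulting homotopy is pointed. Fortunately, this is exactly the content of the pointed-naturality hypothesis on $e$, via \autoref{rmk:alrg-pointed-natural} and \autoref{lem:bar-homotopy}; alternatively, one can bypass the derivation by proving the required triangle directly, mimicking the $\alphabar^L$/$\alphabar^R$ strategy of \autoref{thm:smash-associativity-pentagon} with a new equivalence $\lambdabar^L : (\pbool \smsh A \smsh B \to X) \simeq ((\pbool\smsh A)\smsh B \to X)$, and evaluating at $\idfunc$. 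Either route completes the verification of \autoref{def:symmonprod}.
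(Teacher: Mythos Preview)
Your overall plan matches the paper's: the paper's proof is literally the single sentence ``This follows immediately from the theorems in this section,'' i.e.\ it assembles functoriality from \autoref{lem:smash-general} and \autoref{lem:interchange}, the equivalences from \autoref{def:smash-alrg}, and the coherences from Theorems~\ref{thm:smash-associativity-pentagon}, \ref{thm:smash-unitors-triangle}, \ref{thm:smash-associativity-braiding}.

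You have in fact been more careful than the paper on one point. You correctly observe that the unitor triangle proved in \autoref{thm:smash-unitors-triangle} places the unit in the \emph{middle} slot (the Kelly triangle $(A\smsh\lambda)\o\alpha\sim\rho\smsh B$), whereas \autoref{def:symmonprod} asks for the \emph{left-unit} triangle $\lambda\o\alpha\sim\lambda\smsh B$. The paper's one-line proof does not flag this mismatch. Both of your proposed repairs are sound: the coherence-theoretic derivation via \ref{thm:smash-braiding-unitors}, \ref{thm:smash-associativity-braiding}, \ref{thm:smash-double-braiding} is the standard Kelly argument, and your direct alternative (introduce $\lambdabar^R\defeq e\o\lambdabar\o e\sy$ and evaluate at $\idfunc$, then reduce using naturality of $\two$ to show $(\pbool\to e\sy)\o\two\sy\sim\two\sy\o e\sy$) is a short computation entirely in the style of \autoref{thm:smash-unitors-triangle} itself and is probably the cleaner route.

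One small misattribution: you locate the use of the pointed-naturality hypothesis on $e$ in the naturality of $\alpha,\lambda,\gamma$, but \autoref{def:smash-alrg} explicitly notes that their naturality holds \emph{without} that hypothesis. The hypothesis actually enters through \autoref{lem:bar-homotopy} (which needs \autoref{rmk:alrg-pointed-natural}), and that lemma is invoked inside the proofs of the pentagon and the hexagon (in the $\alphabar^L$ and $\gammabar^L$ simplifications). This does not affect the correctness of your argument, only the bookkeeping of where the assumption is consumed.
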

\begin{proof}
	This follows immediately from the theorems in this section.
\end{proof}

\chapter{The Serre Spectral Sequence}\label{cha:serre-spectr-sequ}

Spectral sequences are important tools in algebraic topology.\footnote{The work in this chapter is joint work with Jeremy Avigad, Steve Awodey, Ulrik Buchholtz, Egbert Rijke and Mike Shulman.} They give a
relationship between certain homotopy, homology and cohomology groups, in a way
that generalizes long exact sequences. This generalization comes at a cost of
being a lot more complicated than a long exact sequence.

In this chapter we will start the study of spectral sequences in homotopy type
theory. We will introduce the notion of spectral sequences, and then construct
the Atiyah-Hirzebruch and Serre spectral sequences for cohomology. We follow the
construction due to Michael Shulman given in~\cite{shulman2013spectral}. We will also give a sketch on
how to construct the analogues for homology, and look at some of the
applications of these spectral sequences. 

There are a couple of notable differences between spectral sequences in homotopy
type theory compared to classical homotopy theory. 
\begin{itemize}
  \item As always, in HoTT all constructions have to be homotopy invariant, so
  we cannot use classical constructions that are not homotopy invariant. For
  example, the construction of the Serre spectral sequence for homology in~\cite{hatcher2004spectral} 
  uses CW-approximation of a space and the skeleton
  of the obtained CW-complex to construct the spectral sequence. These
  operations are not homotopy invariant, and therefore cannot be performed in
  HoTT. 
  \item Another difference is that homology and cohomology are defined
  differently in HoTT than in classical homotopy theory. In classical homotopy
  theory (co)homology is defined using singular (co)homology. Since the intermediate 
  steps in the construction of singular (co)homology is not homotopy invariant, we use a different definition of
  (co)homology (see \autoref{def:cohomology}), which impacts the definition of spectral sequences involving (co)homology.
  \item 
  The first page of a spectral sequence is often not homotopy invariant, and
  therefore cannot be constructed in HoTT. For this reason, we start counting
  the pages of spectral sequences at 2.
  \item HoTT offers a convenient language for formalizing proofs. Therefore, we
  have formalized all constructed spectral sequences in this chapter.
\end{itemize}

The spectral sequences we construct are not the most general version of these
spectral sequences. The spectral sequences we construct are still more general
than the formulation of the Serre spectral sequence in many textbooks (we give a
version of generalized and parametrized cohomology), but there exist more
general versions. There are two places where we compromised on generality for
the sake of making the formalization easier. The first compromise is that we
only formalized exact couples for graded $R$-modules for a ring $R$ (which is
not graded). More generally we could do this for any abelian category, which
would require building up the theory of abelian categories (this is done in
UniMath~\cite{unimath}). Furthermore, we did not look at convergence of spectral
sequences in the most general sense, since that can get quite complicated and
subtle. Instead, we only look at spectral sequences that are eventually constant
pointwise, so the $\infty$-page is just the eventual value. This restriction
adds the condition to the spectral sequences we construct that the coefficients
are only in truncated spectra.

\section{Spectral Sequences}\label{sec:spectral-sequences}

A spectral sequence consists of a sequence of pages, each of them containing a two-dimensional grid of abelian groups. There are maps between these groups, called \emph{differentials}. These differentials form (co)chain complexes, and the (co)homology of these complexes determine the groups on the next page. In \ref{fig:spectral-sequence-pages} we show an example of two pages of a spectral sequence, where each dot represents an abelian group. In this figure only the two first quadrants are shown, because in simple applications all other groups are trivial, though that need not be the case in general.

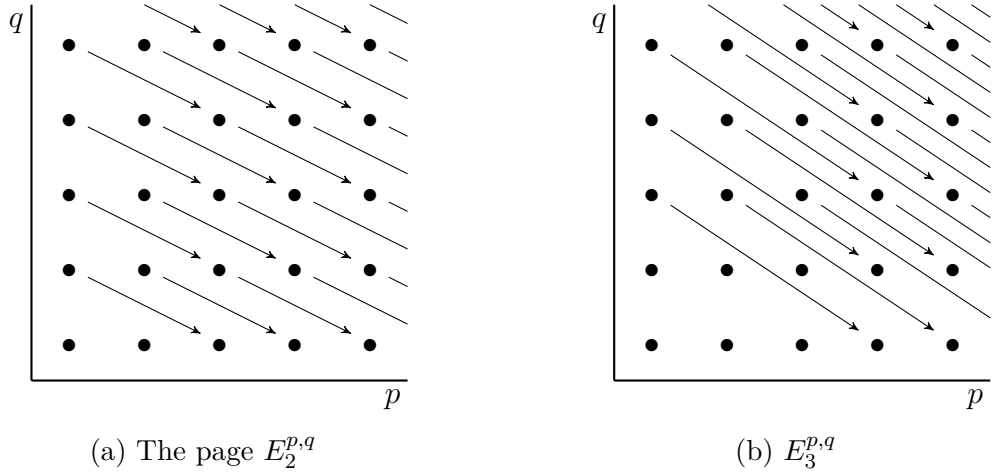
\begin{figure}[ht]
  \centering
  \begin{subfigure}{.5\textwidth}
    \centering
    \begin{tikzpicture}
      \draw [thick] (0.5,0.5) -- (0.5,5.5);
      \draw [thick] (0.5,0.5) -- (5.5,0.5);
      \node at (5.3,0.3) {$p$};
      \node at (0.3,5.3) {$q$};
      \node (x11) at (1,1) {$\bullet$};
      \node (x12) at (1,2) {$\bullet$};
      \node (x13) at (1,3) {$\bullet$};
      \node (x14) at (1,4) {$\bullet$};
      \node (x15) at (1,5) {$\bullet$};
      \node (x21) at (2,1) {$\bullet$};
      \node (x22) at (2,2) {$\bullet$};
      \node (x23) at (2,3) {$\bullet$};
      \node (x24) at (2,4) {$\bullet$};
      \node (x25) at (2,5) {$\bullet$};
      \node (x31) at (3,1) {$\bullet$};
      \node (x32) at (3,2) {$\bullet$};
      \node (x33) at (3,3) {$\bullet$};
      \node (x34) at (3,4) {$\bullet$};
      \node (x35) at (3,5) {$\bullet$};
      \node (x41) at (4,1) {$\bullet$};
      \node (x42) at (4,2) {$\bullet$};
      \node (x43) at (4,3) {$\bullet$};
      \node (x44) at (4,4) {$\bullet$};
      \node (x45) at (4,5) {$\bullet$};
      \node (x51) at (5,1) {$\bullet$};
      \node (x52) at (5,2) {$\bullet$};
      \node (x53) at (5,3) {$\bullet$};
      \node (x54) at (5,4) {$\bullet$};
      \node (x55) at (5,5) {$\bullet$}; 
      \path
      (x12) edge[->] (x31)
      (x13) edge[->] (x32)
      (x14) edge[->] (x33)
      (x15) edge[->] (x34)
      (x22) edge[->] (x41)
      (x23) edge[->] (x42)
      (x24) edge[->] (x43)
      (x25) edge[->] (x44)
      (x32) edge[->] (x51)
      (x33) edge[->] (x52)
      (x34) edge[->] (x53)
      (x35) edge[->] (x54)
      (x42) edge (5.5,1.25)
      (x43) edge (5.5,2.25)
      (x44) edge (5.5,3.25)
      (x45) edge (5.5,4.25)
      (x52) edge (5.5,1.75)
      (x53) edge (5.5,2.75)
      (x54) edge (5.5,3.75)
      (x55) edge (5.5,4.75)
      (2,5.5) edge[->] (x35)
      (3,5.5) edge[->] (x45)
      (4,5.5) edge[->] (x55)
      (5,5.5) edge (5.5,5.25)
      ;
      \end{tikzpicture}
    \caption{The page $E_2^{p,q}$}
    \label{fig:E2-page}
  \end{subfigure}%
  \begin{subfigure}{.5\textwidth}
    \centering
    \begin{tikzpicture}
      \draw [thick] (0.5,0.5) -- (0.5,5.5);
      \draw [thick] (0.5,0.5) -- (5.5,0.5);
      \node at (5.3,0.3) {$p$};
      \node at (0.3,5.3) {$q$};
      \node (x11) at (1,1) {$\bullet$};
      \node (x12) at (1,2) {$\bullet$};
      \node (x13) at (1,3) {$\bullet$};
      \node (x14) at (1,4) {$\bullet$};
      \node (x15) at (1,5) {$\bullet$};
      \node (x21) at (2,1) {$\bullet$};
      \node (x22) at (2,2) {$\bullet$};
      \node (x23) at (2,3) {$\bullet$};
      \node (x24) at (2,4) {$\bullet$};
      \node (x25) at (2,5) {$\bullet$};
      \node (x31) at (3,1) {$\bullet$};
      \node (x32) at (3,2) {$\bullet$};
      \node (x33) at (3,3) {$\bullet$};
      \node (x34) at (3,4) {$\bullet$};
      \node (x35) at (3,5) {$\bullet$};
      \node (x41) at (4,1) {$\bullet$};
      \node (x42) at (4,2) {$\bullet$};
      \node (x43) at (4,3) {$\bullet$};
      \node (x44) at (4,4) {$\bullet$};
      \node (x45) at (4,5) {$\bullet$};
      \node (x51) at (5,1) {$\bullet$};
      \node (x52) at (5,2) {$\bullet$};
      \node (x53) at (5,3) {$\bullet$};
      \node (x54) at (5,4) {$\bullet$};
      \node (x55) at (5,5) {$\bullet$}; 
      \path
      (x13) edge[->] (x41)
      (x14) edge[->] (x42)
      (x15) edge[->] (x43)
      (x23) edge[->] (x51)
      (x24) edge[->] (x52)
      (x25) edge[->] (x53)
      (x33) edge (5.5,1.3333)
      (x34) edge (5.5,2.3333)
      (x35) edge (5.5,3.3333)
      (x43) edge (5.5,2)
      (x44) edge (5.5,3)
      (x45) edge (5.5,4)
      (x53) edge (5.5,2.6667)
      (x54) edge (5.5,3.6667)
      (x55) edge (5.5,4.6667)
      (1.75,5.5) edge[->] (x44)
      (2.75,5.5) edge[->] (x54)
      (3.25,5.5) edge[->] (x45)
      (4.25,5.5) edge[->] (x55)
      (3.75,5.5) edge (5.5,4.3333)
      (4.75,5.5) edge (5.5,5)
      (5.25,5.5) edge (5.5,5.3333)
      ;
      \end{tikzpicture}
    \caption{$E_3^{p,q}$}
    \label{fig:E3-page}
  \end{subfigure}
  \caption{Two pages of a spectral sequence.}
  \label{fig:spectral-sequence-pages}
  \end{figure}

  Before we start, we define the notion of a graded abelian group. We will give a
  nonstandard definition that is equivalent to the standard one.
  \begin{defn}\label{def:graded}
    For an abelian group $G$, an \emph{$G$-graded abelian group} is
    a family of abelian groups indexed over $G$. If $M$ and $M'$ are $G$-graded
    abelian groups, the type of \emph{graded abelian group homomorphism from $M$ to
    $M'$} is a triple consisting of a \emph{degree} $e : G \simeq G$ (this is an equivalence of types, not a group isomorphism), a proof of
    $(g : G) \to e(g)=g+e(0)$ and a term of type 
    $$\{x\ y : I\} \to (p : e(x)=y) \to M_x \to M'_y.$$ 
    We will denote the type of homomorphisms as $M\to M'$.
    For $\phi:M\to M'$ we write $\deg_\phi$ for the first projection. We will
    often call $\deg_\phi(0)$ the degree of $\phi$. For $x : I$ we will write
    $$\phi_x\defeq\phi_{\refl_x}:M_x\to M'_{\deg_\phi(x)}$$ 
    and 
    $$\phi_{[x]}\defeq\phi_{p_x}:M_{\deg_\phi^{-1}(x)}\to M'_x$$
    where $p_x:\deg_\phi(\deg_\phi^{-1}(x))=x$ is the proof obtained from the equivalence $\deg_\phi$.
  \end{defn}
  \begin{rmk}
    This definition looks a bit cumbersome, since the condition on $e$ forces
    $e$ to be homotopic to the function $\lam{g}g+h$ for some group element $h$.
    Furthermore, the type of $\phi$ is equivalently $(x : I) \to M_x \to
    M'_{x+h}$. We will now discuss why we made these choices.
  
    To see why this is more convenient, we consider the composition of two graded homomorphisms. Suppose we
    have two graded homomorphisms $\phi:M\to M'$ and $\psi:M'\to M''$ of degrees $h:G$ and $k:G$,
    respectively. Then the pointwise composition $\lam{g:G}{m:M_g}\psi_{g+h}(\phi_g(m))$ has type
    $(g : G) \to M_g \to M''_{(g+h)+k}$. So to get a graded homomorphism of degree $h+k$, with the more straightforward representation, we would need to
    transport along the equality $(g+h)+k=g+(h+k)$. Since compositions are ubiquitous, this would happen all over the place. 
    However, in our setting, the composite of two graded homomorphisms of degree $e$ and
    $e'$ will have degree $e' \o e$, without using any transports.
  
    We eliminated a transport to define composition, but there are other places where we cannot get rid
    of them so easily. For example, given morphisms $\phi:M\to M'$ and $\psi:M'\to M''$ with
    $\psi\o\phi=0$ (the graded map that is constantly 0), we are interested in the
    homology of $\phi$ and $\psi$. This is the kernel of $\psi$ quotiented by the image of $\phi$ in
    $M'_x$. However, if $\phi_x$ has type $M_x \to M'_{\deg_\phi(x)}$, there is no map that (without
    transports) lands in $M'_x$. We would need to transport along the path
    $p_x:\deg_\phi\sy(\deg_\phi(x))=x$ and take the image of this composite:
    $$M_{\deg_\phi\sy(x)} \xrightarrow{\phi_{\deg_\phi\sy(x)}} M'_{\deg_\phi(\deg_\phi\sy(x))} \xrightarrow{\sim} M'_x.$$
    For this reason, we allow graded homomorphisms to be applied to paths, so that we have a ``built-in''
    transport. Then we can define the homology as $H_x\defeq \ker(\psi_x)/\im(\phi_{[x]})$, or diagramatically
    $$M_{\deg_\phi\sy(x)}\xrightarrow{\phi_{[x]}}M'_x\xrightarrow{\psi_x}M''_{\deg_\psi(x)}.$$

    For the construction of spectral sequences, we do not actually need the
    second component of a graded homomorphism: all constructions also work if
    the degrees are arbitrary equivalences of type $I\simeq I$, where $I$ is an
    arbitrary set. This is the definition used in the formalization. In this
    document we add this condition, so that our definition is equivalent to the
    usual definition of graded morphism.
  \end{rmk}
  
\begin{defn}
  A \emph{spectral sequence} consists of the following data.
  \begin{itemize}
  \item A sequence $E_r$ of abelian groups graded over $\Z\times\Z$ for $r\geq2$. $E_r$ is called the \emph{$r$-page} of the spectral sequence;
  \item \emph{differentials}, which are graded morphisms $d_r:E_r\to E_r$ such that $d_r\circ d_r=0$;
  \item isomorphisms $\alpha_r^{p,q}:H^{p,q}(E_r)\simeq E_{r+1}^{p,q}$ where $H^{p,q}(E_r)=\ker(d_r^{p,q})/\im(d_r^{[p,q]})$ is the cohomology of the cochain complex determined by $d_r$.
  \end{itemize}
\end{defn}
  We use the notation for cohomologically indexed spectral sequences, since we
  will construct spectral sequences in cohomology in this chapter. For the
  spectral sequences in this chapter, the degree of $d_r$ will be $(r,1-r)$, which
  signifies a cohomologically indexed spectral sequence. 

  As mentioned before, we start counting the pages at 2, since the first page of
  the spectral sequences we construct will not be homotopy invariant. In the
  formalization we start counting at 0 for convenience. Also, in the
  formalization, we assume that the grading of $E_r$ is over some set $I$
  instead of fixing it to $\Z\times \Z$. It is not clear whether this extra
  generality is useful. Instead of abelian groups, we could take objects of an
  arbitrary abelian category, but for concreteness and to simplify things, we
  choose to develop the theory only for abelian groups. In the formalization we
  developed the theory for graded $R$-module for a (non-graded) ring $R$, but we
  have only applied it to abelian groups so far.

  Note that $(E_r,d_r)$ determines $E_{r+1}$ but \emph{not} $d_{r+1}$.
  Furthermore, $E_r$ is a subquotient (subgroup of a quotient) of $E_2$, so if
  $E_2^{p,q}$ is trivial, then $E_r^{p,q}$ is trivial for all $r$. 
  
  In many cases, the spectral sequence will \emph{converge}. That means that for
  a fixed $(p,q):\Z\times \Z$ the sequence $E_r^{p,q}$ will be constant for $r$
  large enough. For example, suppose that the degree of $d_r$ is $(r,r-1)$, and
  $E_2$ is limited to the first quadrant. Now for any $(p,q)$ all differentials
  in or out of $E_r^{p,q}$ will go out the first quadrant for sufficiently large
  $r$. This means that the image of $d_r^{[p,q]}$ is trivial, and the kernel of
  $d_r^{p,q}$ is the full group. This implies that $E_{r+1}^{p,q}\simeq
  E_r^{p,q}$, so the spectral sequence converges.

  Whenever a spectral sequence converges, we write $E_\infty^{p,q}$ for the
  eventual value of $E_r^{p,q}$ for $r$ large enough. Now the power of spectral
  sequences is that there is often a relation between $E_2^{p,q}$ and
  $E_\infty^{p,q}$. This relation does not specify $E_\infty^{p,q}$ exactly, but
  specifies that $E_\infty^{p,q}$ build up some group $D^n$ for the diagonals
  where $p+q=n$.

  \begin{defn}
    Suppose given an abelian group $D$ and a finite sequence of abelian groups
    $(E^n)_n$. We say that \emph{$D$ is built from $(E^n)_n$} if there is a
    sequence of abelian groups $(D^n)_n$ and short exact sequences 
    \begin{align*}
      E^0\to &D \to D^1\\
      &\vdots\\
      E^k\to &D^k \to D^{k+1}\\
      E^{k+1}\to &D^{k+1} \to D^{k+2}\\
      &\vdots\\
      E^m\to &D^m \to 0\\
    \end{align*}
    The sequence $(D^n)_n$ is called a \emph{cofiltration} of $D$, they are successive quotients of $D$.
  \end{defn}
  \begin{defn}
    Given a graded abelian group $D^n$ and a bigraded abelian group $C^{p,q}$, we write 
    $$E_2^{p,q}=C^{p,q}\Rightarrow D^{p+q}$$
    if there is a spectral sequence $E$ such that
    \begin{itemize}
      \item $E_2^{p,q}=C^{p,q}$;
      \item $E$ converges to $E_\infty$;
      \item $D^n$ is built from $E_\infty^{p,q}$ where $p+q=n$.
    \end{itemize}
  \end{defn}
  \begin{rmk}
    This definition implicitly requires that for $p+q=n$ only finitely many $E_\infty^{p,q}$ are nontrivial. 
    This is sufficient for the spectral sequences we consider in this chapter, but this condition can be relaxed in more general constructions of spectral sequences.
  \end{rmk}


\section{Exact Couples}\label{sec:exact-couples}

As we said before, the pair $(E_r,d_r)$ in a spectral sequence specifies
$E_{r+1}$, but not $d_{r+1}$. If we have some more information about page $r$,
then we can construct page $r+1$ and the extra information for page $r+1$. Now
we can iterate this construction and obtain a spectral sequence by forgetting
about the extra information.

An \emph{exact couple} exactly gives this extra information~\cite{massey1952exactcouple}. From it, we can compute the \emph{derived exact
couple}, which gives us the information next page of the spectral sequence.
\begin{defn}
An \emph{exact couple} is a pair $(D,E)$ of $\Z\times\Z$-graded abelian groups with graded homomorphisms
\begin{center}
  \begin{tikzpicture}[thick,node distance=1cm]
  \node (tl) at (0,0) {$D$};
  \node[below right = of tl] (b) {$E$};
  \node[above right = of b] (tr) {$D$};
  \path[every node/.style={font=\sffamily\small}]
  (tl) edge[->] node [above] {$i$} (tr)
  (tr) edge[->] node [below right] {$j$} (b)
  (b)  edge[->] node [below left] {$k$} (tl);
  \end{tikzpicture}
\end{center}
that is exact in all three vertices. This means that for all $p:\deg_j(x)=_Iy$ and
$q:\deg_k(y)=z$ that $\ker(k_q)=\im(j_p)$, and similarly for the other two pairs of maps.

For an exact couple we will write $\iota\defeq\deg_i$ and $\eta\defeq\deg_j$ and $\kappa\defeq\deg_k$ for the degrees.
\end{defn}
\begin{lem}
Given an exact couple $(D,E,i,j,k)$, we can define a \emph{derived exact couple} $(D',E',i',j',k')$
where $E'$ is the homology of $d\defeq j\o k:E\to E$. The degrees of the derived maps are
$\deg_{i'}\equiv\iota$, $\deg_{k'}\equiv\kappa$ and $\deg_{j'}\equiv\eta\o\iota\sy$.
\begin{center}
  \begin{tikzpicture}[thick,node distance=1cm]
  \node (tl) at (0,0) {$D'$};
  \node[below right = of tl] (b) {$E'$};
  \node[above right = of b] (tr) {$D'$};
  \path[every node/.style={font=\sffamily\small}]
  (tl) edge[->] node [above] {$i'$} (tr)
  (tr) edge[->] node [below right] {$j'$} (b)
  (b)  edge[->] node [below left] {$k'$} (tl);
  \end{tikzpicture}
\end{center}
\end{lem}
\begin{proof}
  In this proof we will be explicit about the grading of $D$ and $E$, which is a lot trickier (at least in intensional type theory) than a proof without the grading. For a proof that does not take the grading into account, see for example~\cite[Lemma 1.1]{hatcher2004spectral}.
  We define for $x:\Z\times \Z$ the graded abelian groups $D'$ and $E'$ by $D_x'=\im i_{[x]}$ and $E_x'=\ker d_x/\im d_{[x]}$. Now $i_x':D_x'\to D_{\iota x}'$ is defined as the composite
  $$D_x' \hookrightarrow D_x \xrightarrow{i} D_{\iota x}'.$$
  This is sufficient to define $i'$ on all paths $\iota x=y$ as a function $D_x \to D_y$.\\
  We first define $j_{px}':D_{\iota x}'\to E_{\eta x}'$ for the canonical path $px:\eta(\iota\sy(\iota x))=\eta x$, which is sufficient to define $j'$ in general. 
  Note that $D_{\iota x}'\equiv \im i_{[\iota x]}\simeq\im i_x$, so to define $j_{px}'$ it is sufficient to define $\tilde\jmath : D_x \to E_{\eta x}'$ such that $(a : D_x) \to i_x(a)=0 \to \tilde\jmath(a)=0$. We define $\tilde\jmath(a)\defeq [j_xa]$. This is well-defined, since $j_xa\in\ker d_{\eta x}$,\footnote{We use the set-theoretical notation $g\in H$ to say that a group element $g:G$ is in subgroup $H$. Formally, a subgroup $H$ is an element of $G\to\prop$ (containing 0, and closed under addition and negation) and $g\in H$ is defined as $H(g)$. Note that $(g:G)\times H(g)$ can be endowed with a group structure, which is $H$ viewed as a group.} because
  $$d_{\eta x}(j_xa)=j_{\kappa(\eta x)}(k_{\eta x}(j_xa))=j_{\kappa(\eta x)}\const=\const.$$ 
  Now suppose that $i_x(a)=0$. Without loss of generality we may assume that $x\equiv \kappa y$. By exactness, this means that $a\in\im k_y$, so there is $b:E_y$ such that $k_y(b)=a$. 
  Now $j_xa=j_x(k_yb)\equiv d_yb$, so 
  $$j_xa\in\im d_y\simeq \im d_{[\deg_d y]}\equiv \im d_{[\eta x]}.$$
  This shows that $\tilde\jmath(a)=0$, completing the definition of $j'$. Note that 
  $j_{px}'(i_xa)=[j_xa]$.
  To define $k_x':E_x'\to D_{\kappa x}'$, first note that if $a\in\ker d_x$, then $k_xa\in\ker j_{\kappa x}=\im i_{[\kappa x]}$ by exactness. Now we need to show that if $a\in\im d_{[x]}$, then $k_xa=0$. By assumption, we have $b:E_{\eta\sy\kappa\sy x}$ such that $d_{[x]}(b)=a$. Now we compute (using $k_xj_{[x]}=\const$)
  $$k_xa=k_x(d_{[x]}b)=k_x(j_{[x]}(k_{[\eta\sy x]}b))=0.$$
  This defines $k$. 
  
  Showing exactness of the derived couple involves some diagram chasing. 
  To show that $j'i'=\const$ it is sufficient to show that for all $a:D_{\iota x}'$ we have
  $j'_{p(\iota x)}(i'_{\iota x}a)=0$. Since $a\in\im i_{[\iota x]}\simeq i_x$ we know that $a=i_x b$ for some $b: D_x$. We compute
  $$j'_{p(\iota x)}(i'_{\iota x}a)=j'_{p(\iota x)}(i_{\iota x}a)=[j_{\iota x}a]=[j_{\iota x}(i_x b)]=[0]=0.$$
  To show that $\ker j'\subseteq \im i'$, it is sufficient to show that 
  $\ker j'_{p(\kappa x)}\subseteq \im i'_{\kappa x}$. Suppose $a : D'_{\iota(\kappa x)}$ such that $j'_{p(\kappa x)}(a)=0$, we know that $a=i_{\kappa x}(b)$ for some $b$. Now
  $$0=j'_{p(\kappa x)}(a)=j'_{p(\kappa x)}(i_{\kappa x}(b))=[j_{\kappa x}(b)],$$
  which means that $j_{\kappa x}(b)\in\im d_{[{\kappa (\eta x)}]}\simeq \im d_x$. This means that for some $c: E_x$ we have $j_{\kappa x}(b)=d_x(c)=j_{\kappa x}(k_x c)$. This means that 
  $j_{\kappa x}(b-k_x c)=0$, hence $b-k_xc\in \ker j_{\kappa x}=\im i_{[\kappa x]}$. This means that we can define $b-k_xc: D'_{\kappa x}$. Now we compute
  $$i'_{\kappa x}(b-k_xc)=i_{\kappa x}b-i_{\kappa x}(k_xc)=a-0=a,$$
  which means $a\in\im i'_{\kappa x}$, as desired.

  We will omit the other cases, which are similar but easier.
\end{proof}
Repeating the process of deriving exact couples, we get a sequence of exact couples $(D_r,E_r,i_r,j_r,k_r)$.\footnote{We will now put the grading of $D$, $E$ and the maps as superscript, so that we can put the page as subscript.} We get a spectral sequence $(E_r,d_r)$ where $d_r\defeq j_r\o k_r$. Note that
$$\deg_{d_r}=\deg_{j_r}\o\deg_{k_r}=\eta\o\iota^r\o\kappa
$$

Given some extra conditions on the exact couple, we can show that this spectral sequence converges. 
\begin{defn}\label{def:bounded}
We call an exact couple \emph{bounded} if for every $x:\Z\times\Z$ there is are bounds $B_x : \N$ such that for all $s \geq B_x$ we have
$$E^{\iota^{-s}(x)}=0 \qquad\text{and}\qquad D^{\iota^s(x)}=0$$
\end{defn}
\begin{rmk}\label{rmk:bounded}
  The condition on $D$ also shows that if you go sufficiently far in the $\iota$-direction, then $E$ is trivial, since $D \xrightarrow{j} E \xrightarrow{k} D$ is exact and the occurrences of $D$ will be trivial. Converse, the condition on $E$ shows that if you go sufficiently far in the
  $\iota^{-1}$ direction, $i$ will be an equivalence, by the following exact sequence.
  $$E \xrightarrow{k} D \xrightarrow{i} D \xrightarrow{j} E$$
  We call $x:\Z\times\Z$ a \emph{stable index} whenever $i^{[\iota^{-s}x]}$ is surjective for \emph{all} $s\geq0$.
\end{rmk}
Given a bounded exact couple, the pages stabilize pointwise, which is the content of the next lemma.

\begin{lem}\label{lem:exact-couple-stabilize}
  For a bounded exact couple $(D,E,i,j,k)$ we have for all sufficiently large $r$ that $D_{r+1}^x=D_r^x$ and $E_{r+1}^x=E_r^x$.
\end{lem}
\begin{proof}
  Note that $E_{r+1}^x=\ker d_r^{x}/\im d_r^{[x]}$. Since $d_r$ has degree
  $\eta\o\iota^r\o\kappa$, and because $\Z\times \Z$ is an abelian group, the
  degrees commute.\footnote{In the formalization, we do not assume that the
  degrees are shifts by a group element, and we explicitly assume that
  $\kappa\iota=\iota\kappa$ and $\iota\eta=\eta\iota$.} The codomain of $d_r^x$
  is $E_r^{\eta(\iota^r(\kappa x))}=E_r^{\iota^r(\eta(\kappa x))}$, which is
  trivial for sufficiently large $r$ by \autoref{rmk:bounded}. Also, the domain
  of $d_r^{[x]}$ is $E_r^{\kappa\sy(\iota^{-r}(\eta\sy
  x))}=E_r^{\iota^{-r}(\kappa\sy(\eta\sy x))}$, which is trivial for sufficiently large $r$ by the definition of boundedness.

  To show that $D$ stabilizes, first note that if $i_r^{[\iota\sy x]}$ is surjective, then $i_{r+1}^{[x]}$ is surjective. The reason is that 
  $$i_{r+1}: D_{r+1}^{\iota\sy x}\xrightarrow{\sim} D_r^{\iota\sy x} \to D_{r+1}^{x}$$ 
  is now a composite of two surjective maps. This means that if the maps $i_{r_0}^{[\iota^{-s}x]}$ are surjections for all $s\geq B+1$, then the maps $i_{r_0+1}^{[\iota^{-s}x]}$ will be surjections for all $s\geq B$. In this case, for $r\geq r_0+B$ we have that $i_r^{[x]}$ is a surjection, hence that $D_{r+1}^x=D_r^x$. Since $i_0^{[\iota^{-s}x]}$ are surjections for sufficiently large $s$ by \autoref{rmk:bounded}, we finish the proof.
\end{proof}
 
By the proof of \autoref{lem:exact-couple-stabilize} we get explicit bounds $B_x^D$ and $B_x^E$ such that $D_r^x=D_{B_x^D}^x$ and $E_{r'}^x=E_{B_x^E}^x$ for all $r\geq B_x^D$ and $r'\geq B_x^E$. We define $D_\infty^x\defeq D_{B_x^D}^x$ and $E_\infty^x\defeq E_{B_x^E}^x$. Both $B_x^D$ and $B_x^E$ will be the maximum of $B_y$ for some sequence of indices $y$.

\begin{thm}[Convergence Theorem]\label{thm:exact-couple-convergence}
Let $(D,E,i,j,k)$ be a bounded exact couple and let $x$ be a stable index. 
Then $D^{\kappa x}$ is built from $(E_\infty^{\iota^n(x)})_{0\le n < B_{\kappa x}}$. 
\end{thm}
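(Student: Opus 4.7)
The plan is to construct a cofiltration of $D^{\kappa x}$ of length $B \defeq B_{\kappa x}$ whose successive kernels are $E_\infty^{\iota^n x}$ for $0 \le n < B$.

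First I would iterate $i$ starting at $D^{\kappa x}$:
\[D^{\kappa x} \xrightarrow{i} D^{\iota \kappa x} \xrightarrow{i} \cdots \xrightarrow{i} D^{\iota^B \kappa x} = 0,\]
where the last term vanishes by boundedness. Setting $K_n \defeq \ker(i^n \colon D^{\kappa x} \to D^{\iota^n \kappa x})$, the chain $0 = K_0 \subseteq K_1 \subseteq \cdots \subseteq K_B = D^{\kappa x}$ gives a cofiltration $D^{\kappa x} \twoheadrightarrow D^{\kappa x}/K_1 \twoheadrightarrow \cdots \twoheadrightarrow 0$ with successive kernels $K_{n+1}/K_n$. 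The theorem therefore reduces to identifying $K_{n+1}/K_n \simeq E_\infty^{\iota^n x}$.

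Next, the map $i^n$ descends to an injection $K_{n+1}/K_n \hookrightarrow \ker(i \colon D^{\iota^n \kappa x} \to D^{\iota^{n+1} \kappa x})$; by exactness of the couple, this kernel equals $\im(k \colon E^{\iota^n x} \to D^{\iota^n \kappa x})$, using that $\iota$ and $\kappa$ commute as translations in the abelian grading group. Pulling back through $k$ exhibits $K_{n+1}/K_n$ as a concrete subquotient of $E^{\iota^n x}$: pre-images of $\im(i^n)$ modulo $\ker(k)$.

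The main obstacle is to match this subquotient with $E_\infty^{\iota^n x}$. The approach I would take is induction on $r \ge 2$, establishing
\[E_r^{\iota^n x} \simeq \frac{k\sy\bigl(\im(i^{r-1} \colon D^{\iota^{n-r+1} \kappa x} \to D^{\iota^n \kappa x})\bigr)}{j\bigl(\ker(i^{r-1} \colon D^{\kappa\sy\eta\sy \iota^n x} \to D^{\iota^{r-1} \kappa\sy\eta\sy \iota^n x})\bigr)},\]
using the derived-couple construction at each stage: the numerator expands by one pre-image step along $i$, while the denominator picks up one more kernel step. For $r$ sufficiently large both numerator and denominator stabilize by \autoref{lem:exact-couple-stabilize}: the numerator reaches its maximum once the stable-index hypothesis forces the maps $D^{\iota^{n-r+1} \kappa x} \to D^{\kappa x}$ to be surjective, and the denominator reaches its maximum once boundedness forces $D^{\iota^{r-1} \kappa\sy\eta\sy \iota^n x} = 0$. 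At that point the formula coincides with the description of $K_{n+1}/K_n$ found above, producing the short exact sequences witnessing the claim.

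The principal difficulty is bookkeeping: each inductive step shifts the indices of both numerator and denominator, so one must check that the identifications of $E_r$ with successive subquotients of $E^{\iota^n x}$ are natural enough to compose. In the formalization this incurs many small but careful manipulations of transports along the degree equivalences $\iota$, $\kappa$, $\eta$ per \autoref{def:graded}, though each is algebraically routine once the indices are lined up.
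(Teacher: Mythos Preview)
Your approach is correct and classical, but it takes a longer route than the paper's. You build the cofiltration explicitly as $D^{\kappa x}/K_n$ with $K_n=\ker(i^n)$ and then identify the successive subquotients via Massey's closed-form description $E_r \simeq k^{-1}(\im i^{r-1})/j(\ker i^{r-1})$, which you propose to prove by a separate induction on $r$. The paper instead works entirely inside the already-constructed derived couples: it sets $C^n \defeq D_\infty^{\kappa\iota^n x}$ and observes that for large $r$ the exactness of $(D_r,E_r,i_r,j_r,k_r)$ at the relevant indices collapses to a short exact sequence $0\to E_r^{\iota^n x}\xrightarrow{k_r} D_r^{\kappa\iota^n x}\xrightarrow{i_r} D_r^{\kappa\iota^{n+1} x}\to 0$, simply because the outer $j_r$'s have trivial (co)domain by boundedness. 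No Massey formula is needed; the derived-couple machinery absorbs that induction. Your cofiltration and the paper's are actually the same subgroup of $D^{\iota^n\kappa x}$ once the stable-index surjectivity is used, so the real difference is organizational: you re-prove what the derived couple already encodes.

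Two small points. First, your denominator index has a stray $\kappa^{-1}$: the source of $j$ landing in $E^{\iota^n x}$ is $D^{\eta^{-1}\iota^n x}$, not $D^{\kappa^{-1}\eta^{-1}\iota^n x}$. Second, the numerator $k^{-1}(\im i^{r-1})$ is decreasing in $r$, so it does not ``reach its maximum''; rather, it stabilizes at $k^{-1}(\im(i^n\colon D^{\kappa x}\to D^{\iota^n\kappa x}))$ precisely because the stable-index hypothesis makes the tail maps $D^{\iota^{n-r+1}\kappa x}\to D^{\kappa x}$ surjective. Your use of that hypothesis is the right one, just phrased backwards.
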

\begin{proof}
  Define 
  $C^n = D_\infty^{\kappa(\iota^nx)}$. Let $n:\N$ be arbitrary, then for sufficiently large $r$ the following is a short exact sequence
  $$0 \xrightarrow{j_r} E_r^{\iota^nx} \xrightarrow{k_r} D_r^{\kappa(\iota^nx)} \xrightarrow{i_r} 
  D_r^{\iota(\kappa(\iota^nx))} \xrightarrow{j_r} 0.$$
  This is the case, because for sufficiently large $r$ the domain of $j_r^{[\iota^nx]}$ and the codomain of $j_r{\iota(\kappa(\iota^nx))}$ are contractible. Now (possibly by increasing $r$) these groups are in the stable range, so we get a short exact sequence
  $$0 \to E_\infty^{\iota^nx} \to C^n \to C^{n+1} \to 0.$$ Moreover, we have
  $C^0\equiv D_\infty^{\kappa x}=D^{\kappa x}$ because $x$ is a stable index.
  Lastly, for $s\geq B_{\kappa x}$ we know that $C^s$ is trivial, because
  $D^{\kappa(\iota^n)}$ is trivial by the condition of being bounded. This shows
  that $D^{\kappa x}$ is built from $(E_\infty^{\iota^n(x)})_{0\le n < B_{\kappa
  x}}$.
\end{proof}

\section{Spectra}\label{sec:spectra}

We have not yet discussed how to get an exact couple in the first place. Recall that from a pointed
map we get a long exact sequence of homotopy groups. For a \emph{sequence} of pointed maps we get a sequence of long exact sequences. However, we do not want to do this for pointed maps, but for maps between \emph{spectra}. 

You can think of a spectrum as a generalized space with negative dimensions. Suppose we are given a pointed type $X$ and a chosen delooping $Y$ of $X$. That is, $Y$ is a pointed type such that $\Omega Y\simeq^* X$. Now the $(n+1)$-th homotopy group of $Y$ is equal to the $n$-th homotopy group of $X$. The $0$-th homotopy group of $Y$ is new information, and we can think of it as the $(-1)$-th homotopy group of $X$. Spectra go further on this idea: it is a pointed type with infinitely many deloopings.

\begin{defn}
  A \emph{prespectrum} is a pair consisting of a sequence of pointed types $Y:\Z\to\type^*$ and a sequence of pointed maps $e:(n:\Z) \to Y_n \to^* \Omega Y_{n+1}$. An \emph{$\Omega$-spectrum} or \emph{spectrum} is a prespectrum $(Y,e)$ where $e_n$ is a pointed equivalence for all $n$. We will often just write $Y$ for the pair $(Y,e)$, and we denote the type of (pre)spectra by $\prespectrum$ and $\spectrum$.\\
  A map between (pre)spectra $(Y,e)\to (Y',e')$ is a pair consisting of $f:(n : \Z) \to Y_n \to Y_n'$ and $p : (n : \N) \to e_n' \o f_n \sim^* \Omega f_{n+1} \o e_n$.
\end{defn}

\begin{rmk}
  Usually a (pre)spectrum is indexed over $\N$ and not over $\Z$. We index it over $\Z$ so that we do not have to do a case split in --- for example --- the definition of homotopy group of a spectrum, see \autoref{def:spectrum-homotopy-group}.
\end{rmk}

\begin{ex}\mbox{}
  \begin{itemize}
  \item If $A$ is an abelian group, we have $HA:\spectrum$ where $(HA)_n=K(A,n)$ for $n\geq0$ and $(HA)_n=\unit$ for $n<0$. 
  \item Given $Y:\spectrum$ and $k:\Z$, we can define two new spectra $\Omega^kY$ and $\susp^k Y:\spectrum$ with 
  \begin{align*}
    (\Omega^kY)_n&\defeq Y_{n-k}&(\susp^kY)_n&\defeq Y_{n+k}
  \end{align*}
  \item Given a spectrum map $f: X \to Y$, we have a spectrum $\fib_f:\spectrum$ with $(\fib_f)_n\defeq \fib_{f_n}$. Furthermore we have a spectrum map $p_1:\fib_f\to X$. This follows from the following two facts about fibers (which we will not prove here).
  \begin{enumerate}
    \item Given a pointed map $g : A \to B$, there is a pointed equivalence $e_1:\Omega\fib_g\simeq^*$ with a pointed homotopy
    \begin{center}
      \begin{tikzpicture}[thick,node distance=1cm]
      \node (tl) at (0,0) {$\Omega\fib_g$};
      \node[below right = of tl] (r) {$A$};
      \node[below left = of r] (bl) {$\fib_{\Omega g}$};
      \path[every node/.style={font=\sffamily\small}]
      (tl) edge[->] node [above right] {$\Omega p_1$} (r)
      (bl) edge[->] node [below right] {$p_1$} (r)
      (tl)  edge[->] node [left] {$e_1$} (bl);
      \end{tikzpicture}
    \end{center}
    \item $\fib$ is a functor from pointed maps to pointed types and $p_1$ is a natural transformation. This means the following. Suppose we are given a square of pointed maps and a homotopy filling the following square.
    \begin{center}
      \begin{tikzpicture}[thick,node distance=1cm]
      \node (tl) at (0,0) {$A$};
      \node[right = of tl] (tr) {$A'$};
      \node[below = of tl] (bl) {$B$};
      \node (br) at (tr |- bl)  {$B'$};
      \path[every node/.style={font=\sffamily\small}]
      (tl) edge[->] node [above] {$f$} (tr)
           edge[->] node [right] {$h$} (bl)
      (bl) edge[->] node [above] {$g$} (br)
      (tr) edge[->] node [right] {$h'$} (br);
      \end{tikzpicture}
    \end{center}
    Then there is a pointed map $e_2:\fib_f\to\fib_g$, functorial in $(h,h')$. In particular this means that if $h$ and $h'$ are equivalences, then $e_2$ is. The naturality of $p_1$ means that we have the following pointed homotopy.
    \begin{center}
      \begin{tikzpicture}[thick,node distance=1cm]
      \node (tl) at (0,0) {$\fib_f$};
      \node[right = of tl] (tr) {$A$};
      \node[below = of tl] (bl) {$\fib_g$};
      \node (br) at (tr |- bl)  {$B$};
      \path[every node/.style={font=\sffamily\small}]
      (tl) edge[->] node [above] {$p_1$} (tr)
           edge[->] node [right] {$e_2$} (bl)
      (bl) edge[->] node [above] {$p_1$} (br)
      (tr) edge[->] node [right] {$h$} (br);
      \end{tikzpicture}
    \end{center}
  \end{enumerate}
  \end{itemize}
\end{ex}

Given an $\Omega$-spectrum $Y$ and $n : \Z$, we define can the $n$-th homotopy group of $Y$ to be 
$$\pi_n(Y)\defeqp\pi_{n+k}(Y_k):\abgroup$$ 
for any $k$ such that $n+k\geq 0$. This is independent of $k$, because 
$$\pi_{n+(k+1)}(Y_{k+1})\simeq \pi_{n+k}(\Omega Y_{k+1})\simeq \pi_{n+k}(Y_k).$$
For concreteness, in the following definition we pick $k=2-n$. We make this choice so that $\pi_n(Y)$ directly carries the structure of an abelian group.

The homotopy group of a prespectrum $Y$ is a bit different, since $\pi_{n+k}(Y_k)$ is not independent of $k$. In this case, it is the colimit as $k\to\infty$. We make the substitution 
$\ell=n+k-2$ to make the index of the homotopy group always positive.

\begin{defn}\label{def:spectrum-homotopy-group}
  Given an $\Omega$-spectrum $Y$ and $n : \Z$, we define the \emph{$n$-th homotopy group of $Y$} as
  $$\pi_n(Y)\defeq \pi_2(Y_{2-n}).$$
  For a prespectrum $Y$ we define
  $$\pi_n(Y)\defeq\colim_{\ell\to\infty}(\pi_{\ell+2}(Y_{\ell+2-n})).$$
\end{defn}

Note that the homotopy group of a prespectrum is a set by \autoref{cor:trunc_colim}\ref{part:colim_is_trunc}, 
and the colimit can be equipped with a group structure, making $\pi_n(Y)$ an abelian group for a prespectrum $Y$.

The long exact sequence of homotopy groups for pointed types, constructed in \autoref{sec:les-homotopy}, induces one on spectra.

\begin{thm}\label{thm:spectrum-LES}
  Given a spectrum map $f:X\to Y$ with fiber $F$, we get the following long exact sequence of homotopy groups indexed over $\Z\times \fin_3$. 
  \begin{center}\begin{tikzpicture}[thick, node distance=18mm]
  \node (Y)     at (0,0) {$\pi_k(Y)$};
  \node[left = of Y] (X) {$\pi_k(X)$};
  \node[left = of X] (F) {$\pi_k(F)$};
  \node[above = of Y] (OY) {$\pi_{k+1}(Y)$};
  \node[above = of X] (OX) {$\pi_{k+1}(X)$};
  \node[above = of F] (OF) {$\pi_{k+1}(F)$};
  \node[above = of OY] (O2Y) {$\pi_{k+2}(Y)$};
  \node[above = of OX] (O2X) {$\pi_{k+2}(X)$};
  \node[above = of OF] (O2F) {$\pi_{k+2}(F)$};
  \node[above = 5mm of O2X] {$\vdots$};
  \node[below = 5mm of X] {$\vdots$};
  \path[every node/.style={font=\sffamily\small}]
  (X) edge[->] node [above] (f){$\pi_k(f)$} (Y)
  (F) edge[->] node [below] (f){$\pi_k(p_1)$} (X)
  (OY) edge[->] (F)
  (OX) edge[->] node [above] (f){$\pi_{k+1}(f)$} (OY)
  (OF) edge[->] node [below] (f){$\pi_{k+1}(p_1)$} (OX)
  (O2Y) edge[->] (OF)
  (O2X) edge[->] node [above] (f){$\pi_{k+2}(f)$} (O2Y)
  (O2F) edge[->] node [below] (f){$\pi_{k+2}(p_1)$} (O2X);
\end{tikzpicture}
\end{center}
\end{thm}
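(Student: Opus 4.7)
The plan is to reduce the statement to \autoref{thm:les-homotopy} (the pointed-type long exact sequence) applied levelwise. First I would verify that $F$ is itself an $\Omega$-spectrum: setting $F_n \defeq \fib_{f_n}$, the natural pointed equivalence $\Omega\fib_g\simeq^*\fib_{\Omega g}$ from \autoref{lem:pointed-types-basic}, combined with the spectrum equivalences $X_n\simeq^*\Omega X_{n+1}$ and $Y_n\simeq^*\Omega Y_{n+1}$ and the naturality of $\fib$, yields $\Omega F_{n+1}\simeq^*\fib_{\Omega f_{n+1}}\simeq^*\fib_{f_n}=F_n$. The first-projection maps $p_1$ commute with these spectrum structures, giving a spectrum map $p_1:F\to X$.

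Next I would record the general identification $\pi_j(Z_n)\simeq\pi_{j-n}(Z)$, valid for any spectrum $Z$ and $j\geq 0$: iterating $Z_m\simeq^*\Omega Z_{m+1}$ gives $\pi_j(Z_n)\simeq\pi_2(Z_{n-j+2})$, which by \autoref{def:spectrum-homotopy-group} is $\pi_{j-n}(Z)$. Under this identification, the pointed LES for $f_n$ at positions $\pi_{j+1}(Y_n)\to\pi_j(F_n)\to\pi_j(X_n)\to\pi_j(Y_n)\to\pi_{j-1}(F_n)$ (for $j\geq 0$) translates into a segment of the desired spectrum sequence indexed by $k\defeq j-n$, covering all $k\geq -n$. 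Since $n$ may be chosen arbitrarily, every position $\pi_k(W)$ with $W\in\{F,X,Y\}$ and $k\in\Z$ falls within the range of some pointed LES.

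Concretely I would take $\pi_k(p_1)$ and $\pi_k(f)$ to be the homotopy-group functor applied to the spectrum maps, and define the connecting map $\delta_k:\pi_k(Y)\to\pi_{k-1}(F)$, under the identifications above, to be the pointed connecting map $\pi_2(Y_{2-k})\to\pi_1(F_{2-k})$ supplied by \autoref{thm:les-homotopy}. Arranging these over $\Z\times\fin_3$ with the successor structure of \autoref{ex:succ_structure}, I would check that they form a chain complex by transporting the chain-complex property from the pointed LES; exactness at any position $(k,i)$ then follows by invoking the pointed LES of $f_n$ for any $n\geq -k$ (say $n=2-k$).

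The main obstacle is coherence bookkeeping: one must check that the connecting maps extracted from pointed LES's at different levels $n$ agree under the spectrum isomorphisms $\pi_{j+1}(Z_{n+1})\simeq\pi_j(Z_n)$, so that the maps $\pi_k(p_1)$, $\pi_k(f)$, $\delta_k$ really are well-defined independent of the auxiliary choice of $n$. This reduces to naturality of $\Omega\fib_g\simeq^*\fib_{\Omega g}$ in $g$ together with compatibility of the pointed connecting map with looping, both of which appear in \autoref{lem:pointed-types-basic}. These coherences are mathematically routine, but, as is typical in intensional type theory, they demand careful management of transports and will dominate the formalization effort.
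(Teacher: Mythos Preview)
Your proposal is correct and follows the same underlying strategy as the paper: reduce to the pointed long exact sequence of \autoref{thm:les-homotopy} applied at each spectrum level, then glue the resulting sequences together across levels. The paper, however, isolates the gluing step into a separate splicing lemma (\autoref{lem:splice}) stated immediately before this theorem: given a family of long exact sequences $G^n$ indexed by $n:N$ together with isomorphisms $G^{n+1}_m\simeq G^n_{m+k}$ and $G^{n+1}_{m+1}\simeq G^n_{m+k+1}$ and a single commuting square relating them, one obtains a long exact sequence over $N\times\fin_{k-1}$. Applying this with $G^n$ the pointed LES of $f_{2-n}$, $k=3$, and $m=(2,0)$ reduces the entire ``coherence bookkeeping'' you flag as the main obstacle to verifying that one square, which is filled by naturality of the spectrum map $f$. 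Your direct approach is essentially this lemma inlined; the abstraction buys a cleaner formalization and makes the coherence obligation explicit and minimal, whereas your version correctly anticipates (but has to manage by hand) the same naturality input.
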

We will use the following lemma. Recall the definition of successor structure from \autoref{def:chain-complex}.
\begin{lem}\label{lem:splice}
  Suppose given two successor structures $N$ and $M$, and for each $n:N$ let $G^n$ be a long exact sequence index by $M$. Let $m:M$ and $k\geq 2$. Suppose that 
  \begin{itemize}
    \item for all $n:N$, $G_m^{n+1} \simeq G_{m+k}^n$ and $G_{m+1}^{n+1} \simeq G_{m+k+1}^n$
    \item for all $n:N$ the following diagram commutes.
    \begin{center}\begin{tikzpicture}[thick, node distance=10mm]
      \node (Y)     at (0,0) {$G_{m+k}^n$};
      \node[left = of Y] (X) {$G_{m+k+1}^n$};
      \node[above = 12mm of Y] (OY) {$G_m^{n+1}$};
      \node[above = 12mm of X] (OX) {$G_{m+1}^{n+1}$};
      \path[every node/.style={font=\sffamily\small}]
      (X) edge[->] (Y)
      (OY) edge[->] node [sloped, above] {$\sim$} (Y)
      (OX) edge[->] node [sloped, above] {$\sim$} (X)
      (OX) edge[->] (OY);
    \end{tikzpicture}\end{center}
  \end{itemize}
  Then there is a long exact sequence $H:N\times\fin_{k-1}\to\set^*$ with $H_{(n,\ell)}\defeq G_{m+\ell}^n$ 
\end{lem}
For $k=3$ the hypotheses can be represented in the diagram below.
\begin{center}\begin{tikzpicture}[thick, node distance=10mm]
\node (Y)     at (0,0) {$G_m^n$};
\node[left = of Y] (X) {$G_{m+1}^n$};
\node[left = of X] (F) {$G_{m+2}^n$};
\node[left = of F] (G) {$G_{m+3}^n$};
\node[left = of G] (H) {$G_{m+4}^n$};
\node[left = of H] (I)  {$\cdots$};
\node[above = 12mm of Y] (OY) {$G_m^{n+1}$};
\node[above = 12mm of X] (OX) {$G_{m+1}^{n+1}$};
\node[above = 12mm of F] (OF) {$G_{m+2}^{n+1}$};
\node[above = 12mm of G] (OG) {$G_{m+3}^{n+1}$};
\node[above = 12mm of H] (OH) {$G_{m+4}^{n+1}$};
\node (OI) at (I |- OH) {$\cdots$};
\node[above = 12mm of OY] (O2Y) {$G_{m}^{n+2}$};
\node[above = 12mm of OX] (O2X) {$G_{m+1}^{n+2}$};
\node[above = 12mm of OF] (O2F) {$G_{m+2}^{n+2}$};
\node[above = 12mm of OG] (O2G) {$G_{m+3}^{n+2}$};
\node[above = 12mm of OH] (O2H) {$G_{m+4}^{n+2}$};
\node (O2I) at (OI |- O2H) {$\cdots$};
\node[above = 5mm of O2F] {$\vdots$};
\node[below = 5mm of F] {$\vdots$};
\path[every node/.style={font=\sffamily\small}]
(X) edge[->] (Y)
(F) edge[->] (X)
(G) edge[->] (F)
(H) edge[->] (G)
(I) edge[->] (H)
(OY) edge[->] node [sloped, below, pos = 0.4] {$\sim$} (G)
(OX) edge[->] node [sloped, above, pos = 0.6] {$\sim$} (H)
(OX) edge[->] (OY)
(OF) edge[->] (OX)
(OG) edge[->] (OF)
(OH) edge[->] (OG)
(OI) edge[->] (OH)
(O2Y) edge[->] node [sloped, below, pos = 0.4] {$\sim$} (OG)
(O2X) edge[->] node [sloped, above, pos = 0.6] {$\sim$} (OH)
(O2X) edge[->] (O2Y)
(O2F) edge[->] (O2X)
(O2G) edge[->] (O2F)
(O2H) edge[->] (O2G)
(O2I) edge[->] (O2H);
\end{tikzpicture}\end{center}
\begin{proof}[Proof (\autoref{lem:splice})]
  The map $H_{(n,\ell+1)}\to H_{(n,\ell)}$ is defined to be the given map $G_{m+\ell+1}^n\to G_{m+\ell}^n$. The map $H_{(n+1,0)}\to H_{(n,k-1)}$ is defined to be the composite 
  $$G_m^{n+1}\xrightarrow{\sim}G_{m+k}^n\to G_{m+k-1}^n.$$
  It is easy to check that this is a long exact sequence from the conditions.
\end{proof}
\begin{proof}[Proof (\autoref{thm:spectrum-LES})]
For each $n:\Z$ we get a long exact sequence of homotopy groups for $f_{2-n}$ by
\autoref{thm:les-homotopy}. We splice them together using \autoref{lem:splice} with $N=(\Z,\lam{n}n+1)$ and $M=(\N,\lam{n}n+1)$ and with $k=3$ and $m=(2,0)$. This means that the resulting sequence is 
\begin{center}\begin{tikzpicture}[thick, node distance=18mm]
  \node (Y)     at (0,0) {$\pi_2(Y_{2-n})$};
  \node[left = of Y] (X) {$\pi_2(X_{2-n})$};
  \node[left = of X] (F) {$\pi_2(F_{2-n})$};
  \node[above = of Y] (OY) {$\pi_2(Y_{2-(n+1)})$};
  \node[above = of X] (OX) {$\pi_2(X_{2-(n+1)})$};
  \node[above = of F] (OF) {$\pi_2(F_{2-(n+1)})$};
  \node[above = of OY] (O2Y) {$\pi_2(Y_{2-(n+2)})$};
  \node[above = of OX] (O2X) {$\pi_2(X_{2-(n+2)})$};
  \node[above = of OF] (O2F) {$\pi_2(F_{2-(n+2)})$};
  \node[above = 5mm of O2X] {$\vdots$};
  \node[below = 5mm of X] {$\vdots$};
  \path[every node/.style={font=\sffamily\small}]
  (X) edge[->] node [above] (f){$\pi_2(f)$} (Y)
  (F) edge[->] node [below] (f){$\pi_2(p_1)$} (X)
  (OY) edge[->] (F)
  (OX) edge[->] node [above] (f){$\pi_2(f)$} (OY)
  (OF) edge[->] node [below] (f){$\pi_2(p_1)$} (OX)
  (O2Y) edge[->] node [left] (f){$\pi_2(\delta)\quad\mbox{}$} (OF)
  (O2X) edge[->] node [above] (f){$\pi_2(f)$} (O2Y)
  (O2F) edge[->] node [below] (f){$\pi_2(p_1)$} (O2X);
\end{tikzpicture}
\end{center}
We still need to check the conditions for the Lemma. The first isomorphism is given by the following composition
$$\pi_2(Y_{2-(n+1)})\simeq\pi_2(\Omega Y_{2-(n+1)+1})\simeq\pi_2(\Omega Y_{2-n})\equiv \pi_3(Y_{2-n}),$$
The second isomorphism is the same, replacing $Y$ by $X$. The square commutes because the two isomorphisms are both natural in $Y$.
\end{proof}

Suppose given a sequence of spectra $A$ and a sequence of spectrum maps
$$\cdots \to A_{s} \xrightarrow{f_{s}} A_{s-1} \xrightarrow{f_{s-1}} A_{s-2} \to \cdots $$
Let $B_s\defeq\fib_{f_s}$. Then $D^{n,s}\defeq\pi_n(A_s)$ and $E^{n,s}\defeq\pi_n(B_s)$ are
graded abelian groups and the maps of the long exact sequences become graded
homomorphisms. This gives exactly the data of an exact couple. 

For cohomology, it is customary to reindex the pages of the spectral sequence
with the base change $(p,q)=(s-n,-s)$, or equivalently $(n,s)=(-(p+q),-q)$.

\begin{thm}\label{thm:spectral-sequence-spectrum}
  Given a sequence of spectra
  $$\cdots \to A_{s} \xrightarrow{f_{s}} A_{s-1} \xrightarrow{f_{s-1}} A_{s-2}
  \to \cdots $$ with fibers $B_s\defeq\fib_{f_s}$, suppose for all $n$ there is
  a $\beta_n$ such that for all $s\le \beta_n$ we have $\pi_n(A_s)=0$ and suppose that
  for all $n$ there is a $\gamma_n$ such that for all $s>\gamma_n$ the map $\pi_n(f_s)$ is
  an isomorphism. Then the exact couple constructed from this sequence is
  bounded. This spectral sequence gives
  $$E_2^{p,q}=\pi_{-(p+q)}(B_{-q})\Rightarrow \pi_{-(p+q)}(A_{\gamma_{-(p+q)}}).$$
\end{thm}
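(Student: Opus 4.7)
The plan is to assemble the exact couple from the data, reindex it to fit the cohomological convention of the theorem, check the boundedness hypothesis, and then apply \autoref{thm:exact-couple-convergence} to read off convergence.

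First I would construct the unreindexed exact couple on $\Z\times\Z$: let $\widetilde D^{n,s}\defeq\pi_n(A_s)$ and $\widetilde E^{n,s}\defeq\pi_n(B_s)$. The graded homomorphisms $i$, $j$, $k$ come from applying \autoref{thm:spectrum-LES} to each fiber sequence $B_s\to A_s\xrightarrow{f_s}A_{s-1}$, splicing them together for varying $s$. Exactness at each of the three vertices of the triangle is immediate from exactness of the underlying long exact sequences. Now I would apply the change of variables $(p,q)\defeq(s-n,-s)$, which is a bijection of $\Z\times\Z$. Under this relabeling one obtains $D^{p,q}=\pi_{-(p+q)}(A_{-q})$ and $E^{p,q}=\pi_{-(p+q)}(B_{-q})$, with degrees $\iota=(-1,1)$, $\kappa=(0,0)$, $\eta=(2,-1)$, so that $\deg d_r=\eta\circ\iota^{r-2}\circ\kappa=(r,1-r)$ as desired for a cohomological spectral sequence; in particular $E_2^{p,q}=\pi_{-(p+q)}(B_{-q})$.

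Next I would verify boundedness (\autoref{def:bounded}). For fixed $(p,q)$, write $n=-(p+q)$. The $D$-condition reads $D^{\iota^s(p,q)}=\pi_n(A_{-q-s})=0$ for $s$ large, which holds once $-q-s\le\beta_n$, i.e.\ for $s\ge-q-\beta_n$. The $E$-condition $E^{\iota^{-s}(p,q)}=\pi_n(B_{s-q})=0$ is not quite a direct hypothesis, but it follows from the LES of $B_s\to A_s\to A_{s-1}$: if both $\pi_n(f_{s-q})$ and $\pi_{n+1}(f_{s-q})$ are isomorphisms, then $\pi_n(B_{s-q})=0$. Hence we need $s-q>\max(\gamma_n,\gamma_{n+1})$, which produces the required bound. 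Combining, any $B_{(p,q)}\ge\max\!\bigl(-q-\beta_n,\,\max(\gamma_n,\gamma_{n+1})+q+1\bigr)$ works.

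Finally I would identify a stable index on each antidiagonal $p+q=-n$ and invoke \autoref{thm:exact-couple-convergence}. The map $i^{[\iota^{-s}(p,q)]}$ is precisely $\pi_n(f_{s-q+1})$, which is surjective (in fact an isomorphism) for all $s\ge 0$ whenever $-q\ge\gamma_n$. So choose $x_0=(\gamma_n-n,\,-\gamma_n)$; this is a stable index and $\kappa x_0=x_0$, giving $D^{x_0}=\pi_n(A_{\gamma_n})$. Applying \autoref{thm:exact-couple-convergence} then exhibits $\pi_n(A_{\gamma_n})$ as built from $\bigl(E_\infty^{\iota^k x_0}\bigr)_{0\le k<B_{x_0}}$, and $\iota^k x_0$ runs through exactly the pairs $(p,q)$ on the diagonal $p+q=-n$ with $q\ge-\gamma_n$; for $q<-\gamma_n$ one checks (by enlarging $\gamma_n$ to $\max(\gamma_n,\gamma_{n+1})$, which lies in the stable range and does not change $\pi_n(A_{\gamma_n})$) that $E_2^{p,q}=\pi_n(B_{-q})=0$, so these entries do not contribute. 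This yields the convergence $E_2^{p,q}=\pi_{-(p+q)}(B_{-q})\Rightarrow\pi_{-(p+q)}(A_{\gamma_{-(p+q)}})$.

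The main obstacle I expect is bookkeeping rather than conceptual: keeping the degrees of $i$, $j$, $k$ and their iterated/derived versions straight under the reindexing, and choosing the bounds $B_{(p,q)}$ so that both the stable-index condition for the convergence theorem and the vanishing of $E_\infty^{p,q}$ off the prescribed range hold simultaneously. In particular, the slight mismatch between the hypothesis about $\pi_n(f_s)$ being an isomorphism and the conclusion requiring $\pi_n(B_s)=0$ forces one to work with $\max(\gamma_n,\gamma_{n+1})$ in the boundedness step, which is the only place where the argument is not simply a transcription of the fiber-sequence LES into the framework of \cref{sec:exact-couples}.
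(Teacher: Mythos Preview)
Your proposal is correct and takes essentially the same approach as the paper: build the exact couple from the spliced long exact sequences, verify boundedness using the $\beta_n$ hypothesis for the $D$-vanishing and the LES (with the $\max(\gamma_n,\gamma_{n+1})$ observation) for the $E$-vanishing, identify the stable index at $s=\gamma_n$, and invoke \autoref{thm:exact-couple-convergence}. The only cosmetic difference is that you perform the reindexing $(p,q)=(s-n,-s)$ up front and carry out the argument in $(p,q)$-coordinates, whereas the paper works in the original $(n,s)$-coordinates (with $\iota(n,s)=(n,s-1)$ and $\kappa=\id$) and applies the reindexing only at the very end; your explicit handling of the entries with $-q>\gamma_n$ via enlarging $\gamma_n$ to $\max(\gamma_n,\gamma_{n+1})$ is a detail the paper leaves implicit. (One small slip: the differential degree should be $\eta\circ\iota^{-(r-2)}\circ\kappa$ rather than $\eta\circ\iota^{r-2}\circ\kappa$, though your final value $(r,1-r)$ is correct.)
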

\begin{proof}
  Note that for this spectral sequence we have
  $\iota(n,s)\equiv\deg_i(n,s)\equiv(n,s-1)$ and $\kappa=\id$. This means that
  we need to show that for all $(n,s):\Z\times\Z$ there is a bound $\beta'_{n,s}$
  such that for all $t\geq \beta'_{n,s}$ we have 
  $$E^{n,s+t}\equiv \pi_n(B_{s+t})=0\quad\text{ and }\quad D^{n,s-t}\equiv\pi_n(A_{s-t})=0.$$ 
  Note that the right equation holds if $s-t\le \beta_n$, i.e. if $t\ge s-\beta_n$. By
  the long exact sequence of homotopy groups we know that if $f_s:A_s\to
  A_{s-1}$ induces an equivalence on both $\pi_n$ and $\pi_{n+1}$, then
  $\pi_n(B_{s+t})=0$. So if we define 
  $$\beta'_{n,s}\defeq\max(s-\beta_n,\gamma_n-s,\gamma_{n+1}-s),$$
  we know that the exact couple is bounded with bound $\beta'$. 
  
  Now note that $x=(n,\gamma_n)$ is a stable index, because $\pi_n(f_{\gamma_n+t})$ is
  surjective for all $t\geq0$. Therefore, by
  \autoref{thm:exact-couple-convergence} we know that $D^{n,\gamma_n}$ is built from
  $(E_\infty^{n,\gamma_n-s})_{0\le s\le \beta'_{n,\gamma_n}}$. If we apply the reindexing
  $(p,q)=(s-n,-s)$, we get the desired relation
  $$E_2^{p,q}=\pi_{-(p+q)}(B_{-q})\Rightarrow \pi_{-(p+q)}(A_{\gamma_{-(p+q)}}).$$
\end{proof}

\section{Spectral Sequences for Cohomology}\label{sec:spectral-sequence-cohomology}

Cohomology groups are algebraic invariants of types. They are often easier to
compute than homotopy groups, but they can also be used to compute certain
homotopy groups, often via the universal coefficient theorem and the Hurewicz
theorem (neither of which have been proven in HoTT yet). 

The intermediate steps of most classical constructions of the singular cohomology are not homotopy invariant.
\emph{Cellular cohomology} is only defined for cell complexes and not for arbitrary
spaces, but it can be defined in HoTT~\cite{buchholtz2018cellular}. \emph{Singular
cohomology} is defined as a quotient of a large abelian group that is not
homotopy invariant, which makes this definition impossible in HoTT. However,
classically, Eilenberg-MacLane spaces represent cohomology, and we can use this
fact as the \emph{definition} of cohomology in HoTT~\cite{cavallo2015cohomology}.

Normally cohomology groups have coefficients in an abelian group, but more
generally they can have coefficients in a spectrum, or even a family of spectra.
In this section we will define cohomology groups and construct the
Atiyah-Hirzebruch spectral sequence for cohomology. This is a generalization of
the spectral sequence defined in~\cite{atiyah1961spectral} in the special case
of topological K-theory. From the Atiyah-Hirzebruch spectral sequence we can 
construct the Serre spectral sequence, sometimes also called the Leray-Serre spectral sequence.
\begin{defn}
  Suppose given $X:\type^*$ and $Y:X\to\spectrum$. We define $(x:X)\to^* Yx:\spectrum$ such that 
  $((x:X)\to^* Yx)_n\defeq (x:X)\to^*(Yx)_n$. If $Y$ does not depend on $X$, we write $X\to^* Y$.

  For an unpointed type $X:\type$ and $Y:X\to\spectrum$, we similarly define $(x:X)\to Yx:\spectrum$ such that 
  $((x:X)\to Yx)_n\defeq (x:X)\to(Yx)_n$ (this has as basepoint the constant map into the basepoint of $(Yx)_n$), and abbreviate this to $X\to Y$ if $Y$ does not depend on $X$.
\end{defn}
These spectra are well-defined, since we have 
$$\Omega((a:A)\to^*(Ba))\simeq (a:A)\to^*\Omega(Ba)$$ 
and 
$$\Omega((a:A)\to(Ba))\simeq (a:A)\to\Omega(Ba).$$ 
Moreover, they satisfy the expected properties of dependent product. In particular, if $X:\type^*$ and $Y,Z:X\to\spectrum$ and moreover if we have a fiberwise spectrum map $f:(x:X)\to Yx \to Zx$, this induces a map on the dependent products
$$\Pi_f:((x:X)\to Yx)\to ((x:X)\to Zx).$$
\begin{defn}\label{def:cohomology}
  Suppose given $X:\type^*$, $Y:X\to\spectrum$ and $n:\Z$. We define the \emph{geneneralized, parametrized, reduced cohomology} of $X$ with coefficients in $Y$ as\footnote{We will write $\lam{x}Yx$ in $\eta$-expanded form to remember that this is parametrized cohomology.}
  $$\tilde H^n(X;\lam{x}Yx)\defeq \pi_{-n}((x:X)\to^* Yx)\simeq \|(x:X)\to^* (Yx)_n\|_0.$$
  If $Y$ does not depend on $X$, we have the \emph{unparametrized cohomology} as
  $$\tilde H^n(X;Y)\defeq \pi_{-n}(X\to^* Yx)\simeq \|X\to^* Y_n\|_0.$$
  If $X:\type$ is an arbitrary type, we define the \emph{unreduced cohomology} as
  $$H^n(X;\lam{x}Yx)\defeq \pi_{-n}((x:X)\to Yx)\simeq \|(x:X)\to (Yx)_n\|_0\simeq \tilde H^n(X_+;\lam{x}Y_+x).$$ 
  Here $X_+\defeq X+1:\type^*$ and $Y_+:X_+\to\spectrum$ is defined as $Y_+(\inl(x))\defeq Yx$ and $Y_+(\inr(\star))\defeq 1$.
  If $X:\type^*$ and $A:X\to\abgroup$, we define the \emph{ordinary cohomology} as
  $$\tilde H^n(X;\lam{x}Ax)\defeq \tilde H^n(X;\lam{x}H(Ax).$$
  We can combine the attributes ordinary/generalized, parametrized/unparametrized and reduced/unreduced for cohomology however we want, leading to eight different notions. 
  
  We define
  $$\tilde H^n(X)\defeq \tilde H^n(X;\Z)$$
  and similarly for unreduced cohomology.
\end{defn}
Unparametrized cohomology satisfies the Eilenberg-Steenrod axioms for cohomology. Although we will not use this fact in this chapter, for completeness we will state it here. 

To give the definition we need to introduce one more concept. 
\begin{defn}\label{def:choice}
A type $X$ has $n$-choice for $n\geq-2$ if for all $P:X\to\type$ the canonical map
$$\|(x:X)\to Px\|_n\to ((x:X) \to \|Px\|_n)$$
is an equivalence.
\end{defn} 
Note that in particular $\fin_k$ has $n$-choice for all $k,n$.
\begin{defn}
  A \emph{unparametrized reduced cohomology theory} is a contravariant functor
  $\tilde E^n:\type^*\to\abgroup$ for every $n:\Z$ satisfying the
  \emph{Eilenberg-Steenrod axioms}. Functoriality means that for a pointed map
  $f:X\to^* Y$ there is a map $\tilde E^n(f):\tilde E^n(Y)\to \tilde E^n(X)$ such that $\tilde E(\id)\sim^*\id$ and $\tilde E(g\o
  f)\sim^*\tilde E(f)\o\tilde E g$. The Eilenberg-Steenrod axioms are
  \begin{itemize}
  \item \emph{(Suspension axiom)} There is a natural transformation $\tilde E^{n+1}(\susp X)\simeq \tilde E^n(X)$.
  \item \emph{(Exactness)} Given a cofiber sequence $X\xrightarrow{f}Y\xrightarrow{g}Z$, the sequence
  $$\tilde E^n(Z)\xrightarrow{\tilde E^n(g)}\tilde E^nY\xrightarrow{\tilde E^n(f)}\tilde E^n(X)$$
  is exact at $\tilde E^n(Y)$.
  \item \emph{(Additivity)} Suppose given a type $I$ satisfying 0-choice and $X:I\to\type^*$. Then the canonical homomorphism 
  $$\tilde E^n\big(\bigvee_i Xi\big)\to ((i:I)\to \tilde E^n(Xi))$$
  is an isomorphism.
  \end{itemize}
  A cohomology theory is called \emph{ordinary} if it also satisfies the following axiom.
  \begin{itemize}
    \item \emph{(Dimension)} If $n\neq 0$, then $\tilde E^n(\pbool)$ is trivial.
    \end{itemize}
\end{defn}
The following theorem has been proven in~\cite{cavallo2015cohomology}. We will not repeat the proof here.
\begin{thm}\label{thm:cohomology-theory}
  Unparametrized generalized reduced cohomology is a cohomology theory. Ordinary cohomology also satisfies the dimension axiom.
\end{thm}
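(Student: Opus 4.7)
The plan is to verify, in turn, functoriality plus each of the three Eilenberg--Steenrod axioms (suspension, exactness, additivity) for the assignment $\tilde H^n(X;Y) \defeq \|X\to^* Y_n\|_0$, and then check the dimension axiom in the ordinary case. The proof is essentially an exercise in combining the universal properties of the pointed type formers we have set up in \autoref{sec:pointed} and the $\Omega$-spectrum structure on $Y$.

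First I would construct functoriality: given $f:X\to^* X'$, precomposition $({-})\o f:(X'\to^* Y_n)\to^*(X\to^* Y_n)$ is a pointed map by \autoref{lem:pointed-types-basic}, and it is strictly functorial in $f$, hence applying $\|{-}\|_0$ gives a contravariant functor to $\abgroup$ (the abelian group structure comes from the $\Omega$-spectrum structure on $Y$, using that $Y_n\simeq^*\Omega^2 Y_{n+2}$). For the suspension axiom I would chain the adjunction $\Sigma\dashv\Omega$ with the structure map $e_n:Y_n\simeq^*\Omega Y_{n+1}$ of the $\Omega$-spectrum:
\[
(\Sigma X\to^* Y_{n+1})\;\simeq^*\;(X\to^*\Omega Y_{n+1})\;\simeq^*\;(X\to^* Y_n),
\]
natural in $X$ because both equivalences are; then $\|{-}\|_0$ delivers the suspension isomorphism $\tilde H^{n+1}(\Sigma X;Y)\simeq \tilde H^n(X;Y)$.

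For exactness, take a cofiber sequence $X\xrightarrow{f}Y\xrightarrow{g}Z$, so $Z\defeq Y+_X\unit$ and the composite $g\o f$ is pointedly null-homotopic by the glue constructor of the pushout. Thus $\tilde H^n(f)\o\tilde H^n(g)=0$. Conversely, suppose $[h]:\tilde H^n(Y;Y_n)$ satisfies $[h\o f]=0$; unfolding the 0-truncation, we merely have a pointed null-homotopy $h\o f\sim^*\const$. Since we want to inhabit a mere proposition (the existence of a preimage), we may assume the null-homotopy purely, and the universal property of the pushout then produces a pointed $k:Z\to^* Y_n$ with $k\o g\sim^* h$, giving $\tilde H^n(g)([k])=[h]$. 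Additivity is the cleanest: the wedge $\bigvee_i X_i$ is by construction the pointed coproduct, so there is a pointed natural equivalence $(\bigvee_i X_i\to^* Y_n)\simeq^* (i:I)\to(X_i\to^* Y_n)$; then $0$-choice on $I$ commutes $\|{-}\|_0$ past the dependent product, yielding the required isomorphism of abelian groups.

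Finally, for the dimension axiom I would use the pointed equivalence $(\pbool\to^* W)\simeq^* W$ from \autoref{lem:pointed-types-basic} to reduce $\tilde H^n(\pbool;HA)$ to $\|(HA)_n\|_0$. For $n<0$ we have $(HA)_n=\unit$, and for $n\geq 1$ the space $K(A,n)$ is $(n-1)$-connected, hence in particular $0$-connected, so $\|K(A,n)\|_0\simeq \unit$. This leaves only $n=0$ (where one gets $A$), as required. The main obstacle I anticipate is the exactness step: one must argue carefully at the level of 0-truncations that the null-homotopy witnessing $[h\o f]=0$ can be used, via the pushout universal property, to produce a class in $\tilde H^n(Z;Y_n)$ mapping to $[h]$, and track that this extension is well-defined up to the propositional truncation. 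Everything else is formal manipulation of adjunctions and universal properties already established in \autoref{sec:pointed} and \autoref{sec:high-induct-types}.
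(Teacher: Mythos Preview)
The paper does not actually prove this theorem: immediately before the statement it says ``The following theorem has been proven in~\cite{cavallo2015cohomology}. We will not repeat the proof here.'' Your sketch is the standard argument and matches what one finds in that reference, so there is nothing to compare against here.

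One cosmetic point: in your exactness paragraph you reuse the letter $Y$ for both the middle term of the cofiber sequence and the coefficient spectrum, and writing ``$\tilde H^n(Y;Y_n)$'' with a single spectrum component $Y_n$ in the coefficient slot is not the right notation---the coefficient is the whole spectrum. You should also make explicit that, since the conclusion is a mere proposition, you first lift the class $[h]$ to an actual pointed map $h$ via truncation-induction before extracting the null-homotopy; you do this implicitly but it is worth stating. The mathematics is otherwise correct.
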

We will not use \autoref{thm:cohomology-theory} in the remainder of this chapter.

To construct the Atiyah-Hirzebruch spectral sequence, we need the Postnikov tower of a spectrum.

\begin{defn}
  We say that for $k:\Z$ a spectrum $Y$ is \emph{$k$-truncated} if $Y_n$ is $(k+n)$-truncated for all $n:\Z$ (using the convention that any type is $\ell$-truncated for $\ell\leq-2$).

  The \emph{$k$-truncation} of a spectrum $Y$, written $\|Y\|_k$, is defined as $(\|Y\|_k)_n\defeq \|Y_n\|_{k+n}$ where we define $\|A\|_\ell=\unit$ for $\ell\leq-2$.
\end{defn}
\begin{lem}\label{lem:spectrum-trunc-properties}
  The usual properties of truncations also hold for spectra. In particular we will use that there is a spectrum map $|{-}|_k:Y\to\|Y\|_k$ and that if $Z$ is $k$-truncated, then a spectrum map $f:Y\to Z$ induces a spectrum  map $\|Y\|_k\to Z$.
\end{lem}
\begin{proof}
  The underlying maps are the corresponding facts for pointed maps. The fact that these maps are spectrum maps comes from the fact that these operations commute with taking loop spaces. We omit the details here.
\end{proof}
\begin{lem}[Postnikov Tower for spectra]\label{lem:postnikov-tower-spectra}
  For $s:\Z$ and $Y:\spectrum$ there is a spectrum map $f^s:\|Y\|_s\to\|Y\|_{s-1}$ that levelwise has fiber $\susp^n(H\pi_s(Y))$. That is,
  $$(\fib_{f^s})_k\simeq^*(\susp^s(H\pi_s(Y)))_k.$$
\end{lem}
We should be able to extend this equivalence to a spectrum equivalence, but we do not need this strengthening for the remainder of the proof.
\begin{proof}
  Note that $\|Y\|_{s-1}$ is $(s-1)$-truncated, and therefore $s$-truncated. By
  the elimination of spectrum truncation in
  \autoref{lem:spectrum-trunc-properties} we get a spectrum map
  $f^s:\|Y\|_s\to\|Y\|_{s-1}$. For the levelwise pointed equivalence, we need to
  show that
  $$\fib_{f^s_k}\simeq^*K(\pi_s(Y),s+k)$$
  To show this, by \autoref{thm:em-unique} we need to show that $\fib_{f^s_k}$
  is $(s+k)$-truncated, $(s+k-1)$-connected and $\pi_{s+k}(\fib_{f^s_k})\simeq
  \pi_s(Y)$.

  Note that $f^s_k:\|Y_k\|_{s+k}\to\|Y_k\|_{s+k-1}$, so the truncatedness
  follows because the domain and codomain of $f^s_k$ are both $(s+k)$-truncated.
  For the connectedness, we know that $|{-}|_{s+k-1}:Y_k\to \|Y_k\|_{s+k-1}$ is
  $(s+k-1)$-connected, and the elimination principle for truncations preserve
  connectedness, therefore $f^s_k$ is $(s+k-1)$-connected. To compute the
  homotopy group, we look at a piece of the long exact sequence for homotopy
  groups for $f^s_k$ at level $s+k$ and $s+k+1$.
  \begin{center}\begin{tikzpicture}[node distance=10mm]
    \node (Y)     at (0,0) {$0$};
    \node[left = of Y] (X) {$\pi_{s+k}(Y)$};
    \node[left = of X] (F) {$\pi_{s+k}(\fib_{f^s_k})$};
    \node[above = of Y] (OY) {$0$};
    \node[above = of X] (OX) {$0$};
    \node[above = of F] (OF) {$\bullet$};
    \path[every node/.style={font=\sffamily\small}]
    (X) edge[->] (Y)
    (F) edge[->] (X)
    (OY) edge[->] (F)
    (OX) edge[->] (OY)
    (OF) edge[->] (OX);
  \end{tikzpicture}
  \end{center}
  Since we have the exact sequence
  $0\to\pi_{s+k}(\fib_{f^s_k})\to\pi_{s+k}(Y)\to0$, the middle map must be an
  equivalence, which finishes the proof.
\end{proof}
For a spectrum $Y$, we get the Postnikov tower
$$\cdot\to\|Y\|_s\to\|Y\|_{s-1}\to\|Y\|_{s-2}\to\cdots$$ 
This satisfies the conditions of \autoref{thm:spectral-sequence-spectrum}, but
unfortunately the spectral sequence constructed from this is trivial. We need another ingredient to get an interesting spectral sequence.

\begin{lem}\label{lem:spi-functor}
  Suppose given $X:\type^*$ and two family of spectra $Y,Z:X\to\spectrum$. A family of spectrum maps 
  $$f:(x:X)\to Yx \to Zx$$
  induces a spectrum map between the spectra of sections for $Y$ and $Z$:
  $$f\o({-}):((x:X)\to Yx)\to((x:X)\to Zx).$$
  Moreover, the fiber of this spectrum map is levelwise $(x:X)\to \fib_{fx}$, that is
  $$(\fib_{f\o({-})})_n \simeq^* ((x:X)\to \fib_{fx})_n.$$
\end{lem}
The levelwise equivalence should be extendable to a spectrum equivalence, but we do not need that in this chapter.
\begin{proof}
  We define (see \autoref{lem:pointed-types-basic}.\ref{item:fiber-composition})
  $$(f\o({-}))_n\defeq f_n \o ({-}):((x:X)\to^* (Yx)_n)\to^*((x:X)\to^* (Zx)_n).$$
  This is a spectrum map because of the pointed function extensionality mentioned in \autoref{lem:pointed-types-basic}.\ref{item:pointed-function-extensionality}.
  
  By \autoref{lem:pointed-types-basic}.\ref{item:fiber-composition} the fiber of this map is levelwise $(x:X)\to \fib_{fx}$. 
\end{proof}

We now have all the ingredients of the Atiyah-Hirzebruch spectral sequence.

\begin{thm}[Atiyah-Hirzebruch spectral sequence for reduced cohomology]\label{thm:atiyah-hirzebruch-reduced}
  If $X:\type^*$ is a pointed type and $Y:X\to k\operatorname{-\spectrum}$ is a family of
  $k$-truncated spectra over $X$, then we get a spectral sequence with
  $$E_2^{p,q}=\tilde H^p(X;\lam{x}\pi_{-q}(Yx))\Rightarrow \tilde H^{p+q}(X;\lam{x}Yx).$$
\end{thm}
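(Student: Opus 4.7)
The plan is to apply the spectral sequence construction from \autoref{thm:spectral-sequence-spectrum} to the sequence of spectra obtained by applying the ``sections'' functor $(x{:}X)\to^*({-})$ to the fiberwise Postnikov tower of $\lam{x}Yx$. Concretely, for each $s:\Z$ we form $A_s \defeq (x:X)\to^* \|Yx\|_s : \spectrum$, and for each $s$ we use the truncation map $f^s:\|Yx\|_s \to \|Yx\|_{s-1}$ from \autoref{lem:postnikov-tower-spectra} to build a fiberwise spectrum map $f^s : \lam{x}\|Yx\|_s \to \lam{x}\|Yx\|_{s-1}$; applying \autoref{lem:spi-functor} gives a spectrum map $f^s\o({-}): A_s \to A_{s-1}$, together with a levelwise pointed equivalence $(\fib_{f^s\o({-})})_n \simeq^* ((x:X)\to^* \fib_{f^s x})_n$. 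By \autoref{lem:postnikov-tower-spectra}, the pointwise fiber $\fib_{f^s x}$ is levelwise equivalent to $\susp^s(H\pi_s(Yx))$, so the spectrum of fibers $B_s \defeq \fib_{f^s\o({-})}$ is levelwise $(x:X)\to^* (\susp^s(H\pi_s(Yx)))_n$.

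Next I would check the two boundedness hypotheses of \autoref{thm:spectral-sequence-spectrum}. For the upper bound: since $Y$ is $k$-truncated, $(Yx)_n$ is already $(k+n)$-truncated, so $\|Yx\|_s = Yx$ for $s\geq k$; hence $f^s$ is the identity spectrum map for $s\geq k+1$, so $\pi_n(f^s\o({-}))$ is an isomorphism for all $n$ as soon as $s > k$. We may therefore take $\gamma_n = k$ (independent of $n$). For the lower bound: the spectrum $\|Yx\|_s$ is itself $s$-truncated (levelwise $(s+n)$-truncated), and pointed sections into an $m$-truncated type remain $m$-truncated, so $A_s = (x:X)\to^*\|Yx\|_s$ is an $s$-truncated spectrum. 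Consequently $\pi_n(A_s) = 0$ whenever $n > s$, and we may take $\beta_n = n-1$.

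Thus \autoref{thm:spectral-sequence-spectrum} yields a spectral sequence
$$E_2^{p,q} = \pi_{-(p+q)}(B_{-q}) \;\Rightarrow\; \pi_{-(p+q)}(A_{\gamma_{-(p+q)}}) = \pi_{-(p+q)}((x:X)\to^* Yx),$$
where the target is by definition $\tilde H^{p+q}(X;\lam{x}Yx)$. To finish, I would rewrite the $E_2$-term: using $(\susp^{-q}Z)_\ell = Z_{\ell-q}$ and the definition of spectrum homotopy groups, a short computation gives $\pi_{-(p+q)}((x:X)\to^*\susp^{-q}(H\pi_{-q}(Yx))) \simeq \pi_{-p}((x:X)\to^* H\pi_{-q}(Yx))$, which is exactly $\tilde H^p(X;\lam{x}\pi_{-q}(Yx))$ by \autoref{def:cohomology}.

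The main obstacle, I expect, will be keeping the indexing and the various (de)shifts straight: verifying that the claimed boundedness data $(\beta_n,\gamma_n)$ really witness the hypotheses of \autoref{thm:spectral-sequence-spectrum}, and performing the reindexing $(p,q)=(s-n,-s)$ so that the $B_s$-identification lands on the nose at $\tilde H^p(X;\lam{x}\pi_{-q}(Yx))$. The algebraic identifications are all direct once one has the spectrum-level equivalence $(\fib_{f^s\o({-})})_n \simeq^* ((x:X)\to^*\susp^s(H\pi_s(Yx)))_n$, so the bulk of the argument is bookkeeping rather than a new construction.
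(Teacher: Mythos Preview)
Your proposal is correct and follows essentially the same route as the paper: define $A_s = (x:X)\to^*\|Yx\|_s$, identify the fiber via \autoref{lem:spi-functor} and \autoref{lem:postnikov-tower-spectra} as $B_s = (x:X)\to^*\susp^s H\pi_s(Yx)$, take $\beta_n = n-1$ and $\gamma_n = k$ to verify the hypotheses of \autoref{thm:spectral-sequence-spectrum}, and then perform the same index-shift computations on both the $E_2$-term and the target. The paper's proof does exactly this.
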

\begin{proof}
  Define $A_s\defeq((x:X)\to^* \|Yx\|_s)$ and consider the sequence of spectra
  $$\cdots \to A_s\xrightarrow{f_s} A_{s-1} \xrightarrow{f_{s-1}} A_{s-2} \to \cdots$$ 
  where $f_s$ is the map induced by the Postnikov tower. By
  \autoref{lem:spi-functor} and \autoref{lem:postnikov-tower-spectra} $f_s$
  levelwise has fiber $B_s\defeq(x:X)\to^*\susp^sH\pi_s(Yx)$. We want to apply
  \autoref{thm:spectral-sequence-spectrum}, so we need to check the conditions
  of that theorem. For $n:\Z$ we define $\beta_n\defeq n-1$. Notice that $A_s$
  is $s$-truncated, and thus for $s\le B_n$ we have
  $$\pi_n(A_s)\defeq \pi_n((x:X)\to^* \|Yx\|_s)=0.$$
  For $n:\Z$ define $\gamma_n\defeq k$. Then for $s\geq \gamma_n$ the spectrum
  $A_s$ is levelwise equivalent to $(x:X)\to^* Yx$, so for $s>\gamma_n$ the map
  $A_s\to A_{s-1}$ becomes levelwise the identity map under that equivalence.
  This means that $f_s$ is an equivalence, so in particular $\pi_n(f_s)$ is an
  isomorphism. By \autoref{thm:spectral-sequence-spectrum} we now get the
  spectral sequence
  $$E_2^{p,q}=\pi_{-(p+q)}(B_{-q})\Rightarrow \pi_{-(p+q)}(A_k).$$
  We now compute
  \begin{align*}
    \pi_{-(p+q)}(B_{-q})&\simeq\pi_{-(p+q)}((x:X) \to^* \susp^{-q}H\pi_{-q}(Yx))\\
    &\simeq\tilde H^{p+q}(X;\lam{x}\susp^{-q}H\pi_{-q}(Yx))\\
    &\simeq\tilde H^{p}(X;\lam{x}\pi_{-q}(Yx))
    \intertext{and}
    \pi_{-(p+q)}(A_k)&\simeq \pi_{-(p+q)}((x:X)\to^*\|Yx\|_k)\\
    &\simeq\pi_{-(p+q)}((x:X)\to^* Yx)\\
    &\simeq\tilde H^{p+q}(X;\lam{x}Yx).
  \end{align*}
\end{proof}

We also have the corresponding spectral sequence for unreduced cohomology.

\begin{cor}[Atiyah-Hirzebruch spectral sequence for unreduced cohomology]\label{cor:atiyah-hirzebruch-unreduced}
  If $X:\type$ is any type and $Y:X\to k\operatorname{-\spectrum}$ is a family
  of $k$-truncated spectra over $X$, then
  $$E_2^{p,q}=H^p(X;\lam{x}\pi_{-q}(Yx))\Rightarrow H^{p+q}(X;\lam{x}Yx).$$
\end{cor}
\begin{proof}
Apply \autoref{thm:atiyah-hirzebruch-reduced} to $X_+$ and $Y_+$ (defined in \autoref{def:cohomology}).
\end{proof}
From the Atiyah-Hirzebruch spectral sequence we can construct the Serre spectral sequence.
\begin{thm}[Serre spectral sequence for cohomology]\label{thm:serre-spectral-sequence}
  Suppose given $B:\type$, a family of types $F:B\to\type$ and a spectrum
  $Y:\spectrum$ that is $k$-truncated. Then 
  $$E_2^{p,q}=H^p(B;\lam{b}H^q(Fb;Y))\Rightarrow H^{p+q}((b:B)\times Fb;Y).$$
\end{thm}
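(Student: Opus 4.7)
The plan is to obtain the Serre spectral sequence directly from the Atiyah-Hirzebruch spectral sequence for unreduced cohomology (\autoref{cor:atiyah-hirzebruch-unreduced}), applied to the base type $B$ with the family of spectra $Z:B\to\spectrum$ defined by $Z(b)\defeq Fb\to Y$ (using the unpointed dependent product spectrum construction from before \autoref{def:cohomology}).

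First I would check that the family $Z$ satisfies the hypothesis of \autoref{cor:atiyah-hirzebruch-unreduced}, namely that each $Z(b)$ is $k$-truncated. Since $Y$ is $k$-truncated, by definition $Y_n$ is $(k+n)$-truncated for each $n:\Z$. The type $(Fb\to Y_n) \equiv Z(b)_n$ is then $(k+n)$-truncated, because a (non-dependent) function type into an $(k+n)$-truncated type is itself $(k+n)$-truncated. Hence $Z$ is a family of $k$-truncated spectra. Applying \autoref{cor:atiyah-hirzebruch-unreduced} to $B$ and $Z$ yields a spectral sequence
\[
  E_2^{p,q}=H^p(B;\lam{b}\pi_{-q}(Z(b)))\Rightarrow H^{p+q}(B;Z).
\]

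Next I would rewrite both sides using the definition of cohomology with spectrum coefficients. For the $E_2$-page, unfolding the definition gives $\pi_{-q}(Z(b))\equiv\pi_{-q}(Fb\to Y) = H^q(Fb;Y)$, so $E_2^{p,q}\simeq H^p(B;\lam{b}H^q(Fb;Y))$. For the target, I would use the (unpointed) currying equivalence
\[
  \big((b:B)\times Fb \to Y_n\big)\simeq \big((b:B) \to Fb \to Y_n\big),
\]
which holds naturally at each spectrum level. I would lift this to a levelwise pointed equivalence of spectra between $((b:B)\times Fb)\to Y$ and $(b:B)\to Z(b)$, commuting with the structure maps of each spectrum (since both structure maps come from applying $\Omega$ to the corresponding equivalence in each degree, and $\Omega$ is a functor on pointed types).

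Taking $\pi_{-(p+q)}$ of this equivalence gives $H^{p+q}(B;Z)\simeq H^{p+q}((b:B)\times Fb;Y)$, identifying the target of the spectral sequence. Substituting both computations into the abutment yields exactly
\[
  E_2^{p,q}=H^p(B;\lam{b}H^q(Fb;Y))\Rightarrow H^{p+q}((b:B)\times Fb;Y),
\]
as required. The only mildly technical step is verifying that the currying equivalence is genuinely an equivalence of spectra (not just a levelwise equivalence of types), but since currying is natural in all arguments and commutes with the loop-space functor up to the canonical homotopy, this should be routine; I expect this to be the main bookkeeping obstacle in a formalization.
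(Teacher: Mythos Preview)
Your proposal is correct and follows exactly the paper's approach: apply the unreduced Atiyah-Hirzebruch spectral sequence to $B$ with the family $\lam{b}Fb\to Y$, then identify the $E_2$-page via $\pi_{-q}(Fb\to Y)=H^q(Fb;Y)$ and the abutment via the currying equivalence $((b:B)\to Fb\to Y)\simeq(((b:B)\times Fb)\to Y)$. You even spell out the $k$-truncatedness check and the spectrum-level currying more carefully than the paper does.
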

\begin{proof}
Apply \autoref{cor:atiyah-hirzebruch-unreduced} to the type $B$ and and the family of spectra $\lam{b}Fb\to Y$, which is $k$-truncated. Then we get
$$E_2^{p,q}=H^p(B;\lam{b}\pi_{-q}(Fb\to Y))\Rightarrow H^{p+q}(B;\lam{b}Fb\to Y).$$
Note that $\pi_{-q}(Fb\to Y)\simeq H^q(Fb;Y)$, so the second page is the desired group, and for the $\infty$-page we compute
\begin{align*}
  H^{p+q}(B;\lam{b}Fb\to Y) &= \pi_{-(p+q)}((b:B) \to Fb \to Y)\\
  &= \pi_{-(p+q)}(((b:B) \times Fb)\to Y)\\
  &= H^{p+q}((b:B)\times Fb;Y).
\end{align*}
\end{proof}
Equivalent to the data given in \autoref{thm:serre-spectral-sequence} is a map
$X\to B$ and a $k$-truncated spectrum $Y$. In that case we get the spectral
sequence
$$E_2^{p,q}=H^p(B;\lam{b}H^q(\fib_f(b);Y))\Rightarrow H^{p+q}(X;Y).$$ Analogous
to the proof of \autoref{thm:serre-spectral-sequence} we also have a version
when $Y$ is parametrized over $B$. In that case we get
$$E_2^{p,q}=H^p(B;\lam{b}H^q(\fib_f(b);Y))\Rightarrow H^{p+q}(X;\lam{x}Y(fx)).$$

We get a useful special case of the Serre spectral sequence when the family
$\lam{b}H^q(Fb;Y)$ is constant. This happens in particular when $B$ is simply
connected.
\begin{cor}\label{cor:serre-spectral-sequence-conn}
  Suppose given a simply connected pointed type $B:\type^*$, a family of types $F:B\to\type$ and a spectrum
  $Y:\spectrum$ that is $k$-truncated. Then 
  $$E_2^{p,q}=H^p(B;H^q(Fb_0;Y))\Rightarrow H^{p+q}((b:B)\times Fb;Y).$$
\end{cor}
\begin{proof}
  Apply \autoref{thm:serre-spectral-sequence}. The family
  $\lam{b}H^q(Fb;Y):B\to\abgroup$ is a family of sets. Since $B$ is simply
  connected, every such family is constant, so all fibers are equal to $H^q(Fb_0;Y)$.
\end{proof}

In the spectral sequences constructed we assumed that the spectra were truncated. 
The reason we need this assumption is that the notion of convergence we used for spectral sequence is the eventual value of the sequence. 
If we had a stronger notion of convergence, we might be able to relax the truncatedness condition. 
However, there is another reason why the spectral sequence can become (pointwise) eventually constant. 
Instead of assuming that the spectra are truncated, we can pose a restriction on the base space.
\begin{defn}\label{def:weak-pointed-choice}
  We say that $X:\type^*$ satisfies \emph{weak pointed choice} if there is a natural number $n$ such that for all families $Y:X\to\type^*$ of $n$-connected types the type of dependent pointed maps $(x:X)\to^* Y(x)$ is 0-connected.
\end{defn}
\begin{ex}\mbox{}
  \begin{itemize}
    \item The spheres $\S^n$ satisfy weak pointed choice. The proof is easy for $n=0$, which we will skip.
  For $\S^{n+1}$, note that 
  $$((x:S^{n+1})\to^* Y(x))\simeq \star =_{\surf}^{\Omega^nY({-})} \star$$ 
  where $\surf:\Omega^{n+1}\S^{n+1}$ is the surface of $\S^{n+1}$ and
  $\star:\Omega^nY(\base)$ is the basepoint. Now if $Y$ is a family of $(n+1)$-connected types, then $\Omega^nY({-})$ is a family of 1-connected types, and a pathover in that family is 0-connected, as desired.
  \item Suppose $I$ is a type that satisfies 0-choice (see \autoref{def:choice}). Then the collection of types that satisfy weak pointed choice are closed under $I$-indexed wedges. This follows from the dependent universal property of the wedge.
  $$\big(\bigvee_{i:I}X_i\to^* P(x)\big)\simeq (i:I) \to (x : X_i) \to^* P(\inm_i(x)).$$
  \end{itemize}
\end{ex}

\begin{thm}
  If $X:\type^*$ satisfies weak pointed choice and $Y:X\to\spectrum$ is any family of spectra, we get the spectral sequence in \autoref{thm:atiyah-hirzebruch-reduced}:
  $$E_2^{p,q}=\tilde H^p(X;\lam{x}\pi_{-q}(Yx))\Rightarrow \tilde H^{p+q}(X;\lam{x}Yx).$$
\end{thm}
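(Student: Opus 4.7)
The plan is to mimic the proof of \autoref{thm:atiyah-hirzebruch-reduced}, using weak pointed choice in place of the $k$-truncatedness of $Y$ to supply the bound $\gamma_n$ required by \autoref{thm:spectral-sequence-spectrum}. We still set $A_s\defeq (x:X)\to^* \|Yx\|_s$ with Postnikov maps $f_s:A_s\to A_{s-1}$; by \autoref{lem:spi-functor} together with \autoref{lem:postnikov-tower-spectra} the fiber $B_s$ is levelwise $(x:X)\to^* \susp^s H\pi_s(Yx)$. The bound $\beta_n\defeq n-1$ transfers verbatim, since $A_s$ is still $s$-truncated independently of any hypothesis on $Y$.

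The key new ingredient will be an upgrade of weak pointed choice to higher connectedness: if $P:X\to\type^*$ is a family of $(n_0+k)$-connected types, where $n_0$ is the constant from \autoref{def:weak-pointed-choice}, then $(x:X)\to^* P(x)$ is $k$-connected. The argument uses the loop interchange $\Omega^i((x:X)\to^* P(x))\simeq^*(x:X)\to^*\Omega^i P(x)$ from \autoref{lem:pointed-types-basic}.\ref{item:pointed-function-extensionality}, after which each $\Omega^i P(x)$ is at least $n_0$-connected for $0\leq i\leq k$ and weak pointed choice finishes it off. Applying this with $P(x)\defeq K(\pi_s(Yx),s+m)$, which is $(s+m-1)$-connected, shows that $(B_s)_m$ is $(s+m-1-n_0)$-connected, and hence $\pi_n(B_s)=0$ as soon as $s$ exceeds a bound depending only on $n$ and $n_0$. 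Choosing $\gamma_n$ slightly larger ensures that both $\pi_n(B_s)$ and $\pi_{n-1}(B_s)$ vanish for $s>\gamma_n$, so \autoref{thm:spectrum-LES} forces $\pi_n(f_s)$ to be an isomorphism, which is exactly the hypothesis of \autoref{thm:spectral-sequence-spectrum}. The identification of the $E_2$-page as $\tilde H^p(X;\lam{x}\pi_{-q}(Yx))$ is then word-for-word the one in \autoref{thm:atiyah-hirzebruch-reduced}.

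It remains to identify the target of convergence, since \autoref{thm:spectral-sequence-spectrum} delivers $\pi_{-(p+q)}(A_{\gamma_{-(p+q)}})$, whereas we want $\tilde H^{p+q}(X;\lam{x}Yx)\defeq\pi_{-(p+q)}((x:X)\to^* Yx)$. The plan for this is to consider the comparison map $\iota_s:((x:X)\to^* Yx)\to A_s$ induced by the truncation units $|{-}|_s:Yx\to\|Yx\|_s$. By \autoref{lem:spi-functor} its levelwise fiber is $(x:X)\to^* \fib(Y_m\to\|Y_m\|_{s+m})$; the fiber of an $(s+m)$-truncation map is $(s+m)$-connected, so the same upgrade of weak pointed choice shows that this dependent-map fiber is $(s+m-n_0)$-connected. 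A second application of the long exact sequence then shows that $\iota_s$ induces an isomorphism on $\pi_n$ as soon as $s$ is large enough, giving $\pi_n(A_{\gamma_n})\simeq\pi_n((x:X)\to^* Yx)$ and matching the two sides.

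The main obstacle I anticipate is purely bookkeeping: keeping track of the connectedness indices across the three shifts --- truncation level, spectrum/suspension level, and loop level --- so that the final bounds line up. The underlying combinatorics is routine, but this is where errors are easiest to make. Everything else is a direct reuse of the machinery already developed in \cref{sec:spectra,sec:spectral-sequence-cohomology}.
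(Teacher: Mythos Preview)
Your proposal is correct and follows essentially the paper's approach: the paper also sets $A_s\defeq(x:X)\to^*\|Yx\|_s$, keeps $\beta_n=n-1$, and obtains $\gamma_n$ by directly computing $\pi_\ell(B_s)=\|(x:X)\to^* K(\pi_s(Yx),s-\ell)\|_0$ and invoking weak pointed choice for $\ell=n,n-1$ --- your ``upgrade'' lemma is just a clean repackaging of that same computation. In one respect you are actually more careful than the paper: the paper says the proof is ``mostly the same'' and addresses only the new $\gamma_n$, leaving implicit the identification $\pi_n(A_{\gamma_n})\simeq\pi_n((x:X)\to^* Yx)$ (which in the truncated case came for free from $\|Yx\|_k\simeq Yx$), whereas you correctly flag that this step now needs its own argument via the fiber of $\iota_s$ and supply it.
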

\begin{proof}
  The proof is the mostly the same as for \autoref{thm:atiyah-hirzebruch-reduced}. The only difference is in showing that the sequence stabilizes on homotopy groups when $s$ is large. Suppose $X$ satisfies choice with respect to $k$-connected families.
  For $n:\Z$ define $\gamma_n\defeq n+k$. Then for $s>\gamma_n$ we know that the fiber of $f_s$ has as $k$-th homotopy group 
  $$\pi_\ell(B_s)=\pi_0(\Omega^\ell B_s)=\|(x:X)\to^* K(\pi_s(Yx),s-\ell)\|_0.$$ 
  This is a product into a family of $(s-\ell-1)$-connected types, which for $\ell=n$ and $\ell=n-1$ is a family of at least $k$-connected types. By the weak choice principle on $X$ this type is 0-connected, so these homotopy groups are trivial. Now by the long exact sequence of homotopy groups for $f_s$ the map $\pi_n(f_s)$ is an isomorphism, as required.
\end{proof}

\section{Spectral Sequences for Homology}\label{sec:spectral-sequence-homology}

Homology theory has not been developed as much as cohomology theory in HoTT. It is known that the homology given by a prespectrum forms a homology theory~\cite{graham2017homology}. Lemma 18 in that paper was not proven carefully, but it follows from the results in \autoref{sec:smash-product}.

In this section, we sketch the construction of the Atiyah-Hirzebruch and Serre spectral sequences for homology~\cite{serre1951homology}. The results in the section are not proven in HoTT, and are therefore stated as remarks without proof.

If $X:\type^*$ and $Y:\prespectrum$, we can define $X\wedge Y:\prespectrum$ with 
$$(X\wedge Y)_n\defeq X\wedge Y_n.$$ 
To show that it is a prespectrum, recall the adjunction between the suspension and the loop. For pointed types $X$ and $Y$ we have a natural equivalence
$$(\susp X \to^* Y)\simeq (X \to^* \susp Y).$$
Therefore, to characterize a prespectrum, it is sufficient to give a map $f_n:\susp Y_n \to Y_{n+1}$. This is given for the smash prespectrum as the composite
$$\susp(X\wedge Y_n)\xrightarrow{\sim} X \wedge \susp Y_n \xrightarrow{X\wedge f_n} X \wedge Y_{n+1}.$$
We can define reduced homology 
$$\tilde H_n(X;Y)\defeq \pi_n(X\wedge Y).$$
For the construction of parametrized homology we need to generalize the smash product. 

\begin{defn}
  Given $A : \type^*$ and $B : A \to \type^*$, we define the parametrized smash
  $$(x : A) \wedge B(x)$$
  to be the pushout 
  \begin{center}
  \begin{tikzpicture}[node distance=10mm,baseline={([yshift={-\ht\strutbox}]current bounding box.north)}]
    \node (tl)       at (0,0) {$A+B$};
    \node[right = 25mm of tl] (tr) {$2$};
    \node[below = of tl] (bl) {$(x:A)\times B(x)$};
    \node (br) at (tr |- bl)   {$(x : A) \wedge B(x)$};
    \path[every node/.style={font=\sffamily\small}]
    (tl) edge[->] node {} (tr)
          edge[->] node {} (bl)
    (tr) edge[->] node {} (br)
    (bl) edge[->] node {} (br);
  \end{tikzpicture}\quad
  \begin{tikzpicture}[baseline={([yshift={-\ht\strutbox}]current bounding box.north)}]
    \draw[fill=black!5, thin] (0,0) ellipse (20mm and 8mm);
    \node[label=below:{$a_0$}] at (0,-1.98) {$\bullet$};
    \node at (2.3,-2) {$A$};
    \node at (2.3,0) {$B$};
    \path[every node/.style={font=\sffamily\scriptsize}]
    (-2,-2) edge[thick] (2,-2)
    (0,-1.1)  edge[->] (0,-1.8)
    (0,0)     edge[bend left = 10, thick] (-2,0)
              edge[bend left = 10, thick] node[below] {$b_0$} (2,0)
              edge[bend left = 5, thick] node[left] {$B(a_0)$} (0,0.8)
              edge[bend left = 5, thick] (0,-0.8);
  \end{tikzpicture}
  \end{center}
\end{defn}
\begin{rmk}The strategy for constructing the spectral sequences for homology is as follows.
\begin{itemize}
\item The parametrized smash is (should be) left adjoint to pointed dependent maps. 
That means that there is a natural equivalence
$$(((x:A) \wedge Bx) \to^* C) \simeq (x:A) \to^* Bx \to^* C.$$
\item From this we get (natural) equivalences
$$\susp((x:A)\wedge Bx) \simeq ((x:A) \wedge \susp(Bx));$$
$$((x:A)\wedge Bx) \wedge C \simeq (x:A) \wedge (Bx \wedge C);$$
$$(x:A_+)\wedge B_+x) \wedge C \simeq (x:A) \times Bx.$$
The proofs of these properties should be similar to the proofs in \autoref{sec:smash-monoidal}.
\item Therefore, for $X:\type^*$ and $Y:X\to\prespectrum$ we have a prespectrum $(x : X)\wedge Yx$. The maps are given by the above equivalence.
\item We can now define parametrized (reduced, generalized) homology as
$$H_n(X;\lam{x}Yx) \defeq \pi_n((x : X) \wedge Yx).$$
We can define unreduced homology by adding a point to $X$, in the same way as for cohomology.
\item As before, given $X:\type^*$ and $Y:X\to\spectrum$, we can again form the Postnikov tower of $Yx$ for any $x:X$. We now want to take the parametrized smash over $X$, but there is no hope to compute the fiber of this spectrum.
\item However, we should be able to do it when we work in spectra. The forgetful
functor $\spectrum \to \prespectrum$ has a left adjoint, called
\emph{spectrification}. The spectrification $LY$ of a prespectrum $Y$ can be
constructed either as a higher inductive family of types~\cite{shulman2011spectrification} or as the colimit
$$(LY)_n\defeq\colim_{k\to\infty}\Omega^k Y_{n+k}.$$
For neither definition a careful proof of the adjunction has been given.
\item We can now define the parametrized smash of spectra as the spectrification of the parametrized smash for prespectra. This should preserve cofiber sequences of spectra, in the sense that if 
$$Ax \to Bx \to Cx$$
is a family of cofiber sequences of spectra indexed by $x:\type^*$, the following sequence is also a cofiber sequence of spectra
$$((x:X)\wedge Ax)\to ((x:X)\wedge Bx)\to ((x:X)\wedge Cx)$$
\item A sequence of spectra should be a fiber sequence of spectra if and only if it is a cofiber sequence of spectra. This is true classically, and should also hold in HoTT.
\item Assuming that all the above properties have been proven, we can get the Atiyah-Hirzebruch spectral sequence for reduced homology. Suppose given a pointed type $X$ and $Y:X\to\spectrum$ a family of spectra. We can apply
\autoref{thm:spectral-sequence-spectrum} to the iterated fiber sequence
$$((x:X)\wedge \susp^nH)\to((x:X)\wedge \|Yx\|_s)\to ((x:X)\wedge
\|Yx\|_{s-1}).$$ To satisfy the conditions for that theorem we need to assume
some conditions on $X$ and/or $Y$. In particular it is sufficient if $Y$ is a family of 
truncated and connected spectra, but weaker conditions might also suffice.
Using homological indexing (where $p$ and $q$ have their sign reversed) we get 
$$E^2_{p,q}=\pi_{p+q}(B_q)\Rightarrow \pi_{p+q}(A_{\gamma_{p+q}}).$$
Now we compute
\begin{align*}
  \pi_{p+q}(B_q)&\simeq\pi_{p+q}((x:X) \wedge \susp^qH\pi_q(Yx))\\
  &\simeq\tilde H_{p+q}(X;\lam{x}\susp^qH\pi_q(Yx))\\
  &\simeq\tilde H_p(X;\lam{x}\pi_q(Yx))
  \intertext{and}
  \pi_{p+q}(A_k)&\simeq \pi_{p+q}((x:X)\wedge\|Yx\|_k)\\
  &\simeq\pi_{p+q}((x:X)\to Yx)\\
  &\simeq\tilde H_{p+q}(X;\lam{x}Yx).
\end{align*}
This gives the desired spectral sequence:
$$E^2_{p,q}=\tilde H_p(X;\lam{x}\pi_q(Yx))\Rightarrow \tilde H_{p+q}(X;\lam{x}Yx).$$
\item We get the Atiyah-Hirzebruch spectral sequence for \emph{unreduced} homology in the same way as for cohomology, by applying the version for reduced homology to $X_+$ and $Y_+$.
\item We get the Serre spectral sequence for homology also in the same way. Suppose given
$B:\type$ and $F:B\to\type$ and a truncated spectrum $Y$. Applying the Atiyah-Hirzebruch spectral sequence for unreduced homology to the type $B$ and the spectrum $\lam{b}Fb\wedge Y$ we get
$$E^2_{p,q}=H_p(B;\lam{b}\pi_q(Fb\wedge Y))\Rightarrow H_{p+q}(B;\lam{b}Fb\wedge Y).$$
The second page is what we want. For the $\infty$-page we compute
\begin{align*}
  H_{p+q}(B;\lam{b}Fb\to Y) &= \pi_{p+q}((b:B_+) \wedge (F_+b \wedge Y))\\
  &= \pi_{p+q}(((b:B_+) \wedge F_+b)\wedge Y)\\
  &= \pi_{p+q}(((b:B) \times Fb)\wedge Y)\\
  &= H_{p+q}((b:B)\times Fb;Y).
\end{align*}
This gives the Serre spectral sequence for homology:
$$E^2_{p,q}=H_p(B;\lam{b}H_q(Fb;Y))\Rightarrow H_{p+q}((b:B)\times Fb;Y).$$
\end{itemize}
\end{rmk}
\begin{rmk}
  We can also use the parametrized smash to get a spectral sequence for reduced
  homology and reduced cohomology. Suppose given $B:\type^*$ and a family of types $F:B\to\type^*$ and a spectrum $Y:\spectrum$ that is $k$-truncated. Then we get the following two spectral sequences
  $$E_2^{p,q}=\tilde H^p(B;\lam{b}\tilde H^q(Fb;Y))\Rightarrow \tilde H^{p+q}((b:B)\wedge Fb;Y);$$
  $$E^2_{p,q}=\tilde H_p(B;\lam{b}\tilde H_q(Fb;Y))\Rightarrow \tilde H_{p+q}((b:B)\wedge Fb;Y).$$
  For homology, the proof is the same as above. For cohomology, we apply the Atiyah-Hirzebruch spectral sequence for reduced cohomology to the pointed type $B$ and the family of spectra $\lam{b}Fb\to^* Y$. We get the desired spectral sequence by the adjunction between parametrized smash and dependent pointed maps.

  These spectral sequences generalize \autoref{thm:serre-spectral-sequence} and the corresponding version for homology: we get those versions back when we add a point to $B$ and $F$. Whether this extra generality is useful is unknown.
\end{rmk}

\section{Applications of Spectral Sequences}\label{sec:applications-spectral-sequences}

Classically, there are many applications of the Serre and Atiyah-Hirzebruch spectral sequences. 
Here we will list some of these applications, and give thoughts on how to translate these results in HoTT. 
The results in this section have not been formalized. Before we start, we compute the cohomology of spheres.

\begin{lem}\label{lem:cohomology-spheres}
  If $n\geq 1$, then 
\begin{equation}H^k(\S^n;A)=\begin{cases}A&\text{if $k\in\{0,n\}$}\\0&\text{otherwise.}\end{cases}\label{eq:cohomology-spheres}\end{equation}
\end{lem}
This is a special case of the universal coefficient theorem, which we do not have yet in HoTT. 
However, we can prove these equalities directly from the definition of cohomology. 
\begin{proof}
For $k=0$ we have 
$$H^0(\S^n;A)=\|\S^n\to A\|_0=(\S^n\to A) = A,$$
where we use that $\S^n$ is 0-connected. For $k\neq 0$ we have
$$H^k(\S^n;A)=\tilde H^k(\S^n+1;A)=\tilde H^k(\S^n;A)=\|\S^n\to^* K(A,k)\|_0=\|\Omega^n K(A,k)\|_0.$$
Now for $n<k$ the type $\Omega^n K(A,k)$ is 0-connected, hence the result is contractible. For $n=k$ the result is $A$, and for $n>k$ the type $\Omega^n K(A,k)$ itself is contractible. 
\end{proof}

The first application is the path fibration. Suppose given a simply connected pointed type $B$ we have a map $\unit\to B$ that has fiber $\Omega B$.\footnote{It is called the \emph{path fibration} because classically to get a Serre fibration we need to take the path space $PB$ instead of $\unit$.} In other words, we have the fiber sequence
$$\Omega B \to \unit \to B.$$
Now the Serre spectral sequence for cohomology gives (say, with integer coefficients)
$$E_2^{p,q}=H^p(B;H^q(\Omega B))\Rightarrow H^{p+q}(\unit).$$
Note that the $\infty$-page vanishes, except when $p+q=0$, when the coefficient is $\Z$. 
For ordinary cohomology $H^n$ is trivial for $n<0$, which means that the second page is only nontrivial in the first quadrant of the plane, 
hence this is true for all pages, including the $\infty$-page. 
Therefore, the $\infty$-page has one group $\Z$ at the origin, and trivial groups everywhere else, as shown in \autoref{fig:infty-path-fibration}.
\begin{figure}[ht]
  \begin{center}
  \begin{tikzpicture}
    \draw [thick] (0.5,0.5) -- (0.5,3.5);
    \draw [thick] (0.5,0.5) -- (5.5,0.5);
    \node[font=\normalsize] at (5.4,0.3) {$p$};
    \node[font=\normalsize] at (0.3,3.4) {$q$};
    \node at (0.1,1) {$0$};
    \node at (0.1,2) {$1$};
    \node at (0.1,3) {$2$};
    \node at (1,0.1) {$0$};
    \node at (2,0.1) {$1$};
    \node at (3,0.1) {$2$};
    \node at (4,0.1) {$3$};
    \node at (5,0.1) {$4$};
    \node (x11) at (1,1) {$\Z$};
    \node (x12) at (1,2) {$0$};
    \node (x13) at (1,3) {$0$};
    \node (x21) at (2,1) {$0$};
    \node (x22) at (2,2) {$0$};
    \node (x23) at (2,3) {$0$};
    \node (x31) at (3,1) {$0$};
    \node (x32) at (3,2) {$0$};
    \node (x33) at (3,3) {$0$};
    \node (x41) at (4,1) {$0$};
    \node (x42) at (4,2) {$0$};
    \node (x43) at (4,3) {$0$};
    \node (x51) at (5,1) {$0$};
    \node (x52) at (5,2) {$0$};
    \node (x53) at (5,3) {$0$};
  \end{tikzpicture}
\end{center}
  \caption{$E_\infty^{p,q}$ for the path fibration.}
  \label{fig:infty-path-fibration}
\end{figure}

This gives a relation between the cohomology of $B$ and the cohomology of $\Omega B$. 
If we know the cohomology for one of the spaces one of them, 
then we can sometimes compute the cohomology from the other using this. 
Using the Serre spectral sequence for homology, we have the same relationship between the homology of $B$ and the homology of $\Omega B$. 
The computations in the next example will work exactly the same for homology.

\begin{ex}\label{ex:cohomology-KZ2}
As an example, we can compute the cohomology groups of $B = K(\Z,2)$ 
(which is the complex projective space $\mathbf{CP}^\infty$). 
Its loop space is $\Omega K(\Z,2) = K(\Z,1) = \S^1$, and by \autoref{lem:cohomology-spheres} we have
$$H^n(\S^1)=\begin{cases}\Z&\text{if $n=0,1$}\\ 0&\text{otherwise.}\end{cases}$$
The resulting second page of the spectral sequence is shown in \autoref{fig:two-page-KZ2-path-fibration}, all other groups on the second page are trivial. 
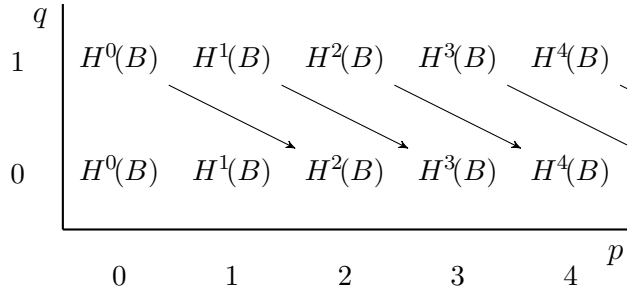
\begin{figure}[ht]
  \begin{center}
  \begin{tikzpicture}[every node/.style={font=\small},x=15mm,y=15mm]
    \draw [thick] (0.5,0.5) -- (0.5,2.5);
    \draw [thick] (0.5,0.5) -- (5.5,0.5);
    \node[font=\normalsize] at (5.4,0.3) {$p$};
    \node[font=\normalsize] at (0.3,2.4) {$q$};
    \node at (0.1,1) {$0$};
    \node at (0.1,2) {$1$};
    \node at (1,0.1) {$0$};
    \node at (2,0.1) {$1$};
    \node at (3,0.1) {$2$};
    \node at (4,0.1) {$3$};
    \node at (5,0.1) {$4$};
    \node (x11) at (1,1) {$H^0\!(B)$};
    \node (x12) at (1,2) {$H^0\!(B)$};
    \node (x21) at (2,1) {$H^1\!(B)$};
    \node (x22) at (2,2) {$H^1\!(B)$};
    \node (x31) at (3,1) {$H^2\!(B)$};
    \node (x32) at (3,2) {$H^2\!(B)$};
    \node (x41) at (4,1) {$H^3\!(B)$};
    \node (x42) at (4,2) {$H^3\!(B)$};
    \node (x51) at (5,1) {$H^4\!(B)$};
    \node (x52) at (5,2) {$H^4\!(B)$};
   \path
    (x12) edge[->] (x31)
    (x22) edge[->] (x41)
    (x32) edge[->] (x51)
    (x42) edge (5.5,1.25)
    (x52) edge (5.5,1.75);
  \end{tikzpicture}
  \end{center}
  \caption{$E_2^{p,q}$ for the path fibration of $K(\Z,2)$.}
  \label{fig:two-page-KZ2-path-fibration}
\end{figure}
Note that the shown differentials are the only nontrivial differentials on the second page, and all differentials on all later pages are also trivial. This means that $E_3=E_\infty$, depicted in \autoref{fig:infty-path-fibration}. Note that there are no nontrivial differentials going in or out of the $H^0(B)$ and $H^1(B)$ in the bottom line. This means that 
$$H^0(B)=E_\infty^{0,0}=\Z$$ 
and 
$$H^1(B)=E_\infty^{1,0}.$$
All other groups displayed on the second page vanish on the $\infty$-page. Therefore, all shown differentials must be isomorphisms. 
This means that $H^{n+2}(B)=H^n(B)$, which shows that $H^n(B)$ is $\Z$ for even $n$ and $0$ for odd $n$.
\end{ex}
Another simple application of the Serre spectral sequence is to compute the homology and cohomology groups of $\Omega \S^n$, given in~\cite[Example 1.5]{hatcher2004spectral}. 
In this case, we know the (co)homology of the base space $\S^n$, and from it we can deduce the (co)homology of the loop space $\Omega \S^n$. We will do the computation here for cohomology.

\begin{ex}\label{ex:cohomology-OSn}
  If we take the Serre spectral sequence for the path fibration of $B=S^n$ for $n\geq 2$, then the second page has entries
  $$E_2^{p,q}=H^p(S^n;H^q(\Omega S^n))=\begin{cases}H^q(\Omega S^n)&\text{if $p=0,n$}\\ 0&\text{otherwise.}\end{cases}$$
  using \autoref{lem:cohomology-spheres}. Therefore, the only nontrivial groups are in the columns $p=0$ and $p=n$. 
  This means that by looking at the degree of the differentials, the only nonzero differentials can occur in page $n$, as shown in \autoref{fig:n-page-path-fibration-sphere}.
\begin{figure}[ht]
  \begin{center}
  \begin{tikzpicture}[every node/.style={font=\small},x=25mm,y=12mm]
    \draw [thick] (0.5,0.5) -- (2.5,0.5);
    \draw [thick] (0.5,0.5) -- (0.5,4.5);
    \node[font=\normalsize] at (2.4,0.3) {$p$};
    \node[font=\normalsize] at (0.4,4.4) {$q$};
    \node at (1,0.1) {$0$};
    \node at (2,0.1) {$n$};
    \node at (0.1,1) {$0$};
    \node at (0.1,2) {$n-1$};
    \node at (0.1,3) {$2(n-1)$};
    \node at (0.1,4) {$3(n-1)$};
    \node (x11) at (1,1) {$H_0(\Omega S^n)$};
    \node (x12) at (1,2) {$H_{n-1}(\Omega S^n)$};
    \node (x13) at (1,3) {$H_{2(n-1)}(\Omega S^n)$};
    \node (x14) at (1,4) {$H_{3(n-1)}(\Omega S^n)$};
    \node (x21) at (2,1) {$H_0(\Omega S^n)$};
    \node (x22) at (2,2) {$H_{n-1}(\Omega S^n)$};
    \node (x23) at (2,3) {$H_{2(n-1)}(\Omega S^n)$};
    \node (x24) at (2,4) {$H_{3(n-1)}(\Omega S^n)$};
    \path
    (x12) edge[->] (x21)
    (x13) edge[->] (x22)
    (x14) edge[->] (x23)
    (1.5,4.5) edge[->] (x24);
  \end{tikzpicture}
  \end{center}
  \caption{$E_n^{p,q}$ for the path fibration of $S^n$.}
  \label{fig:n-page-path-fibration-sphere}
\end{figure}
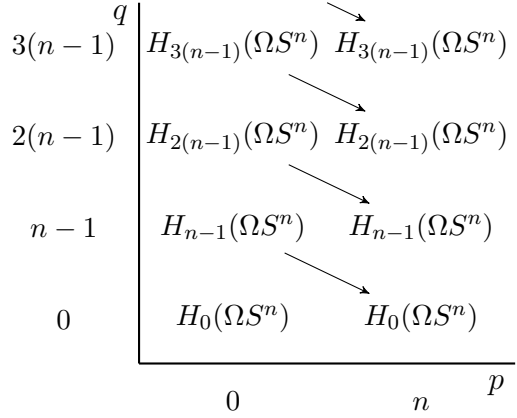  
  Because all later differentials are trivial, $E_{n+1}=E_\infty$, which is depicted in \autoref{fig:infty-path-fibration}. This means that all differentials on page $n$ from the $p=0$ column to the $p=n$ column must be isomorphisms, except for the differential from $(0,0)$ to $(n,-(n-1))$. Hence we can conclude by induction that
\begin{equation}H^k(\Omega\S^n)\begin{cases}\Z&\text{if $n-1\mid k$}\\0&\text{otherwise.}\end{cases}\label{eq:cohomology-loop-spheres}\end{equation}
\end{ex}

As a generalization of \autoref{ex:cohomology-KZ2}, we can construct the \emph{Gysin sequence} from the Serre spectral sequence~\cite[Theorem 3.3.3]{holmbergperoux2014models}. 
The Gysin sequence for homology states that if $f:E\to B$ is a pointed map with fiber $\S^{n-1}$ for $n\geq 2$ and if $B$ is simply connected, then there exists a long exact sequence
$$\cdots \to H_i(E)\to H_i(B)\to H_{i-n}(B)\to H_{i-1}(E) \to \cdots.$$
There is also an analogue for cohomology, which states that under the same assumptions there exists a long exact sequence of cohomology groups
$$\cdots \to H^{i-1}(E)\to H^{i-n}(B)\to H^i(B)\to H^i(E) \to \cdots.$$
The proof given in~\cite[Theorem 3.3.3]{holmbergperoux2014models} works the same in HoTT. 
An alternative construction of the Gysin sequence in HoTT is given in~\cite[Section 6.1]{brunerie2016spheres}, which was used as a main ingredient to compute $\pi_4(\S^3)$.

We can also generalize \autoref{ex:cohomology-OSn} to get the \emph{Wang sequence}. 
For homology this states that if $E\to \S^n$ is a pointed map for $n\geq 2$ with fiber $F$, then there exists a long exact sequence
$$\cdots \to H_i(F)\to H_i(E)\to H_{i-n}(F)\to H_{i-1}(F) \to \cdots.$$
Again, a similar long exact sequence holds for cohomology, and the proof given in~\cite[Theorem 3.3.6]{holmbergperoux2014models} works the same in HoTT.

As another application, we can prove the Hurewicz theorem from the Serre spectral sequence~\cite{holmbergperoux2014models}. 
The Hurewicz theorem only holds for homology, and requires the Serre spectral sequence for homology. 
The theorem states that if $X$ is a simply connected pointed type, $n\geq 2$ and $\pi_q(X)$ is trivial for $q<n$, then $H_q(X)=0$ for $q<n$ and $H_n(X)=\pi_n(X)$. 
For $n=1$ the Hurewicz theorem states that for a 0-connected pointed type $X$, the first homology group $H_1(X)$ is the abelianization of $\pi_1(X)$. 
In the proof given in the aforementioned reference, the case for $n=1$ needs to be proven separately, 
but then the case for $n\geq2$ follows from that using the Serre spectral sequence. 
Since the case for $n=1$ seems easier than the general case, this should be very helpful to prove the Hurewicz theorem in HoTT.

An application for the Atiyah-Hirzebruch spectral sequence would be to compute cohomology groups of generalized cohomology theories.
One such generalized cohomology theory is K-theory. Although K-theory has not been precisely defined yet in HoTT, 
one possible idea by Ulrik Buchholtz is to define it using Snaith's theorem~\cite{snaith1981ktheory}. 
If we have defined K-theory, we could try to compute its cohomology groups using the Atiyah-Hirzebruch spectral sequence. 
With the current machinery, we can compute the cohomology groups of all types that satisfy weak pointed choice (cf. \autoref{def:weak-pointed-choice}), 
which probably includes all finite CW-complexes. 

An application that is probably trickier in HoTT is the Serre class theorem. A \emph{Serre class} is a class $\mc C$ of abelian groups such that for every short exact sequence
$0\to A \to B \to C \to 0$ of abelian groups we have $B\in \mc C$ iff $A,C\in \mc C$. In particular, any Serre class is closed under taking subgroups and quotient groups. Classical examples of Serre classes include 
\begin{itemize}
\item finite abelian groups;
\item finitely generated abelian groups;
\item torsion abelian groups.
\end{itemize}
However, constructively, the first two classes are not closed under either taking subgroups or quotient groups (torsion abelian groups do form a Serre class constructively).

The Serre class theorem is a theorem about certain Serre classes that satisfy some extra properties. 
This include the three examples mentioned above. 
If $\mc C$ is such a Serre class and if $X$ is a simply connected 
type,\footnote{or path-connected and \emph{abelian}. A space $X$ is abelian if the action of $\pi_1(X)$ on $\pi_n(X)$ is trivial for all $n\geq 1$.} 
then the theorem states that $\pi_n(X)\in\mc C$ for all $n$ iff $H_n(X)\in\mc C$ for all $n$. 
More general is the Hurewicz theorem modulo a Serre class, which states that if $\pi_i(X)\in\mc C$ for all $i<n$, 
then the kernel and the cokernel of the Hurewicz homomorphism $h:\pi_n(X)\to H_n(X)$ belong to $\mc C$.

As a corollary of the Serre class theorem, we know that the homotopy groups of the spheres are finitely generated, since their homology groups are finitely generated. 
Moreover, the homotopy groups of simply connected finite CW-complexes are also finitely generated, using cellular cohomology~\cite{buchholtz2018cellular}.
A classical proof of the Serre class theorem can be found in~\cite[Section 1.1]{hatcher2004spectral}.

It is not straightforward to adapt the proof of the Serre class theorem to a proof in HoTT. One difficulty is that the classical proof uses the \emph{universal coefficient theorem} for homology. 
This theorem is not yet proven in HoTT. 
The universal coefficient theorem relates the homology group $H_n(X;A)$ with coefficients in any abelian group $A$ to the the homology group $H_n(X)$ with integer coefficients.
There is also a dual universal coefficient theorem for cohomology that relates the cohomology group $H^n(X;A)$ with the homology group $H_n(X)$.
It is not clear how to prove or even formulate the universal coefficient theorem in HoTT. 
The universal coefficient theorem for homology uses the $\mathsf{Tor}$ functor, whose definition requires projective resolutions. 
Similarly, the universal coefficient theorem for cohomology uses the $\mathsf{Ext}$ functor, whose definition requires injective resolutions. 
Basic properties of projective and injective resolutions are classically proven with the axiom of choice~\cite{blass1979injectivity}, 
so it is not clear whether we can prove the universal coefficient theorem without the axiom of choice.
Another problem with proving the universal coefficient theorem is that classically it is proven algebraically for chain complexes. 
Since homology and cohomology groups of spaces are defined as the (co)homology of a chain complex, the universal coefficient theorem then applies to spaces.
Even if we could solve the issues with the axiom of choice, and we could prove the universal coefficient theorem for chain complexes in HoTT, 
it does not directly follow that it is true for spaces, since these groups are \emph{not} defined as the (co)homology of chain 
complexes.\footnote{Cellular (co)homology~\cite{buchholtz2018cellular} is defined as the (co)homology of a chain complex, 
  and therefore we could prove it for finite CW complexes, but it would not follow for arbitrary types.}
Therefore, in HoTT, it seems fruitful to prove the universal coefficient theorem directly for spaces, 
using the definition in terms of Eilenberg-MacLane spaces, but this is an open problem as of now. 
A good first step might be to try to prove a special case of the universal coefficient theorem where the $\mathsf{Tor}$ and $\mathsf{Ext}$ functors vanish, 
although that will not be sufficient to prove the Serre class theorem.

Proving the Serre class theorem in HoTT will be tricky, and it might be necessary to reformulate or weaken some notions to get a usable result in HoTT. 
If we manage to prove these results in HoTT, we can get a lot of information about the homotopy groups of spheres. 
One additional ingredient that is required is the fact that the cup product structure of the cohomology groups respect the Serre spectral sequence. 
From these ingredients we can classically show the following:
\begin{itemize}
\item The groups $\pi_i(\S^n)$ are finite for $i>n$, except for $\pi_{4k-1}(\S^{2k})$, which are the direct sum of $\Z$ and a finite group~\cite[Theorem 1.21]{hatcher2004spectral}.
\item For a prime $p$ the $p$-torsion subgroup of $\pi_i(\S^3)$ is 0 for $i<2p$ and $\Z_p$ for $i=2p$~\cite[Example 1.18]{hatcher2004spectral}.
\item From the two above results we can immediately conclude that $\pi_4(\S^3)=\Z_2$.
\item Using additionally the localization of a space at a prime, we can show that for $p$ a prime, the $p$-torsion subgroup of $\pi_i(\S^{n+3})$ is 0 for $i<n+2p$ and $\Z_p$ for $i=n+2p$~\cite[Theorem 1.28]{hatcher2004spectral}.
\item We can compute more homotopy groups of spheres using significantly more machinery. 
For this we need the EHP sequence, Steenrod squares and Serre's theorem, which computes the cohomology rings of $K(\Z_2,n)$ and $K(\Z,n)$ and $K(\Z_{2^k},n)$ with coefficients in $\Z_2$.
If we have all these results, we can compute $\pi_{n+i}(\S^n)$ for all $n$ and $i\leq 3$~\cite[Theorem 1.40]{hatcher2004spectral}.
\end{itemize}

\chapter*{Conclusion}
\addcontentsline{toc}{chapter}{Conclusion}
\lstMakeShortInline"

In this dissertation I have shown that homotopy type theory is a practical language to prove involved theorems in homotopy theory, 
most notably the construction of two important spectral sequences: the Atiyah-Hirzebruch and the Serre spectral sequences for cohomology.
The discovery of these spectral sequences in classical homotopy theory was an important milestone, 
and we expect that the corresponding proof in HoTT will lead to many useful corollaries in synthetic homotopy theory.
That said, many applications of these spectral sequences require more machinery, such as the universal coefficient theorem, Serre classes and the Hurewicz theorem. 
The first two of these three results might be problematic to prove in HoTT, because of their dependence on the axiom of choice. 
I hope that an adapted or weaker version of these theorems can be found, which avoids the use of the axiom of choice, or alternatively, 
that when looking at their applications, we can avoid the use of choice. For example, we cannot prove constructively that finitely generated abelian groups form a Serre class, 
but it is conceivable that we can still prove that all homotopy groups of spheres are finitely generated without resorting to the axiom of choice.
That said, we could also assume the axiom of choice and continue proving results in synthetic homotopy theory using it. 
However, then the resulting theorem would not hold anymore in all models of HoTT.

It would be interesting to see other spectral sequences proven in HoTT, such as the Adams spectral sequence and the Eilenberg-Moore spectral sequences~\cite{hatcher2004spectral}.

\subsection*{Thoughts on formalization}

This dissertation also shows that homotopy type theory provides a good language for the computer formalization of results in homotopy theory.

Through the formal methods community there is a strong desire that formal methods will be adopted in a large scale by general mathematicians.
The main bottlenecks for this adoption are
\begin{enumerate}
  \item the necessary expertise of formalization in the proof assistant of choice;
  \item the vast number of proof assistant in existence;
  \item the amount of work it takes to formalize mathematics compared to writing it on paper. 
\end{enumerate}
It definitely takes time to learn a proof assistant, familiarize oneself with the library and get enough practice to use a proof assistant efficiently.
Moreover, in my experience, learning to use a proof assistant takes longer than learning to use other programs, like LaTeX or Mathematica. 
Still, I do not think this is the main bottleneck to the adoption of proof assistants. 
Various courses that integrate the use of proof assistants have been taught, and students taking those courses will get a level of proficiency of using that proof assistant.

The second concern is the number of proof assistants in existence, each with a separate library and the near-impossibility to translate theorems and proofs between two proof assistants. 
There are translation procedures between some proof assistants, such as~\cite{mclaughlin2006interpretation}, 
but such translations are often incomplete, and only specific to two proof assistants.

However, I think the main bottleneck is the amount of extra time it takes to formalize mathematics compared to writing a paper proof. 
Rough estimates for the formalization time is about one week to formalize a page of a mathematical paper or textbook~\cite{asperti2010itp}.
My experience with formalizing synthetic homotopy theory specifically is a little different. 
Paper proofs given in synthetic homotopy theory are often quite detailed, and the techniques used are often very close to the underlying type theory. 
I would argue that this is necessary; we do not have much experience with proving theorems in synthetic homotopy theory yet, and it is not always clear which results are hard to prove.
Some results turn out more difficult to prove than initially thought. 
For example, the ``basic property'' of the smash product that it forms a 1-coherent symmetric monoidal product (see \autoref{sec:smash-product}) 
was assumed with a vague proof sketch in~\cite{brunerie2016spheres} to prove $\pi_4(\S^3)=\Z_2$, but this result is still open as of now. 
Another example is \autoref{thm:colim_sm}, which was originally thought to be a basic result about colimits by Egbert Rijke and me, but the proof turned out to be much harder than expected.

Because paper proofs in synthetic homotopy theory are often proven with many details, in my experience, giving a fully formal proof is not much more work. 
In cases where the formal proof is a lot more work, the paper proof sometimes omitted showing the case of the path constructor when inducting over a HIT, 
which is the hardest --- but least enlightening --- part of the proof. 
In the paper proofs of this dissertation, I have also sometimes omitted these steps, because they are tedious to work through and not very enlightening. 
However, the formal proofs (of course) contain all the details.
For some theorems the formalization did take substantially more work. 
For the formalization of spectral sequences, a substantial algebra library had to be developed, 
consisting of basic group theory, ring theory, modules over a ring and graded modules.
This took many man-hours of work, which would have no counterpart in a paper proof. 

Another reason why formalization is more work, is that necessarily such proofs have to be encoded in the corresponding logic, intensional type theory. 
Most of the time, this is straightforward, but in some cases it takes a bit more work. 
Especially when dealing with sequences of types, in intensional type theory one has to work explicitly with transports (or its relatives, like pathovers or heterogenous equality), 
which is especially laborious in the proof-relevant setting of HoTT. 
Sometimes an ``encoding trick'' is useful when dealing with these dependent types. In this dissertation some of these tricks have been given. 
In \autoref{sec:les-homotopy} we defined a chain complex over an arbitrary successor structure, 
because we wanted to not only index chain complexes over $\N$ or $\Z$, but also over $\N\times\fin_3$ or similar types, to get a more convenient computational content.
These successor structures also turned out to be useful for spectra in \autoref{sec:spectra}, so that we can apply the same notion to spectra indexed over $\N$ and spectra indexed over $\Z$. 
The reason that spectra indexed over $\Z$ are useful (traditionally they are only indexed over $\N$) is that for certain definitions, 
such as the homotopy group of a spectrum, no case-splits are required when they are indexed over $\Z$. 
Another encoding trick was given in the definition of graded morphisms. In order to define the composition of graded morphisms more easily, 
we defined the degree of a graded morphism to be an automorphism of the indexing set. 
In order to avoid dealing with transports everywhere, we defined a graded morphism to act on a path in the indexing set, see \autoref{sec:spectral-sequences}.

Formalizing in Lean is a fun activity, and Lean is a good language for formalization. 
In Lean 2 one of the main annoyances when formalizing was the unpredictability of the elaborator, which was greatly improved in Lean 3.
Another issue was the ability to simplify expressions. There was a tactic "esimp" that simplified by evaluation, 
but it was quite slow, and would sometimes use $99\%$ of the elaboration time of a proof. 
I do not have enough experience with "dsimp" in Lean 3 to see whether it has similar issues. 

In 2016 Leonardo de Moura decided that he would stop supporting homotopy type theory in Lean~3. It was quite devastating to hear this. 
I am glad that Gabriel Ebner has found a method to do homotopy type theory in Lean~3 safely, by avoiding the use of "Prop".
Since then, I have been slowly working on porting the Lean 2 HoTT library to Lean~3, although the progress has been slow. 
The main reasons for this are:
\begin{itemize}
\item The elaborator in Lean~3 is weaker to make it more robust, which causes many proofs to break.
\item Some tactics do not work without using "Prop" or the "Prop"-valued equality. 
Gabriel Ebner has modified the "simp" and "rewrite" tactic to work in HoTT. 
I have written an "induction"-tactic, since the default induction tactic does not allow custom induction principles to eliminate to only non-"Prop" sorts.
\item The notation "!" has been removed in Lean~3. This was used in Lean~2 to turn (some) explicit arguments into implicit ones.
\item There are many small differences in Lean~2 and Lean~3 in syntax for tactics, proof styles, attributes, universe levels and declarations. 
None of these issues take much time to fix, but the sheer number of them add up.
\end{itemize}
Despite this, a significant part of the library has been ported, and I am planning to continue this so that Lean~3 (and later Lean~4) can be used to formalize results in homotopy type theory.

The current implementation in Lean is probably not the ultimate proof assistant for HoTT in the long-term. 
Many cubical type theories have been developed over the last few years, 
and a few proof assistants have been developed using a cubical type theory as their underlying logic. 
Cubical type theory offers many advantages when reasoning about higher inductive types and when doing higher path algebra, since more relations hold strictly. 
For example, the computation rule of the induction principle for a higher inductive type holds judgmentally in cubical type theory. 
This is very convenient when working with HITs, especially HITs with higher path constructors.
It is conceivable that a cubical type theory can be implemented in Lean, although it will require some hacking in the "C++" code, 
and many features of Lean will need to be modified to work well with the cubical structure. 
This will be a big project, and it is probably smart not do this project until the different variants of cubical type theory have been studied more.
In particular, current versions of cubical type theory do not satisfy regularity, which states that the induction principle for paths has judgmental computation rules.\footnote{It 
is possible to have two notions of paths: a path type with all the cubical structure, and an identity type with an induction principle and a judgmental computation rule. 
However, in current cubical type theories these cannot be the same type.} 
Some constructions in HoTT are done by doing a long string of path inductions, and such proofs will be harder to reason with in cubical type theory. That said, it would be interesting to perform some constructions of this dissertation in one of the cubical type theories to see whether the proof would significantly simplify. In particular the proofs in \autoref{sec:non-recursive-2} would simplify when the induction principle of higher inductive types reduces definitionally when applied to path constructors.


\chapter*{Acknowledgements}
\addcontentsline{toc}{chapter}{Acknowledgements}
First and foremost I would like to thank my advisor Jeremy Avigad, who was always ready to give useful feedback, proofread drafts of all my written work and provide support. 
Futhermore, I would like to thank Steve Awodey for always being ready to answer any questions I have about HoTT or category theory. 
I would like to thank Mike Shulman for many helpful remarks and insights whenever I show my work. 
I also want to thank Ulrik Buchholtz, Egbert Rijke, Jakob von Raumer, Stefano Piceghello and Kristina Sojakova for the collaborations and discussions. 
I am grateful towards Leonardo de Moura for all his help with getting me up to speed with Lean, and answering all my stupid questions and ideas I brought up early in the development of Lean.
I would like to thank Marc Bezem and Dan Christensen to invite me for academic visits. 
More generally, I would like to thank everyone in the HoTT community for maintaining such a good research community. 
It is very nice to be part of such a friendly and collaborative research community, 
where it is normal to have unfinished projects on Github or discuss half-baked ideas on a mailing list.

For moral support, I would like to thank my parents, Peter van Doorn and Judith van Wakeren, for supporting me during times when I was struggling. Dank jullie wel! 
Lastly I would like to thank Cecilia Hornberger for the moral support over the last months.

I gratefully acknowledge the support of the Air Force Office of Scientific Research through MURI
grant FA9550-15-1-0053. Any opinions, findings and conclusions or recommendations expressed in this
material are those of the authors and do not necessarily reflect the views of the AFOSR. 

The author would like to thank the Isaac Newton Institute for Mathematical Sciences, Cambridge,
for support and hospitality during the programme Big Proof where work on this paper
was undertaken. This work was supported by EPSRC grant no EP/K032208/1. 

This material is based upon work supported by the National Science Foundation under Grant Number DMS 1641020. 

\clearpage
\phantomsection
\addcontentsline{toc}{chapter}{Bibliography}
\Urlmuskip=0mu plus 1mu 
\bibliographystyle{amsalphaurl}
\bibliography{references}

\end{document}